\documentclass[11pt]{article}
\usepackage[utf8]{inputenc}
\usepackage[usenames,dvipsnames]{xcolor}
\usepackage{mathtools}
\usepackage{mathrsfs}
\usepackage{bbm}
\usepackage[a4paper, margin =2cm]{geometry}

\usepackage{amsmath,amsfonts,amssymb,graphics,amsthm, comment}
\usepackage{hyperref}
\hypersetup{
 pdftitle=Brownian paths as loop-decorated SLEs,
 pdfauthor=Nathana\"el Berestycki Isao Sauzedde,
    colorlinks=true,
    linkcolor=blue,
    citecolor=RedViolet,
    urlcolor=blue,
    pdfborder={0 0 0}
}

\usepackage[font=small]{caption}
\usepackage{enumerate}

\newtheorem*{lemma*}{Lemma}

\newtheorem{theorem}{Theorem}[section]
\newtheorem*{theorem*}{Theorem}

\newtheorem{lemma}[theorem]{Lemma}

\newtheorem{proposition}[theorem]{Proposition}
\newtheorem{corollary}[theorem]{Corollary}
\newtheorem{definition}[theorem]{Definition}

\newtheorem{remark}[theorem]{Remark}

\newtheorem{remarks}[theorem]{Remarks}

\def\d{\,{\rm d}}
\def\R{{\mathbb R}}

\def\P{{\mathbb P}}
\def\Z{{\mathbb Z}}

\def\cL{{\mathcal L}}
\def\cF{{\mathcal F}}
\def\cX{{\mathcal X}}
\def\eps{\varepsilon}
\DeclareMathOperator{\Range}{Range}
\def\Conf{{\mathscr{R}}}

\def\rhos{\mathbf{r}}
\def\Att{{\Xi}}

\title{Brownian paths as loop-decorated SLEs
}
\author{Nathana\"el Berestycki\thanks{University of Vienna, Austria, \texttt{nathanael.berestycki@univie.ac.at}}\ \ and Isao Sauzedde\thanks{\'Ecole Normale Supérieure de  Lyon, France, \texttt{isao.sauzedde@ens-lyon.fr} } }
\date{\today}

\begin{document}

\maketitle
\abstract{We construct an application, which takes as input a simple path and a possibly infinite collection of loops, and outputs a continuous path by adding the loops chronologically to the simple path as the simple path encounters them. By studying the regularity properties of this application and using lattice discretisations, we prove that chronologically adding the loops from a Brownian loop soup encountered by an independent radial SLE$_2$ path produces a continuous path which has the law of a planar Brownian motion. This resolves a conjecture of Lawler and Werner \cite[Conjecture 1]{LawlerWerner}.

This construction produces a coupling between SLE$_2$ and Brownian motion, and we further show that this joint law is the scaling limit of the loop-erased random walk and the random walk itself. The arguments are robust and can be applied for instance in the off-critical setup, where the scaling limit of loop-erased random walk is Makarov and Smirnov's massive SLE$_2$ \cite{MakarovSmirnov}.

}

\medskip \noindent \small{\textbf{Keywords:} Loop-erased random walk, Schramm--Loewner Evolution (SLE), Brownian motion, Brownian loop soup, random walk loop soup, scaling limits. \\
\textbf{MSC 2020 classification}: 60J67, 60J65, 82B41.}

\tableofcontents

\section{Introduction}
\label{sec:introduction}
\subsection{Main results}

Let $D \subset \R^2$ be a proper simply connected domain. Let $o\in D$. Let $(W_t, 0 \le t \le \tau_D)$ be a Brownian motion starting from $o$ and run until the time $\tau_D$ where it leaves $D$ for the first time. A classical question in the area of conformally invariant random processes is whether it is possible to meaningfully erase the loops created by $W$ chronologically and obtain an SLE$_2$ path. (We recall that, following the landmark result of Lawler, Schramm and Werner \cite{LSW}, the scaling limit of the loop-erased random walk starting from a vertex converging to $o$, is the so-called radial SLE$_2$ path; see below for more precise definitions).

The inverse of loop-erasure is loop addition and in this paper we show that it is possible to \textbf{chronologically} add loops to a radial SLE$_2$ path $\gamma$ in $D$ in such a way as to obtain a Brownian motion $W$. These loops are obtained as the set of loops $\cL_\gamma$ which intersect $\gamma$ in a Brownian loop soup $\cL$ with intensity $c = 1$ (again, see below for precise definitions).

Results in this vein go back at least to the work of Lawler, Schramm and Werner \cite{LSWrestriction} who showed as a consequence of their study of the conformal restriction property that the \textbf{hull} generated by a chordal SLE$_2$ path $\gamma$ in the upper half plane and the loops $\cL_\gamma$ coincides with the hull of a Brownian excursion in the upper half plane: in other words, ``filling-in the holes'', the reunion of $\gamma$ and $\cL_\gamma$ has the same distribution as a Brownian excursion.  A more precise result was shown by Zhan~\cite{Zhan}, who proved (among other things) that there is a random nondecreasing function $\psi: [ 0, \tau_D] \to \R$ such that $W_{\psi(t)}$ is a radial SLE$_2$ path. Even more recently,
Sapozhnikov and Shiraishi \cite{SapozhnikovShiraishi} showed that the \textbf{range} $\Range (W) = W ([0, \tau_D]) = \{ W_t: 0\le 0 \le t \le \tau_D\} \subset \R^2$ of the Brownian motion $W$, viewed as a random compact set, has the same law as the range of $\gamma$ together with the ranges of loops in  $\cL_\gamma$. More precisely, they showed

\begin{theorem*}[{\cite[Theorem 1.1]{SapozhnikovShiraishi}}]
	In distribution,
	$
	\Range( W)=\Range(\gamma)\cup \bigcup_{\ell \in \mathcal{L}_\gamma } \Range(\ell).
	$
\end{theorem*}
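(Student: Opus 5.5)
The plan is to prove the identity at the discrete level and then pass to the scaling limit, using the Hausdorff topology on compact subsets of $\overline D$.

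\textbf{Discrete identity.} Fix a mesh $\delta>0$ and a lattice approximation $D_\delta$ of $D$, with $o_\delta\to o$. Let $S$ be a simple random walk on $\delta\Z^2$ started at $o_\delta$ and stopped on exiting $D_\delta$. Let $\mathrm{LE}(S)$ be its chronological loop-erasure. The key input is the random walk loop soup decomposition (Le Jan, Lawler--Trujillo Ferreras). It states that the loops erased from $S$ have the same law as the loops of an independent random walk loop soup $\cL^\delta$ of intensity $1$ that intersect $\mathrm{LE}(S)$, reattached at their first visit to the path. At the level of ranges this gives
\[
\Range(S)\overset{d}{=}\Range(\mathrm{LE}(S))\cup\bigcup_{\ell\in\cL^\delta_{\mathrm{LE}(S)}}\Range(\ell).
\]
Here $\cL^\delta$ is independent of $\mathrm{LE}(S)$, and $\cL^\delta_{\mathrm{LE}(S)}$ denotes the loops of $\cL^\delta$ that hit $\mathrm{LE}(S)$. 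This is exact combinatorics: the weight of a walk path factorises as the weight of its loop-erasure times a product of loop measures.

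\textbf{Passing to the limit.} As $\delta\to0$, $\Range(S)\to\Range(W)$ in law for the Hausdorff metric, by Donsker's theorem and the continuity of exit times for regular domains (approximate $D$ from inside if needed). By Lawler--Schramm--Werner \cite{LSW}, $\mathrm{LE}(S)\to\gamma$ in law. The random walk loop soup converges to the Brownian loop soup $\cL$ (Lawler--Trujillo Ferreras), and we couple this convergence so that it holds simultaneously with the LERW convergence, independently. It remains to show that in this coupling
\[
\bigcup_{\ell\in\cL^\delta_{\mathrm{LE}(S)}}\Range(\ell)\;\to\;\bigcup_{\ell\in\cL_\gamma}\Range(\ell)
\]
in probability in the Hausdorff sense. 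For this we split the loops by diameter at a threshold $\eta$. There are finitely many loops of diameter at least $\eta$, and their ranges converge. Loops of diameter below $\eta$ that hit the path lie within $\eta$ of the path, so they do not affect the Hausdorff distance by more than $\eta$. Letting $\eta\to0$ then finishes this part.

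\textbf{Main obstacle.} The hard point is that the indicator "$\ell$ hits the path" is discontinuous. A macroscopic discrete loop may come close to $\mathrm{LE}(S)$ without touching it, or vice versa, and this may change in the limit. The plan is to show that, almost surely, no Brownian loop in $\cL$ comes within distance $0$ of $\gamma$ without actually crossing it. This should follow from independence together with the fact that Brownian loops hit any set they approach: if a Brownian loop comes close to a curve of dimension $5/4>0$, it intersects the curve with high probability. To convert this into a discrete statement we would use a Beurling-type estimate: a random walk loop passing within $\epsilon$ of $\mathrm{LE}(S)$ hits it with probability $1-O(\epsilon^{c})$. We would also need to rule out near-touching from the LERW side. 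For that we use the analogous quasi-loop estimates of LSW and Schramm, which show that $\mathrm{LE}(S)$ has no near self-touching at macroscopic scale.

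Combining these steps gives joint convergence of both sides of the discrete identity, and hence the claimed equality in distribution.
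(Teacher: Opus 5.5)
The paper does not prove this statement itself. It is quoted from \cite{SapozhnikovShiraishi}, and in the planar case it follows from Theorem~\ref{th:main}. Indeed, $\Att(\gamma,\mathcal{L})$ runs through every loop of $\mathcal{L}_\gamma$ and stays on $\Range(\gamma)$ off the loops. Its compact range contains the points $x_\ell$, which are dense in $\Range(\gamma)$ by condition~\eqref{it:dense}. Hence $\Range(\Att(\gamma,\mathcal{L}))=\Range(\gamma)\cup\bigcup_{\ell\in\mathcal{L}_\gamma}\Range(\ell)$.

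Your direct argument, in the spirit of Sapozhnikov--Shiraishi, is sound in outline. Working only with ranges in the Hausdorff metric saves a lot. You need no ordering or rerooting of loops and no natural parametrisation. Above all, you need no control of the total time spent on small loops (Proposition~\ref{prop:vol2}, Theorem~\ref{th:tCV}), which is the hardest estimate in the paper: small loops hitting the path are simply invisible up to $\eta$. The paper pays these costs to obtain the much stronger chronological, path-level identity.

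On the hitting issue the two routes also differ. The paper treats it deterministically:
\begin{itemize}
\item non-hitting is an open condition (Lemma~\ref{le:closed});
\item hitting persists under uniform perturbation of the parametrised path and loop, because almost surely the first intersection is a bilateral crossing (Proposition~\ref{prop:tech:casT} together with the Jordan-curve argument of Lemma~\ref{le:notstupid}).
\end{itemize}

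You instead use a discrete Beurling estimate. For this to work it must be phrased as a union bound, not a per-loop conditional probability, because the discrete loop coupled to a given Brownian loop is not a fresh sample. State it as follows: the expected number of discrete loops of diameter at least $\eta$ (their number is uniformly bounded in the mesh) that come within $\epsilon$ of $\mathrm{LE}(S)$ without hitting it is $O_\eta(\epsilon^{c})$. You also need to handle the case where the approach happens near the end of the loop, for instance by reversibility.

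Once this is done, your continuum ``touching implies crossing'' step is not needed. Its justification via the dimension $5/4$ is also not the right reason; what is true is that Brownian motion started on a simple curve immediately visits both sides (Lemma~\ref{le:tech:cas0}). Likewise the quasi-loop estimates are not needed separately, since they are already built into the LSW convergence you invoke.
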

(Remarkably, their result is also true in dimension $d = 3$.) Here both sides are considered in the space of compact subsets of $D$, endowed with the Hausdorff metric and with the associated Borel $\sigma$-algebra. As this is an identity in distribution for random sets, it does not convey any information about the order with which these sets are traversed.

Our main contribution is to show that one can \textbf{chronologically} add loops of $\cL_\gamma$ to $\gamma$ to obtain a parametrized path, which has the law of Brownian motion.

The result below, which resolves a conjecture of Lawler and Werner \cite[Conjecture 1]{LawlerWerner} is the main theorem of this article, stated in a somewhat informal manner, which will be made precise below. (As we will explain further below, our arguments are robust and function equally well in the chordal setup, which is in fact the original setting of the conjecture of Lawler and Werner; interestingly they  apply to the near-critical or \textbf{massive} setup which we will introduce later on.)

\begin{theorem}\label{thm:main_intro}
  Let $\gamma$ be a radial SLE$_2$ and let $\cL$ be an independent Brownian loop soup with intensity $c = 1$. Let $\cL_\gamma$ denote the set of loops in $\cL$ intersected by $\gamma$.

  The loops of $\cL_\gamma$ can be totally ordered by the first time $\gamma$ intersects them, starting from the root~$o$. The chronological concatenation of all the loops in $\cL_\gamma$ gives a continuous parametrized path $\Att(\gamma,\mathcal{L})$, which furthermore has the same law as $(W_t, 0 \le t \le \tau_D$).
\end{theorem}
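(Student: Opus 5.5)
The strategy is a discrete approximation followed by a continuity argument for the loop-addition map $\Att$. On a lattice $\delta\Z^2\cap D$, the statement is an exact identity. Take a simple random walk $S^\delta$ from (a vertex near) $o$, killed on exiting $D$, and let $\gamma^\delta = \mathrm{LE}(S^\delta)$ be its chronological loop-erasure. The classical Lawler (and Le Jan) decomposition applies here: given $\gamma^\delta$, the erased loops are distributed as a random walk loop soup $\cL^\delta$ of intensity $c=1$ restricted to loops hitting $\gamma^\delta$. Concretely, rooting each loop at the first vertex of $\gamma^\delta$ it visits and concatenating chronologically along $\gamma^\delta$ reproduces $S^\delta$. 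So $S^\delta = \Att(\gamma^\delta,\cL^\delta)$ exactly in law, where $\cL^\delta$ is independent of $\gamma^\delta$. (Unrooted loops must be rerooted at the first point of the path they meet, and the orientation and multiplicity conventions must match.) The continuum statement should then follow by passing to the limit on both sides.

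The limits are as follows. By Donsker, $S^\delta$ (time rescaled by $\delta^2$) converges to $(W_t,0\le t\le\tau_D)$. By Lawler--Schramm--Werner, $\gamma^\delta$ converges to radial SLE$_2$. By Lawler--Trujillo Ferreras, $\cL^\delta$ converges to the Brownian loop soup, with a coupling in which large loops are matched closely. The pair $(\gamma^\delta,\cL^\delta)$ is independent, so it converges jointly to $(\gamma,\cL)$. By Skorokhod representation we may assume almost sure convergence in a suitable topology: uniform convergence of paths up to reparametrisation, plus matching of all loops of diameter larger than $\eps$ up to small uniform errors. It then suffices to show that $\Att(\gamma^\delta,\cL^\delta)\to\Att(\gamma,\cL)$ in probability, as parametrised paths in the uniform topology. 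This gives both that $\Att(\gamma,\cL)$ is a well-defined continuous path and that its law is that of $W$.

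This convergence is the main obstacle, because $\Att$ is not continuous in general. There are two issues.

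\textbf{(i) Small loops.} Infinitely many loops hit $\gamma$, so we need the total contribution of loops of diameter $<\eps$ to be small uniformly. We truncate: define $\Att_\eps$ using only loops of diameter $\ge\eps$, which are finitely many. We then show $\sup_t|\Att-\Att_\eps|$ and the time-change discrepancy go to $0$ as $\eps\to0$, uniformly in $\delta$. For the displacement, removing small loops moves the path by at most $\eps$ pointwise. For the time, the total duration of small loops hitting $\gamma^\delta$ must be $o(1)$. This follows from an expected-time computation: the total time is the sum over vertices of $\gamma$ of loop-soup occupation. Equivalently, it follows from the discrete identity and the fact that the random walk spends little time in sub-$\eps$ excursions off the loop-erasure. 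That can be estimated via Green's function bounds and a first-moment bound on the loop measure of small loops touching a path of dimension $5/4$.

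\textbf{(ii) Ordering stability for big loops.} The order of the big loops is determined by the first hitting time of $\gamma$, and this is discontinuous when $\gamma$ only barely touches a loop or nearly touches it earlier. We will show that almost surely no loop of $\cL$ is "tangent" to $\gamma$. That is, every loop that $\gamma$ hits is hit "transversally": small perturbations of $\gamma$ and of the loop still intersect, near the same time. This should come from the fact that Brownian loops have no cut-point-type touching with an independent SLE$_2$. Their intersection exponents make grazing an event of probability zero, proven via a second-moment or quasi-independence argument on near-misses at scale $\eta$ as $\eta\to0$. Given this, hitting times of $\gamma^\delta$ on the matched discrete loops converge to those of $\gamma$. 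The finitely many big loops are therefore inserted at converging times with converging rootings, up to a rerooting along the loop, which is continuous. Combining (i) and (ii) yields the convergence, and hence the theorem.
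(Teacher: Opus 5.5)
Your overall architecture is the paper's: an exact lattice identity, Donsker, convergence of $\gamma^\delta$ and of the loop soup via a strong coupling, continuity of $\Att$ at almost every configuration, and a mesh-uniform bound on time spent in small loops. The continuity step, however, has two genuine holes.

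\textbf{First gap: tangency is only one of three degeneracies.}
\begin{itemize}
\item \emph{Ties.} The statement itself asserts that the loops of $\cL_\gamma$ are totally ordered by first hitting time. This requires $\ell\mapsto\sigma_\ell$ to be injective on $\cL_\gamma$, and you never prove it. It also matters for continuity: if two big loops share the same $\sigma$, convergence of the discrete hitting times does not determine their order.
\item \emph{Double visits of the root.} The concatenation requires each loop to pass through its spatial root $x_\ell$ at a single time $\theta_\ell$, which you also never prove. Your assertion that rerooting is continuous is false without this. If a big loop visits $x_\ell$ at two times (possibly crossing $\gamma$ transversally at both), arbitrarily small perturbations can place the discrete root near either time.
\item \emph{Lattice tie-breaks.} On the lattice these ties genuinely occur: several loops are rooted at the same vertex, and loops revisit their root vertex. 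The identity $S^\delta=\Att(\gamma^\delta,\cL^\delta)$ therefore needs a random tie-break, and you must show the limit is insensitive to it.
\end{itemize}
The paper proves both properties almost surely in Lemma~\ref{le:item:inj}. It uses point polarity of the Brownian bridge, the Markov property at rational times (needed because $\theta_\ell$ is not a stopping time for $\ell$), and the absence of atoms in the law of $\sigma_\ell$.

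\textbf{Second gap: the truncation bound ignores motion along $\gamma$.} You claim that removing loops of diameter $<\eps$ moves the path by at most $\eps$. Between two consecutive big loops $\ell_i\prec\ell_{i+1}$, the path also travels along $\gamma$ from $x_{\ell_i}$ to $x_{\ell_{i+1}}$: in zero time in the continuum, in vanishing time on the lattice. So $\Att_\eps(\gamma,\cL)$ has jumps, and after the natural time change $\|\Att-\Att_\eps\|_\infty$ is only bounded by $\eps+\max_i\operatorname{diam}\gamma[\sigma_{\ell_i},\sigma_{\ell_{i+1}}]$. The same term obstructs comparing $\Att_\eps(\gamma^\delta,\cL^\delta)$ with its limit uniformly. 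You therefore need two further facts:
\begin{itemize}
\item (a) $\{\sigma_\ell:\ell\in\cL_\gamma\}$ is almost surely dense in $[0,t_\gamma]$. Without this, $\Att(\gamma,\cL)$ is not even continuous, since a gap in the roots produces a jump along $\gamma$. The paper proves density by Borel--Cantelli over loops winding in dyadic annuli around each point of $\gamma$ (Lemma~\ref{le:item:dense}).
\item (b) The hitting times of the discrete big loops along $\gamma^\delta$ leave gaps that vanish as $\eps\to0$, uniformly in $\delta$. This follows from (a) together with convergence of the matched $\sigma$'s (Lemma~\ref{le:omega1bound}), which the paper identifies as the key point.
\end{itemize}
Otherwise the walk may cross a macroscopic stretch of $\gamma^\delta$ in vanishing time, and your comparison through $\Att_\eps$ does not close.

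\textbf{Secondary remarks.}
\begin{itemize}
\item \emph{Transversality.} Intersection exponents are unnecessary here. Conditioning on $\gamma$, it suffices that a Brownian motion started on a fixed simple curve immediately visits both sides (Beurling projection, Lemma~\ref{le:tech:cas0}). This is transferred to the non-stopping time $\theta_\ell$ through stopping times on a time grid.
\item \emph{Small-loop time.} Green's function bounds on the random walk range cannot give the estimate, since the walk misses only a logarithmic fraction of sites. The continuum dimension $5/4$ does not give a mesh-uniform bound either. What is needed is $\mathbb{E}\,\#\{x:d(x,\gamma^{nD})\le k\}\le Cn^2(k/n)^\nu$ uniformly in $n$ for $k\ge C\log n$, including near a rough $\partial D$ (Corollary~\ref{coro:vol}).
\item \emph{Time on $\gamma^\delta$.} Since you only use convergence of $\gamma^\delta$ up to reparametrisation, you also need $\delta^2\,\#\gamma^\delta\to0$, i.e.\ that the time the walk spends on $\gamma^\delta$ itself vanishes.
\end{itemize}
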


The process of adding loops chronologically with respect to the order in which they are encountered by $\gamma$ is explicitly suggested in \cite{LawlerWerner}. We emphasise that in any interval of time with positive length, $\gamma$ will encounter infinitely many loops from $\cL_\gamma$ and thus the chronological concatenation of these loops is not obviously defined. This was however justified in \cite[Lemma 13]{LawlerWerner} and the paragraph below it (who also state that such a concatenation is necessarily continuous). As part of our proof we will give a somewhat simpler proof of this first fact and a complete proof of the latter. Our construction also applies to other values of $\kappa$, see the open problems in Section \ref{sec:open} for a discussion.

\medskip The construction above provides in particular a coupling $(\gamma, W)$ between an SLE$_2$ path $\gamma$ and a Brownian motion $W$. To further illustrate the sense in which $\gamma$ is the ``loop-erasure'' of $W$ in this construction (and as a main step of the proof of the theorem), we show that actually the law of the pair $(\gamma, W)$ in the above coupling is the scaling limit as $n\to \infty$ of the pair $(\gamma^n, W^n)$. Here $W^n$ is a random walk on the scaled square lattice $n^{-1} \Z^2$: that is, if $(S^n(0), S^n(1), S^n(2), \ldots)$ is a random walk in discrete time on $\Z^2$, and $(S^n (t), t \ge 0)$ is its linear interpolation, then for $t\ge 0$ we let $W^n(t) = o^n + n^{-1} S^n (  2n^2 t )$, where the vertices~$o^n\in n^{-1} \Z^2$  converge to $o$. To state the result precisely,
consider the discretised domain $D_n $ which is the component containing $o^n$ of $\{w \in n^{-1} \Z^2: B(w, 1/(2n)) \subset D\}$, and let $\tau_D^n = \inf \{ t\ge 0: W^n(t) \notin D_n\}$ be the first time that $W^n$ leaves $D_n$. Finally, $\gamma^n = LE (W^n[0, \tau^n_D]) $ is the chronological loop-erasure of $W[0, \tau_D^n]$ (we refer for instance to \cite{LawlerLimic} for a definition of the chronological loop-erasure of a discrete path).

\begin{theorem}
There exists a sequence $\epsilon_n\to 0$ which satisfies the following properties.
For each $n \ge 1$, there is a coupling between $(\gamma^n, W^n)$ on the one hand, and $(\gamma, W)$ on the other hand, such that with probability at least $1-\epsilon_n$,
\begin{align*}
&\| W^n (\ \cdot \ \wedge \tau^n_D) - W
 (\  \cdot \ \wedge \tau_D)\|_\infty  \le \epsilon_n, \text{ and }\\
& \mathrm{d}_{\mathrm{Hausdorff}} (\gamma^n , \gamma)  \le \epsilon_n.
\end{align*}
Here $\mathrm{d}_{\mathrm{Hausdorff}}$ refers to the Hausdorff distance between two compact sets. In fact, if we view $\gamma$ as parametrized by its so-called ``natural parameterisation'', one also has $\| \gamma^n - \gamma\|_\infty \le \epsilon_n$.
\end{theorem}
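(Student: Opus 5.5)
We would prove the theorem by transporting the discrete loop-addition picture to the continuum through the map $\Att$. The starting point is the discrete analogue of Theorem~\ref{thm:main_intro}, namely the random walk loop soup decomposition of Lawler--Trujillo Ferreras and Lawler--Limic. Take $\gamma^n$ a loop-erased random walk from $o^n$ to $\partial D_n$, and independently a random walk loop soup $\cL^n$ on $n^{-1}\Z^2$ of intensity $1$, restricted to $D_n$. Attach to $\gamma^n$, chronologically, the loops of $\cL^n$ that hit it; at each vertex the loops are rooted there and concatenated in the appropriate order. The resulting path $\Att(\gamma^n,\cL^n)$ has exactly the law of $W^n[0,\tau^n_D]$, and its loop-erasure is $\gamma^n$. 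So $(\gamma^n,W^n)\overset{d}{=}(\gamma^n,\Att(\gamma^n,\cL^n))$, and it suffices to couple $(\gamma^n,\cL^n)$ with $(\gamma,\cL)$ and then show that $\Att$ is continuous enough.

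\textbf{Coupling.} We use two existing results. The first is the convergence of LERW to radial SLE$_2$ (Lawler--Schramm--Werner), which holds in the Hausdorff sense and, by Lawler--Viklund, uniformly in natural parametrisation. The second is the strong coupling of random walk and Brownian loop soups (Lawler--Trujillo Ferreras): with high probability, all loops of diameter at least $n^{-\delta}$ are paired, with uniform distance $O(n^{-1}\log n)$ between paired loops and small time discrepancy. Since the two sources of randomness are independent, we place both couplings on one probability space. By Skorokhod's representation the errors can then be taken to be a deterministic $\epsilon_n\to0$ outside an event of probability at most $\epsilon_n$.

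\textbf{Regularity of $\Att$.} This is the main step and the expected obstacle. One problem is that a macroscopic loop coupled to a loop at distance $O(n^{-1}\log n)$ need not be hit by both $\gamma$ and $\gamma^n$: it can nearly touch one path without touching the other. The order in which loops are encountered can also differ. To handle these we would prove a quantitative stability statement for $\Att$ on a good set of configurations. On this set:
\begin{enumerate}[(i)]
\item No loop of diameter at least $\eta$ comes within $r$ of $\gamma$ without crossing it "robustly", i.e. on both sides. This can be estimated using the fact that SLE$_2$ and Brownian loops have no near-touch configurations, which follows from Beurling-type and one-arm estimates uniformly in $n$.
\item The loops hit by $\gamma$ in a given short arc have small total time and small combined diameter. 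This gives equicontinuity, extending the continuity argument behind \cite[Lemma 13]{LawlerWerner} to the discrete paths uniformly in $n$.
\end{enumerate}
Given (i) and (ii), the macroscopic loops are hit by both paths in the same order, at nearby points and nearby times. The contribution of small loops is controlled in sup norm by (ii). Together these give $\|\Att(\gamma^n,\cL^n)-\Att(\gamma,\cL)\|_\infty\le\epsilon_n$ on the good set.

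\textbf{Conclusion.} Set $W=\Att(\gamma,\cL)$ and $W^n=\Att(\gamma^n,\cL^n)$ on the coupled space. By Theorem~\ref{thm:main_intro}, $W$ is a Brownian motion stopped at $\tau_D$, and by the discrete decomposition $W^n$ has the law of the stopped walk. It remains to compare stopping times: the total durations of the two paths are close because the loop times are close and the small loops carry negligible total mass. Hence $W^n(\cdot\wedge\tau^n_D)$ and $W(\cdot\wedge\tau_D)$ are uniformly close. The Hausdorff and natural-parametrisation closeness of $\gamma^n$ and $\gamma$ comes directly from the LERW coupling. After diagonalising over $\eta$, $r$ and $\delta$, this yields a single sequence $\epsilon_n$ satisfying all the required estimates.
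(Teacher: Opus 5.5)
Your architecture matches the paper's. It combines the exact discrete identity (LERW plus an independent random-walk loop soup, attached chronologically with uniformly random tie-breaks, is a random walk), the couplings of LERW with SLE$_2$ and of lattice loops with Brownian loops, continuity of $\Att$ at good configurations, and identification of the limit. For the last step you do not need Theorem~\ref{thm:main_intro}: the paper deduces that theorem from this very argument via Donsker's theorem.

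\textbf{The main gap.} The genuine gap is the clause ``the small loops carry negligible total mass''. You need it for the discrete configurations uniformly in $n$, and you give no mechanism for it.

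\emph{Why it is needed.} Without $t_{\cX^n}\to t_{\cX}$ you control neither $|\tau^n_D-\tau_D|$ nor the times at which the two paths reach a given macroscopic loop. The paper notes that without this control $\Att$ would be nowhere continuous.

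\emph{Why your couplings do not give it.} The loop-soup coupling only pairs loops above a mesoscopic scale, and the LERW/SLE$_2$ coupling is only macroscopic. Neither says anything about the unmatched lattice loops of duration between $n^{-2}$ and $\delta$ that touch $\gamma^n$.

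\emph{Why the Lawler--Werner argument does not simply extend.} In the continuum it rests on $\dim\gamma<2$ giving $\mathbb{E}[t_{\gamma,\cL}]<\infty$. The discrete analogue needs a uniform-in-$n$ substitute for $\dim<2$, namely $\mathbb{E}\,\#\{x\in nD: d(x,\gamma^{nD})\le k\}\le C k^{\nu}n^{2-\nu}$ for all $k\ge C'\log n$, including near $\partial D$ (Corollary~\ref{coro:vol}). This is a genuine LERW estimate: bounding by the random-walk range is useless, since the walk visits all but a logarithmic fraction of points.

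\emph{How the paper proves it.} It uses the last visit near $x$ that survives the erasure, $(k,2k)$-upcrossings to recover a stopping time, and the discrete Beurling estimate. A separate estimate handles points near the boundary (Lemmas~\ref{le:probbound1} and~\ref{le:probbound2}). Combined with the Gaussian tail of lattice-loop diameters, this gives $\mathbb{E}[T_{\le\delta}(\cX^n)]\le C\bigl((\log\log n)(\log n)^{\nu}n^{-\nu}+\delta^{\nu/2}\bigr)$, which yields $t_{\cX^n}\to t_\cX$.

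\textbf{Equicontinuity.} Your (ii) points the wrong way. Because $\lambda_n\to 0$, the discrete path runs along $\gamma^n$ almost instantaneously between loops. What rules out near-jumps is that every macroscopic arc of $\gamma^n$ contains the first-hitting point of some loop of duration at least $\delta$, uniformly in large $n$. The statement that ``short arcs carry little time'' says nothing about this. The paper gets it (Lemma~\ref{le:omega1bound}) from the almost sure density of first-hitting times along the continuum $\gamma$ (Borel--Cantelli over non-contractible loops in dyadic annuli, Lemma~\ref{le:item:dense}). This density is then transported by the continuity of $\sigma_{\gamma,\ell}$ at regular pairs.

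\textbf{Your step (i) is unnecessary.} The uniform-in-$n$ near-touch estimates you plan for (i) are not needed. Only the limiting pair must be regular: single temporal root, distinct first-hitting times, and a two-sided crossing at the root. This holds almost surely for any deterministic $\gamma$ of zero area, by polarity of points and Beurling's projection principle. A deterministic topological argument then transfers it to nearby configurations: a two-sided crossing survives uniform perturbations (Lemma~\ref{le:notstupid}), and disjointness is an open condition (Lemma~\ref{le:closed}). No SLE$_2$ one-arm estimates or discrete regularity are required.
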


\begin{remark} \label{R:coupling3}
In fact, the argument shows that we can not only couple the pair $(\gamma^n, W^n)$ with $(\gamma, W)$ such that $(\gamma^n,W^n)$ is close to $(\gamma,W)$ in $\| \cdot \|_\infty$ norm, but also that the triplet $(\gamma^n, \cL^n, W^n) $ is close to $(\gamma, \cL, W)$ (see Theorem~\ref{th:main} below for the full statement).
 Here $\cL^n$ is a random walk loop soup (independent of $\gamma^n)$, and all the loops in $\cL^n$ beyond a certain mesoscopic scale are close to all the loops in $\cL$. 

Note furthermore that, by a result of Lawler and Trujillo-Ferreras \cite{LawlerFerreras}, the loops in $\cL^n$ beyond a certain mesoscopic scale are in one-to-one correspondence and close to the loops of the corresponding sizes in $\cL$. (In fact we mention that, very recently, Qian \cite{Qian} announced that it is possible to remove the polynomial restriction on the range of the loops that are coupled together).   \end{remark}

The way we obtain such a result relies on both probabilistic estimates and topological arguments. We construct a deterministic application $\Att$ which takes as input a simple path and a collection of loops and chronologically adds the loops to the path. Although it is unrealistic to find a topology that would make the map $\Att: ({\textsf{path, loops}})\mapsto \Att({\textsf{path, loops}})$ continuous, the pairs $({\textsf{path, loops}})$ around which $\Att$ has a singular behavior are somewhat atypical and essentially fall into one of three sorts of specific issues (see Figure~\ref{fig:issues} below). Indeed, we are able to define a subspace $\Conf$ of pairs $({\textsf{path, loops}})$ which is sufficiently large to ensure that the pair formed by the SLE$_2$ path $\gamma$ and the Brownian loop soup $\mathcal{L}$ almost surely lies in $\Conf$, and a topology sufficiently strong to ensure that the application
$\Att$ is continuous on $\Conf$.\footnote{We emphasize that it is not only the restriction of $\Att$ to $\Conf$ which is continuous, but the globally defined application $\Att$ which is continuous on $\Conf$.} Yet, this topology is sufficiently weak that the lattice approximation of
$(\gamma,\mathcal{L})$ by the pair of a loop erased random walk and a random walk loop soup converge in distribution for this topology, and the initial and analytically difficult question of whether $\Att(\gamma,\mathcal{L})$ is distributed as a Brownian motion is reduced to its discrete counterpart, which is of purely combinatorics nature.

The main contributions of this paper can be summarized as follow:
\begin{itemize}
\item The introduction and rigorous construction of the `right' objects to work with, namely the space of configurations on which the map $\Att$ is defined (some simplifications were made in this introduction for the sake of clarity), the map $\Att$ itself, and the subset $\Conf$ (Section~\ref{sec:construction})
, and the adequate distance $d_{\Conf_0}$ (Section~\ref{sec:continuity}).
\item In Sections~\ref{sec:attaching} and~\ref{sec:continuity}, the proof of the continuity of $\Att$ on the subset $\Conf$ for the distance $d_{\Conf_0}$ (Theorem~\ref{th:continuityAtt}), together with tools which reduce the question of convergence toward an element of $\Conf$ to conditions simpler to check (Lemma~\ref{le:Tbound} and Lemma~\ref{le:omegabound}).
\item The proof that the pair $(\gamma,\mathcal{L})$ almost surely lies in $\Conf$ (Section~\ref{sec:probabilistic1}, Proposition~\ref{prop:conf}), and that the lattice approximations converge in distribution, for the topology of $d_{\Conf_0}$ (Section~\ref{sec:probabilistic2}, Proposition~\ref{prop:converg}).
\end{itemize}

\begin{figure}[ht]
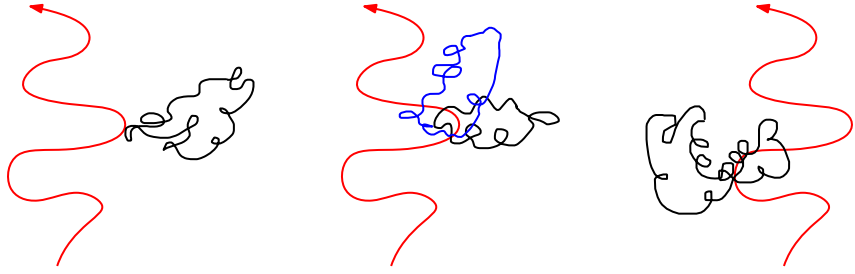

	\centering
	\includegraphics[height=3.5cm]{PathLoops1.pdf}\hspace{1cm}
\includegraphics[height=3.5cm]{PathLoops2.pdf}\hspace{1cm}
\includegraphics[height=3.5cm]{PathLoops3.pdf}

  \begin{minipage}[t]{0.8\linewidth}

	\caption{The three types of issues that prevent continuity of the map $\Att$. In red, the simple path. In black and blue, loops $\ell,\ell'\in \mathcal{L}$. Around the root point $x_\ell$ where $\gamma$ meets $\ell$ for the first time:  {\it Left:}  the intersection is one-sided. Small continuous deformations of $\gamma$ or $\ell$ can remove this intersection entirely,  rather than moving it continuously. {\it Middle:} $\gamma$ also meets $\ell'$ for the first time at the same place as $\ell$. Small continuous deformations can make $\gamma$ meet either $\ell$ or $\ell'$ first. The resulting path are very different, although their range are about the same. {\it Right:} $\ell$ passes through $x_\ell$ at two distinct times $s,t$. Some small deformations lead to rerooting $\ell$  around $s$, while others lead to rerooting $\ell$ around~$t$, again leading to two fairly distinct paths.
	}
	\label{fig:issues}
  \end{minipage}
\end{figure}


The (deterministic) topological arguments which underlie our approach have the advantage of being very robust. For instance, these tools apply to different topologies (e.g., on Riemann surfaces) or to cover the 
\textbf{chordal setting} (which, as already mentioned, was actually the original setting for Lawler and Werner's conjecture \cite{LawlerWerner}. In this setting, $D = \mathbb{H}$ is the upper half plane; $\gamma$ is now a \emph{chordal} (instead of radial) SLE$_2$ path in $\mathbb{H}$ from 0 to $\infty$, and $W$ is a Brownian excursion from 0 to $\infty$ in $\mathbb{H}$ (that is, a Brownian motion starting from 0, and conditioned to stay in the upper half plane for all positive times, and to leave via $\infty$ -- equivalently, its real and imaginary parts are independent Brownian motion and three-dimensional Bessel processes respectively; see \cite{BN}). The corresponding loop soup is unchanged. The analogous result of Theorem \ref{thm:main_intro} to the chordal follows in the exact same manner as the radial case, since:
\begin{itemize}
    \item It is known how to make sense of a random walk $W^n$ in $(1/n)\mathbb{Z}^2 \cap \mathbb{H}$, starting from $i/n$, and conditioned to stay in $\mathbb{H}$ forever (and leaving $\mathbb{H}$ through infinity).
    \item As it is a transient path, its chronological loop-erasure $\gamma^n$ is well defined a.s., and converges to chordal SLE$_2$ by the landmark result of \cite{LSW} as $n\to \infty$ (say, in the Hausdorff sense, or with respect to uniform convergence up to reparametrisation). 
    \item Adding the loops from the loop soup $\cL^n$ (the same as in Remark \ref{R:coupling3}) encountered by $\gamma^n$ produces a path with the same law as $W^n$.
    \item To apply the arguments in this paper, the only estimate that remains to be proved is convergence of $(\gamma^n, \cL^n)$ to $(\gamma, \cL)$ in the sense of the topology defined in Section \ref{sec:continuity}. This boils down to proving that the total time spent on small loops is negligible. For ordinary random walk, these estimates are obtained in Section \ref{SS:timesmallloops}. One way to argue is by absolute continuity: indeed, for any fixed $\eps>0$, if $\tau_\eps = \inf\{ t: \Im ( W^n(t)) \ge \eps\}$, then the law of $W^n[\tau_\eps, \tau_{\eps^{-1}}]$ under the chordal law is absolutely continuous (uniformly in $n$, for each fixed $\eps$) with respect to that of the radial law in an appropriate domain. This can be used to prove the corresponding estimates on the time interval $[\tau_\eps, \tau_{\eps^{-1}}]$ and thus the validity of the desired identity on this time-interval. Since $\eps$ is arbitrary, this implies the claim.   
\end{itemize}


\medskip To illustrate further the robustness of this approach let us mention another setting in which our proof applies, which is the \textbf{massive} or \textbf{off-critical} setting, which has attracted considerable renewed attention since the inspiring work of Makarov and Smirnov \cite{MakarovSmirnov} where,  motivated by potential connections to (non-conformal) quantum field theories, they formulated a programme of studying near-critical scaling limits. 

Let us explain this briefly by focusing on the case of interest, namely loop-erased random walk. Let us fix a bounded simply connected domain $D$ and real parameter $m \in \R$. Suppose that the random walk $W^n$, on the lattice $ (1/n) \mathbb{Z}^2 \cap D$, has a fixed probability of dying at every step, given by $m^2/(2n^2)$, but condition it to leave the domain $D$ before dying (by convention, the mass is $m^2$, and this corresponds to the rate at which the corresponding Brownian motion is killed in the continuum scaling limit). 

Makarov and Smirnov sketched a proof in 
\cite{MakarovSmirnov} that its loop-erasure $\gamma^n$ converges in the scaling limit to a certain random curve, which they called massive SLE$_2$, and which is locally (i.e., on bounded intervals of time) absolutely continuous with respect ordinary SLE$_2$. A complete (and nontrivial) proof of this fact was subsequently given by Chelkak and Wan \cite{ChelkakWan} in the slightly harder chordal setting. This was extended in \cite{BHS} to random walk with fixed drift (i.e., drift of a fixed intensity in a fixed direction). Indeed it turns out that the loop-erasure $\gamma^n$ of this walk $W^n$, conditional on its endpoint, converges to the \emph{same} massive SLE$_2$ (where the mass $m^2$ is simply the Euclidean norm of the limiting drift). In fact, that result was generalised to random walk with variable drift, provided that the drift derives from a log-convex potential. In that case, the scaling limit of $\gamma^n$ is a suitably generalised notion of massive SLE$_2$; see \cite{BHS}. (These ideas have played a crucial role in recent developments connecting the off-critical dimer model of statistical mechanics to a quantum field theory known as the sine-Gordon model; see \cite{BHS, BMR}.)

\medskip Given the above works, it is natural
to guess that massive SLE$_2$ (say, with fixed mass $m\in \R$, as in the setting of Makarov and Smirnov \cite{MakarovSmirnov} for simplicity) is the ``loop-erasure'' of Brownian motion killed at rate $m^2$, conditioned to leave the domain $D$ before dying. In fact, this can be proved as a straightforward consequence of our approach, in the same sense as Theorem \ref{thm:main_intro}. We state this as a corollary:

\begin{corollary}
    \label{cor:massive}
Let $D$ be a bounded, simply connected domain and let $o \in D$. Let $\gamma$ be a radial massive SLE$_2$ from $o$, and let $\cL$ be an independent Brownian loop soup with intensity $c = 1$, where each loop $
\ell \in \cL$ of duration $t_\ell$ is independently retained with probability $\exp ( - m^2 t_\ell)$. Let $\cL_\gamma$ denote the set of loops in $\cL$ intersected by $\gamma$. 

The loops of $\cL_\gamma$ can be totally ordered by the first time $\gamma$ intersects them, starting from the root~$o$. The chronological concatenation of all the loops in $\cL_\gamma$ gives a continuous, parametrized path, which furthermore has the same law as $(W_t, 0 \le t \le \tau_D$) conditional on $\tau > \tau_D$, where $\tau$ is an independent exponential random variable with rate $m^2$, and $\tau_D$ is the first exit time of $D$.
\end{corollary}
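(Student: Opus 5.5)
The proof runs the discrete-to-continuum scheme of Theorem~\ref{thm:main_intro} again, with killed walks and the correspondingly thinned loop soups in place of the unkilled ones. I do not reprove continuity of $\Att$; the only new input is a set of absolute-continuity comparisons with the critical case.

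\emph{Discrete identity.} Let $W^n$ be the random walk on $D_n$ killed with probability $m^2/(2n^2)$ at each step and conditioned to exit $D_n$ before dying, and let $\gamma^n=LE(W^n)$. The Lawler--Werner decomposition of a random walk path into its loop-erasure plus the loops of a random walk loop soup encountered chronologically is a combinatorial identity for the path measure. Weighting each path $\omega$ by $(1-m^2/(2n^2))^{|\omega|}$ is multiplicative over concatenation. So the same identity holds with two changes: the loop soup $\cL^n$ keeps each loop $\ell$ of length $|\ell|$ independently with probability $(1-m^2/(2n^2))^{|\ell|}\approx \exp(-m^2 t_\ell)$; and the loop-erasure has the law of the loop-erased killed walk conditioned to exit. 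That law converges to radial massive SLE$_2$ by the Makarov--Smirnov and Chelkak--Wan results cited above. Hence $\Att(\gamma^n,\cL^n)$ has exactly the law of $W^n$. In the scaling limit, $W^n$ converges to Brownian motion conditioned on $\tau>\tau_D$, and $\cL^n$ converges (via the Lawler--Trujillo-Ferreras coupling, as in Remark~\ref{R:coupling3}, with the thinning coupled loop by loop) to the thinned Brownian loop soup.

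\emph{Transfer to the limit.} By Theorem~\ref{th:continuityAtt}, it remains to show two things: that $(\gamma,\cL)$ lies a.s.\ in $\Conf$, and that $(\gamma^n,\cL^n)\to(\gamma,\cL)$ in distribution for $d_{\Conf_0}$. Then $\Att(\gamma^n,\cL^n)\to\Att(\gamma,\cL)$ in law and the limit is identified as the conditioned Brownian motion. For membership in $\Conf$, I would use that $D$ is bounded and that $\exp(-m^2t_\ell)$ is bounded below on bounded durations. Under these, massive SLE$_2$ is absolutely continuous with respect to radial SLE$_2$; this can be read off directly from the discrete Radon--Nikodym derivative, which is a ratio of killed and unkilled Green's functions and bounded escape probabilities, bounded above and below uniformly in $n$. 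The thinned soup is a subset of $\cL$, and the defining properties of $\Conf$ in Figure~\ref{fig:issues} are excluded events that are inherited by sub-collections and preserved under absolutely continuous changes of the path's law. Proposition~\ref{prop:conf} therefore transfers. The same bounded density transfers the tightness estimates of Proposition~\ref{prop:converg}, notably the negligibility of time spent in small loops from Section~\ref{SS:timesmallloops}: removing loops only decreases that time, and conditioning on an event of probability bounded below (exiting before death, whose probability is at least $\exp(-m^2 \sup_n\mathbb{E}\tau^n_D)$-ish, bounded below) changes probabilities by at most a constant factor.

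\emph{Main obstacle.} The delicate step is checking the density bounds uniformly in $n$ for the loop-erasure together with its encountered loops, rather than for the whole walk. I would handle it by working directly with the killed walk conditioned to exit. Its law has density, with respect to the unkilled walk, equal to $\exp(-m^2\tau)$ (up to the discrete version) divided by $P(\tau>\tau_D)$. This density lies in $[c,1/c]$ on the event $\{\tau_D^n\le T\}$, and that event has probability close to one uniformly in $n$ for $T$ large, since $D$ is bounded. All events relevant for $d_{\Conf_0}$-tightness and for membership in $\Conf$ are functions of the walk, so the bounds carry over from the $m=0$ case after sending $T\to\infty$.
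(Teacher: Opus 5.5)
Your plan matches the paper's own indication: it calls Corollary~\ref{cor:massive} a straightforward consequence of its approach and gives no further detail. The discrete identity does carry over, because the weight $(1-\frac{m^2}{2n^2})^{|\omega|}$ is multiplicative and the thinned soup is the loop soup of the sub-Markovian weight $(1-\frac{m^2}{2n^2})/4$. The estimates of Sections~\ref{sec:probabilistic1}--\ref{sec:probabilistic2} transfer because the conditioned walk has density at most $1/\P(\text{survival})$ with respect to the unkilled walk.

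\textbf{The gap.} One step fails as written. Property~\eqref{it:dense} of $\Conf$, density of $\{\sigma_{\gamma,\ell}:\ell\in\cL_\gamma\}$ in $[0,t_\gamma]$, is not one of the excluded configurations of Figure~\ref{fig:issues}. It is \emph{increasing} in the loop collection, so the thinned soup does not inherit it from $\cL$. Without it, $\sigma_{\mathcal{X},0}$ can jump, $\Att(\gamma,\cL)$ need not be continuous, and Theorem~\ref{th:continuityAtt} does not apply. The repair is easy. In the proof of Lemma~\ref{le:item:dense}, the events $E_n$ remain independent for the thinned soup. Their probabilities stay bounded below uniformly in $n$, since a loop of duration at most $2^{-2n}\le 1$ is retained with probability at least $e^{-m^2}$. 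So the second Borel--Cantelli argument goes through unchanged.

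\textbf{Three smaller points.}
\begin{enumerate}
\item In your last paragraph, the loops of $\mathbb{L}^n$ are not functions of the walk. What is true is this: tilting the joint critical law of $(\gamma^n,\mathbb{L}^n,B)$ by $(1-\frac{m^2}{2n^2})^{|\Att(\gamma^n,\mathbb{L}^n,1,B)|}$ turns $(\gamma^n,\mathbb{L}^n_{\gamma^n})$ into the massive pair restricted to loops hitting the path, and that is all $\Att$ uses.
\item The two-sided bound you claim for the Radon--Nikodym derivative of massive LERW is not uniform over paths. That derivative is the conditional expectation of the walk's density given the loop-erasure, which is small for paths with large neighbourhoods. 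Only the upper bound is ever used, so this does not matter.
\item $d_{\Conf_0}$ contains $\rho$, so you need convergence of $\phi_n^*(\gamma^n)$ as parametrised curves. The Makarov--Smirnov and Chelkak--Wan results do not give this. It follows from the upper density bound together with Lawler--Viklund in the critical case: you get tightness in $\rho$, and any subsequential limit is absolutely continuous with respect to naturally parametrised SLE$_2$.
\end{enumerate}

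\textbf{A shortcut.} All of this, including~\eqref{it:dense}, is handled at once by doing the tilt in the continuum. Weight the coupling of Theorem~\ref{th:main} by $e^{-m^2 t_{\gamma,\cL}}$. This has four effects:
\begin{itemize}
\item $\Att(\gamma,\cL)$ becomes Brownian motion conditioned on $\tau>\tau_D$.
\item $\gamma$ becomes the scaling limit of massive LERW, using the discrete tilt and $t_{\phi_n(S^{nD})}\to t_W$.
\item Conditionally on $\gamma$, the loops hitting $\gamma$ form a thinned soup.
\item Membership in $\Conf$ follows by absolute continuity.
\end{itemize}
With this route, Section~\ref{sec:probabilistic2} need not be rerun.
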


\subsection{Precise statements of Theorem~\ref{thm:main_intro}}

We view the space of rooted and oriented loops as taking values in the space
\[
\mathcal{C}([0,\cdot], \mathbb{R}^2) = \bigcup_{t\ge 0} \mathcal{C}([0,t], \mathbb{R}^2).
\]
In the rest of the article, for $f\in \mathcal{C}([0,\cdot], \mathbb{R}^2)$ we write $t_f$ for the unique $t$ such that $f\in\mathcal{C}([0,t], \mathbb{R}^2) $.
The space $\mathcal{C}([0,\cdot], \mathbb{R}^2)$ is a Polish space when endowed with the distance $d_\infty$ given by
\[d_\infty(f,g)\coloneqq |t_f-t_g| + \sup_{t \geq 0} |f(t\wedge t_f)-g(t\wedge t_g)|. \]

By definition \cite{LawlerWerner}, the loop measure is the Borel measure on $\mathcal{C}([0,\cdot], \mathbb{R}^2)$ given by
\[
\mu ( \mathrm{d} \ell ) =\int_0^\infty \int_{\mathbb{R}^2} \frac{1}{2 \pi t^2} \mathbb{P}_{t,x,x} ( \mathrm{d} \ell) \d x \d t,
\]
where $\mathbb{P}_{t,x,y}$ is the distribution of a Brownian bridge from $x$ to $y$ with duration $t$. We fix a simply connected planar domain $D\neq \mathbb{C}$, and we let $\mu^D$ be the restriction of $\mu$ to loops $\ell \in \Omega$ which remain in $D$, that is, $\Range(\ell) \subset D$.
\footnote{
In fact, the loop measure considered in \cite{LawlerWerner} corresponds to a slightly different state space and topology, and it is usually the projection of $\mu^D$ onto the space of \textbf{un}rooted loops which is of interest. (It is however convenient for us to consider the loops as being rooted, though the precise choice of the roots is irrelevant for our construction below).}

Let $\cL$ denote the \textbf{Brownian loop soup} in $D$ with intensity $c = 1$, i.e., a Poisson point process on $\mathcal{C}([0,\cdot], \mathbb{R}^2)$ with intensity $\mu^D$. The probability distribution of $\cL$ is written $\mu^{\mathcal{L}}$.
\medskip

We also consider a (time-reversed) radial SLE$_2$ path in $D$ from $o$ to a random point distributed according to the harmonic measure seen from $o$ on $\partial D$. The range of this path has Hausdorff dimension $5/4$ and has a continuous parametrisation by $5/4$-Minkowski content~\cite[Theorem 1.1]{LawlerRezaei}, which we refer to as its natural parameterisation. 
We let $\gamma$ be the SLE$_2$ path with this parameterisation, and let $\mu^\gamma$ be its probability distribution. In particular, $\gamma(0)=o$ and $\gamma(t_\gamma)\in \partial D$.  The theorem below is a more precise formulation of Theorem~\ref{thm:main_intro}.
\begin{theorem}\label{th:main_precise}
    Let $(\gamma,\mathcal{L})$ be distributed as $\mu^\gamma\otimes \mu^{\mathcal{L}}$, and let $\mathcal{L}_\gamma\coloneqq \{ \ell \in \mathcal{L}: \Range(\ell)\cap \Range(\gamma)\neq \emptyset\}$.
    For $\ell \in \mathcal{L}_\gamma$, let $\sigma_\ell\coloneqq \inf\{\sigma: \gamma(\sigma) \in \Range(\ell) \}$.

    Then, almost surely, for all $\ell\neq \ell'\in  \mathcal{L}_\gamma$, it holds $\sigma_\ell\neq \sigma_{\ell'}$, and there exists a unique $\theta_\ell\in (0, t_\ell)$ such that $\ell(\theta_\ell)= \gamma(\sigma_\ell)$.
    Define a total order on $\cL_\gamma$ by $\ell \prec \ell'\iff \sigma_\ell<\sigma_{\ell'}$, and let $T_\ell\coloneqq \sum_{\ell'\prec \ell} t_\ell$ (to be thought of as the time it takes to the path $X=\Att(\gamma,\mathcal{L})$ to reach the loop $\ell$), and $t_X\coloneqq \sum_{\cL_\gamma} t_\ell$.
    Almost surely, $t_X<\infty$, and there exists a unique continuous path $X\in \mathcal{C}([0,t_X],\bar{D})$ such that for all $\ell \in \mathcal{L}_\gamma$ and all $t\in [T_\ell,T_\ell+t_\ell]$, $X(t)= \ell(   t-T_\ell+\theta_\ell \mod t_\ell)$.

    Furthermore, $X$ is distributed as a Brownian motion started from $o$ and defined up to its first exit time from $D$.
\end{theorem}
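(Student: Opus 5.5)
The plan is to follow the strategy outlined in the introduction: a deterministic continuity statement combined with a discrete identity and a convergence in distribution.

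\textbf{Step 1 (discrete identity).} I would first establish the combinatorial counterpart. Let $W^n$ be a random walk on $n^{-1}\Z^2$ started at $o^n$ and stopped on exiting $D_n$. Let $\gamma^n = LE(W^n)$, and let $\cL^n$ be an independent random walk loop soup in $D_n$ of intensity $1$. Adding chronologically to $\gamma^n$ the loops of $\cL^n$ that it meets should reproduce the law of $W^n$. Each loop is rooted at its first visit point on $\gamma^n$, and loops sharing a root are concatenated in the appropriate Poissonian order. This is the classical Lawler--Le Jan/Lawler--Trujillo-Ferreras decomposition: the loops erased at a vertex $\gamma^n(k)$ form a random walk loop measure in $D_n\setminus\gamma^n[0,k-1]$ rooted at that vertex, and its mass matches the Green's function factor $\det G$.

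\textbf{Step 2 (the continuum objects are good).} Next I would show that almost surely $(\gamma,\cL)\in\Conf$ (Proposition~\ref{prop:conf}). Concretely:
\begin{itemize}
\item distinct loops have distinct $\sigma_\ell$;
\item each $\ell\in\cL_\gamma$ hits $\gamma(\sigma_\ell)$ at a unique time $\theta_\ell$, and the intersection there is two-sided, i.e.\ $\gamma$ genuinely crosses into the loop;
\item $\sum_{\ell\in\cL_\gamma}t_\ell<\infty$, and for each $\delta>0$ only finitely many loops have diameter $>\delta$.
\end{itemize}
The first two items follow from first-moment/Poisson computations: two independent Brownian loops and the $5/4$-dimensional curve meeting at a common first point is a codimension event, and a Brownian loop has no double points lying on an independent SLE$_2$ trace. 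Given these properties, existence and uniqueness of the continuous $X$ in the statement is the deterministic content of Section~\ref{sec:construction}. The key input is that small loops are small, so the concatenation is uniformly continuous across accumulation points and $X$ is forced on the closure of $\bigcup_\ell[T_\ell,T_\ell+t_\ell]$, which has full measure because $t_X=\sum t_\ell$.

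\textbf{Step 3 (convergence and continuity).} I would couple $\gamma^n\to\gamma$ (LSW, upgraded to the natural parametrisation via Lawler--Viklund) and $\cL^n\to\cL$ on macroscopic loops (Lawler--Trujillo-Ferreras). Then I would verify convergence in $d_{\Conf_0}$ (Proposition~\ref{prop:converg}) using the reduction lemmas, Lemma~\ref{le:Tbound} and Lemma~\ref{le:omegabound}. The essential estimate is uniform in $n$: the total time $W^n$ spends in loops of diameter $<\delta$ attached to $\gamma^n$ tends to $0$ as $\delta\to0$ (Section~\ref{SS:timesmallloops}). I would bound its expectation by the Green's function times the loop measure of small loops through a point, which is $O(\delta^2\log)$ per unit of $\gamma^n$-length. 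Theorem~\ref{th:continuityAtt} then gives $\Att(\gamma^n,\cL^n)\to\Att(\gamma,\cL)=X$ in distribution. Since the left side equals $W^n$ in law by Step~1, and $W^n\to W$ by Donsker, we get $X\overset{d}{=}W$.

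\textbf{Main obstacle.} The hardest part is the continuity of $\Att$ together with checking the two-sidedness and nondegeneracy conditions almost surely. In the discrete setting the first hit of a macroscopic loop must converge to the continuum first hit. One-sided touchings, as in Figure~\ref{fig:issues}, would break this, so I would use the fact that an SLE$_2$ curve entering the hull of a Brownian loop immediately enters its interior. This would be argued via the boundary-touching exponents of Brownian motion, which exceed the dimension available.
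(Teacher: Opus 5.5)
Your architecture matches the paper's: the discrete identity, $(\gamma,\cL)\in\Conf$ a.s., convergence in $d_{\Conf_0}$ via Lemmas~\ref{le:Tbound} and~\ref{le:omegabound}, continuity of $\Att$ at $\Conf\times\{0\}$, and Donsker. The gap is in the step you yourself call essential, the $n$-uniform bound on time spent in small loops.

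\textbf{The small-loop estimate is not uniform in $n$.} Bounding it by ``the loop measure of small loops through a point'' along $\gamma^n$ is a union bound over the vertices a loop hits. The time-weighted mass of lattice loops of diameter $<\delta n$ through a fixed vertex is of order $(\delta n)^2/\log(\delta n)$ steps, i.e.\ $\delta^2/\log(\delta n)$ after rescaling. This must be multiplied by the number of vertices of $\gamma^{nD}$, which is of order $n^{5/4}$, not by a macroscopic length. The product, of order $n^{5/4}\delta^2/\log(\delta n)$, diverges, because a loop of diameter $r$ meeting a $5/4$-dimensional curve meets it at polynomially many vertices. Keeping the constraint that loops attached at $\gamma^n(j)$ avoid $\gamma^n[0,j-1]$ would only help through a LERW escape exponent proved uniformly up to a rough $\partial D$, which you do not supply.

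The missing idea is to count loops by their root rather than by the points of $\gamma^n$ they hit. A loop of length $k$ meeting $\gamma^{nD}$ is rooted within distance $r_{\mathsf L}$ of it. Combining the Gaussian tail of $r_{\mathsf L}/\sqrt k$ with rooted mass $\lesssim k^{-2}$ reduces everything to a uniform neighbourhood-volume bound: $\mathbb{E}\#\{x\in nD: d(x,\gamma^{nD})\le k\}\le C k^{\nu}n^{2-\nu}$ for $k\ge C'\log n$ (Corollary~\ref{coro:vol}). Summing $k\cdot k^{-2}\cdot k^{\nu/2}n^{2-\nu}$ over $k\le \delta n^2$ and dividing by $n^2$ then gives $\delta^{\nu/2}$ plus an error vanishing in $n$ (Proposition~\ref{prop:vol2}).

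That volume bound is the real work. In the bulk it comes from the last visit near $x$, upcrossing counts and the discrete Beurling estimate (Lemma~\ref{le:probbound2}). Near $\partial D$ it comes from uniform windings of the walk (Lemmas~\ref{le:unifp} and~\ref{le:probbound1}), and the two are interpolated in Proposition~\ref{prop:probbound}. Without it, Theorem~\ref{th:tCV} is unproved, so you get neither convergence in $d_{\Conf_0}$ nor the use of Theorem~\ref{th:continuityAtt}.

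\textbf{Inaccuracies in Step 2.}
\begin{itemize}
\item Your claim that a Brownian loop has no double points on an independent SLE$_2$ trace is false. Double points of planar Brownian motion have full dimension and meet an independent $5/4$-dimensional set with positive probability. Only $x_\ell$ must be visited once. Since $\theta_\ell$ is not a stopping time, the paper notes that on $\{\theta^-_\ell<s\}$ the point $x_\ell=x_{\gamma,\ell_{[0,s]}}$ is $\cF_s$-measurable. It then uses polarity of points for the bridge after time $s$, over rational $s$ (Lemma~\ref{le:item:inj}).
\item Equicontinuity alone does not make $X$ continuous. You also need $\{\sigma_\ell\}$ dense in $[0,t_\gamma]$, since a gap forces a jump between two distinct points of $\gamma$ (Lemma~\ref{le:item:dense}). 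Your full-measure argument for uniqueness is correct and simpler than Lemma~\ref{le:dense}.
\item Two-sidedness cannot be phrased as $\gamma$ ``entering the hull'' of $\ell$, since $\gamma$ starts at $o$, inside the hull of every loop surrounding $o$. What is needed is that $\ell$ visits both $L(x_\ell)$ and $R(x_\ell)$ in every time window around $\theta_\ell$. This requires no SLE exponents: it holds for any fixed simple curve of zero area. The proof is Beurling projection at a fixed point, plus the strong Markov property at the first hit of $\gamma$ in each $\epsilon$-block of time (Lemma~\ref{le:tech:cas0}, Proposition~\ref{prop:tech:casT}).
\end{itemize}
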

The uniqueness of such a path $X$ follows directly from Proposition~\ref{prop:conf} and Lemma~\ref{le:dense} below, and the fact it is distributed as a Brownian motion is obtained in Theorem~\ref{th:main}.

\begin{figure}[ht]
	\centering
	\includegraphics[width=0.8\linewidth]{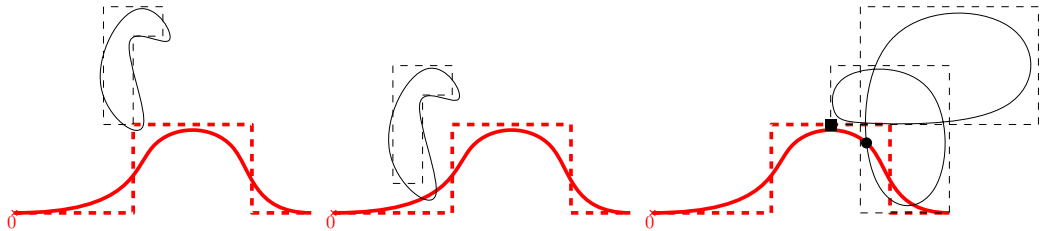}

  \begin{minipage}[t]{0.8\linewidth}

	\caption{Three examples of bad events. In red (thick lines), the $SLE_2$ process. In black (thin lines), one of the loop in $\mathcal{L}$. Dashed lines: their discrete counterparts.
		 {\it left:} Although $\ell$ and $\gamma$ do not intersect, the corresponding discrete processes intersect, so the reconstructed path $W(\mathbb{L}, \mathsf{S})$ will incorporate this loop.
		{\it middle:} The opposite event.  {\it right:} The paths $\ell$ and $\gamma$
		intersect, so does their discrete equivalent, the continuum and discrete roots (bullet and squared bullet) are close from each other spatially, but correspond to very different parts of $\gamma$. Remark it can also happen that the continuum and discrete roots both exist but are spatially far from each other. If the red curves now represent the boundary $\partial D$ rather than the $SLE_2$ process, these figures depict other atypical events that we need control over.
	}
	\label{fig:events}
  \end{minipage}
\end{figure}

\subsection{Strategy and organisation of the paper}

\label{SS:strategy}

\textbf{Approach and main ideas.} Before discussing some of the key points of the proof it is useful to first give a brief overview of the approach of Lawler and Werner in \cite{LawlerWerner}. Their viewpoint is to consider the set of loops as they are discovered chronologically along the (chordal) SLE$_2$ path $\gamma$. Since the (unrooted) Brownian loop soup is conformally invariant, one may map out at any point $s$ the portion $
\gamma[0,s]$ of the SLE$_2$ path discovered by time $s$ along with the loops that it has intersected so far. What remains is a Brownian loop soup in the complement of this domain, which is the conformal image of an independent Brownian loop soup in the upper-half plane. In this way, the sequence of loops encountered by the SLE path can be described as a Poisson point process whose intensity is an infinite measure on loops in the upper half plane rooted at the origin, the so-called bubble measure. A similar property holds at the discrete level as well. As mentioned by Lawler and Werner, to exploit this approach in order to prove their conjecture, the key difficulty is to prove a form of convergence of discrete bubble measures to their continuum counterpart. This is potentially tricky due to the fact that this would need to be proved in domains with very rough boundaries (namely, the complement of a piece of loop-erased random walk and loops intersected by it).

By contrast, our approach is more ``hands-on'' and is more closely related to the one used in~\cite{SapozhnikovShiraishi}. We consider a discrete analogue of the construction in Theorem~\ref{th:main_precise}, in which we add the loops of a discrete random walk loop soup encountered by an independent loop-erased random walk; it is already known that this produces a path with the same law as a random walk. We then check that as we take a scaling limit, if the loop-erased random walk is close to the SLE$_2$ path and the discrete loop soup is close to the continuum loop soup, then the resulting concatenation of discrete loops is close to its continuum analogue and must therefore be a Brownian motion.

The main difficulties relate to the fact that a small deformation of a path and/or a loop could result in missing a loop entirely, or on the contrary adding many new loops, or rooting it at a different place (see Fig.~\ref{fig:events}).
Luckily, we show that the rerooting map $\ell \mapsto \theta_\ell$ is discontinuous only at loops which visits $\ell(\theta_\ell)$ either twice or in a \textit{one-sided} way.  Another source of difficulty is the need to control the total time spend on microscopic and mesoscopic loops, uniformly over the deformation. We introduce the space $\Conf$ on which these atypical visits do not happen, and define a distance which forbids (among other thing) such accumulation of time. We then prove that the pair $(\gamma, \cL)$ do not have atypical visits, and that these accumulated errors upon discretisation are negligible, uniformly over the discretisation $n\ge 1$.

\bigskip
\noindent \textbf{Organisation of the paper.} The paper is divided as follows.

\emph{\it Section~\ref{sec:construction}:} In Subsection~\ref{sub:sets}, we construct the space $\Conf_0$ on which the map $\Att$ will be defined and the subset $\Conf$ on which we will prove the map $\Att$ is continuous (as a function of the simple path $\gamma$ and loops $\mathcal{L}$). We need to support the case of the SLE$_2$ path and the Brownian loop soup, but also their discrete analogue, which brings two complications: first, we must manage cases for which several loops are rooted at the exact same location along the simple path (Figure~\ref{fig:issues}, middle), as well as cases for which a loop goes twice or more through their root (Figure~\ref{fig:issues}, right). In these cases, one may think of $\Att$ as being multivalued, which leads us to the construction of the set of {\it tie-breaks} in Subsection~\ref{sub:tie} which resolves these  indeterminacies. Secondly, the set of roots is no longer necessarily dense on $\Range(\gamma)$, and to maintain continuity we must introduce a parameter $\lambda\geq 0$ which describes the inverse pace at which $X$ follows the path $\gamma$ (our explanation so far correspond to $\lambda=0$,
which we take in the continuum limit while $\lambda=\lambda_n>0$ for lattice approximations). Then, we define by~\eqref{eq:defX} the map $\Att$ itself, and we provide conditions on $(\gamma,\mathcal{L})$ under which we prove in Subsection~\ref{sub:cont} that the path $\Att(\gamma,\mathcal{L})$ is continuous in time.

%

\emph{\it Section~\ref{sec:attaching}:} We define one-sided intersections and rerooting operations on couples $(\gamma, \ell)$, where $\ell$ is a loop which intersects $\gamma$, and we prove that these operations
are continuous at $(\gamma,\ell)$, provided that $\ell$ visits $\gamma(\sigma_\ell)$ at a single time $\theta_\ell$ (recall the notation $\sigma_\ell$ from Theorem~\ref{th:main_precise}) and that this visit is two-sided (Proposition~\ref{prop:sigmaxcont} and Corollary~\ref{coro:thetacont}).

\emph{\it Section~\ref{sec:continuity}:} This section contains the core deterministic arguments. We introduce the distance $d_{\Conf_0}$ on $\Conf_0$ in Subsection~\ref{sub:d}, and ultimately prove the continuity  on $\Conf$ of the map $\Att$ in Subsection~\ref{sub:cont2}. To this end, we prove in Subsection~\ref{sub:strong} that the topology induced by
$d_{\Conf_0}$ is equivalent (in the vicinity of $\Conf$) to an {\it a priori} stronger topology (Lemma~\ref{le:strongsuited}).
Conversely, we prove in Subsection~\ref{sub:weak} that, on some subsets of $\Conf_0$ defined by the property that some function-valued functional of $(\gamma,\mathcal{L})$ is smaller than a fixed function, the convergence for $d_{\Conf_0}$ toward an element in $\Conf$ is equivalent to the convergence for {\it a priori} weaker topologies. The key technical element that ensures, in the end, the continuity of the map $\Att$, is given by Lemma~\ref{le:omega1bound}, which roughly states that for $(\gamma',\mathcal{L}')$ sufficiently close from  $(\gamma,\mathcal{L})$, the set of hitting times by $\gamma'$ of sufficiently large loops in $\mathcal{L}'$ is fairly dense on $[0,t_{\gamma'}]$, which prevents $\Att(\gamma',\mathcal{L}')$ from jumping too quickly along $\gamma$ without meeting any large loop.

\emph{\it Section~\ref{sec:probabilistic1}:} We prove that for any simple path $\gamma$ with dimension less than $2$, $\mu^{\mathcal{L}}$-almost surely, $(\gamma, \mathcal{L})$ lies in the smaller space $\Conf$. We also show that for $\gamma_n$ a loop-erased random walk and $\mathbb{L}_n$ a random walk loop soup on a lattice approximation of $D$, $\Att(\gamma_n, \mathbb{L}_n)$
is distributed as a simple random walk.\footnote{This is essentially an already-known result, but we could not find a clear statement in the litterature.}

\emph{\it Section~\ref{sec:probabilistic2}:} We prove that the pair $(\gamma_n, \mathbb{L}_n)$ converges, as $n\to \infty$ and for the distance $d_{\Conf_0}$, toward $(\gamma,\mathcal{L})$ the pair of the radial SLE$_2$ trace and the Brownian loop soup, indeed concluding the proof of Theorem~\ref{th:main_precise}.
The main difficulty is to prove that the total duration of all the loops in $\mathbb{L}_n$ which intersect $\gamma_n$ is a close approximation of the total duration of all the loops in $\mathcal{L}$ which intersect $\gamma$. This is proved in Theorem~\ref{th:tCV} by using again the deterministic Lemma~\ref{le:strongsuited}, combined with the technical probabilistic estimation~\eqref{eq:temp:Ebound}, which roughly states that the total time spend by $\mathbb{L}_n$ on loop with size less than $\delta$, is (in expectation) bounded by the sum of a function independent from $\delta$ which goes to
$0$ with the mesh size, and a  function independent from the mesh size which goes to $0$ with $\delta$.

%
%

\section{Construction of the path \texorpdfstring{$\Att(\gamma,\mathcal{L})$}{Xi(gamma,L)}}
\label{sec:construction}

\subsection{Preliminary notations}
For sets $X,Y$, let $\mathcal{F}(X,Y)\coloneqq Y^X$ the set of functions from $X$ to $Y$, $\mathcal{C}(X, Y) $ the subset of continuous function, and $ \mathcal{C}( [0,\cdot],Y)\coloneqq  \bigcup_{T\geq 0} \mathcal{C}( [0,T], Y)$. For $f\in \mathcal{F}( [0,\cdot],Y)\coloneqq \bigcup_{T\geq 0} \mathcal{F}( [0,T], Y)$, we let $t_f$ be the unique non-negative real number such that the domain of $f$ is $[0,t_f]$.
We define the set of \textbf{simple paths }
\[ \mathscr{S}\coloneqq
\{\gamma\in \mathcal{C}([0,\cdot],\mathbb{R}^2):  \ \gamma_{|[0,t_\gamma)} \mbox{ is injective}\}.
\]
We also let $\mathscr{L}$ be the set of \textbf{loops}
\[
\mathscr{L} \coloneqq
\{\ell\in \mathcal{C}([0,\cdot],\mathbb{R}^2):  \ell(0)=\ell(t_\ell)\}.
\]
Besides the distance $d_\infty$ defined sooner, we will use the distance $\rho$ on $ \mathcal{F}( [0,\cdot], \mathbb{R}^d)$ given by
\[
\rho (\gamma_0, \gamma_1)\coloneqq \inf_\varphi \{\|{\gamma}_0-{\gamma}_1\circ \varphi\|_{\infty,[0,t_{\gamma_0}] }+\frac{\|\varphi-\operatorname{Id}\|_{\infty,[0,t_{\gamma_0}] }+\|\varphi^{-1}-\operatorname{Id}\|_{\infty,[0,t_{\gamma_1}] }}{2} \},
\]
where $\varphi$ ranges over the set of continuous increasing bijections from $[0,t_{\gamma_0}]$  to $[0,t_{\gamma_1}]$.
We will prove (Lemma~\ref{le:sametopo}) that these two distances induce the same topology on $ \mathcal{C}( [0,\cdot], \mathbb{R}^d)$.
For $d,d'$ two distances on resp. $X,X'$, we write $d\times d'$ for the distance on $X\times X'$ given by $d\times d'((x,x'),(y,y'))\coloneqq d(x,y)+d'(x',y')$.

The $ \delta$-continuity modulus of a function $f$ is written
\[\omega_ \delta(f)\coloneqq \sup \{f(t)-f(s): s,t\in[0,t_f], |t-s|\leq\delta\}.\] If $f\in \mathscr{L}$, the $t_f$-periodic extension $\bar{f}$ of $f$ is uniformly continuous and we let $\omega^l_{\epsilon}(f)\coloneqq\omega_\epsilon(\bar{f})$, which is invariant by the operation of changing the root of $f$. Remark $ \omega_{\epsilon}(f)\leq \omega^l_{\epsilon}(f)\leq 2 \omega_{\epsilon}(f)$.
For a collection $\mathcal{L}$ of loops and $\epsilon>0$, we let $\omega_\epsilon(\mathcal{L})\coloneqq \sup_{\ell \in \mathcal{L}} \omega_\epsilon(\ell)\in [0,\infty]$ the maximum of the continuity moduli, and $\omega^l_{\epsilon}(\mathcal{L})=\sup_{\ell \in \mathcal{L}} \omega^l_\epsilon(\ell)$.

\smallskip

We let $\mathscr{L}^{lf}$ be the set of locally finite counting measures on $\mathscr{L}$, which we think of as multi-subsets of $\mathscr{L}$. We will use set-theoretic notations for multisets. Given a simple path $\gamma\in \mathscr{S}$ and a multiset of loops $\mathcal{L} \in \mathscr{L}^{lf}$, we define
\[
 \mathscr{L}_{\gamma}\coloneqq \{\ell \in \mathscr{L}:\Range(\ell)\cap\Range(\gamma)\neq \emptyset \},\qquad
 \mathcal{L}_{\gamma}\coloneqq \mathcal{L}\cap \mathscr{L}_\gamma,\qquad
t_{\gamma,\mathcal{L}} \coloneqq \sum_{\ell \in \mathcal{L}_{\gamma} } t_\ell.\]
In particular $\mathscr{L}_\gamma$ is the set of loops which intersect $\gamma$.
For a simple path $\gamma\in \mathscr{S}$ and $\ell\in \mathscr{L}_\gamma$, we let
\[ \sigma_{\gamma,\ell}\coloneqq \min \{ \sigma: \gamma(\sigma)\in \Range(\ell)\}, \quad
x_{\gamma,\ell}\coloneqq \gamma(\sigma_{\gamma,\ell}), \quad
 \Theta_{\gamma,\ell}\coloneqq \{ t\in [0,t_\ell]: \ell_t=x_\ell \},\]
which we will refer to as the {\bf first hitting time}  of $\ell$, its { \bf spatial root}, and its set of {\ temporal roots}.
We also set \[
\sigma_{\gamma,\mathcal{L}}\coloneqq \{ \sigma_{\gamma,\ell}: \ell\in \mathcal{L}_\gamma\},\quad
\theta^-_{\gamma,\ell}\coloneqq \min  \Theta_{\gamma,\ell}, \quad \theta^+_{\gamma,\ell}\coloneqq \max  \Theta_{\gamma,\ell}.\]
In the case $\Theta_{\gamma,\ell}$ consists of a single element, or when a specific element of $\Theta_{\gamma,\ell}$ is fixed, we will call $\theta_{\gamma,\ell}$ this element and refer to it as the {\bf temporal root} of $\ell$. Then, the rerooted loop $\hat{\ell}$ (which implicitly depends on the choice of $\theta_{\gamma,\ell}\in\Theta_{\gamma,\ell}$) is defined by $t_{\hat{\ell}}=t_\ell$ and for $t\in [0, t_{\hat{\ell}}]$,
\[\hat{\ell}(t)\coloneqq \ell ( t+\theta_{\gamma,\ell} \mod t_\ell   ).\]
We write $\prec_\gamma$ and $\preceq_\gamma$ for the strict partial order and the partial order given on $\mathscr{L}_{\gamma}$ by
\[ \ell\prec_\gamma \ell' \iff \sigma_{\gamma,\ell} < \sigma_{\gamma,\ell'}, \qquad
\ell\preceq_\gamma \ell' \iff \sigma_{\gamma,\ell} \leq \sigma_{\gamma,\ell'}
.\]
Later on, we will fix on $\mathcal{L}_\gamma$ a finer total order $\prec_{\gamma,B}$, which depends on the extra parameter $B$. Then,
for $\mathcal{L}\in  \mathscr{L}^{lf}$ and $\ell\in \mathcal{L}_\gamma$,
we let
\[
 \mathscr{L}_{\leq \epsilon}\coloneqq \{\ell \in \mathscr{L}: t_\ell\leq \epsilon\},\qquad
  \mathcal{L}_{\leq \epsilon}\coloneqq \mathcal{L}\cap \mathscr{L}_{\leq \epsilon} ,\qquad
 \mathcal{L}_{\prec \ell}\coloneqq \{\ell' \in \mathcal{L}: \ell'\prec_{\gamma,B}\ell\}.\]
We similarly define the spaces $\mathscr{L}_{< \epsilon},\ \mathscr{L}_{\geq \epsilon},\ \dots$ and the multisets $\mathcal{L}_{<\epsilon },\ \mathcal{L}_{\geq \epsilon },\  \mathcal{L}_{\preceq \ell},\dots$.
From $ \mathscr{L}_{\gamma}$, we will extract a subset  $\mathscr{L}_{reg,\gamma}$ which roughly speaking consists on the set of curves $\ell$ such that $\Theta_{\gamma,\ell}$ consists of a single element $\theta_{\gamma,\ell}$, and such that the intersection at this temporal root is {\textbf{bilateral}}, in the sense that for all $\epsilon>0$, the loop $\ell$ reaches both sides of $\gamma$ within the time interval
$[\theta_{\gamma,\ell}-\epsilon,\theta_{\gamma,\ell}+\epsilon]$. The precise definition will be given later. We combine the indices for intersections, e.g. $\mathcal{L}_{reg,\gamma,\leq \epsilon}\coloneqq \mathcal{L}_{reg,\gamma}\cap \mathcal{L}_{\leq \epsilon}$.

Often, $\gamma$ is fixed and it is then removed from the notations (e.g. $\theta_\ell=\theta_{\gamma,\ell}$).

 \subsection{Summary of the construction}

We shall define two sets $\Conf\subseteq \Conf_0$ of pairs $(\gamma,\mathcal{L})$, where $\gamma$ is a continuous simple path and $\mathcal{L}$ is a collection of loops, from which we wish to construct a continuous curve $X=\Att(\gamma,\mathcal{L})$ to be interpreted as {\it the curve obtained from attaching to $\gamma$ the loops in $\mathcal{L}$, as $\gamma$ visit them for the first time} (the loops that are not visited by $\gamma$ are discarded). For general elements of $\Conf_0$, there are some indeterminacy related e.g. to the fact that two different loops may have the same {\it first visit time by $\gamma$}.
Thus, for $(\gamma,\mathcal{L})\in \Conf_0$, we build a set $\mathcal{B}_{\gamma,\mathcal{L}}$ which describes this indeterminacy: an element $B$ of  $\mathcal{B}_{\gamma,\mathcal{L}}$ refines the order $\prec_\gamma$ into a total order $\prec_{\gamma,B}$, and selects for each $\ell\in \mathcal{L}_\gamma$ a temporal root $\theta_{\gamma,\ell}\in\Theta_{\gamma,\ell}$.
For $(\gamma,\mathcal{L})$ in the smaller set $\Conf$, this set $\mathcal{B}_{\gamma,\mathcal{L}}$  is reduced to a single element.
We further introduce a non-negative parameter $\lambda$ which describes the inverse pace at which $X$ follows $\gamma$. We are interested in $\lambda=0$ when we attach the Brownian loop soup along the $SLE_2$ path, but it is necessary to extend the construction to $\lambda>0$ in order to consider discrete approximations and continuity properties.
Thus, for $(\gamma,\mathcal{L},\lambda)\in \Conf_0\times[0,\infty)$  and $B\in \mathcal{B}_{\gamma,\mathcal{L}}$, we construct a curve
\[ X=\Att(\gamma,\mathcal{L},\lambda,B),\]
which we prove to be continuous as soon as $(\gamma,\mathcal{L})\in \Conf$ or $\lambda>0$. For $(\gamma,\mathcal{L})\in \Conf$, we let $\Att(\gamma,\mathcal{L})=\Att(\gamma, \mathcal{L},0,B)$ where $B$ is the unique element of $\mathcal{B}_{\gamma,\mathcal{L}}$.

This curve $X\coloneqq\Att(\gamma,\mathcal{L},\lambda,B)$ satisfies the following property, which describes it uniquely as a continuous function when  $(\gamma,\mathcal{L})\in \Conf$ and $\lambda=0$:
 \[\mbox{For all $\ell\in \mathcal{L}_\gamma$ and $t\in[0,t_\ell]$, }\quad  X(T_\ell+t )=\hat{\ell}(t), \quad\mbox{where}\quad T_\ell=T_\ell(\mathcal{X},B,\lambda)\coloneqq
\sum_{\substack{ \ell' \in \mathcal{L}_{\gamma},\\\ell'\prec_{\gamma,B} \ell} } t_{\ell'}
   +\lambda \sigma_{\gamma,\ell}.\]
   Here, 
	 the rerooted loop $\hat{\ell}$ is obtained thanks to the temporal root $\theta_{\gamma,\ell}\in \Theta_{\gamma,\ell}$ determined by $B$.
In words, $T_\ell$ is the time it takes $\Att$ to reach the loop $\ell$, and from the time $T_\ell$, $X$ follows the loop $\ell$ starting from $\theta_\ell$ the temporal root of $\ell$ associated with $\gamma$.

\smallskip

We first construct the sets $\Conf$ and $\Conf_0$, as well as the application $\Att$, and prove that $X=\Att(\gamma,\mathcal{L},\lambda,B)$ is continuous in time as soon as $\lambda>0$ or $(\gamma,\mathcal{L})\in \Conf$.

Then, we introduce a distance on $\mathcal{R}_0$ which is sufficiently strong to prove the continuity of the map $(\gamma,\mathcal{L},\lambda,B)\mapsto \Att(\gamma,\mathcal{L},\lambda,B)$ in the vicinity of $\Conf\times\{0\}$, for {\it any} choice of $B$. The last condition imposed on the smaller set $\Conf$ is key to prove this continuity property. We further prove that convergence in $\mathcal{R}_0$ toward a point in $\Conf$ can be reduced to convergence for a weaker topology together with boundedness of some functionals, which shall be easier conditions to verify later on.

Finally, we consider the case when $(\gamma,\mathcal{L})\sim \mu^\gamma\otimes \mu^D$, i.e. when $\gamma$ is a radial $SLE_2$ from $0$ to $\partial D$ and $\mathcal{L}$ is a Brownian loop soup in $D$ independent from $\gamma$. We shall prove that almost surely $(\gamma,\mathcal{L})\in \Conf$, and that the discrete analogous $(\gamma_n,\mathbb{L}_n)$ on the lattice, properly rescaled, converges in distribution toward $(\gamma,\mathcal{L})$ for the topology we introduced. For a uniform choice of $B_n\in \mathcal{B}_{\gamma_n,\mathbb{L}_n}$,
we shall see that $\Att(\gamma_n,\mathbb{L}_n,1,B_n)$ is distributed as a simple random walk, from which we will deduce that $\Att(\gamma,\mathcal{L})$ is distributed as a Brownian motion.

\subsection{The sets \texorpdfstring{$\Conf,\Conf_0$}{R,R0}}
\label{sub:sets}
We define $\Conf$ the subset of $\mathscr{S}\times \mathscr{L}^{lf}$ which consists of pairs $\mathcal{X}=(\gamma,\mathcal{L})$ such that:
\begin{enumerate}[(i)]
\item \label{it:tmax} The time \[ t_{\mathcal{X},\lambda}\coloneqq
\sum_{\ell \in \mathcal{L}_\gamma} t_\ell +\lambda t_\gamma
\] is finite, and the set $\mathcal{L}_{\geq \epsilon}$ is finite for all $\epsilon>0$.
\item \label{it:unifcont} The set $\mathcal{L}$ is equicontinuous, in the sense that for all $\epsilon>0$, there exists $\delta>0$ such that $\omega_\delta(\mathcal{L})<\epsilon$, i.e.
\[
\forall \ell\in \mathcal{L}, \forall s,t\in [0,t_\ell] \mbox{ such that } |s-t|<\delta, \mbox{ it holds } |\ell_s-\ell_t|< \epsilon.
\]
\item \label{it:inj} The application $\ell\mapsto \sigma_\ell$ defined on $\mathcal{L}_\gamma$ is injective. For any $\ell\in \mathcal{L}_\gamma$, the set of roots $\Theta_\ell$ is reduced to a single element $\theta_\ell$.
\item \label{it:dense} The set $\sigma_{\gamma,  \mathcal{L}}$ is dense in $ [0,t_\gamma]$.
\item \label{it:reg} For all $\ell \in \mathcal{L}_\gamma$, it holds $\ell\in \mathscr{L}_{reg,\gamma}$ the sets defined below.
\end{enumerate}
Set further $\Conf_0\subset \mathscr{S}\times \mathscr{L}^{lf}$ the subset of pairs which satisfy only the conditions~\eqref{it:tmax} and~\eqref{it:unifcont}, and let
$\Conf'=(\Conf\times \{0\})\cup (\Conf_0\times(0,\infty))$.
We will generically let $\mathcal{X},\mathcal{X}'$ be elements of $\Conf_0$. Then, we will use the notation $\gamma$ (resp. $\gamma'$) and $\mathcal{L}$ (resp. $\mathcal{L}'$) for the elements such that $\mathcal{X}=(\gamma,\mathcal{L})$ (resp. $\mathcal{X}'=(\gamma',\mathcal{L}')$).

\subsection{Tie-breaks}
\label{sub:tie}
Let $\mathcal{X}\in\Conf_0$. For $\sigma\in \sigma_{\gamma,\mathcal{L}}$, define $\mathcal{O}_\sigma$ be the set of total orders on the set $\{ \ell \in \mathcal{L}_\gamma : \sigma_\ell=\sigma \}$ of loops rooted at $\sigma$, and define $\mathcal{B}_{\mathcal{X}}$ the set of tie-breaks as
\[
\mathcal{B}_{\mathcal{X}}\coloneqq \Big(\prod_{\sigma \in \sigma_\mathcal{L} }  \mathcal{O}_\sigma \Big) \times \Big( \prod_{\ell \in \mathcal{L}_{\gamma}} \Theta_\ell\Big).
\]
Given $(\mathcal{X},\lambda)\in \Conf'$ and $B\in \mathcal{B}_{\mathcal{X}}$, we will construct $\Att(\mathcal{X},\lambda,B)\in \mathcal{C}([0,t_{\mathcal{X},\lambda} ], \mathbb{R}^2)$.
Elements of
$\mathcal{B}_{\mathcal{X}}$ should be interpreted as a way to decide, whenever there are several choices, which loop to follow first, and where to reroot loops from.
We will say that there is no tie when $\mathcal{B}_{\mathcal{X}}$ is reduced to a single element, which can easily be seen to be equivalent to the condition~\eqref{it:inj}.

An element $B=( (\preceq_\sigma)_{\sigma\in \sigma_{\gamma,\mathcal{L}}}, (\theta_\ell)_{\ell \in \mathcal{L}_{\geq \gamma} }  )$  of $\mathcal{B}_{\mathcal{X}}$, induces naturally a
`lexicographic' total order~$\preceq_{\gamma,B}$ on $\mathcal{L}_\gamma$ which is finer than the initial order $\preceq_\gamma$ and given by
\[
\ell\preceq_{\gamma,B} \ell' \iff ( \sigma_{\gamma,\ell} < \sigma_{\gamma',\ell'} \mbox{ or }  (\sigma_\ell = \sigma_{\ell'} \mbox{ and } \ell \preceq_{\sigma_\ell} \ell' )).
\]
Furthermore, $B$ determines for all $\ell\in \mathcal{L}_\gamma$ a unique $\theta_\ell\in \Theta_\ell$, so that the rerooted loop $\hat{\ell}$ is uniquely defined.

\subsection{The set \texorpdfstring{$\mathcal{T}$}{T} and the function \texorpdfstring{$\sigma_{\mathcal{X},\lambda}$}{sigma} }
Let $\mathcal{X}\in \Conf_0$, $\lambda\geq 0$ and $B\in \mathcal{B}_{\mathcal{X}}$.
Recall $ t_{\mathcal{X},\lambda}= \sum_{\ell\in \mathcal{L}_\gamma} t_\ell+ \lambda t_\gamma $
and
$T_\ell=T_\ell(\mathcal{X},B,\lambda) \coloneqq \sum_{\ell'\prec_{\gamma,B} \ell } t_\ell+\lambda \sigma_{\gamma,\ell} $,
which we shall interpret respectively as the total duration of $\Att(\mathcal{X},\lambda,B)$ and the time it takes for $\Att(\mathcal{X},\lambda,B)$ to reach the loop $\ell$.
We define the set
\[  \mathcal{T} \coloneqq \bigsqcup_{\ell \in \mathcal{L}_\gamma} (T_\ell, T_\ell+t_\ell)\subset [0,t_{\mathcal{X},\lambda} ]. \]
The set $\mathcal{T}$ is the set of time when $\Att(\mathcal{X},\lambda,B)$ follows one of the loops in $\mathcal{L}$, and $  [0,t_{\mathcal{X},\lambda} ]\setminus \mathcal{T} $ is the set of time when $\Att(\mathcal{X},\lambda,B)$ follows $\gamma$.
When $\lambda=0$, one can also think of $[0,t_{\mathcal{X},\lambda} ]\setminus\mathcal{T}$ as a set of exceptional times: it is the topological closure of $\{T_\ell\}_{\ell \in \mathcal{L}_\gamma}$, and $\mathcal{T}$ is dense (Lemma~\ref{le:dense}) on $[0,t_{\mathcal{X},\lambda} ]$ with full Lebesgue measure (which we won't prove nor use in the proof). This is not the case for $\lambda>0$, for then
$\Att(\mathcal{X},\lambda,B)$ spends a positive amount of time on $\gamma$.

We now define a non-decreasing function
$
\sigma_{\mathcal{X},\lambda}: [0,t_{\mathcal{X},\lambda}]\to [0,t_\gamma]$, such that $\sigma_{\mathcal{X},\lambda}(t)$ describes how far $\Att(\mathcal{X},\lambda,B)_{[0,t]}$ went along $\gamma$
(more precisely, it is such that $\Att(\mathcal{X},\lambda,B)$ will not visit $\gamma_{[0, \sigma_{\mathcal{X},\lambda}(t) ]}$ after the time $t$).\footnote{Be careful that $\sigma_{\mathcal{X},\lambda}$ is a function, while $\sigma_{\gamma,\mathcal{L}}$ is the set of first hitting times.}
Once $\sigma_{\mathcal{X},\lambda}$ will be defined, the path $X=\Att(\mathcal{X},\lambda,B)$ is defined by
\begin{equation}
\label{eq:defX}
X_t\coloneqq \left\{
\begin{array}{cc}
\hat{\ell}(t-T_\ell ) & \mbox{if } t\in (T_\ell,T_\ell+t_\ell) \subset \mathcal{T},\\
\gamma( \sigma_{\mathcal{X},\lambda}(t)  ) & \mbox{if } t\in [0,t_{\mathcal{X},\lambda} ]\setminus \mathcal{T}.
\end{array}
\right.
\end{equation}

We now explain how to define $\sigma_{\mathcal{X},\lambda}$ (on {\it all} of $[0, t_{\mathcal{X}, \lambda}]$). For $t\in \mathcal{T}$, there exists a unique loop $\ell\in \mathcal{L}_\gamma$ such that $t\in (T_\ell,T_{\ell}+t_\ell)$, and we then simply define
 \[ \sigma_{\mathcal{X},\lambda}(t)\coloneqq \sigma_{\gamma,\ell}.\]

For $t\notin  \mathcal{T}$, we proceed by affine interpolation. Consider first the simple case when there exist loops $\ell,\ell'\in \mathcal{L}_\gamma$ such that
$T_\ell+t_\ell\leq t \leq T_{\ell'}$, $T_\ell+t_\ell<T_{\ell'}$, and there is no loop in between, i.e. there is no $\ell''\in \mathcal{L}_\gamma $ such that $\ell \prec \ell''\prec \ell'$. In such a case,
the limit from the left $\sigma_{\mathcal{X},\lambda}(T_\ell+t_\ell -)$ is equal to $\sigma_\ell$, and the limit from the right $\sigma_{\mathcal{X},\lambda}(T_{\ell'}+)$ is equal to $\sigma_{\ell'}$. We thus interpolate with
$\sigma_{\mathcal{X},\lambda}(t)\coloneqq  (1-\mu) \sigma_{\ell}+  \mu  \sigma_{\ell'}   $, where
$\mu\in[0,1]$ is the unique value such that $t= (1-\mu) (T_\ell+t_\ell)+  \mu T_{\ell'}$.

Let us now consider the general situation (for $t\in [0,t_{\mathcal{X},\lambda}]\setminus \mathcal{T}$). First, set
\begin{equation}
\label{eq:def:tpm}
t_-\coloneqq 0\vee \sup \{ T_\ell+t_\ell: \ell \in  \mathcal{L}_\gamma, T_\ell+t_\ell\leq t \},
\qquad t_+\coloneqq t_{\mathcal{X},\lambda}  \wedge \inf\{ T_\ell: \ell \in  \mathcal{L}_\gamma, T_\ell\geq t\},
\end{equation}
\[ \sigma_-\coloneqq 0\vee \sup \{ \sigma_\ell: \ell \in  \mathcal{L}_\gamma, T_\ell+t_\ell\leq t \},
\qquad \sigma_+\coloneqq t_\gamma \wedge \inf\{ \sigma_\ell: \ell \in  \mathcal{L}_\gamma, T_\ell\geq t\}
.\]
Then, $t_-\leq t\leq t_+$.
In the case $t_-=t_+$, let $\mu(t)\coloneqq 0$. Otherwise, let $\mu(t)\coloneqq (t-t_-)/(t_+-t_-)\in [0,1]$, i.e. $t= (1-\mu)t_-+\mu t_+$, and set, on $ [0,t_{\mathcal{X},\lambda}]\setminus \mathcal{T}$,
\[\sigma_{\mathcal{X},\lambda}\coloneqq (1-\mu)\sigma_-+\mu \sigma_+.\]
In particular, these functions satisfy $\sigma_-\leq \sigma_{\mathcal{X},\lambda}\leq \sigma_+$.

\begin{remark}
  Clearly the function $\sigma_{\mathcal{X},\lambda}$ does not depend on the choice of the roots in $\prod_{\ell\in \mathcal{L}_\gamma} \Theta_\ell$. Although it is less immediate, it also does not depend on the choice of the orders (thus it does not depend from the choice of $B\in \mathcal{B}_{\mathcal{X}}$ at all). Indeed, the values of $\sigma_-$, $\sigma_+$ do not depend on these orders, and although there are some values of $t$ for which $t_-$ and $t_+$ do depend on the choice of $B$, this is the case only for $t$ such that $\sigma_-=\sigma_+$, in which case $\sigma_{\mathcal{X},\lambda}$ is just equal to this common value anyway.
\end{remark}

For later use, we now prove the following. (Intuitively, this tells us among other things that the resulting path $X$ does not depend on the initial parametrisation of $\gamma$). 

\begin{lemma}
  \label{le:dense}
  For all $\lambda=0$ and $\mathcal{X}\in \Conf_0$, the set $\mathcal{T}$ is dense on $[0,t_{\mathcal{X},0}]$.
\end{lemma}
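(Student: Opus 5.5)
The plan is to show that the complement $[0,t_{\mathcal{X},0}]\setminus\mathcal{T}$ has empty interior. With $\lambda=0$, $t_{\mathcal{X},0}=\sum_{\ell\in\mathcal{L}_\gamma}t_\ell$ is finite by condition~\eqref{it:tmax}, and the intervals $(T_\ell,T_\ell+t_\ell)$ are pairwise disjoint. So it suffices to prove that the Lebesgue measure of $\mathcal{T}$ equals $t_{\mathcal{X},0}$, i.e. that $\mathcal{T}$ has full measure. A set of full Lebesgue measure in an interval meets every nondegenerate subinterval, hence is dense.

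First I check that the intervals $(T_\ell,T_\ell+t_\ell)$ are disjoint and contained in $[0,t_{\mathcal{X},0}]$. If $\ell\prec_{\gamma,B}\ell'$, then $T_{\ell'}=\sum_{\ell''\prec\ell'}t_{\ell''}\ge T_\ell+t_\ell$, because the sum for $\ell'$ contains all terms of the sum for $\ell$ together with $t_\ell$ (all terms are nonnegative). Also $T_\ell+t_\ell\le\sum_{\ell''\in\mathcal{L}_\gamma}t_{\ell''}=t_{\mathcal{X},0}$. Since $\prec_{\gamma,B}$ is a total order on $\mathcal{L}_\gamma$, any two distinct loops are comparable, so their intervals are disjoint. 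Hence $\operatorname{Leb}(\mathcal{T})=\sum_{\ell\in\mathcal{L}_\gamma}t_\ell=t_{\mathcal{X},0}$. This uses that $\mathcal{L}_\gamma$ is countable, which follows from local finiteness, or from the finiteness of $\mathcal{L}_{\ge\epsilon}$ for every $\epsilon>0$.

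Now let $(a,b)\subset[0,t_{\mathcal{X},0}]$ with $a<b$. If $(a,b)\cap\mathcal{T}=\emptyset$, then $\mathcal{T}\subset[0,t_{\mathcal{X},0}]\setminus(a,b)$, so $\operatorname{Leb}(\mathcal{T})\le t_{\mathcal{X},0}-(b-a)<t_{\mathcal{X},0}$, a contradiction. Therefore $\mathcal{T}$ is dense.

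The only mild subtlety is bookkeeping with multiset notation, since a loop may appear with multiplicity, in which case each copy counts separately in both $T_\ell$ and $t_{\mathcal{X},0}$. Degenerate loops with $t_\ell=0$ contribute empty intervals, which changes nothing. I expect no real obstacle: the key observation is that $\lambda=0$ leaves no time outside the loops, and the argument is pure measure counting.
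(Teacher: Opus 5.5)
Your argument is correct, and it takes a different and shorter route than the paper's. The paper never uses Lebesgue measure. It sets $\mathcal{T}'=\bigcup_{\ell}[T_\ell,T_\ell+t_\ell]$ and computes by monotone convergence that for every $s\notin\mathcal{T}'$ both quantities in \eqref{eq:def:tpm} equal $\sum\{t_{\ell'}:T_{\ell'}<s\}$, so $s_-=s=s_+$. Then, for $s<t$ outside $\mathcal{T}'$, it uses that the infimum defining $s_+$ is either attained at some $T_\ell=s$ or approached by $T_{\ell_n}\downarrow s$ (with $T_{\ell_n}+t_{\ell_n}\downarrow s$ too), which produces a point of $\mathcal{T}$ in $[s,t]$.

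Your observation replaces all of this: the pairwise disjoint intervals $(T_\ell,T_\ell+t_\ell)\subset[0,t_{\mathcal{X},0}]$ have total length exactly $t_{\mathcal{X},0}$. It also proves the full-Lebesgue-measure property, which the paper states but explicitly declines to prove. It makes the role of $\lambda=0$ transparent as well, since for $\lambda>0$ the complement has measure $\lambda t_\gamma$.

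What the paper's route gives in exchange is a pointwise description of the complement, namely $s_-=s=s_+$ for $s\notin\mathcal{T}'$. This is phrased in the same quantities $t_\pm$ that define $\sigma_{\mathcal{X},\lambda}$.

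Both arguments need $t_{\mathcal{X},0}<\infty$ from \eqref{it:tmax}. Both also tacitly assume $t_{\mathcal{X},0}>0$: if no loop of $\mathcal{L}_\gamma$ has positive duration, then $\mathcal{T}=\emptyset$ is not dense in $\{0\}$. This is a degenerate case that the statement itself ignores.
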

\begin{proof}
  Let $\mathcal{T'}= \bigcup_{\ell\in \mathcal{L}_\gamma} [T_\ell,T_\ell+t_\ell]$, thus $\mathcal{T}$ is dense in $\mathcal{T}'$.

  The key point is that, for $\lambda=0$, for all $s\in [0,t_{\mathcal{X},0}] \setminus \mathcal{T}'$, it holds $s_-=s_+$ (for $\lambda\neq 0$, they would differ by $\lambda(\sigma_+-\sigma_-)$). Indeed, for such $s$,
  \begin{align*} s_+=\inf\{ \sum_{\ell'\prec \ell } t_{\ell'}: T_\ell\geq s\}
  &=\sum \{ t_{\ell'}: \forall \ell, T_\ell\geq s \implies \ell'\prec \ell\}\\
  &=\sum \{ t_{\ell'}: \forall \ell, T_\ell\geq s \implies T_{\ell'}< T_{\ell}\}\\
  &=\sum \{ t_{\ell'}:  T_{\ell'}< s \},
  \end{align*}
  where the second equality on the first line is by monotone convergence theorem.
  On the other hand,
  \begin{align*}
  s_-=\sup\{ \sum_{\ell'\preceq \ell } t_{\ell'}: T_\ell+t_\ell \geq s\}
  &=\sum \{ t_{\ell'}: \forall \ell, T_\ell+t_\ell \geq s \implies \ell'\preceq \ell\}\\
  &=\sum \{ t_{\ell'}:  T_{\ell'}+t_{\ell'}  \geq s \}\\
  &=\sum \{ t_{\ell'}:  T_{\ell'}< s \},
  \end{align*}
  where the second equality is again by monotone convergence theorem, and the last equality is obtained by using the fact $s\notin \mathcal{T}'$). Thus, $s_-=s_+$ as claimed.

  Let now  $s<t\in [0,t_{\mathcal{X},0}] \setminus \mathcal{T}'$. We will prove there exists $u\in [s,t]\cap \mathcal{T}$.
  Since $s=s_+$, either there exists $\ell$ such that $T_\ell=s$, in which case we can take $u\coloneqq (t+s)/2 \wedge (T_\ell+t_\ell/2)$, or there exists a sequence $(\ell_n)$ such that $T_{\ell_n}$ decreases toward $s$. Then $T_{\ell_n}+t_{\ell_n}\leq T_{\ell_{n-1}}$, thus $(T_{\ell_n}+t_{\ell_n} )$ also decreases toward $s$. Thus, for any $n$ large enough, $T_{\ell_n}+t_{\ell_n} <t$, and we can take $u\coloneqq (T_{\ell_n}+t_{\ell_n}/2)$, which concludes the proof.
\end{proof}

\subsection{Continuity in time of \texorpdfstring{$\Att(\mathcal{X},\lambda,B)$}{Xi(gamma,L,lambda,B)}}
\label{sub:cont}
The end of the section is dedicated to the proof that, for $(\mathcal{X},\lambda)\in \Conf'$ and $B\in\mathcal{B}_\mathcal{X}$, the function $X=\Att(\mathcal{X},\lambda,B)$ is continuous. During the process, we will record two quantitative bounds, in particular the key estimation \eqref{eq:def:omegadelta1} in the next lemma, which will turn useful when we will consider the harder problem of the continuity of the map $(\mathcal{X},\lambda,B)\mapsto \Att(\mathcal{X},\lambda,B)$.
\begin{lemma}
\label{le:sigmaIsCont}
  Let $\mathcal{X}\in \Conf_0$ and $\lambda\geq 0$.
  If either $\lambda>0$ or $\mathcal{X}$  satisfies~Condition \eqref{it:dense}, the application $\sigma_{\mathcal{X},\lambda}$ is continuous on $[0,t_{\mathcal{X},\lambda}]$.

  Furthermore, we have the following quantitative bound, independent of $\lambda$, on the continuity moduli: for all $\delta>0$, let $(\sigma_i)_{i\in \{1,\dots, k\}}$ be an increasing enumeration of the finite set $\{ \sigma_{\ell}: \ell\in \mathcal{L}_{\gamma,\geq \delta} \}$, and let $\sigma_0=0$, $\sigma_{k+1}=t_\gamma$. Then,
  \begin{equation}
  \label{eq:def:omegadelta1}
  \omega_\delta(\sigma_{\mathcal{X},\lambda})\leq \omega^1_{\delta}(\mathcal{X})\coloneqq  \max \{ \sigma_{i+1}-\sigma_i : i\in \{0,\dots, k\} \}.
  \end{equation}
\end{lemma}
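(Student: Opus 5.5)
I will first establish the quantitative bound \eqref{eq:def:omegadelta1}, and then deduce continuity from it together with an analysis at the exceptional times. The function $\sigma_{\mathcal{X},\lambda}$ is non-decreasing; I would check this directly from the construction: it is constant on each interval $(T_\ell,T_\ell+t_\ell)$, equal to $\sigma_\ell$ there, and $T_\ell<T_{\ell'}$ implies $\sigma_\ell\le\sigma_{\ell'}$. On the complement it is an affine interpolation between $\sigma_-\le\sigma_+$, and $\sigma_\pm$ are monotone in $t$. Hence $\omega_\delta(\sigma_{\mathcal{X},\lambda})=\sup_{t-s\le\delta}(\sigma_{\mathcal{X},\lambda}(t)-\sigma_{\mathcal{X},\lambda}(s))$.

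\textbf{Step 1: the modulus bound.} Fix $s<t$ with $t-s\le\delta$. Any loop $\ell\in\mathcal{L}_{\gamma,\ge\delta}$ occupies a time interval $[T_\ell,T_\ell+t_\ell]$ of length at least $\delta$. Therefore at most one such interval can have a root $\sigma_\ell$ strictly between $\sigma_{\mathcal{X},\lambda}(s)$ and $\sigma_{\mathcal{X},\lambda}(t)$, and in fact none can. Indeed, if $\sigma_{\mathcal{X},\lambda}(s)<\sigma_\ell<\sigma_{\mathcal{X},\lambda}(t)$, then monotonicity forces $s<T_\ell$ and $T_\ell+t_\ell<t$: a time $u\ge T_\ell+t_\ell$ has $\sigma_-(u)\ge\sigma_\ell$, and symmetrically for $u\le T_\ell$. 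This gives $t-s>t_\ell\ge\delta$, a contradiction. So the open interval $(\sigma_{\mathcal{X},\lambda}(s),\sigma_{\mathcal{X},\lambda}(t))$ contains no $\sigma_i$, and is thus contained in some $[\sigma_i,\sigma_{i+1}]$, using $0\le\sigma_{\mathcal{X},\lambda}\le t_\gamma$. This yields $\sigma_{\mathcal{X},\lambda}(t)-\sigma_{\mathcal{X},\lambda}(s)\le\max_i(\sigma_{i+1}-\sigma_i)$. The only care needed is the boundary cases ($t$ or $s$ exactly at $T_\ell$ or $T_\ell+t_\ell$, or inside $\mathcal T$). I would handle these using $\sigma_-\le\sigma_{\mathcal{X},\lambda}\le\sigma_+$ and the definitions of $\sigma_\pm$ via non-strict inequalities.

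\textbf{Step 2: continuity.} A monotone function is continuous if and only if it has no jumps. On $\mathcal T$ the function is locally constant, and on open intervals of the complement where $t_-<t_+$ it is affine. So the only possible jumps occur at points $t\notin\mathcal T$ that are accumulation points of interval endpoints, or at endpoints $T_\ell$ and $T_\ell+t_\ell$. I would show that the left and right limits both equal $\sigma_{\mathcal{X},\lambda}(t)$.

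Case $\lambda>0$: by condition~\eqref{it:tmax} only finitely many loops have duration above any $\epsilon$. So for times $u\uparrow t$ the loops encountered have $t_\ell\to0$, and by the definition of $T_\ell$ we have $\lambda(\sigma_{\ell'}-\sigma_\ell)\le T_{\ell'}-T_\ell-t_\ell$. Hence any jump of size $\eta$ in $\sigma$ would require a time gap of at least $\lambda\eta$. Combined with the interpolation, this gives a Lipschitz-type bound $\sigma(t)-\sigma(s)\le (t-s)/\lambda$ away from $\mathcal T$, and hence continuity.

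Case $\lambda=0$ under condition~\eqref{it:dense}: density of $\sigma_{\gamma,\mathcal L}$ combined with finiteness of $\mathcal{L}_{\ge\epsilon}$ is not enough by itself to make $\omega^1_\delta\to0$. Instead I would argue directly. Suppose $\sigma$ jumps at $t$ from $a$ to $b>a$. By density there is a loop $\ell$ with $\sigma_\ell\in(a,b)$. Monotonicity then forces $[T_\ell,T_\ell+t_\ell]$ to lie at a time that is neither before nor after $t$, which is impossible since $t_\ell>0$, or $\sigma$ equals $\sigma_\ell$ on that interval.

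I expect the main obstacle to be the bookkeeping in Step 1, namely showing rigorously from the sup/inf definitions \eqref{eq:def:tpm} that $\sigma_{\mathcal{X},\lambda}$ is monotone and that crossing a root $\sigma_\ell$ forces the whole interval $[T_\ell,T_\ell+t_\ell]$ to lie in $[s,t]$, including the degenerate cases $t_-=t_+$ and ties $\sigma_\ell=\sigma_{\ell'}$.
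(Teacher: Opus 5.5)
Your proposal is correct. Step~1 is essentially the paper's proof of \eqref{eq:def:omegadelta1}, but the justification after your colon runs the wrong way: ``$u\ge T_\ell+t_\ell\Rightarrow\sigma_{\mathcal{X},\lambda}(u)\ge\sigma_\ell$'' only yields $s<T_\ell+t_\ell$. What does the work, as in the paper, is that $\sigma_{\mathcal{X},\lambda}\equiv\sigma_\ell$ on $(T_\ell,T_\ell+t_\ell)$. Monotonicity then gives $s\le T_\ell$ and $t\ge T_\ell+t_\ell$. The inequality at $t$ is strict because $\sigma_{\mathcal{X},\lambda}(T_\ell+t_\ell)=\sigma_-(T_\ell+t_\ell)=\sigma_\ell$, and this suffices for $t-s>\delta$; the inequality at $s$ need not be strict.

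Where you genuinely differ from the paper is the continuity part.

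In the dense case the paper simply lets $\delta\to0$ in \eqref{eq:def:omegadelta1}. Your remark that density plus finiteness of $\mathcal{L}_{\ge\epsilon}$ does not force $\omega^1_\delta(\mathcal{X})\to0$ is wrong. Choose one root in each of finitely many open intervals of length $\eta/2$ covering $[0,t_\gamma]$; for $\delta$ small enough these all come from loops in $\mathcal{L}_{\gamma,\ge\delta}$, so every gap is less than $\eta$. Your no-jump argument is nonetheless valid and just as short, and it uses only monotonicity and density rather than the modulus bound.

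For $\lambda>0$ the paper does a local case analysis at points with $t=t_-$ or $t=t_+$, because it believes a Lipschitz argument only works when $\sigma_{\gamma,\mathcal{L}}$ is finite. Your global Lipschitz bound shows this restriction is unnecessary and gives the cleaner proof. To make ``combined with the interpolation'' rigorous:
\begin{enumerate}
\item Set $A(u)\coloneqq\sum_{T_\ell<u}t_\ell$ and check from \eqref{eq:def:tpm} that $t_\pm=A(u)+\lambda\sigma_\pm$ for $u\notin\mathcal{T}$, using monotone convergence when the supremum or infimum is not attained.
\item Hence $u=A(u)+\lambda\,\sigma_{\mathcal{X},\lambda}(u)$ off $\mathcal{T}$.
\item In particular $\sigma_{\mathcal{X},\lambda}(T_\ell)=\sigma_\ell$, so $\sigma_{\mathcal{X},\lambda}$ is constant on each closed interval $[T_\ell,T_\ell+t_\ell]$, and $\omega_\delta(\sigma_{\mathcal{X},\lambda})\le\delta/\lambda$ follows at once.
\end{enumerate}
This bound degenerates as $\lambda\to0$. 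So the $\lambda$-uniform estimate \eqref{eq:def:omegadelta1}, which you prove as the paper does, is still the one used later in Proposition~\ref{prop:equicont}.
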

\begin{proof}

  We first prove~\eqref{eq:def:omegadelta1}.
  Let $s<t$. Assume $(\sigma_{\mathcal{X},\lambda}(s),\sigma_{\mathcal{X},\lambda}(t))\cap \{ \sigma_i : i\in \{0,\dots, k+1\} \}\neq \emptyset$, and let $\ell \in \mathcal{L}_{\gamma,\geq \delta}  $ such that $\sigma_\ell \in (\sigma_{\mathcal{X},\lambda}(s),\sigma_{\mathcal{X},\lambda}(t))$. For all $u\in (T_\ell,T_\ell+t_\ell)$, $\sigma_{\mathcal{X},\lambda}(s)< \sigma_{\mathcal{X},\lambda}(u)=\sigma_{\ell}< \sigma_{\mathcal{X},\lambda}(t)$. Since $\sigma_{\mathcal{X},\lambda}$ is non-decreasing, we deduce $s\leq u\leq t$.
  It follows $s\leq T_\ell$ and $t\leq T_\ell+t_\ell$, thus $t-s\geq t_\ell\geq \delta$. By contraposition,
  \[ t-s<\delta\implies
  (\sigma_{\mathcal{X},\lambda}(s),\sigma_{\mathcal{X},\lambda}(t))\cap \{ \sigma_i : i\in \{0,\dots, k+1\}\}
   = \emptyset \implies \sigma_{\mathcal{X},\lambda}(t)-\sigma_{\mathcal{X},\lambda}(s) \leq \omega^1_{\delta}(\mathcal{X}),\]
  which proves~\eqref{eq:def:omegadelta1}.

  For the continuity of $\sigma_{\mathcal{X},\lambda}$, we consider separately the case when $\mathcal{X}$ satisfies the condition~\eqref{it:dense} and the case $\lambda>0$.

  The former case follows directly from~\eqref{eq:def:omegadelta1}. Indeed, as $\delta\to 0$, $\omega^1_{\delta}(\mathcal{X})$ converges toward \[ \sup \{ \sigma'-\sigma :\sigma'\geq \sigma, (\sigma,\sigma')\cap \{ \sigma_{\gamma,\ell}: \ell \in \mathcal{L} \}  =\emptyset \},\]
  which is $0$ by~\eqref{it:dense}. Thus $\omega_\delta(\sigma)\leq \omega^1_{\delta}(\mathcal{X}) \underset{\delta\to 0}\longrightarrow 0$, hence $\sigma$ is continuous.

  Consider now the latter case $\lambda >0$. If the set $\sigma_\mathcal{L}$ is finite, then we can easily show that the function $\sigma_{\mathcal{X},\lambda}$ is Lipschitz-continuous. On the other end, if $\sigma_\mathcal{L}$ is dense, the argument above also allows to conclude that $\sigma_{\mathcal{X},\lambda}$ is continuous. However, when $\sigma_\mathcal{L}$ is neither discret nor dense, neither argument allows to conclude: we must `localise' the previous argument. For some $t$,  we need to use `the Lipschitz argument' on the left and `the density argument' on the right, or the other way around, so we also distinguish between left and right continuity. 

  \begin{itemize}
  \item The continuity of $\sigma_{\mathcal{X},\lambda}$ at any $t\in \mathcal{T}$ is obvious, since there is an open set $(T_\ell,T_\ell+t_\ell)\ni t$ on which $\sigma_{\mathcal{X},\lambda}$ is locally constant. Let now $t\in [0,t_{\mathcal{X},\lambda}]\setminus \mathcal{T}$.
  \item For $t\in [0,t_{\mathcal{X},\lambda} ]\setminus \mathcal{T}$ such that $t_-<t<t_+$, the restriction of $\sigma_{\mathcal{X},\lambda}$ to $(t_-,t_+)$ is affine, thus continuous. For the same reason, if $t<t_+$ then $\sigma_{\mathcal{X},\lambda}$ is right-continuous at $t$, and if $t>t_-$ then
  $\sigma_{\mathcal{X},\lambda}$ is left-continuous at $t$.
  \item It only remains to show that, for $t=t_+$, $\sigma_{\mathcal{X},\lambda}$ is right-continuous at $t$, and that  for $t=t_-$, $\sigma_{\mathcal{X},\lambda}$ is left-continuous at $t$.

  In the case $t=t_-$, $\mu(t)=0$ so that $\sigma_{\mathcal{X},\lambda}(t)=\sigma_-(t)$.

  In the case $t=t_+$, either $\mu(t)=1$  or $t_-=t_+$. In the former case $\mu(t)=1$, it holds $\sigma_{\mathcal{X},\lambda}(t)=\sigma_+(t)$.
  In the latter case $t_-=t_+$, there exists a non-decreasing sequence $(\ell^-_n)$ and a  non-increasing sequence $(\ell^+_n)$ such that $T_{\ell^+_n}$ and
  $T_{\ell^-_n}+t_{\ell_n^-}$  both converges toward $t=t_-=t_+$ as $n\to \infty$. However, the difference $T_{\ell^+_n}-(T_{\ell^-_n}+t_{\ell_n^-}) $ is bounded below by $\lambda\cdot (\sigma_{\ell^+_n}-\sigma_{\ell^-_n})\geq \lambda\cdot (\sigma_{+}(t)-\sigma_{-}(t))$. Since $\lambda>0$, we deduce $\sigma_-(t)=\sigma_+(t)$, hence $\sigma_{\mathcal{X},\lambda}(t)=\sigma_+(t)$.
  In both the case $\mu(t)=1$  and the case $t_-=t_+$, we have proved $\sigma_{\mathcal{X},\lambda}(t)=\sigma_+(t)$.

  In the rest of the proof, the cases $t=t_-$ and $t=t_+$ are now symmetrical, so we only consider the right-continuity for $t=t_+$.

  We make two subcases depending on whether the infimum defining $t_+$ is achieved of not. If it is achieved, i.e. there exists $\ell\in \mathcal{L}_\gamma$ such that $t=T_\ell$, then $\sigma_+(t)=\sigma_{\gamma,\ell}$ by definition. Thus
  $\sigma_{\mathcal{X},\lambda}(t)=\sigma_{\gamma,\ell}$. For all $s\in (t,t+t_\ell)$, by definition of $\sigma_{\mathcal{X},\lambda}$, $\mathcal{X},\sigma_{\mathcal{X},\lambda}(s)=\sigma_{\gamma,\ell}$. Thus, $\sigma_{\mathcal{X},\lambda}$ is locally constant on $[t,t+t_\ell)$, hence right-continuous at $t$.

  In the case when the infimum defining $t_+$ is not achieved, there exists a sequence $(\ell_n)_n$ in $\mathcal{L}_\gamma$ such that $T_{\ell_n}$ decreases toward $t_+=t$.
Since $\sigma_{\mathcal{X},\lambda}$ is non-decreasing and  $T_{\ell_n}$ decreases toward $t$, $\lim_{\substack{s\to t,\\ s>t}}  \sigma_{\mathcal{X},\lambda}(s)=\lim_{n\to \infty} \sigma_{\mathcal{X},\lambda}(T_{\ell_n})$.
By definition of $\sigma_+$ and monotonicity of $\ell\mapsto \sigma_{\gamma,\ell}$,  $\sigma_{\mathcal{X},\lambda}(T_{\ell_n})$ converges toward $\sigma_+(t)$. Since  $\sigma_+(t)=\sigma_{\mathcal{X},\lambda}(t)$,
$\lim_{\substack{s\to t,\\ s>t}}  \sigma_{\mathcal{X},\lambda}(s)=\sigma_{\mathcal{X},\lambda}(t)$, which concludes the proof.\qedhere
  \end{itemize}
\end{proof}

\begin{proposition}
  \label{prop:extension}
  Let $\mathcal{X}\in \Conf_0$ and $\lambda\geq 0$.
  If either $\lambda>0$ or $\mathcal{X}$  satisfies~\eqref{it:dense}, for whichever choice of tie break $B\in \mathcal{B}_{\mathcal{X}}$, for all $\epsilon>0$,
  \[ \omega_\epsilon(\Att(\mathcal{X},\lambda,B))\leq 2\, \omega^l_\epsilon(\mathcal{L})+
  \omega_\epsilon(\gamma\circ \sigma_{\cX,\lambda}).\]

  In particular, the function $X=\Att(\mathcal{X},\lambda,B)$ is continuous on $[0,t_{\mathcal{X},\lambda}]$,
  and the family of functions $\{\Att(\gamma,\mathcal{L},\lambda,B)\}_{B\in \mathcal{B}_{\gamma,\mathcal{L}}}$ is equicontinuous.
\end{proposition}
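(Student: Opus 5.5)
The plan is to fix $s<t$ in $[0,t_{\mathcal{X},\lambda}]$ with $t-s\le\epsilon$ and to bound $|X_t-X_s|$. The intermediate comparison path is $Y\coloneqq\gamma\circ\sigma_{\mathcal{X},\lambda}$, which is continuous by Lemma~\ref{le:sigmaIsCont}. The key pointwise observation is that for every $u\in\mathcal{T}$, say $u\in(T_\ell,T_\ell+t_\ell)$, we have $Y(u)=\gamma(\sigma_{\gamma,\ell})=x_\ell=\hat\ell(0)=\hat\ell(t_\ell)$. This holds for whichever root $\theta_\ell\in\Theta_\ell$ is selected by $B$. Outside $\mathcal{T}$, $X=Y$ by definition~\eqref{eq:defX}. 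So $X$ differs from $Y$ only along loop excursions, and each excursion starts and ends at the point $Y$ sits at during it.

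The proof then splits into cases according to where $s$ and $t$ lie.

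\emph{Case 1: $s,t$ in the same interval $[T_\ell,T_\ell+t_\ell]$.} Here we directly get $|X_t-X_s|=|\hat\ell(t-T_\ell)-\hat\ell(s-T_\ell)|\le\omega^l_\epsilon(\ell)$, since rerooting does not change $\omega^l$. At the endpoints $T_\ell$ and $T_\ell+t_\ell$ one should check $X=x_\ell$. This holds because at those points $X=\gamma(\sigma_{\mathcal{X},\lambda})$, and $\sigma_{\mathcal{X},\lambda}$ is continuous with value $\sigma_\ell$ on the open interval, hence also at its endpoints.

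\emph{Case 2: general $s<t$.} If $s\in(T_\ell,T_\ell+t_\ell)$, replace $s$ by $s'\coloneqq T_\ell+t_\ell\le t$ when $t\notin$ that interval. Then $|X_s-X_{s'}|\le\omega^l_\epsilon(\ell)$ and $X_{s'}=Y(s')$. Symmetrically, if $t\in(T_{\ell'},T_{\ell'}+t_{\ell'})$ with $\ell'\ne\ell$, replace $t$ by $t'\coloneqq T_{\ell'}\ge s'$, again costing at most $\omega^l_\epsilon(\ell')$. Now $s',t'$ are both outside $\mathcal{T}$, or are endpoints of loop intervals where $X=Y$, and $|t'-s'|\le\epsilon$. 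Hence $|X_{t'}-X_{s'}|=|Y(t')-Y(s')|\le\omega_\epsilon(Y)$. Summing gives $|X_t-X_s|\le 2\omega^l_\epsilon(\mathcal{L})+\omega_\epsilon(\gamma\circ\sigma_{\mathcal{X},\lambda})$.

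The subtle point is the claim that $X=Y$ at the endpoints of loop intervals, and more generally on $[0,t_{\mathcal{X},\lambda}]\setminus\mathcal{T}$. This follows from~\eqref{eq:defX} together with $\sigma_{\mathcal{X},\lambda}$ being constant equal to $\sigma_\ell$ on $(T_\ell,T_\ell+t_\ell)$ and continuous by Lemma~\ref{le:sigmaIsCont}. The hypothesis ($\lambda>0$ or~\eqref{it:dense}) is used exactly here, and in asserting that $Y$ is uniformly continuous. Ties are harmless: several loops with the same $\sigma_\ell$ all start and end at the same point $x_\ell$.

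Finally, by Condition~\eqref{it:unifcont}, $\omega^l_\epsilon(\mathcal{L})\le 2\omega_\epsilon(\mathcal{L})\to0$. Also, $Y$ is a composition of the uniformly continuous map $\gamma$ with the continuous map $\sigma_{\mathcal{X},\lambda}$ on a compact set, so $\omega_\epsilon(Y)\to0$. This gives continuity of $X$. For equicontinuity in $B$, note that the bound does not depend on $B$: $\sigma_{\mathcal{X},\lambda}$ is independent of $B$ by the Remark above, and $\omega^l$ is root-invariant.
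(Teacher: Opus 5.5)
Your proof is correct and follows essentially the paper's argument. Your $s'$ and $t'$ are the paper's projections $\pi_+(s)$ and $\pi_-(t)$ onto $[0,t_{\mathcal{X},\lambda}]\setminus\mathcal{T}$, followed by the same three-term triangle inequality. You also spell out what the paper uses only implicitly: $X=\hat\ell(0)=\hat\ell(t_\ell)=x_\ell$ at the endpoints $T_\ell$ and $T_\ell+t_\ell$, which follows from the continuity of $\sigma_{\mathcal{X},\lambda}$ (Lemma~\ref{le:sigmaIsCont}).
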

\begin{proof}
  Define $\pi_-,\pi_+:[0,t_{\mathcal{X},\lambda}]\to [0,t_{\mathcal{X},\lambda}]\setminus \mathcal{T}$ by $\pi_-t(t)=\pi_+(t)=t$ for $t\in [0,t_{\mathcal{X},\lambda} ]\setminus \mathcal{T}$, and for $t\in(T_\ell,T_\ell+t_\ell)$, $\pi_-(t)\coloneqq T_\ell$, $\pi_+(t)\coloneqq T_\ell+t_\ell$.
  In particular, for all $t$, $\pi_-(t)\leq t \leq \pi_+(t)$. Furthermore, for any $s,t\in [0,t_{\mathcal{X},\lambda}]$ with $s<t$, either there exists $\ell$ such that
  $T_\ell<s<t< T_\ell+t_\ell$, or $\pi_+(s)\leq \pi_-(t)$.

  Let $s,t \in [0,t_{\mathcal{X},\lambda}]$ such that $s<t<s+\epsilon$. In the case there exists $\ell$ such that $T_\ell<s<t< T_\ell+t_\ell$,
  \[ |X(t)-X(s)|=|\hat{\ell}(t-T_\ell)-\hat{\ell}(s-T_\ell)|\leq \omega_\epsilon(\hat{\ell})\leq \omega^l_\epsilon(\mathcal{L}).\]
  Otherwise, $s<\pi_+(s)\leq \pi_-(t)<t<s+\epsilon$. Since $\pi_-(t),\pi_+(s)\in  [0,t_{\mathcal{X},\lambda}]\setminus \mathcal{T}   $,
  $X( \pi_-(t))=\gamma\circ \sigma (\pi_-(t))$ and  $X( \pi_+(s))=\gamma\circ \sigma (\pi_+(s))$. Thus,
  \begin{align*}
  |X(t)-X(s)|&\leq |X(t)-X( \pi_-(t))|+|\gamma\circ \sigma( \pi_-(t))-\gamma\circ \sigma(\pi_+(s))|+|X(\pi_+(s))-X(s)|\\
  &\leq \omega^l_\epsilon(\mathcal{L})+
  \omega_\epsilon(\gamma\circ \sigma)+  \omega^l_\epsilon(\mathcal{L}).
  \end{align*}
  The equicontinuity condition and the continuity of $\gamma$ and $\sigma$ ensure $2 \omega^l_\epsilon(\mathcal{L})+
  \omega_\epsilon(\gamma\circ \sigma)\underset{\epsilon\to 0}\longrightarrow 0$,
  which concludes the proof.
\end{proof}

\section{Attaching a single loop to a simple path}
\label{sec:attaching}
In this section, we consider continuity of the applications $(\gamma,\ell)\mapsto x_{\gamma,\ell},\ \sigma_{\gamma,\ell},\ \theta^{-}_{\gamma,\ell},\ \theta^+_{\gamma,\ell}$. One can easily convince oneself that discontinuity points $(\gamma,\ell)$ are not generic, and we thus search for generic conditions on $(\gamma,\ell)$ ensuring the continuity at $(\gamma,\ell)$.
%

\subsection{The set of regular pairs}
\label{sec:regPairs}

For $\gamma\in \mathscr{S}$ and $x\in \gamma((0,t_\gamma))$, we first construct two connected open sets $L(x)$ and $R(x)$ such that the disjoint union $\Range(\gamma)\sqcup L(x)\sqcup R(x)$ contains a neighbourhood of $x$, and which we can think as the `left' and `right' sides of $\gamma$, around $x$.

Let $\gamma^{-1}:\Range(\gamma)\to [0,t_\gamma]$ be the inverse of $\gamma$, $o\coloneqq \gamma(0)$, $f\coloneqq \gamma(t_\gamma)$, and $\rhos(x)\coloneqq d(x,\{ o,f\} )$. Then, for any $x\in \Range(\gamma)\setminus\{o,f\} $, it holds $\rhos(x)>0$. Let
$\sigma_\gamma^-(x)$ and $\sigma_\gamma^+(x)$ be respectively the last time when $\gamma$ enters $B_{\rhos(x)}(x)$ before reaching $x$, and the first time when $\gamma$ exits $B_{\rhos(x)}(x)$ after reaching $x$:
\[\sigma_\gamma^-(x)\coloneqq  \sup\{ \sigma< \gamma^{-1}(x) : \gamma_\sigma \in \partial B_{\rhos(x)}(x) \}\quad\text{and}\quad\sigma_\gamma^+(x)\coloneqq  \inf\{ \sigma> \gamma^{-1}(x): \gamma_\sigma \in \partial B_{\rhos(x)}(x) \}.\]
In particular, $0\leq \sigma_\gamma^-(x)\leq \gamma^{-1}(x)\leq \sigma_\gamma^+(x)\leq t_\gamma$.

The restriction of $\gamma$ to the interval $(\sigma_\gamma^-(x), \sigma_\gamma^+(x))$ takes values on $B_{\rhos(x)}(x)$, whilst $\gamma(\sigma_\gamma^\pm(x))\in \partial B_{\rhos(x)}(x)$. Let $ \overleftarrow{\gamma}$ (which depends on $\gamma$ but also on $x$) be the arclength parametrisation of the arc going anticlockwise from
$\gamma(\sigma_\gamma^+(x))$ to $\gamma(\sigma_\gamma^-(x))$ along $\partial B_{\rhos(x)}(x)$. Then, the concatenation $\eta\coloneqq \gamma_{|[\sigma_\gamma^-(x), \sigma_\gamma^+(x)]}\cdot \overleftarrow{\gamma}$ is a simple loop.  By the Jordan curve theorem, $\mathbb{C}\setminus \Range(\eta)$ consists of exactly two connected components. We let $L(x)$ be the bounded one, and $R(x)$ be the unbounded one.
\begin{definition}
\label{eq:def:reg}
    The loop $\ell$ is said to be {\bf regular for $\gamma$} if it intersects $\gamma$ and the following holds.
    \begin{enumerate}[i)]
    \item \label{item:prop1} The set of temporal roots $\Theta_\ell$ is reduced to a single element $\theta_\ell$. Furthermore, $\sigma_\ell\notin \{0,t_\gamma\}$.
     Remark this prevents $\theta^-_\ell=0$, since then $\theta^+_\ell=t_\ell$.
    \item \label{item:prop2} The intersection at $\theta_\ell$ is bilateral: for any $\epsilon>0$, there exists $\theta^L,\theta^R\in [\theta_\ell-\epsilon,\theta_\ell+\epsilon]$ such that
    $\ell(\theta^L)\in L(x_\ell)$ and $\ell(\theta^R)\in R(x_\ell)$.
    \end{enumerate}
    The subset of $\mathscr{L}_\gamma$ of loops which are regular for $\gamma$ is denoted $\mathscr{L}_{reg,\gamma}$. We define
		\[ \mathscr{L}_{reg}\coloneqq \bigcup_{\gamma\in \mathscr{S}} (\{\gamma\} \times \mathscr{L}_{reg,\gamma}) \qquad \mbox{and} \qquad
		\mathscr{L}_0\coloneqq\bigcup_{\gamma\in \mathscr{S}} (\{\gamma\} \times \mathscr{L}_{\gamma}).\]
\end{definition}
The main goal of this subsection is to prove the following properties, 	where	$\mathscr{S}\times \mathscr{L}\supset \mathscr{L}_0$ is endowed with the topology induced by $\rho\times d_\infty$.
\begin{theorem}\
    \label{th:rerootingcontinuity}
    \begin{enumerate}
    \item   \label{th:rerootingcontinuity1} The set $\mathscr{L}_0$ is a neighbourhood of $\mathscr{L}_{reg}$ in $\mathscr{S}\times \mathscr{L}$ (i.e., there exists $U$ an open set such that $\mathscr{L}_{reg} \subset U \subset \mathscr{L}_0$).
    \item \label{th:rerootingcontinuity2}   The applications which map $(\ell,\gamma)$ to $\theta^-_\ell$, $\theta^+_\ell$, $x_\ell$, and $\sigma_\ell$, all defined on
    $\mathscr{L}_0$,
    are continuous on $\mathscr{L}_{reg}$.
    \end{enumerate}
\end{theorem}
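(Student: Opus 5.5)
We fix $(\gamma,\ell)\in\mathscr{L}_{reg}$ and write $x=x_{\gamma,\ell}$, $\sigma=\sigma_{\gamma,\ell}$, $\theta=\theta_{\gamma,\ell}$. We consider a sequence $(\gamma_n,\ell_n)\to(\gamma,\ell)$ for $\rho\times d_\infty$. Reparametrising via the bijections $\varphi_n$ from the definition of $\rho$ (and similarly for $d_\infty$), we may assume $\gamma_n\to\gamma$ and $\ell_n\to\ell$ uniformly after time-changes that are uniformly close to the identity. It then suffices to prove four things: $(\gamma_n,\ell_n)\in\mathscr{L}_0$ for $n$ large, $\sigma_{\gamma_n,\ell_n}\to\sigma$, $x_{\gamma_n,\ell_n}\to x$, and $\theta^\pm_{\gamma_n,\ell_n}\to\theta$. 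Since $\rho$ and $d_\infty$ are metrics, working with sequences is enough. Item~\ref{th:rerootingcontinuity1} follows from the first of these: if no open neighbourhood of $\mathscr{L}_{reg}$ were contained in $\mathscr{L}_0$, there would be a sequence outside $\mathscr{L}_0$ converging to a regular pair.

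\textbf{Step 1: forcing an intersection near $x$.} We fix a small $\epsilon>0$ and use bilaterality to pick $\theta^L,\theta^R\in[\theta-\epsilon,\theta+\epsilon]$ with $\ell(\theta^L)\in L(x)$ and $\ell(\theta^R)\in R(x)$. Inside the ball $B_{\rhos(x)}(x)$, the boundary between $L(x)$ and $R(x)$ is the arc $\gamma[\sigma^-_\gamma(x),\sigma^+_\gamma(x)]$. We choose $r>0$ so that the closed balls of radius $r$ around $\ell(\theta^L)$ and $\ell(\theta^R)$ lie in $L(x)$ and $R(x)$ respectively, and so that $\ell[\theta^L,\theta^R]$ stays inside the ball. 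For $n$ large, the sub-path of $\ell_n$ corresponding to $[\theta^L,\theta^R]$ runs from the first small ball to the second. We also need the ``left/right'' structure to be stable under perturbation of $\gamma$: we close $\gamma_n$ with a circular arc as in the construction of $L(x)$. Then $\ell_n$ restricted to this sub-path has winding number $1$ around one endpoint region and $0$ around the other relative to the Jordan loop built from $\gamma_n$; equivalently, a path connecting the bounded and unbounded components of the complement of a Jordan loop must meet that loop. Since it cannot meet the circular closing arc (it stays at positive distance), it meets $\gamma_n$ at a point within $\omega_{2\epsilon}(\ell)+o(1)$ of $x$. This shows $(\gamma_n,\ell_n)\in\mathscr{L}_0$, and it gives $\limsup\sigma_{\gamma_n,\ell_n}\le\sigma+\eta(\epsilon)$ with $\eta(\epsilon)\to0$, using that $\gamma^{-1}$ is continuous on $\Range(\gamma)$ because $\gamma$ is injective.

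\textbf{Step 2: lower semicontinuity.} This is a compactness argument. Suppose, along a subsequence, $\sigma_{\gamma_n,\ell_n}\le\sigma-\delta$. Then the points $\gamma_n(\sigma_{\gamma_n,\ell_n})\in\Range(\ell_n)$ have a limit point in $\gamma[0,\sigma-\delta]\cap\Range(\ell)$ (passing to the limit in the time parameters using uniform convergence), which contradicts the minimality of $\sigma$. Hence $\sigma_{\gamma_n,\ell_n}\to\sigma$, and by uniform convergence $x_{\gamma_n,\ell_n}\to x$.

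\textbf{Step 3: convergence of the temporal roots.} Any subsequential limit $\vartheta$ of elements of $\Theta_{\gamma_n,\ell_n}$ satisfies $\ell(\vartheta)=x$, so $\vartheta\in\Theta_\ell$. Here a loop time $0$ or $t_\ell$ should be read modulo $t_\ell$. Regularity gives $\Theta_\ell=\{\theta\}$ with $\theta\in(0,t_\ell)$, so $\theta^\pm_{\gamma_n,\ell_n}\to\theta$.

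\textbf{Main obstacle.} We expect the delicate part to be Step~1, namely making the topological crossing argument rigorous when $\gamma_n$ is only uniformly close to $\gamma$. The Jordan domains $L(x)$ defined from $\gamma_n$ could a priori differ from those of $\gamma$, for instance because $\sigma^\pm_{\gamma_n}$ jumps. We plan to avoid this by not using the domains of $\gamma_n$ directly. Instead we use the fixed disc $B_{\rhos(x)}(x)$ and the index (winding number) of the closed loop $\gamma_n|_{[\sigma^-_\gamma(x),\sigma^+_\gamma(x)]}$ closed by a short segment and the arc $\overleftarrow{\gamma}$. Because winding numbers are stable under uniform perturbation at points staying at distance $\ge r$ from the curve, the two endpoints of $\ell_n[\theta^L,\theta^R]$ have different indices, which forces an intersection.
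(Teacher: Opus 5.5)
Your proposal is correct and follows essentially the paper's route: Step~1 corresponds to Lemma~\ref{le:notstupid} together with Lemma~\ref{le:sigmaCont} and Corollary~\ref{coro:notstupidbound}, Step~2 to Corollary~\ref{coro:stupidbound}, Step~3 to Corollary~\ref{coro:thetacont}, and item~\ref{th:rerootingcontinuity1} is read off from Step~1 as in the paper. The one difference, in the crossing argument, is to your credit. Your index formulation (the possibly non-simple perturbed loop $\eta_1$ has different winding numbers at the two points) is cleaner than the paper's claim that lying in the unbounded component is invariant under homotopies avoiding the point, which can fail once $\eta_1$ is not simple. You should therefore replace the phrase ``Jordan loop built from $\gamma_n$'' in Step~1 by that index argument.
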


We remind the reader once more that continuity on $\mathscr{L}_{reg}$ is stronger than continuity of the applications restricted to $\mathscr{L}_{reg}$: what this says is that for any fixed $(\gamma,\ell) \in \mathscr{L}_{reg}$, and any sequence $(\gamma_n,\ell_n) $ in $\mathscr{L}_{0}$ such that $(\gamma_n , \ell_n) \to (\gamma, \ell)$ we have that $\theta^-_{\ell_n} \to \theta^-_{\ell}$, and analogously for the other applications.

The proof of this theorem is given in Sections \ref{SS:32} and \ref{SS:33}. We first make a few elementary observations first. 

\begin{remarks}
    By Lemma~\ref{le:sametopo} below, the topologies induced by $d_\infty$ and $\rho$ on $\mathcal{C}([0,\cdot],\mathbb{R}^d)$, hence on $\mathscr{S}$ and $\mathscr{L}$, are equal to each other. Thus, we will alternatively use the distances $\rho\times \rho$, $\rho\times d_\infty$,  and $d_\infty\times d_\infty$.

    Furthermore, the considered topologies are metric, hence first countable. We will thus rely on sequential characterisation of continuity
     without further explanation.

    Neither $\mathscr{L}_{0}$ nor $\mathscr{L}_{reg}$ are open in $\mathscr{S}\times \mathscr{L}$ (in fact, we will prove $\mathscr{L}_{0}$ is closed).

\end{remarks}

The following is classical, and says that the two distances on paths from the introduction, $\rho$ and $d_\infty$, are equivalent. We state it and prove it for completeness.

\begin{lemma}
\label{le:sametopo}
  For any $\gamma_0,\gamma_1\in  \bigcup_{T\geq 0} \mathcal{F}( [0,T], \mathbb{R}^d)$, it holds
  \[ \rho (\gamma_0, \gamma_1)\leq \omega_{d_\infty(\gamma_0,\gamma_1) }(\gamma_0)+3 d_\infty(\gamma_0,\gamma_1) \]and \[
  d_\infty(\gamma_0,\gamma_1)\leq 2\rho (\gamma_0, \gamma_1) +\min(\omega_{2 \rho (\gamma_0, \gamma_1)}(\gamma_0),\omega_{2 \rho (\gamma_0, \gamma_1)}(\gamma_1)).\]
  In particular, the distances $\rho$ and $d_\infty$ induce the same topology on $ \mathcal{C}( [0,\cdot], \mathbb{R}^d)$.
\end{lemma}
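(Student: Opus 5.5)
Both inequalities come from exhibiting a good reparametrisation for one of them and reading off the other from the definition of $\rho$. Write $\delta = d_\infty(\gamma_0,\gamma_1)$, $t_0=t_{\gamma_0}$, $t_1=t_{\gamma_1}$.

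\emph{First inequality.} Take the linear bijection $\varphi(t)= t\, t_1/t_0$ from $[0,t_0]$ to $[0,t_1]$; if $t_0=0$ or $t_1=0$, treat it by a direct degenerate argument or a limiting affine map. Since $|t_0-t_1|\le \delta$, we have $\|\varphi-\operatorname{Id}\|_\infty\le |t_1-t_0|\le\delta$ and likewise $\|\varphi^{-1}-\operatorname{Id}\|_\infty\le\delta$, so the time term in $\rho$ is at most $\delta$. For the space term we write
\[
|\gamma_0(t)-\gamma_1(\varphi(t))|\le |\gamma_0(t)-\gamma_0(\varphi(t)\wedge t_0)| + |\gamma_0(\varphi(t)\wedge t_0)-\gamma_1(\varphi(t)\wedge t_1)|.
\]
The second term is at most $\delta$ by definition of $d_\infty$; note $\varphi(t)\le t_1$. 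The first term is at most $\omega_\delta(\gamma_0)$, since $|\varphi(t)\wedge t_0 - t|\le\delta$. One has to be slightly careful with the truncation $\wedge$ when $\varphi(t)>t_0$, which is where the extra constant slack in ``$3\delta$'' gets spent. Summing, $\rho\le \omega_\delta(\gamma_0)+2\delta\le \omega_\delta(\gamma_0)+3\delta$.

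\emph{Second inequality.} Let $r=\rho(\gamma_0,\gamma_1)$ and fix $\eta>0$ together with a $\varphi$ nearly achieving the infimum. Then $\|\gamma_0-\gamma_1\circ\varphi\|_\infty\le r+\eta$ and $\|\varphi-\operatorname{Id}\|_\infty,\|\varphi^{-1}-\operatorname{Id}\|_\infty\le 2(r+\eta)$. Evaluating $\varphi$ at $t_0$ gives $|t_1-t_0|\le 2(r+\eta)$; more precisely this is bounded by $r+\eta$ when both sup-norms are close, but $2(r+\eta)$ suffices. For $t\ge0$ we compare $\gamma_0(t\wedge t_0)$ with $\gamma_1(\varphi(t\wedge t_0))$, which costs at most $r+\eta$. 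We then compare $\gamma_1(\varphi(t\wedge t_0))$ with $\gamma_1(t\wedge t_1)$; the two times differ by at most $2(r+\eta)$, which costs at most $\omega_{2(r+\eta)}(\gamma_1)$. The symmetric route, using $\varphi^{-1}$ and $\gamma_0$, gives the bound with $\omega(\gamma_0)$ instead, so we may take the minimum. The main fiddly point is matching the duration term $|t_0-t_1|$ and the $r$ term to land exactly on the constant $2\rho$. I would bound $|t_0-t_1|+\|\gamma_0-\gamma_1\circ\varphi\|$ jointly by using that the $\rho$-functional already contains both pieces. Letting $\eta\to0$ requires right-continuity of $\omega$ in its argument; for continuous paths we can instead absorb this by monotonicity, or simply state the inequality with $\omega$ evaluated at a slightly larger argument and pass to the limit.

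\emph{Topological conclusion.} For continuous $\gamma_0$, $\omega_\epsilon(\gamma_0)\to0$ as $\epsilon\to0$. Hence each distance tends to zero along a sequence whenever the other does, with $\gamma_0$ fixed as the limit. So the two distances have the same convergent sequences and, being metrics, induce the same topology. The only real obstacle is the bookkeeping of constants and truncations; there is no conceptual difficulty.
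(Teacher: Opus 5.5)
Your proposal is correct and follows essentially the paper's proof: the linear reparametrisation $t\mapsto t\,t_{\gamma_1}/t_{\gamma_0}$ gives the first bound. For the second, you take a near-optimal $\varphi$ and use $|t_{\gamma_0}-t_{\gamma_1}|=|\varphi(t_{\gamma_0})-t_{\gamma_0}|\le\|\varphi-\operatorname{Id}\|_\infty$, so the duration and spatial terms jointly cost at most $2\rho$, and symmetry then gives the minimum.

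The only real difference is in the case $t_{\gamma_0}<t_{\gamma_1}$. Your truncation $\varphi(t)\wedge t_{\gamma_0}$ handles it directly, and no slack is spent there, since $|\varphi(t)\wedge t_{\gamma_0}-t|\le|t_{\gamma_0}-t_{\gamma_1}|$ in all cases. The paper instead swaps the two paths and pays $2d_\infty(\gamma_0,\gamma_1)$ to convert $\omega(\gamma_1)$ into $\omega(\gamma_0)$, which is where its constant $3$ comes from.

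One correction: if exactly one duration vanishes, there is no increasing bijection at all, so $\rho=+\infty$. That case has to be excluded rather than treated by a limiting map; the paper tacitly excludes it too.
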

\begin{proof}
   Let $\gamma_0,\gamma_1\in\bigcup_{T\geq 0} \mathcal{F}( [0,T], \mathbb{R}^d)$, and set $\varphi:t\mapsto t\cdot t_{\gamma_1}/t_{\gamma_0}$, which maps $[0,t_{\gamma_0}]$ to $[0,t_{\gamma_1}]$, and
   $\delta t\coloneqq |t_{\gamma_0}- t_{\gamma_1}|$.
  Then, \[ \|\varphi-\operatorname{Id}\|_{\infty,[0,t_{\gamma_0}] }=\|\varphi^{-1}-\operatorname{Id}\|_{\infty,[0,t_{\gamma_1}] }= \delta t.\]
  Assume $t_{\gamma_0}\geq t_{\gamma_1}$, so that $\gamma_0\circ \varphi$ is well-defined on $[0,t_{\gamma_0}]$. Then,
  \begin{align*} \|{\gamma}_0-{\gamma}_1\circ \varphi \|_{\infty,[0,t_{\gamma_0}] }
  &\leq
  \|{\gamma}_0-{\gamma}_0\circ \varphi \|_{\infty,[0,t_{\gamma_0}] }+
  \|{\gamma}_0\circ \varphi-{\gamma}_1\circ \varphi\|_{\infty,[0,t_{\gamma_0}] }\\
  &\leq \omega_{\|\varphi-\operatorname{Id}\|_{\infty,[0,t_{\gamma_0}] }  }(\gamma_0)+\|{\gamma}_0-{\gamma}_1\|_{\infty,[0,t_{\gamma_1}] }
  =\omega_{\delta t }(\gamma_0)+\|{\gamma}_0-{\gamma}_1\|_{\infty,[0,t_{\gamma_1}] }.
  \end{align*}
  Thus,
  \begin{equation}
  \label{eq:temp:gamma12}
  \rho (\gamma_0, \gamma_1)\leq \omega_{\delta t }(\gamma_0)+\|{\gamma}_0-{\gamma}_1\|_{\infty,[0,t_{\gamma_1}] }+\delta t \leq \omega_{d_\infty(\gamma_0,\gamma_1) }(\gamma_0)+d_\infty(\gamma_0,\gamma_1).
  \end{equation}
  In the case $t_{\gamma_0}\leq t_{\gamma_1}$, it holds $\omega_{d_\infty(\gamma_0,\gamma_1) }(\gamma_1)\leq \omega_{d_\infty(\gamma_0,\gamma_1) }(\gamma_0)+2 d_\infty(\gamma_0,\gamma_1) $. Applying~\eqref{eq:temp:gamma12} with $\gamma_0$ and $\gamma_1$ swapped, we deduce
  \[ \rho (\gamma_0, \gamma_1)\leq
  \omega_{d_\infty(\gamma_0,\gamma_1) }(\gamma_1)+d_\infty(\gamma_0,\gamma_1)\leq \omega_{d_\infty(\gamma_0,\gamma_1) }(\gamma_0)+3 d_\infty(\gamma_0,\gamma_1),
  \]
  which is valid for both $t_{\gamma_0}\geq t_{\gamma_1}$ and $t_{\gamma_0}\leq t_{\gamma_1}$.

  On the other hand, for any $\gamma_0,\gamma_1$, let  $\varphi$ be a continuous increasing bijection from $[0,t_{\gamma_0}]$  to $[0,t_{\gamma_1}]$   such that
  \[
   \|\varphi-\operatorname{Id}\|_{\infty,[0,t_{\gamma_0}] }+\|\varphi^{-1}-\operatorname{Id}\|_{\infty,[0,t_{\gamma_1}] }+\|{\gamma}_0-{\gamma}_1\circ \varphi\|_{\infty,[0,\infty)} \leq 2
   \rho (\gamma_0, \gamma_1).
  \]
  One has
  \begin{align*} d_\infty(\gamma_0,\gamma_1)
  &\leq   \|\gamma_0-\gamma_1\circ \varphi\|_{\infty,[0,t_{\gamma_0}]} +d_\infty(\gamma_1\circ\varphi,\gamma_1 ) \\
  &\leq  \|\gamma_0-\gamma_1\circ \varphi\|_{\infty,[0,t_{\gamma_0}]} + |t_{\gamma_0}-t_{\gamma_1}| + \omega_{\|\varphi-\operatorname{Id}\|_{\infty,[0,t_{\gamma_0}]} }(\gamma_1)  \\
  &\leq  \|\gamma_0-\gamma_1\circ \varphi\|_{\infty,[0,t_{\gamma_0}]} + \|\varphi -\operatorname{Id}\|_{\infty,[0,t_{\gamma_0}]} + \omega_{\|\varphi-\operatorname{Id}\|_{\infty,[0,t_{\gamma_0}]} }(\gamma_1) \\
  &\leq 2\rho (\gamma_0, \gamma_1) +\omega_{2 \rho (\gamma_0, \gamma_1)}(\gamma_1),
  \end{align*}
which by symmetry between $\gamma_0$ and $\gamma_1$ we can improve into \[d_\infty(\gamma_0,\gamma_1)\leq 2\rho (\gamma_0, \gamma_1) +\min(\omega_{2 \rho (\gamma_0, \gamma_1)}(\gamma_0),\omega_{2 \rho (\gamma_0, \gamma_1)}(\gamma_1)).\]

  When $\gamma_0,\gamma_1$ are restricted to $\mathcal{C}([0,\cdot],\mathbb{R}^d)$, $\omega_\epsilon(\gamma_0)\to 0$ as $\epsilon\to 0$, hence $\rho$ and $d_\infty$ induce the same topology.
\end{proof}

Later on we will also need the following
\begin{lemma}
  \label{le:closed}
  The set $\mathscr{L}_0$ is closed in $\mathscr{S}\times \mathscr{L}$.
\end{lemma}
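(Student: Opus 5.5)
We argue by sequences, which is legitimate since $\rho\times d_\infty$ (equivalently $d_\infty\times d_\infty$, by Lemma~\ref{le:sametopo}) is metric. Let $(\gamma_n,\ell_n)\in\mathscr{L}_0$ converge to some $(\gamma,\ell)\in\mathscr{S}\times\mathscr{L}$. We have to show $\Range(\ell)\cap\Range(\gamma)\neq\emptyset$. For each $n$ there are times $s_n\in[0,t_{\gamma_n}]$ and $u_n\in[0,t_{\ell_n}]$ with $\gamma_n(s_n)=\ell_n(u_n)$.

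Because $t_{\gamma_n}\to t_\gamma$ and $t_{\ell_n}\to t_\ell$, the sequences $(s_n)$ and $(u_n)$ are bounded. Passing to a subsequence, $s_n\to s$ and $u_n\to u$. Since $s_n\le t_{\gamma_n}\to t_\gamma$, we get $s\in[0,t_\gamma]$, and likewise $u\in[0,t_\ell]$.

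Next we estimate
\[
|\gamma_n(s_n)-\gamma(s)|\le |\gamma_n(s_n)-\gamma(s_n\wedge t_\gamma)|+|\gamma(s_n\wedge t_\gamma)-\gamma(s)|.
\]
The first term is bounded by $d_\infty(\gamma_n,\gamma)$, using $s_n=s_n\wedge t_{\gamma_n}$ in the definition of $d_\infty$. The second term tends to $0$ by continuity of $\gamma$, since $s_n\wedge t_\gamma\to s$. Hence $\gamma_n(s_n)\to\gamma(s)$, and in the same way $\ell_n(u_n)\to\ell(u)$. Passing to the limit in $\gamma_n(s_n)=\ell_n(u_n)$ gives $\gamma(s)=\ell(u)$, so $(\gamma,\ell)\in\mathscr{L}_0$.

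The closure of $\mathscr{S}$ plays no role here, since the lemma asserts closedness relative to $\mathscr{S}\times\mathscr{L}$. The only point requiring care is the mismatch between the time domains $[0,t_{\gamma_n}]$ and $[0,t_\gamma]$, and the truncation $\cdot\wedge t$ built into $d_\infty$ handles it.
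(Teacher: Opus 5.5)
Your proof is correct, and it takes a slightly different route from the paper's.

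The paper proves that the complement of $\mathscr{L}_0$ is open. Given a pair $(\gamma,\ell)$ with disjoint ranges, it uses compactness of the ranges to get $\delta=d_{\inf}(\Range(\gamma),\Range(\ell))>0$. It then uses the reparametrisations $\varphi_\gamma,\varphi_\ell$ from the definition of $\rho$ to show that every pair within $\rho\times\rho$-distance $\delta$ still has disjoint ranges.

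You instead prove sequential closedness. Your compactness is on the time parameters (extracting convergent subsequences of $s_n,u_n$), not on the ranges, and you handle the differing domains with the truncation $\cdot\wedge t_\gamma$ built into $d_\infty$. This is the same device the paper uses in \eqref{eq:impliqueCV} for Lemma~\ref{le:sigmaCont}.

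The paper's version gives an explicit radius for a neighbourhood of non-intersecting pairs, while yours is purely qualitative. However, the only later use of the lemma, the existence of $\delta'(\gamma,\ell)$ in Lemma~\ref{le:suited}, needs nothing beyond closedness, so either argument is enough. Your route also avoids reparametrisations and the domain mismatch when picking the distance-minimising times (these should lie in $[0,t_{\gamma'}]$ and $[0,t_{\ell'}]$ and be pulled back through $\varphi^{-1}$), which the paper's write-up handles somewhat loosely.
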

\begin{proof}
By Lemma~\ref{le:sametopo}, we can work with the distance $\rho\times \rho$.
  For two subsets $X,Y$ from $\mathbb{R}^2$, let $d_{\inf}(X,Y)\coloneqq \inf \{ |x-y|: x\in X, y\in Y\} $. For $X,Y$ compact, this infimum is achieved, i.e. there exist $x\in X, y\in Y$ such that $d_{\inf}(X,Y)=|x-y|$. In particular, for $X,Y$ compact, $d_{\inf}(X,Y)=0$ if and only if $X\cap Y\neq \emptyset$.

  Let $(\gamma,\ell) \in (\mathscr{S}\times \mathscr{L}) \setminus \mathscr{L}_0$. By compactness, and since $\Range(\gamma)\cap\Range(\ell)=\emptyset$, \[ d_{\inf}(\Range(\gamma),\Range(\ell))>0.\]

  Let $(\gamma',\ell')$ at $\rho\times \rho$ distance strictly less than $\delta=d_{\inf}(\Range(\gamma),\Range(\ell))$ from $(\gamma,\ell)$.
  By compactness, there exists $\sigma\in [0,t_\gamma]$ and $t\in [0,t_\ell]$ such that $d_{\inf}(\Range(\gamma'),\Range(\ell'))=|\gamma'(\sigma)-\ell'(t)| $.
  By considering $\varphi_\gamma,\varphi_\ell$ continuous bijection such that
  $\| \gamma-\gamma'\circ \varphi_\gamma\|_\infty
  +\| \ell-\ell'\circ \varphi_\ell\|_\infty
  <\delta $, we deduce $|\gamma'(\sigma)-\ell'(t)| > d_{\inf}(\Range(\gamma),\Range(\ell))- \delta =0 $. Thus
  $d_{\inf}(\Range(\gamma'),\Range(\ell'))>0$, and by compactness we deduce $\Range(\gamma')\cap \Range(\ell')=\emptyset$, hence $(\gamma',\ell')\notin \mathscr{L}_0$.
  We have proved  $ \mathscr{S}\times \mathscr{L} \setminus \mathscr{L}_0$ is open, hence $\mathscr{L}_0$ is closed.
\end{proof}

\subsection{\texorpdfstring{$\mathscr{L}_0$}{L0} as a neighborhood of \texorpdfstring{$\mathscr{L}_{reg}$}{Lreg}}

\label{SS:32}

Note that the first item~\ref{th:rerootingcontinuity1} in Theorem~\ref{th:rerootingcontinuity} can be reformulated as follows. 
    Let $(\gamma,\ell)\in \mathscr{L}_{reg}$. Then, there exists $\delta>0$ such that for all $\gamma'\in \mathscr{S}$ and $\ell'\in \mathscr{L}$ such that $d_\infty(\gamma,\gamma')<\delta$ and $d_\infty(\ell,\ell')<\delta$, it holds $\ell'\in \mathscr{L}_{\gamma'}$.
    
    We shall directly prove the following much stronger version, which also state that the spatial root  $x_{\gamma,\ell}$ and temporal root $\theta_{\gamma,\ell}$ can be approached respectively by {some} intersection point $x$ between $\gamma'$ and $\ell'$ and by {some} time $\theta: \ell'(\theta)=x$. Remark we do not claim yet that we can chose $x=x_{\gamma',\ell'}$ and $\theta\in \Theta_{\gamma',\ell'}$
(even less that we can chose $\theta=\theta^-_{\gamma',\ell'}$ or $\theta=\theta^+_{\gamma',\ell'}$  ), so this is still far from proving the continuity of $\gamma,\ell\mapsto x_{\gamma,\ell}, \ \theta_{\gamma,\ell}$.
\begin{lemma}
  \label{le:notstupid}
    Let $(\gamma,\ell)\in \mathscr{L}_{reg}$ and $\epsilon>0$. Then, there exists $\delta>0$ such that for all $\gamma'\in \mathscr{S}$ and $\ell'\in \mathscr{L}$ such that $d_\infty(\gamma,\gamma')<\delta$ and $d_\infty(\ell,\ell')<\delta$, it holds $\ell'\in \mathscr{L}_{\gamma'}$
    and there exists $\theta\in [\theta_\ell-\epsilon,\theta_\ell+\epsilon]$ such that $\ell'(\theta)\in \Range(\gamma')$ and $|\ell'(\theta)- x_\ell |\leq \epsilon$.
\end{lemma}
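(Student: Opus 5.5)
The plan is a topological ``crossing'' argument based on the bilateral property \ref{item:prop2} of Definition~\ref{eq:def:reg}. We may assume $\epsilon$ is small: $\epsilon<\rhos(x_\ell)/4$, and $\epsilon$ smaller than the distance from $x_\ell$ to $\gamma([0,\sigma_\gamma^-(x_\ell)]\cup[\sigma_\gamma^+(x_\ell),t_\gamma])$. The latter is positive by compactness, since $\gamma$ is simple and $\sigma_\ell\notin\{0,t_\gamma\}$.

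\textbf{Step 1: the continuous picture.} By continuity of $\ell$, fix $\epsilon'\le\epsilon$ such that $\ell([\theta_\ell-\epsilon',\theta_\ell+\epsilon'])\subset B_{\epsilon/2}(x_\ell)$. Bilaterality then gives $\theta^L,\theta^R\in[\theta_\ell-\epsilon',\theta_\ell+\epsilon']$ with $\ell(\theta^L)\in L(x_\ell)$ and $\ell(\theta^R)\in R(x_\ell)$. Since $L(x_\ell)$ and $R(x_\ell)$ are open, there is $r>0$ with $B_r(\ell(\theta^L))\subset L(x_\ell)$ and $B_r(\ell(\theta^R))\subset R(x_\ell)$. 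Inside $B_{\epsilon}(x_\ell)$, the boundary $\Range(\eta)$ of these components reduces to the arc $\gamma((\sigma^-_\gamma,\sigma^+_\gamma))$: the circular arc $\overleftarrow{\gamma}$ lies on $\partial B_{\rhos}(x_\ell)$, far away, and the rest of $\gamma$ is excluded by the choice of $\epsilon$.

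\textbf{Step 2: perturbation.} Take $\delta<\min(r,\epsilon/4)$, possibly smaller, with $d_\infty(\ell,\ell')<\delta$ and $d_\infty(\gamma,\gamma')<\delta$. Then $\ell'(\theta^L)\in L(x_\ell)$ and $\ell'(\theta^R)\in R(x_\ell)$, and the segment $\ell'([\theta^L\wedge\theta^R,\theta^L\vee\theta^R])$ stays in $B_{\epsilon}(x_\ell)$. (Times are within $[0,t_{\ell'}]$ because $\theta_\ell\in(0,t_\ell)$ and $\delta$ is small; we may also shrink $\epsilon'$ so that $[\theta_\ell-\epsilon',\theta_\ell+\epsilon']\subset(0,t_\ell-\delta)$.)

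The point is to show that this piece of $\ell'$ must meet $\Range(\gamma')$. Argue by contradiction. If it avoids $\gamma'$, it is at positive distance $d'$ from $\gamma'$. We would like to say that $L$ and $R$ are ``separated by $\gamma$'' robustly under perturbation. The cleanest tool is the winding number: let $\eta'$ be the closed loop obtained from $\gamma'_{|[\sigma^-_\gamma,\sigma^+_\gamma]}$ by closing it with the arc $\overleftarrow{\gamma}$ together with two short segments of length $<\delta$ joining $\gamma'(\sigma^\pm_\gamma)$ to $\gamma(\sigma^\pm_\gamma)$. We have $\|\eta-\eta'\|_\infty<\delta$ under a common parametrisation. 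Since $\ell'(\theta^L)$ and $\ell'(\theta^R)$ are at distance $\ge r>\delta$ from $\Range(\eta)$, the homotopy $\eta\to\eta'$ avoids them, so their winding numbers with respect to $\eta'$ equal those with respect to $\eta$, namely $\pm1$ and $0$ by the Jordan curve theorem for the simple loop $\eta$. The winding number is constant along any path avoiding $\Range(\eta')$. Hence $\ell'_{|[\theta^L,\theta^R]}$ meets $\Range(\eta')$.

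Inside $B_\epsilon(x_\ell)$, the only possible part of $\Range(\eta')$ is $\gamma'$. The closing arc is at distance about $\rhos\gg\epsilon$, and the short segments lie near $\partial B_{\rhos}$. So the intersection point $\ell'(\theta)$ lies on $\Range(\gamma')$, with $\theta\in[\theta_\ell-\epsilon,\theta_\ell+\epsilon]$ and $|\ell'(\theta)-x_\ell|<\epsilon$. In particular $\ell'\in\mathscr{L}_{\gamma'}$.

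\textbf{Main obstacle.} The delicate step is making precise that the separation between the two sides survives perturbation of the (possibly very rough) simple curve $\gamma$. I would handle it via homotopy invariance of the winding number as above, rather than via any Jordan-curve statement for $\gamma'$, which need not enter or exit balls nicely. The remaining care is bookkeeping: the choice of $\delta$ relative to $r$, $\epsilon$ and $\rhos(x_\ell)$, and the time shift between $d_\infty$-close loops of different durations.
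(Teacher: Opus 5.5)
Your proposal is correct and follows essentially the paper's route: close $\gamma'$ on $[\sigma^-_\gamma,\sigma^+_\gamma]$ with two short segments and the arc $\overleftarrow{\gamma}$, deform $\eta$ linearly to this loop while avoiding the two points given by the bilateral property, conclude that $\ell'$ must cross the new loop between $\theta^L$ and $\theta^R$, and note that the crossing can only lie on $\gamma'$ because the arc and segments are near $\partial B_{\rhos(x_\ell)}(x_\ell)$. Two differences are in your favour: the winding number is the right homotopy invariant (the paper's "lies in the unbounded component" is not homotopy-invariant for non-simple loops), and you write out the final localisation step that the paper only announces; the one fix needed is to take $\delta<r/2$ rather than $\delta<r$, since $\ell'(\theta^L),\ell'(\theta^R)$ are only at distance $\ge r-\delta$ from $\Range(\eta)$ and the homotopy moves points by up to $\delta$.
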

\begin{proof}

  By Lemma~\ref{le:sametopo}, it suffices to prove the same result but with $\rho$ instead of $d_\infty$.
	First, we reduce to the problem to the case when $t_\gamma=t_{\gamma'}$, $t_\ell=t_{\ell'}$, $\| \gamma-\gamma'\|_\infty<\delta$, and $\| \ell-\ell'\|_\infty<\delta$.

  Let $\delta_1\in (0,\epsilon)$ be sufficiently small that the continuity modulus $\omega_{\delta_1}(\ell)$ is smaller than both $\epsilon/4$ and $\rhos(x_\ell)/2$. Let $\theta^L,\theta^R\in [\theta_\ell-\delta_1, \theta_\ell+\delta_1]$ such that $\ell(\theta^L)\in L(x_\ell)$, $\ell(\theta^R)\in R(x_\ell)$, which exist since $\ell\in \mathcal{L}_{reg,\gamma}$.
  Then, let
  \begin{equation}
  \label{eq:deltadef}
  \delta \coloneqq \frac{1}{4} \min \{\epsilon,\delta_1, \rhos(x_\ell), d(\ell(\theta^L), R(x_\ell) ), d(\ell(\theta^L), L(x_\ell) ) \}>0.
  \end{equation}

  Let now $\gamma',\ell'\in \mathscr{S}\times\mathscr{L}$ such that $d_\infty(\gamma,\gamma')<\delta$ and $d_\infty(\ell,\ell')<\delta$. Let further $\varphi_{\gamma}:[0,t_\gamma]\to [0,t_{\gamma'}]$ and $\varphi_{\ell}:[0,t_\ell]\to [0,t_{\ell'}] $
	be continuous bijections such that
  \[
  \|\gamma-\gamma'\circ \varphi_{\gamma}\|_{\infty,[0,t_{\gamma}] }+\frac{\|\varphi_{\gamma}-\operatorname{Id}\|_{\infty,[0,t_{\gamma}] }+\|\varphi_{\gamma}^{-1}-\operatorname{Id}\|_{\infty,[0,t_{\gamma'}] }}{2}<\delta
  \]
  \[\mbox{and} \qquad
  \|\ell-\ell'\circ \varphi_{\ell}\|_{\infty,[0,t_{\ell}] }+\frac{\|\varphi_{\ell}-\operatorname{Id}\|_{\infty,[0,t_{\ell}] }+\|\varphi_{\ell}^{-1}-\operatorname{Id}\|_{\infty,[0,t_{\ell'}] }}{2} <\delta.
  \]
  Let $\tilde{\theta}^L=\varphi_\ell^{-1}(\theta^L)$, $\tilde{\theta}^R=\varphi_\ell^{-1}(\theta^R)$. For $s,t\in [0,\infty)$, write $[s,t]$ for the interval $[s\wedge t,s\vee t ]$.
  We will find $\tilde{\theta}\in [\tilde{\theta}^L,\tilde{\theta}^R  ]$ such that $\ell'\circ \varphi_\ell(\theta)\in \Range(\gamma'\circ \varphi_\gamma)$.
  This is sufficient to conclude: letting then $ \theta\coloneqq \varphi_\ell(\tilde{\theta})$, we have
  \begin{itemize}
  \item By  monotonicity of $\varphi_\ell$,  $ \theta= \varphi_\ell(\tilde{\theta})\in \varphi_\ell([ \tilde{\theta}^L,\tilde{\theta}^R   ])=
  [\theta^L,\theta^R ]\subseteq [\theta_\ell-\epsilon,\theta_\ell+\epsilon]$.
  \item $\ell(\theta)=\tilde{\ell}(\tilde{\theta}) \in \Range(\gamma'\circ \varphi_\gamma)= \Range(\gamma')$.
  \item $|{\ell}'({\theta})-x_\ell| = |\tilde{\ell}'(\tilde{\theta})-\ell(\theta_\ell)|
  \leq |   \tilde{\ell}'(\tilde{\theta})-{\ell}(\tilde{\theta})|+|{\ell}(\tilde{\theta})-\tilde{\ell}(\tilde{\theta})|+|\tilde{\ell}(\tilde{\theta})-\ell(\theta_\ell)|
  \leq \| \tilde{\ell}'-\ell\|_\infty+\omega_{\|\phi_\ell-\operatorname{Id}\|_\infty}(\ell)+\epsilon/2< \delta_1+\omega_{\delta_1}(\ell)+\epsilon/2=\epsilon$.
  \end{itemize}

We have finish the reduction step (remark also that we no longer need to prove $|{\ell}'({\theta})-x_\ell|$), and we now stablish the existence of $\tilde{\theta}$. We only need to consider $\tilde{\ell}'\coloneqq \ell'\circ \varphi_{\ell}$ and $\tilde{\gamma}'\coloneqq \gamma'\circ \varphi_{\gamma}$ rather than $\ell'$ and $\gamma'$, so we write $\ell'$ for $\tilde{\ell}'$, $\gamma'$ for $\tilde{\gamma}'$, $\theta^L$ for $\tilde{\theta}^L$ and $\theta^R$ for $\tilde{\theta}^R$ .
  Thus $t_{\gamma'}=t_\gamma$, $t_{\ell'}=t_\ell$, $\|\gamma'-\gamma\|_\infty<\delta$, and $\|\ell'-\ell\|_\infty<\delta$.

  Remember the loop
  $\eta= \gamma_{|[\sigma_\gamma^-(x), \sigma_\gamma^+(x)]}\cdot \overleftarrow{\gamma}$ which parametrises $\partial L(x_\ell)$ anticlockwise.
  For $a,b\in \mathbb{R}^2$ (with possibly $a=b$), let $[a,b]$ be the function from $[0,1]$ that goes in an affine way from $a$ at time $0$ to $b$ at time $1$.
  Let $a_\pm=\gamma(\sigma_\gamma^\pm(x))$, $a'_\pm=\gamma'(\sigma_\gamma^\pm(x)))$, and define
  \[ \eta_0\coloneqq [a_-,a_-]\cdot \gamma_{|[\sigma_\gamma^-(x), \sigma_\gamma^+(x)]}\cdot [a_+,a_+]\cdot\overleftarrow{\gamma},\]
  \[ \eta_1\coloneqq [a_-,a'_-]\cdot \gamma'_{|[\sigma_\gamma^-(x), \sigma_\gamma^+(x)]}\cdot [a'_+,a_+]\cdot \overleftarrow{\gamma}.\]
  Remark $\eta_0,\eta_1$ are defined on the same time interval and $\|\eta_1-\eta_0\|_\infty\leq\|\gamma-\gamma'\|_\infty<\delta$.

  For $\lambda\in [0,1]$, define the loop
  $\eta_\lambda=(1-\lambda) \eta_0+\lambda \eta_1$. For all $\lambda\in [0,1]$, $\|\eta_\lambda- \eta_0\|_\infty=(1-\lambda)   \|\eta_1-\eta_0\|_\infty<\delta$.
  Thus, $d(\ell(\theta^R),\eta_\lambda)> d(\ell(\theta^R), \eta_0) -\delta\geq 0$  and similarly $d(\ell(\theta^L),\eta_\lambda)>0$. Thus $(\eta_\lambda)_\lambda$ is a homotopy from $\eta_0$ to $\eta_1$ which avoids $\ell(\theta^R)$ and $\ell(\theta^L)$. Since
  the property that a point $x$ lies in the unbounded path-connected component delimited by a loop is invariant by homotopies avoiding $x$, we deduce that  $\ell(\theta^R)$ lies in the unbounded path-connected component delimited by $\eta_1$, which we will call $R'$, and that
  $\ell(\theta^L)$ does not.
  Furthermore, $d(\ell(\theta^R),\eta_1)\geq d(\ell(\theta^R),\eta_0)-\|\eta_1,\eta_0\|_\infty\geq 2\delta-\delta=\delta$. Thus, $\ell'(\theta^R ) \in B_\delta( \ell(\theta^R)) \subset R'$.
  From a similar argument, we deduce $
  \ell'(\theta^L)  \notin R'$.
  By intermediate value theorem, there exists $\theta\in [\theta^L,\theta^R]$ such that $\ell'(t)\in \Range(\eta_1)$. We will now show that $\ell'([\theta^L,\theta^R] )$ does not intersect $\overleftarrow{\gamma}$ nor the segments $[a_-,a'_-]$ and $[a'_+,a_+]$, from which it follows that
	$\ell'(t)\in \Range(\gamma'_{|[\sigma_\gamma^-(x), \sigma_\gamma^+(x)]})\subseteq \Range(\gamma')$, which concludes the proof.
	%
\end{proof}

\subsection{Continuity on \texorpdfstring{$\mathscr{L}_{reg}$}{Lreg} }

\label{SS:33}

We now start the proof of the continuity of $(\gamma,\ell)\mapsto \sigma_{\gamma,\ell}$ on $\mathcal{L}_{reg}$,  which will be concluded in Proposition~\ref{prop:sigmaxcont}.
\begin{lemma}
    \label{le:sigmaCont}
    Let $\gamma_n\in \mathscr{S} $ be a sequence which converges toward $\gamma\in \mathscr{S}$. Let $x_n\in \Range(\gamma_n)$ which converges toward $x\in \Range(\gamma )$.
    Then, $\gamma_n^{-1}(x_n)$ converges toward $\gamma^{-1}(x)$.
\end{lemma}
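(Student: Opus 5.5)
We argue by compactness and uniqueness of limits. By Lemma~\ref{le:sametopo} we may work with $\rho$, so there are increasing bijections $\varphi_n:[0,t_\gamma]\to[0,t_{\gamma_n}]$ with $\|\gamma-\gamma_n\circ\varphi_n\|_\infty\to 0$ and $\|\varphi_n-\operatorname{Id}\|_\infty+\|\varphi_n^{-1}-\operatorname{Id}\|_\infty\to 0$. Set $s_n\coloneqq\gamma_n^{-1}(x_n)\in[0,t_{\gamma_n}]$ and $u_n\coloneqq\varphi_n^{-1}(s_n)\in[0,t_\gamma]$. Since $|u_n-s_n|\le\|\varphi_n^{-1}-\operatorname{Id}\|_\infty\to0$, it is enough to prove that $u_n\to\gamma^{-1}(x)$.

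The key estimate is
\[|\gamma(u_n)-x|\le|\gamma(u_n)-\gamma_n(\varphi_n(u_n))|+|\gamma_n(s_n)-x|\le\|\gamma-\gamma_n\circ\varphi_n\|_\infty+|x_n-x|\longrightarrow 0,\]
using $\gamma_n(\varphi_n(u_n))=\gamma_n(s_n)=x_n$. Hence $\gamma(u_n)\to x$.

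To conclude, take any subsequence of $(u_n)$. Since $[0,t_\gamma]$ is compact, it has a further subsequence converging to some $u$. By continuity of $\gamma$, $\gamma(u)=x$.

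The only delicate point is the injectivity of $\gamma$ at the endpoint. An element of $\mathscr{S}$ is only required to be injective on $[0,t_\gamma)$, so if $\gamma(0)=\gamma(t_\gamma)$ then $\gamma^{-1}$ is not even well defined at that point. We therefore interpret the lemma, as the statement implicitly does by writing $\gamma^{-1}(x)$, for $x$ with a unique preimage; in particular this covers the case where $\gamma$ is injective on all of $[0,t_\gamma]$, which holds for the simple paths considered here. Under this assumption $\gamma(u)=x$ forces $u=\gamma^{-1}(x)$. So every subsequence of $(u_n)$ has a further subsequence converging to $\gamma^{-1}(x)$, and therefore $u_n\to\gamma^{-1}(x)$, which gives $s_n\to\gamma^{-1}(x)$.

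Note that the $\gamma_n$ need not be injective for the estimate itself, only for $\gamma_n^{-1}(x_n)$ to be well defined. Any choice of preimage $s_n$ of $x_n$ works equally well.
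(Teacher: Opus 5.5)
Your proof is correct and follows the same compactness/subsequence argument as the paper. The only difference is cosmetic: you pull the times back into $[0,t_\gamma]$ via $\rho$-reparametrisations $\varphi_n$, whereas the paper works with $d_\infty$ directly, using $|\gamma_n(\sigma_n)-\gamma(\sigma_n\wedge t_\gamma)|\le d_\infty(\gamma_n,\gamma)$ along convergent subsequences in $[0,t_\gamma+1]$. Your caveat is also legitimate: elements of $\mathscr{S}$ are only required to be injective on $[0,t_\gamma)$, so $\gamma^{-1}(x)$ need not be well defined, a point the paper's ``since $\gamma$ is injective'' glosses over.
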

\begin{proof}
    First remark that for any sequence $(\sigma_n)_n$ such that $\sigma_n\in[0,t_{\gamma_n}]$ for all $n$, we have the implication
    \begin{equation}
    \label{eq:impliqueCV}
    \sigma_n\underset{n\to \infty}\longrightarrow \sigma \implies \gamma_n(\sigma_n)\underset{n\to \infty}\longrightarrow \gamma(\sigma).
    \end{equation} Indeed, the left-hand side implies $\sigma\leq t_\gamma$ and
        \[
    | \gamma_n(\sigma_n)- \gamma(\sigma)|\leq | \gamma_n(\sigma_n)- \gamma(\sigma_n\wedge t_\gamma)|+| \gamma(\sigma_n\wedge t_\gamma )- \gamma(\sigma)|\leq d_\infty(\gamma_n,\gamma)+| \gamma(\sigma_n\wedge t_\gamma)- \gamma(\sigma)|\underset{n\to \infty}\longrightarrow 0.
    \]
    Let $x_n,x$ as in the lemma, and let $\sigma$ be any accumulation point for $\{\gamma_n^{-1}(x_n)\}$. Since, for all $n$ large enough, $\gamma_n^{-1}(x_n)\in [0,t_\gamma+1]$  which is compact, it suffices to prove $\sigma= \gamma^{-1}(x)$. We let $n$ vary along an infinite subset of $\mathbb{N}$ such that $\gamma_n^{-1}(x_n)\to \sigma$. By~\eqref{eq:impliqueCV}, it follows
    $x_n=\gamma_n(\gamma_n^{-1}(x_n) )\to \gamma(\sigma)$.
    By assumption, $x_n\to x$, thus $x=\gamma(\sigma)$. Since $\gamma$ is injective, we deduce $\sigma=\gamma^{-1}(x)$, which concludes the proof.
\end{proof}
\begin{corollary}
    \label{coro:notstupidbound}
    Let $(\gamma_n,\ell_n)\in \mathscr{L}_0$ be a sequence which converges toward $(\gamma,\ell)\in \mathscr{L}_{reg}$. Then, \[\limsup \sigma_{\gamma_n,\ell_n}\leq \sigma_{\gamma,\ell}.\]
\end{corollary}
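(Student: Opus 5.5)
The plan is to combine Lemma~\ref{le:notstupid} with Lemma~\ref{le:sigmaCont}. Write $\sigma=\sigma_{\gamma,\ell}$ and $x=x_{\gamma,\ell}$. It suffices to show that for every $\epsilon>0$ and all $n$ large, $\gamma_n$ meets $\Range(\ell_n)$ at a point $y_n$ which is close to $x$. In that case $\sigma_{\gamma_n,\ell_n}$, being the minimal hitting time, is bounded above by $\gamma_n^{-1}(y_n)$, and the latter is close to $\gamma^{-1}(x)=\sigma$.

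Fix a sequence $\epsilon_k\downarrow 0$ and apply Lemma~\ref{le:notstupid} with $\epsilon=\epsilon_k$. This gives some $\delta_k>0$, and for $n$ large enough we have $d_\infty(\gamma_n,\gamma)<\delta_k$ and $d_\infty(\ell_n,\ell)<\delta_k$. (We may pass between $\rho$ and $d_\infty$ freely by Lemma~\ref{le:sametopo}.) For such $n$ the lemma provides a time $\theta_n\in[\theta_\ell-\epsilon_k,\theta_\ell+\epsilon_k]$ with
\[
y_n\coloneqq \ell_n(\theta_n)\in\Range(\gamma_n), \qquad |y_n-x|\le\epsilon_k .
\]
A diagonal choice of $k=k(n)\to\infty$ then yields points $y_n\in\Range(\gamma_n)\cap\Range(\ell_n)$ with $y_n\to x$.

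Next, since $y_n\in\Range(\gamma_n)$ converges to $x\in\Range(\gamma)$ and $\gamma_n\to\gamma$ in $\mathscr{S}$, Lemma~\ref{le:sigmaCont} gives $\gamma_n^{-1}(y_n)\to\gamma^{-1}(x)=\sigma$. The last equality holds because $x=\gamma(\sigma)$ and $\gamma$ is injective on $[0,t_\gamma)$; regularity ensures $\sigma\ne t_\gamma$, although injectivity at the endpoint is not actually needed here. Because $y_n\in\Range(\ell_n)$, the definition of $\sigma_{\gamma_n,\ell_n}$ as a minimum gives $\sigma_{\gamma_n,\ell_n}\le\gamma_n^{-1}(y_n)$. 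Taking $\limsup$ yields $\limsup_n\sigma_{\gamma_n,\ell_n}\le\sigma_{\gamma,\ell}$.

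The only delicate point is how Lemma~\ref{le:sigmaCont} is applied. Its proof uses injectivity of the limit $\gamma$ only to identify accumulation points. Since $\gamma\in\mathscr{S}$ is injective on $[0,t_\gamma)$, and $x=\gamma(\sigma)$ with $\sigma<t_\gamma$, that identification is unambiguous. Everything else in the argument is direct.
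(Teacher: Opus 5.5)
Your proof is correct and is essentially the paper's argument: Lemma~\ref{le:notstupid}, with the diagonal extraction you make explicit, gives $y_n\in\Range(\gamma_n)\cap\Range(\ell_n)$ with $y_n\to x_{\gamma,\ell}$; Lemma~\ref{le:sigmaCont} then gives $\gamma_n^{-1}(y_n)\to\sigma_{\gamma,\ell}$; and minimality of $\sigma_{\gamma_n,\ell_n}$ concludes. One correction to your side remark: since a path in $\mathscr{S}$ may end on itself, identifying the accumulation points of $\gamma_n^{-1}(y_n)$ requires $x_{\gamma,\ell}\neq\gamma(t_\gamma)$, not merely $\sigma_{\gamma,\ell}<t_\gamma$, so the endpoint does matter. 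This is harmless, because regularity already excludes it: $L(x_{\gamma,\ell})$ and $R(x_{\gamma,\ell})$ are only defined when $\rhos(x_{\gamma,\ell})>0$.
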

\begin{proof}
    By Lemma~\ref{le:notstupid}, there exists a sequence $\theta_n$ such that $\theta_n$ converges toward $\theta_{\gamma,\ell}$, and $x_n\coloneqq \ell_n(\theta_n)$ converges toward $x_{\gamma,\ell}$, and for all $n$ large enough $\ell_n(\theta_n)\in \Range(\gamma_n)$.
    Applying Lemma~\ref{le:sigmaCont}, which we can do since $x_{\gamma,\ell}\in \Range(\gamma )$, we deduce that
    $\gamma_n^{-1}( x_n)$ converges toward $\gamma^{-1}(x_{\gamma,\ell})=\sigma_{\gamma,\ell}$   as $n\to \infty$. Since $x_n\in \Range(\ell_n)\cap \Range(\gamma_n)$, $\sigma_{ \gamma_n,\ell_n}\leq \gamma_n^{-1}( x_n)$, thus $\limsup \sigma_{\ell_n, \gamma_n}\leq \sigma_{\gamma,\ell}$.
\end{proof}

\begin{lemma}
    \label{le:stupidbound}
    Let  $(\gamma_n,\ell_n)\in \mathscr{L}_0 $ be a sequence which converges toward $(\gamma,\ell)\in \mathscr{L}_{reg}$.
    Let $(t_n)_n$ be a sequence of times such that $\ell_n(t_n)\in \Range(\gamma_n)$ and assume $t_n\to t$ as $n\to \infty$. Then, $\ell(t)\in\Range(\gamma )$.
\end{lemma}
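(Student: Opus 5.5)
This is essentially a compactness argument, using the convergence implication~\eqref{eq:impliqueCV} from the proof of Lemma~\ref{le:sigmaCont}, applied both to $\gamma_n$ and to $\ell_n$. Note that regularity of $(\gamma,\ell)$ is not needed here: only the convergence $(\gamma_n,\ell_n)\to(\gamma,\ell)$ in $d_\infty\times d_\infty$ (equivalently $\rho\times d_\infty$, by Lemma~\ref{le:sametopo}) is used.

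\textbf{Step 1 (choose preimage times on $\gamma_n$).} For each $n$, since $\ell_n(t_n)\in\Range(\gamma_n)$, pick $s_n\in[0,t_{\gamma_n}]$ with $\gamma_n(s_n)=\ell_n(t_n)$. Because $t_{\gamma_n}\to t_\gamma$, the sequence $(s_n)$ eventually lies in the compact interval $[0,t_\gamma+1]$. We may therefore extract a subsequence along which $s_n\to s$ for some $s\in[0,t_\gamma]$; the bound $s\le t_\gamma$ holds because $s_n\le t_{\gamma_n}\to t_\gamma$.

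\textbf{Step 2 (pass to the limit on both sides).} By~\eqref{eq:impliqueCV} applied to $\gamma_n\to\gamma$, we get $\gamma_n(s_n)\to\gamma(s)$. The same argument applies verbatim to $\ell_n\to\ell$ in $d_\infty$ with $t_n\to t$. Indeed, $t\le t_\ell$, and
\[
|\ell_n(t_n)-\ell(t)|\le d_\infty(\ell_n,\ell)+|\ell(t_n\wedge t_\ell)-\ell(t)|\to 0,
\]
so $\ell_n(t_n)\to\ell(t)$. Since $\ell_n(t_n)=\gamma_n(s_n)$ for every $n$, uniqueness of limits gives $\ell(t)=\gamma(s)\in\Range(\gamma)$, which is the claim.

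\textbf{Main point of care.} The only potentially delicate step is the evaluation of a path past its domain: $t_n$ may exceed $t_\ell$, and $s_n$ may exceed $t_\gamma$. This is handled by the truncation $\cdot\wedge t_\ell$ (respectively $\cdot\wedge t_\gamma$) that is built into the definition of $d_\infty$, exactly as in the proof of~\eqref{eq:impliqueCV}. Alternatively, one can argue directly that $\Range(\gamma)$ is compact and $d(\ell_n(t_n),\Range(\gamma))\le d_\infty(\gamma_n,\gamma)+\omega$-type terms tend to $0$, which would avoid the extraction. I would nonetheless follow the subsequence argument, since it reuses~\eqref{eq:impliqueCV} directly.
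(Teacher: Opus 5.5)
Your argument is correct and essentially the paper's. The paper takes the direct route you mention at the end: it writes $d(\ell(t),\Range(\gamma))\le|\ell(t)-\ell(t_n\wedge t_\ell)|+|\ell(t_n\wedge t_\ell)-\ell_n(t_n)|+d(\ell_n(t_n),\Range(\gamma))$, bounds the last term by $\rho(\gamma_n,\gamma)$, and concludes from closedness of $\Range(\gamma)$, so no extraction of preimage times $s_n$ is needed (and, as you note, regularity of $(\gamma,\ell)$ plays no role).
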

\begin{proof}
    By triangle inequality,
    \[
    d(\ell(t), \Range(\gamma) )
    \leq |\ell(t)- \ell(t_n\wedge t_\ell)|+ |\ell(t_n\wedge t_\ell)- \ell_n(t_n)|+d(\ell_n(t_n), \Range(\gamma)).
    \]
  Since $\ell_n\to \ell $, $t_n\leq t_{\ell_n}\underset{n\to \infty}{\longrightarrow} t_\ell$. Since furthermore $t_n\to t$, it holds $t_n\wedge t_\ell\to t$. By continuity of $\ell$, the first term on the right-hand side goes to $0$ as $n\to \infty$. The second term is less than $d_\infty(\ell,\ell_n)$ which goes to $0$ as well. Since $\ell_n(t_n)\in \Range(\gamma_n)$, the third term is less than $\rho(\gamma_n,\gamma)$, which goes to $0$ as well. Since the left-hand side does not depend on $n$, it is equal to $0$, i.e. $\ell(t)\in \overline{\Range(\gamma)}=\Range(\gamma)$.
\end{proof}

\begin{corollary}
    \label{coro:stupidbound}
     Let  $(\gamma_n,\ell_n)\in \mathscr{L}_0 $ be a sequence which converges toward $(\gamma,\ell)\in \mathscr{L}_{reg}$.
    Then, \[\liminf \sigma_{\gamma_n,\ell_n}\geq \sigma_{\gamma,\ell}.\]
\end{corollary}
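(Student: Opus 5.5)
We argue by compactness and contradiction, using Lemma~\ref{le:stupidbound} to pass intersection points to the limit. Suppose, along some subsequence, $\sigma_{\gamma_n,\ell_n}\to \sigma^*<\sigma_{\gamma,\ell}$. The values $\sigma_{\gamma_n,\ell_n}\in[0,t_{\gamma_n}]$ are bounded, since $t_{\gamma_n}\to t_\gamma$, so such limit points exist. It therefore suffices to show that every subsequential limit $\sigma^*$ satisfies $\sigma^*\geq\sigma_{\gamma,\ell}$.

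Set $x_n\coloneqq x_{\gamma_n,\ell_n}=\gamma_n(\sigma_{\gamma_n,\ell_n})\in\Range(\gamma_n)\cap\Range(\ell_n)$, and choose $t_n\in[0,t_{\ell_n}]$ with $\ell_n(t_n)=x_n$. For instance, take $t_n=\theta^-_{\gamma_n,\ell_n}$, which is well defined because $(\gamma_n,\ell_n)\in\mathscr{L}_0$. Since $t_{\ell_n}\to t_\ell$, the $t_n$ are bounded. Passing to a further subsequence, we may assume $t_n\to t\in[0,t_\ell]$ as well as $\sigma_{\gamma_n,\ell_n}\to\sigma^*$.

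We apply Lemma~\ref{le:stupidbound} to the times $t_n$: since $\ell_n(t_n)\in\Range(\gamma_n)$ and $t_n\to t$, we get $\ell(t)\in\Range(\gamma)$. Next, by \eqref{eq:impliqueCV} from the proof of Lemma~\ref{le:sigmaCont}, $x_n=\gamma_n(\sigma_{\gamma_n,\ell_n})\to\gamma(\sigma^*)$. By the convergence $\ell_n\to\ell$ in $d_\infty$ and the continuity of $\ell$, we also have $x_n=\ell_n(t_n)\to\ell(t)$. Hence $\gamma(\sigma^*)=\ell(t)\in\Range(\ell)\cap\Range(\gamma)$. By the definition of $\sigma_{\gamma,\ell}$ as a minimum, $\sigma_{\gamma,\ell}\leq\sigma^*$, which gives the claim.

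There is no real obstacle in this argument; regularity of $(\gamma,\ell)$ is not even needed beyond what Lemma~\ref{le:stupidbound} already assumes. The only care needed is bookkeeping: the functions live on varying intervals, so the convergence $\ell_n(t_n)\to\ell(t)$ must be handled via $t_n\wedge t_\ell$, exactly as in the proof of Lemma~\ref{le:stupidbound}.
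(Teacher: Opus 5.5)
Your proof is correct and follows essentially the paper's argument: pass to subsequences along which $\sigma_{\gamma_n,\ell_n}$ and a choice of temporal roots converge, then show that the limiting spatial point lies in $\Range(\gamma)\cap\Range(\ell)$. The one difference is that you get $\gamma(\sigma^*)=\ell(t)$ directly from \eqref{eq:impliqueCV}, whereas the paper invokes Lemma~\ref{le:sigmaCont}; your route does not need injectivity of $\gamma$, and it makes your appeal to Lemma~\ref{le:stupidbound} unnecessary, since $\ell(t)=\gamma(\sigma^*)$ already lies in $\Range(\gamma)$.
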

\begin{proof}
    Let $\sigma$ be an accumulation point for $\sigma_{\gamma_n,\ell_n}$, and fix a subsequence along which $\sigma_{\gamma_n,\ell_n}\to \sigma$. Take a further subsequence such that $\theta_{\gamma_n,\ell_n}$ converges along this subsequence (this exists  by compactness of $[0,t_\gamma+1]$), and let $\theta$ be the limit. Along this subsequence, $x_{\gamma_n,\ell_n}= \ell_n(\theta_{\gamma_n,\ell_n})$ converges toward $x\coloneqq \ell(\theta)$ (this is by continuity of $\ell$, convergence of $\theta_n$ toward $\theta$, and uniform convergence of $\ell_n$ toward $\ell$).
By Lemma~\ref{le:stupidbound}, $x\in \Range(\gamma)$. Thus, we can apply Lemma~\ref{le:sigmaCont} along the subsequence, to deduce that $\sigma_{\gamma_n,\ell_n}=\gamma_n^{-1}(x_{\gamma_n,\ell_n})$ converges toward $\gamma^{-1}(x)$. By unicity of the limit, $\sigma= \gamma^{-1}(x)$. Since
 $x\in \Range(\gamma)\cap \Range(\ell)$,  $\sigma_{\gamma, \ell}\leq \gamma^{-1}(x)=\sigma  $, which concludes the proof.
\end{proof}

\begin{proposition}
    \label{prop:sigmaxcont}
    The maps $(\gamma,\ell)\mapsto \sigma_{\gamma,\ell}$ and $(\gamma,\ell)\mapsto x_{\gamma,\ell}$ defined on $\mathscr{L}_0$ are continuous on $\mathscr{L}_{reg}$.
\end{proposition}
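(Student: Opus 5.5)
The continuity of $\sigma_{\gamma,\ell}$ on $\mathscr{L}_{reg}$ follows by combining the two one-sided bounds already obtained. Let $(\gamma_n,\ell_n)\in\mathscr{L}_0$ converge to $(\gamma,\ell)\in\mathscr{L}_{reg}$ for the topology of $\rho\times d_\infty$. By Lemma~\ref{le:sametopo} this is equivalent to convergence for $d_\infty\times d_\infty$, so the earlier lemmas apply in whichever metric they were stated. Corollary~\ref{coro:notstupidbound} gives $\limsup_n \sigma_{\gamma_n,\ell_n}\le\sigma_{\gamma,\ell}$, and Corollary~\ref{coro:stupidbound} gives $\liminf_n\sigma_{\gamma_n,\ell_n}\ge\sigma_{\gamma,\ell}$. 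Together these yield $\sigma_{\gamma_n,\ell_n}\to\sigma_{\gamma,\ell}$. Since the topology is metric, sequential continuity suffices, which proves the first claim.

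For $x_{\gamma,\ell}=\gamma(\sigma_{\gamma,\ell})$, I would apply the implication \eqref{eq:impliqueCV} from the proof of Lemma~\ref{le:sigmaCont}: if $\sigma_n\to\sigma$ with $\sigma_n\in[0,t_{\gamma_n}]$ and $\gamma_n\to\gamma$, then $\gamma_n(\sigma_n)\to\gamma(\sigma)$. Taking $\sigma_n=\sigma_{\gamma_n,\ell_n}$, which lies in $[0,t_{\gamma_n}]$ by definition and converges to $\sigma_{\gamma,\ell}$ by the first part, we get $x_{\gamma_n,\ell_n}=\gamma_n(\sigma_{\gamma_n,\ell_n})\to\gamma(\sigma_{\gamma,\ell})=x_{\gamma,\ell}$. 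The implication \eqref{eq:impliqueCV} uses $d_\infty(\gamma_n,\gamma)\to0$, which holds by Lemma~\ref{le:sametopo}.

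One point needs checking: Corollary~\ref{coro:stupidbound} takes a limit $\theta$ of $\theta_{\gamma_n,\ell_n}$ along a subsequence, and this requires some choice of temporal root for the approximating loops. Any element of $\Theta_{\gamma_n,\ell_n}$ will do (for instance $\theta^-_{\gamma_n,\ell_n}$), since only $\ell_n(\theta_n)=x_{\gamma_n,\ell_n}$ is used. That argument also uses that $\sigma_{\gamma_n,\ell_n}=\gamma_n^{-1}(x_{\gamma_n,\ell_n})$, which holds by injectivity of $\gamma_n$ on $[0,t_{\gamma_n})$. The endpoint case $\sigma_{\gamma_n,\ell_n}=t_{\gamma_n}$ does not affect the argument: Lemma~\ref{le:sigmaCont} only uses $x\in\Range(\gamma)$ and injectivity of the limit $\gamma$, and for $(\gamma,\ell)\in\mathscr{L}_{reg}$ we have $\sigma_{\gamma,\ell}\notin\{0,t_\gamma\}$ anyway. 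Beyond this check, no step is a real obstacle; the substance was in Lemma~\ref{le:notstupid}.
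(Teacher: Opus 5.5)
Your proposal is correct and follows the paper's proof: continuity of $\sigma_{\gamma,\ell}$ is exactly the combination of Corollaries~\ref{coro:notstupidbound} and~\ref{coro:stupidbound}, and continuity of $x_{\gamma,\ell}=\gamma(\sigma_{\gamma,\ell})$ then follows from a triangle inequality. Invoking \eqref{eq:impliqueCV} for the second part is the same estimate the paper uses. It is slightly cleaner, because the truncation $\sigma_n\wedge t_\gamma$ handles the possibility $\sigma_{\gamma_n,\ell_n}>t_\gamma$, which the paper's displayed bound passes over.
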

\begin{proof}
    The continuity of $\sigma$ is equivalent to Corollaries~\ref{coro:notstupidbound} and~\ref{coro:stupidbound}. Since $x_{\gamma,\ell}=\gamma(\sigma_{\gamma,\ell} )$, the continuity of $x$ follows:
    \[
    |x_{\gamma,\ell}-x_{\gamma',\ell'}|=|\gamma(\sigma_{\gamma,\ell} )-\gamma'(\sigma_{\gamma',\ell'} )|
		\leq d_\infty(\gamma',\gamma)+ |\gamma(\sigma_{\gamma',\ell'})-\gamma(\sigma_{\gamma,\ell})|\underset{(\gamma',\ell')\to (\gamma,\ell)}\longrightarrow 0,
    \]
    by continuity of $\gamma$.
\end{proof}
It remains to prove the continuity of $\theta^-$ and $\theta^+$.

\begin{corollary}
  \label{coro:thetacont}
  The functions $\theta^-$ and $\theta^+$ defined on $\mathscr{L}_0$ are continuous on $\mathscr{L}_{reg}$.
\end{corollary}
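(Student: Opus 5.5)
We argue sequentially, using the fact that $\rho\times d_\infty$ is metric. Fix $(\gamma,\ell)\in\mathscr{L}_{reg}$, so that $\Theta_{\gamma,\ell}=\{\theta_\ell\}$ and $\theta^-_\ell=\theta^+_\ell=\theta_\ell$. Take a sequence $(\gamma_n,\ell_n)\in\mathscr{L}_0$ converging to $(\gamma,\ell)$. The goal is to show that both $\theta^-_{\gamma_n,\ell_n}$ and $\theta^+_{\gamma_n,\ell_n}$ converge to $\theta_\ell$. Since $\theta^\pm_{\gamma_n,\ell_n}\in[0,t_{\ell_n}]$ and $t_{\ell_n}\to t_\ell$, these sequences lie eventually in the compact interval $[0,t_\ell+1]$. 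It therefore suffices to prove that every accumulation point $\theta$ of either sequence equals $\theta_\ell$.

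Fix a subsequence along which $\theta^\pm_{\gamma_n,\ell_n}\to\theta$. By definition, $\ell_n(\theta^\pm_{\gamma_n,\ell_n})=x_{\gamma_n,\ell_n}$. Arguing as in Corollary~\ref{coro:stupidbound}, the uniform convergence $\ell_n\to\ell$ together with the continuity of $\ell$ gives $\ell_n(\theta^\pm_{\gamma_n,\ell_n})\to\ell(\theta)$, where we use $\theta\le t_\ell$ and the truncation $\theta_n\wedge t_\ell$. On the other hand, Proposition~\ref{prop:sigmaxcont} shows that $x_{\gamma_n,\ell_n}\to x_{\gamma,\ell}$. Hence $\ell(\theta)=x_{\gamma,\ell}$, that is, $\theta\in\Theta_{\gamma,\ell}=\{\theta_\ell\}$, and so $\theta=\theta_\ell$. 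Since both sequences are relatively compact and have $\theta_\ell$ as their only accumulation point, they converge to $\theta_\ell$.

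Only one point needs care. The condition $\sigma_\ell\notin\{0,t_\gamma\}$ in Definition~\ref{eq:def:reg} rules out the case $\theta^-_\ell=0,\ \theta^+_\ell=t_\ell$. Because of this, the periodic ambiguity of the root between times $0$ and $t_\ell$ never produces two distinct elements of $\Theta_{\gamma,\ell}$, and the identification $\Theta_{\gamma,\ell}=\{\theta_\ell\}$ with $\theta_\ell\in(0,t_\ell)$ is legitimate. The step carrying the real content is the convergence of the spatial roots $x_{\gamma_n,\ell_n}\to x_{\gamma,\ell}$, which Proposition~\ref{prop:sigmaxcont} already provides. Given that, the argument reduces to this compactness and uniqueness reasoning.
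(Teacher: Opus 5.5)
Your proof is correct and follows the paper's argument: compactness of $[0,t_\ell+1]$, a subsequence with limit $\theta$, Proposition~\ref{prop:sigmaxcont} to get $\ell(\theta)=x_{\gamma,\ell}$, and then $\Theta_{\gamma,\ell}=\{\theta_\ell\}$ to conclude. One small correction to your closing remark, which does not affect the proof: $\theta^-_\ell=0$ is excluded by the singleton condition on $\Theta_{\gamma,\ell}$ itself (since $0\in\Theta_{\gamma,\ell}$ would force $t_\ell\in\Theta_{\gamma,\ell}$), not by the condition $\sigma_\ell\notin\{0,t_\gamma\}$.
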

\begin{proof}
    We treat the case of $\theta^-$, the case of $\theta^+$ is identical.    Let $(\gamma,\ell)\in \mathscr{L}_{reg}$, and $(\gamma_n, \ell_n)\in \mathscr{L}_0$ which converges toward $(\gamma, \ell)$.
    By compactness of $[0,t_\ell+1]$, it suffices to prove that for any accumulation point $\theta$ of $(\theta^-_{\ell_n, \gamma_n})_n$, it holds
    $\theta=\theta^-_{\gamma,\ell}$.
    Fix a subsequence along which $\theta^-_{\gamma_n,\ell_n}\to \theta$. Since $(\gamma,\ell)\in \mathcal{L}_{reg}$,
     $\theta^-_{\gamma,\ell}=\theta^+_{\gamma,\ell}=\theta_{\gamma,\ell}$. Along the subsequence, $x_{\gamma_n,\ell_n}=\ell_n(\theta^-_{\gamma_n,\ell_n})\to \ell(\theta)$ (By uniform convergence of $\ell_n$ toward $\ell$, continuity of $\ell$, and convergence of $\theta^-_{\gamma_n,\ell_n}$ toward $\theta$).
      On the other hand, by continuity of $(\gamma,\ell)\mapsto x_{\gamma,\ell}$ (Proposition~\ref{prop:sigmaxcont}), $x_{\gamma_n,\ell_n}\to x_{\gamma,\ell}= \ell(\theta_{\gamma,\ell})$. Thus
    $ \ell(\theta)=  \ell(\theta_{\gamma,\ell})$, i.e. $\theta\in \Theta_{\gamma,\ell}$. Since $\ell$ is regular for $\gamma$, $\Theta_{\gamma,\ell}=\{\theta_{\gamma,\ell}\}$, hence $\theta=\theta_{\gamma,\ell}$, which concludes the proof.
\end{proof}

\section{Continuity of the attachement map}
\label{sec:continuity}
\subsection{\texorpdfstring{$\delta$}{delta}-isomorphisms, suited and strongly suited}
In order to construct a distance on $\mathscr{L}^{lf}$, we define a {\it $\delta$-isomorphism from $\mathcal{L}$ to $\mathcal{L}'$} in $\mathscr{L}^{lf}$ as a bijection
$\phi:\mathcal{L}_0\to \mathcal{L}'_0$, where $\mathcal{L}_0,\mathcal{L}'_0$ are such that
$\mathcal{L}'_{\geq \delta}\subseteq \mathcal{L}'_0\subseteq \mathcal{L}'$ and $\mathcal{L}_0= \mathcal{L}_{\geq \delta} \cup \phi^{-1}(\mathcal{L}'_{\geq \delta})$
(from which it follows that $\mathcal{L}'_0= \mathcal{L}'_{\geq \delta} \cup \phi(\mathcal{L}_{\geq \delta})$), and such that for all $\ell\in \mathcal{L}_0$, \[d_\infty(\ell,\phi(\ell))\leq\delta/2.\]

We will say that an application $\phi$ {\it induces}  a $\delta$-isomorphism if it is defined on $\mathcal{L}_{\geq \delta}$, and the application $\tilde{\phi}$ defined from $\mathcal{L}_{\geq \delta} \cup \phi^{-1}(\mathcal{L}'_{\geq \delta})$ to
$\phi(\mathcal{L}_{\geq \delta}) \cup \mathcal{L}'_{\geq \delta}$ by $\tilde{\phi}(\ell)\coloneqq\phi(\ell)$ is a $\delta$-isomorphism.

For a $\delta$-isomorphism, it holds in particular
\[ \mathcal{L}_0\subseteq \mathcal{L}_{\geq \delta/2}\qquad \mbox{and} \qquad \mathcal{L}'_0\subseteq \mathcal{L}'_{\geq \delta/2}
.\]
Indeed, $\mathcal{L}_{\geq \delta}\subseteq \mathcal{L}_{\geq \delta/2}$, and for $ \ell\in \phi^{-1}(\mathcal{L}_{\geq \delta})$, $t_\ell\geq t_{\phi(\ell)}-|t_\ell-t_{\phi(\ell)}|\geq  \delta -\delta/2 $,
thus $\phi^{-1}(\mathcal{L}_{\geq \delta})\subseteq  \mathcal{L}_{\geq \delta/2}$.
The inclusion $\mathcal{L}'_0\subseteq \mathcal{L}'_{\geq \delta/2}$ can be checked with similar computation.

\begin{lemma}
\label{le:tri}
  Let $\delta>0$ and let $\phi$ be a $\delta$-isomorphism from $\mathcal{L}$ to $\mathcal{L}'$. Then,
  \begin{itemize}
  \item $\phi^{-1}$ is a $\delta$-isomorphism from $\mathcal{L}'$ to $\mathcal{L}$.

  \item For  $\delta'>\delta$, $\phi$ induces a $\delta'$-isomorphism.

  \item If $\psi$ is a $\delta'$-isomorphism from $\mathcal{L}'$ to $\mathcal{L}''$, $\psi\circ \phi$ induces a $(\delta+\delta')$-isomorphism from $\mathcal{L}$ to $\mathcal{L}''$.
  \end{itemize}
\end{lemma}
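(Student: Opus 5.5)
The plan is to unwind the definition of a $\delta$-isomorphism item by item. In each case the inclusions follow from set-theoretic bookkeeping, while the metric condition $d_\infty(\ell,\phi(\ell))\le\delta/2$ either carries over directly or is handled by the triangle inequality for $d_\infty$.

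\textbf{First item.} We have $\phi:\mathcal{L}_0\to\mathcal{L}'_0$ with $\mathcal{L}'_{\ge\delta}\subseteq\mathcal{L}'_0$ and $\mathcal{L}_0=\mathcal{L}_{\ge\delta}\cup\phi^{-1}(\mathcal{L}'_{\ge\delta})$. We already noted that $\mathcal{L}'_0=\mathcal{L}'_{\ge\delta}\cup\phi(\mathcal{L}_{\ge\delta})$, since $\phi$ is a bijection. For $\phi^{-1}:\mathcal{L}'_0\to\mathcal{L}_0$ we need two things:
\begin{itemize}
\item $\mathcal{L}_{\ge\delta}\subseteq\mathcal{L}_0\subseteq\mathcal{L}$, which holds by definition;
\item $\mathcal{L}'_0=\mathcal{L}'_{\ge\delta}\cup(\phi^{-1})^{-1}(\mathcal{L}_{\ge\delta})$, which is exactly the identity just recalled.
\end{itemize}
The distance condition is symmetric, so $\phi^{-1}$ is a $\delta$-isomorphism.

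\textbf{Second item.} Here $\delta'>\delta$ and we restrict $\phi$ to $\mathcal{L}_{\ge\delta'}\subseteq\mathcal{L}_{\ge\delta}\subseteq\mathcal{L}_0$. The map $\tilde\phi$ is $\phi$ restricted to $\mathcal{L}_{\ge\delta'}\cup\phi^{-1}(\mathcal{L}'_{\ge\delta'})$. This set is well defined because $\mathcal{L}'_{\ge\delta'}\subseteq\mathcal{L}'_{\ge\delta}\subseteq\mathcal{L}'_0$. The map is injective and takes values in $\phi(\mathcal{L}_{\ge\delta'})\cup\mathcal{L}'_{\ge\delta'}$, so the required inclusions hold, and $d_\infty(\ell,\phi(\ell))\le\delta/2\le\delta'/2$.

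\textbf{Third item.} Set $\delta''=\delta+\delta'$ and let $\chi=\psi\circ\phi$, defined on $\mathcal{L}_{\ge\delta''}$. We first check that this composition makes sense.
\begin{itemize}
\item For $\ell\in\mathcal{L}_{\ge\delta''}\subseteq\mathcal{L}_{\ge\delta}$, $\phi(\ell)$ is defined and $t_{\phi(\ell)}\ge t_\ell-\delta/2\ge\delta'+\delta/2\ge\delta'$.
\item Hence $\phi(\ell)\in\mathcal{L}'_{\ge\delta'}$, which lies in the domain of $\psi$, so $\chi(\ell)$ is defined.
\end{itemize}
Symmetrically, we check that $\chi^{-1}$ is defined on $\mathcal{L}''_{\ge\delta''}$.
\begin{itemize}
\item For $\ell''\in\mathcal{L}''_{\ge\delta''}$, $\psi^{-1}(\ell'')$ exists and has duration at least $\delta''-\delta'/2\ge\delta$.
\item So $\psi^{-1}(\ell'')\in\mathcal{L}'_{\ge\delta}$, and $\phi^{-1}(\psi^{-1}(\ell''))$ exists.
\end{itemize}
Therefore $\tilde\chi$ is defined as $\psi\circ\phi$ on $\mathcal{L}_{\ge\delta''}\cup\chi^{-1}(\mathcal{L}''_{\ge\delta''})$. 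It is injective as a composition of injections, and its image is $\chi(\mathcal{L}_{\ge\delta''})\cup\mathcal{L}''_{\ge\delta''}$, which gives the required form of domain and codomain. Finally, the triangle inequality gives $d_\infty(\ell,\psi\phi(\ell))\le\delta/2+\delta'/2=\delta''/2$.

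The only point needing care is the duration bookkeeping in the third item. It uses $|t_\ell-t_{\phi(\ell)}|\le d_\infty(\ell,\phi(\ell))\le\delta/2$, which follows from the definition of $d_\infty$. I expect no real obstacle beyond checking that every composition is defined on the required sets.
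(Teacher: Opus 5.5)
Your proof is correct and follows essentially the same route as the paper. For the second item you restrict $\phi$ to $\mathcal{L}_{\geq\delta'}\cup\phi^{-1}(\mathcal{L}'_{\geq\delta'})$; for the third you use $|t_\ell-t_{\phi(\ell)}|\le\delta/2$ to get $\phi(\mathcal{L}_{\geq\delta+\delta'})\subseteq\mathcal{L}'_{\geq\delta'}$ and $\psi^{-1}(\mathcal{L}''_{\geq\delta+\delta'})\subseteq\mathcal{L}'_{\geq\delta}$, then conclude by injectivity and the triangle inequality, exactly as in the paper. (A cosmetic point: in the second item $\tilde\phi$ maps \emph{onto} $\phi(\mathcal{L}_{\geq\delta'})\cup\mathcal{L}'_{\geq\delta'}$, not merely into it, which is immediate since that set is its image.)
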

\begin{proof}
  The first item is trivial to check.

  For the second one, let $\tilde{\mathcal{L}}_0\coloneqq \mathcal{L}_{\geq \delta'}\cup \phi^{-1}( \mathcal{L}'_{\geq \delta'}  )$ and $\tilde{\mathcal{L}}'_0\coloneqq \phi(\mathcal{L}_{\geq \delta'})\cup   \mathcal{L}'_{\geq \delta'}  $.
  Since $\phi$ is injective and $\tilde{\mathcal{L}}'_0=\phi(\tilde{\mathcal{L}}_0)$,
  $\phi:\tilde{\mathcal{L}}_0\to\phi(\tilde{\mathcal{L}}_0)$ is bijective. For all $\ell\in \tilde{\mathcal{L}}_0$, $d_\infty(\ell,\phi(\ell))<\delta/2<\delta'/2$.

  For the last item, 
  remark first that $\psi^{-1}(\mathcal{L}''_{\geq \delta+\delta'})\subseteq \mathcal{L}'_{\geq \delta}$ and that $\phi(\mathcal{L}_{\geq \delta+\delta'})\subseteq \mathcal{L}'_{\geq \delta'}$, so that the sets
  \[
  \mathcal{L}_0\coloneqq \mathcal{L}_{\geq \delta+\delta'}\cup \phi^{-1}(\psi^{-1}(\mathcal{L}''_{\geq \delta+\delta'})), \qquad
  \mathcal{L}''_0\coloneqq\psi ( \phi( \mathcal{L}_{\geq \delta+\delta'} ))\cup (\mathcal{L}''_{\geq \delta+\delta'})
  \]
  are well-defined.
  Furthermore, $\psi\circ \phi$ is well-defined on $\mathcal{L}_0$, and $\psi\circ \phi(\mathcal{L}_0)=\mathcal{L}''_0$. Since $\psi\circ \phi$ is injective, we deduce
  $\Phi:\mathcal{L}_0\to  \mathcal{L}''_0$, $\ell\mapsto \psi\circ \phi(\ell)$ is a bijection, and for all $\ell\in \mathcal{L}_0$,
  \[d_\infty(\ell,\Phi(\ell))\leq d_\infty(\ell,\phi(\ell)) +d_\infty(\phi(\ell),\psi(\phi(\ell)))\leq (\delta+\delta')/2 .\qedhere\]
\end{proof}
Given $\gamma,\gamma'\in \mathscr{S}$, we will say that $\phi$ is a {\it $\delta$-suited isomorphism}, with respect to $(\gamma,\gamma')$, if it is a $\delta$-isomorphism and furthermore $\phi (\mathcal{L}_{\gamma, \geq \delta'})\subseteq \mathcal{L}'_{\gamma'}$ and $\phi^{-1} (\mathcal{L}'_{\gamma', \geq \delta'})\subseteq \mathcal{L}_{\gamma}$.
We will say that
$\phi$ is a {\it $\delta$-strongly suited isomorphism}, with respect to $(\gamma,\gamma')$, if it is a $\delta$-suited isomorphism
and furthermore,
\[
\forall \ell,\ell'\in \mathcal{L}_{\gamma, \geq \delta'} \cup \phi^{-1}(\mathcal{L}'_{\gamma', \geq \delta'}  ), \qquad \ell \prec_\gamma \ell' \iff \phi(\ell)\prec_{\gamma'} \phi(\ell').
\]
For $\mathcal{X}=(\gamma,\mathcal{L})$ and $\mathcal{X}'=(\gamma',\mathcal{L}')$, we will say that $\phi$ is a $\delta$-suited (resp. strongly suited) isomorphism from $\mathcal{X}$ to $\mathcal{X}'$ when it is a $\delta$-suited (resp. strongly suited) isomorphism from $\mathcal{L}$ to $\mathcal{L}'$ with respect to $(\gamma,\gamma')$.

\subsection{Distances on \texorpdfstring{$\Conf_0$}{R0}}
\label{sub:d}
It follows from Lemma \ref{le:tri} that $d$ defined by
\begin{equation}
 \label{eq:def:dist}
d( \mathcal{L},\mathcal{L}'  )\coloneqq
\inf \{\delta>0: \mbox{there exists a $\delta$-isomorphism from $\mathcal{L}$ to $\mathcal{L}'$}\}
\end{equation}
is a distance on $\mathscr{L}^{lf}$.
%
We use it to define a distance $d_{\Conf_0}$ on $\Conf_0$: for $\mathcal{X}=(\gamma, \mathcal{L})$ and $\mathcal{X}'=(\gamma', \mathcal{L}')$ in $\Conf_0$,
\[
d_{\Conf_0}(\mathcal{X},\mathcal{X}' )\coloneqq \rho(\gamma,\gamma')+
d( \mathcal{L},\mathcal{L}'  )+|t_{\mathcal{X}}-t_{\mathcal{X}'} |+\| \delta\mapsto (\omega_\delta(\mathcal{L})-\omega_ \delta(\mathcal{L}'))\|_{\infty,[0,1]}
\]
The ball of $(\Conf_0,d_{\Conf_0})$ with radius $r$ centred at $\mathcal{X}$ is denoted $B_r(\mathcal{X})$.
The last two terms in the definition of $d_{\Conf_0}$ may look a bit artificial, but they are very crucial. The term $|t_{\mathcal{X}}-t_{\mathcal{X}'} |$ prevents the small loops to accumulate total time of order $1$ (this prevents elements in $\Att(\mathcal{X}')$ from `getting stuck' as $\mathcal{X}'\to \mathcal{X}$). If we were to remove it from the definition of $d_{\Conf_0}$, the map $\Att$ would be nowhere continuous (although it may remain continuous when viewed as a map with values in the space of paths up non-decreasing reparametrisations). As for the last term involving the continuity moduli, it prevents for example to add a single loop that would be very small in term of time duration but macroscopic spacewise (the limiting $\Att(\mathcal{X}')$ would `skip' such a loop).

It follows from Lemma~\ref{le:Tbound} and~\ref{le:omegabound} below that, in order to prove convergence in $d_{\Conf_0}$, it suffices to prove convergence in the weaker distance $d'_{\Conf_0}$ given by
\begin{equation}
\label{eq:defdprime}
d'_{\Conf_0}(\mathcal{X},\mathcal{X}' )\coloneqq  \rho(\gamma,\gamma')+
d( \mathcal{L},\mathcal{L}'  ),
\end{equation}
together with uniform boundedness of $\epsilon\mapsto \omega_\epsilon$ and $\epsilon\mapsto T_{\leq\epsilon}$ (defined above these lemmas in \eqref{eq:def:Tleqepsilon}) by functions vanishing at $0$. Balls for the distance $d'_{\Conf_0}$ are written $B'_r(\mathcal{X})$.

On the other hand, we will now prove that for any given $\mathcal{X}=(\gamma,\mathcal{L})\in \mathcal{R}$, 
for all $\epsilon>0$, there exists $\delta>0$ such that any $\mathcal{X}'\in \mathcal{R}_0$ at distance less than $\delta$ from $\mathcal{X}$, there exists a strongly suited isomorphism from $\mathcal{X}$ to $\mathcal{X}'$. In words, the topology induced by $d_{\Conf_0}$ is equal to the a priori stronger topology obtained by replacing `isomorphism' with `strongly suited isomorphism' in \eqref{eq:def:dist} (but strongly suited isomorphisms do not compose well, and triangle inequality would fail with such a modification).
\begin{remark}
  The set $\Conf_0$ is a measurable subset of $\mathscr{S}\times \mathscr{L}^{lf}$ for the $\sigma$-algebra generated by $(\gamma,\mathcal{L})$ (the pair of an $SLE_2$ path and a Brownian loop soup), and the identity from $(\Conf_0,\sigma(\mathscr{S}\times \mathscr{L}^{lf}) )$ to $(\Conf_0, d_{\Conf_0})$ is measurable, which we leave as an exercise about point processes to prove. We will see that $(\gamma, \mathcal{L})\in \Conf_0$
  with probability $1$, and it thus makes sense to consider it as a random variable in $(\Conf_0, d_{\Conf_0})$. 
\end{remark}

\subsection{Equality with stronger topology}
\label{sub:strong}

As announced above, we now show that convergence (for the distance induced by the notion of $\delta$-isomorphism) implies a stronger form of convergence with respect to the notion of strongly suited isomorphism. 

\begin{lemma}
  \label{le:suited}
  Let $\mathcal{X}=(\gamma,\mathcal{L})\in\Conf$. For all $\epsilon>0$, there exists $\delta=\delta(\mathcal{X})\in(0,\epsilon]$ such that for all $\mathcal{X}'=(\gamma',\mathcal{L}')\in B'_{\delta}(\mathcal{X})$, all $\delta$-isomorphism $\phi$ from $\mathcal{X}$ to $\mathcal{X}'$ induces an $\epsilon$-suited isomorphism (with respect to $(\gamma,\gamma')$).
\end{lemma}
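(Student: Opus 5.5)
We read the $\delta'$ appearing in the definition of an $\epsilon$-suited isomorphism as $\epsilon$ itself. Thus we must show two things. First, for every $\ell\in\mathcal{L}_{\gamma,\geq\epsilon}$, the image $\phi(\ell)$ intersects $\gamma'$. Second, for every $\ell'\in\mathcal{L}'_{\gamma',\geq\epsilon}$, the preimage $\phi^{-1}(\ell')$ intersects $\gamma$. By the second item of Lemma~\ref{le:tri}, a $\delta$-isomorphism induces an $\epsilon$-isomorphism as soon as $\delta\le\epsilon$, so only these two intersection properties need to be checked. The key input is that, by condition~\eqref{it:tmax}, the set $\mathcal{L}_{\geq\epsilon/2}$ is finite. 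The plan is to treat separately the finitely many loops of $\mathcal{L}_{\geq \epsilon/2}$ that meet $\gamma$ and those that do not, using Theorem~\ref{th:rerootingcontinuity} for the former and Lemma~\ref{le:closed} for the latter.

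\emph{Forward direction.} Let $\ell\in\mathcal{L}_{\gamma,\geq\epsilon}$. Since $\mathcal{X}\in\Conf$, condition~\eqref{it:reg} gives $(\gamma,\ell)\in\mathscr{L}_{reg}$. By the first item of Theorem~\ref{th:rerootingcontinuity} (equivalently Lemma~\ref{le:notstupid}), there exists $\delta_\ell>0$ such that any $(\gamma',\tilde\ell)$ with $\rho(\gamma,\gamma')<\delta_\ell$ and $d_\infty(\ell,\tilde\ell)<\delta_\ell$ lies in $\mathscr{L}_0$, i.e.\ $\tilde\ell$ meets $\gamma'$. We take $\delta\le\min_\ell\delta_\ell$ over the finite set $\mathcal{L}_{\gamma,\geq\epsilon}$, together with $\delta\leq\epsilon$. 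Then $\ell\in\mathcal{L}_{\geq\epsilon}\subseteq\mathcal{L}_{\geq\delta}$, so $\phi(\ell)$ is defined and satisfies $d_\infty(\ell,\phi(\ell))\le\delta/2$. Hence $\phi(\ell)\in\mathcal{L}'_{\gamma'}$.

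\emph{Backward direction.} Let $\ell'\in\mathcal{L}'_{\gamma',\geq\epsilon}$. Since $\epsilon\geq\delta$, $\ell'$ lies in the range of $\phi$; write $\ell=\phi^{-1}(\ell')$. Then $t_\ell\ge t_{\ell'}-\delta/2\ge\epsilon/2$, so $\ell$ belongs to the finite set $\mathcal{L}_{\geq\epsilon/2}$. For each $\ell\in\mathcal{L}_{\geq\epsilon/2}\setminus\mathcal{L}_\gamma$, Lemma~\ref{le:closed} provides $\eta_\ell>0$ such that $\rho(\gamma,\gamma')<\eta_\ell$ and $\rho(\ell,\tilde\ell)<\eta_\ell$ imply that $\tilde\ell$ does not meet $\gamma'$. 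To pass from $d_\infty$ to $\rho$ on $\ell$, we use Lemma~\ref{le:sametopo}: $\rho(\ell,\tilde\ell)\le\omega_{\delta}(\ell)+3\delta$, which is small uniformly over this finite family. Choosing $\delta$ so that $\omega_\delta(\ell)+3\delta<\eta_\ell$ and $\delta<\eta_\ell$ for each such $\ell$, the situation where $\ell$ misses $\gamma$ while $\ell'$ meets $\gamma'$ is excluded. Therefore $\ell\in\mathcal{L}_\gamma$, as required.

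The main subtlety is making sure $\delta$ does not depend on $\mathcal{X}'$ or $\phi$. This holds precisely because the relevant loops of $\mathcal{L}$ range over the fixed finite set $\mathcal{L}_{\geq\epsilon/2}$, determined by $\mathcal{X}$ alone. This is exactly where we use that the threshold for $\phi^{-1}$ only guarantees length at least $\epsilon/2$ rather than $\epsilon$, which is why the finite set must be $\mathcal{L}_{\geq\epsilon/2}$ and not $\mathcal{L}_{\geq\epsilon}$. The other non-trivial point is that the forward direction genuinely requires regularity, since one-sided intersections could be destroyed by perturbation, and this is handled by Theorem~\ref{th:rerootingcontinuity}. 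The final $\delta$ is the minimum of $\epsilon$ and finitely many positive constants, so it is positive.
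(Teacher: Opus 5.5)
Your proof is correct and follows essentially the same route as the paper. For the finitely many loops of $\mathcal{L}_{\gamma,\geq\epsilon}$ you use Theorem~\ref{th:rerootingcontinuity}(1) together with condition~\eqref{it:reg}; for the finitely many loops of $\mathcal{L}_{\geq\epsilon/2}\setminus\mathcal{L}_\gamma$ you use the closedness of $\mathscr{L}_0$ (Lemma~\ref{le:closed}); and $\delta$ is $\epsilon$ minimised with these finitely many radii. In the backward direction your write-up is slightly cleaner than the paper's: you bound $d_\infty(\ell,\phi(\ell))\le\delta/2$ for the given $\phi$, whereas the paper bounds via $d'_{\Conf_0}(\mathcal{X},\mathcal{X}')$, which is an infimum over all isomorphisms.
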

\begin{proof}
  Let $\epsilon>0$ and $\mathcal{X}=(\gamma,\mathcal{L})\in\Conf$. For $\ell\in \mathcal{L}_\gamma$, let $\delta(\gamma,\ell)>0$ such that for all $(\gamma',\ell')\in \mathscr{S}\times \mathscr{L}$ at $(\rho\times d_\infty)$-distance less than or equal to $\delta(\gamma,\ell)$ from $(\gamma,\ell)$, it holds $\ell'\in \mathscr{L}_{\gamma'}$.
  The existence of such a $\delta(\gamma,\ell)$ follows from Theorem~\ref{th:rerootingcontinuity}, item~\ref{th:rerootingcontinuity1} and the assumption~\eqref{it:reg} in the definition of $\Conf$.

  For $\ell\in \mathcal{L}\setminus \mathcal{L}_\gamma $, $(\gamma,\ell)\notin \mathscr{L}_0$.
  Since $\mathscr{L}_0$ is closed in $\mathscr{S}\times \mathscr{L}$ (Lemma~\ref{le:closed}), there exists
  $\delta'(\gamma,\ell)>0$ such that for all $(\gamma',\ell')\in \mathscr{S}\times \mathscr{L}$ at $(\rho\times d_\infty)$-distance less than or equal to $\delta'(\gamma,\ell)$ from $(\gamma,\ell)$, it holds $\ell'\notin \mathscr{L}_{\gamma'}$.

  Let \[\delta\coloneqq\epsilon \wedge \min \{\delta(\gamma,\ell)/2 : \ell \in \mathcal{L}_{\gamma, \geq \epsilon} \}
  \wedge \min \{ \delta'(\gamma,\ell) : \ell \in \mathcal{L}_{ \geq \epsilon/2}\setminus \mathcal{L}_\gamma \}
  >0.\]
  Let $\phi$ be any $\delta$-isomorphism from $\mathcal{X}$ to $\mathcal{X}'=(\gamma',\mathcal{L}')\in B_{\delta}(\mathcal{X})$. Since $\phi$ induces an $\epsilon$-isomorphism, we only need to check the extra conditions that $\phi(\mathcal{L}_{\gamma,\geq \epsilon})\subseteq \mathcal{L}'_{\gamma'}$ and $\phi^{-1}(\mathcal{L}'_{\gamma',\geq \epsilon})\subseteq \mathcal{L}_{\gamma}$. For all $\ell \in \mathcal{L}_{\gamma,\geq \epsilon}$,
  $\rho(\gamma,\gamma')+ d_\infty(\ell,\phi(\ell))\leq \delta+\delta/2\leq \delta(\gamma,\ell)$, thus $\phi(\ell)\in \mathscr{L}_{\gamma'}$. This proves the first inclusion.
  For the second inclusion, let $\ell'\in \mathcal{L}'_{\gamma',\geq \epsilon}$. Since $\phi$ induces an $\epsilon$-isomorphism,  $\ell\coloneqq \phi^{-1}(\ell')$ is well-defined and $d_\infty(\ell,\ell')\leq \epsilon/2$. In particular $t_{\ell}\geq t_{\ell'}-d_\infty(\ell,\ell')\geq \epsilon$, thus $\ell\in \mathcal{L}_{\geq \epsilon/2}$.
  Let us assume by contradiction that $\ell \notin \mathcal{L}_\gamma$. Then \[(\rho\times d_\infty)((\gamma,\ell),(\gamma',\ell'))\leq d'_{\mathcal{R}_0}(\mathcal{X},\mathcal{X}')\leq  \delta\leq \delta'(\gamma,\ell),\]
  and by construction of $\delta'(\gamma,\ell)$ it follows $\ell'\notin \mathscr{L}_{\gamma'}$, which is absurd.
\end{proof}
\begin{lemma}
  \label{le:strongsuited}
  Let $\mathcal{X}=(\gamma,\mathcal{L})\in\Conf$. For all $\epsilon>0$, there exists $\eta=\eta(\mathcal{X})\in(0,\epsilon]$ such that for all $\mathcal{X}'=(\gamma',\mathcal{L}')\in B'_{\eta}(\mathcal{X})$, all $\eta$-isomorphism $\phi$ from $\mathcal{X}$ to $\mathcal{X}'$ induces an $\epsilon$-strongly suited isomorphism (with respect to $(\gamma,\gamma')$).
\end{lemma}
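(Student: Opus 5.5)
The plan is to build on Lemma~\ref{le:suited}, adding the order-preservation condition by means of the continuity of $\sigma$ on $\mathscr{L}_{reg}$ (Proposition~\ref{prop:sigmaxcont}), condition~\eqref{it:inj} of $\Conf$, and finiteness of $\mathcal{L}_{\geq \epsilon/2}$ (condition~\eqref{it:tmax}).

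Fix $\epsilon>0$ and $\mathcal{X}\in\Conf$. The finite set $F\coloneqq\mathcal{L}_{\gamma,\geq \epsilon/2}$ will contain every loop whose order we must control: for a $\delta$-isomorphism with $\delta\le\epsilon$, the set $\mathcal{L}_{\gamma,\geq\epsilon}\cup\phi^{-1}(\mathcal{L}'_{\gamma',\geq\epsilon})$ is contained in $\mathcal{L}_{\geq \epsilon/2}$, and by the suited property of Lemma~\ref{le:suited} also in $\mathcal{L}_\gamma$, hence in $F$. By~\eqref{it:inj} the values $\sigma_{\gamma,\ell}$, $\ell\in F$, are pairwise distinct; let $m>0$ be the minimal gap between them. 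By~\eqref{it:reg} each $(\gamma,\ell)$ with $\ell\in F$ lies in $\mathscr{L}_{reg}$, so by Proposition~\ref{prop:sigmaxcont} (together with Theorem~\ref{th:rerootingcontinuity}\ref{th:rerootingcontinuity1}, ensuring $\ell'\in\mathscr{L}_{\gamma'}$ nearby) there exists $\kappa(\ell)>0$ such that $(\rho\times d_\infty)$-distance at most $\kappa(\ell)$ from $(\gamma,\ell)$ implies $|\sigma_{\gamma',\ell'}-\sigma_{\gamma,\ell}|<m/2$. Since $F$ is finite, $\kappa\coloneqq\min_F\kappa(\ell)>0$.

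Then we set $\eta\coloneqq \min(\delta_{\mathrm{L}\ref{le:suited}}(\epsilon), \kappa/2,\epsilon)$, where $\delta_{\mathrm{L}\ref{le:suited}}(\epsilon)$ is the $\delta$ provided by Lemma~\ref{le:suited}. For $\mathcal{X}'\in B'_\eta(\mathcal{X})$ and an $\eta$-isomorphism $\phi$, Lemma~\ref{le:suited} shows that $\phi$ induces an $\epsilon$-suited isomorphism. For $\ell\in F$ in the domain, $\rho(\gamma,\gamma')+d_\infty(\ell,\phi(\ell))\le \eta+\eta/2\le\kappa$, so $\sigma_{\gamma',\phi(\ell)}$ lies within $m/2$ of $\sigma_{\gamma,\ell}$. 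The intervals of radius $m/2$ around distinct $\sigma$'s being disjoint, strict order is preserved in both directions: $\ell\prec_\gamma\ell'\iff\phi(\ell)\prec_{\gamma'}\phi(\ell')$, as $\prec_\gamma$ is total and strict on $F$.

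The main obstacle is the bookkeeping: one must check that the loops appearing in the strongly suited condition at level $\epsilon$ (defined via the induced $\epsilon$-isomorphism) are indeed in the domain of $\phi$ and in $F$. This follows from $\mathcal{L}_0\subseteq\mathcal{L}_{\geq\delta/2}$-type inclusions and the suited property, but the two levels of $\delta$ must be tracked carefully; if needed one replaces $\epsilon/2$ by a smaller threshold in the definition of $F$, which is harmless since $F$ remains finite.
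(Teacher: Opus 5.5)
Your proof is correct and follows essentially the same route as the paper. Both combine Lemma~\ref{le:suited} for the suited part with condition~\eqref{it:inj}, which separates the finitely many first-hitting times, and the continuity of $(\gamma,\ell)\mapsto\sigma_{\gamma,\ell}$ on $\mathscr{L}_{reg}$, then take a minimum over finitely many loops; your use of the larger set $\mathcal{L}_{\gamma,\geq\epsilon/2}$ instead of the paper's $\mathcal{L}_{\gamma,\geq\epsilon}$ is slightly more careful, since it explicitly covers the loops of $\phi^{-1}(\mathcal{L}'_{\gamma',\geq\epsilon})$ with duration just below $\epsilon$, which the strongly suited condition also involves.
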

\begin{proof}
  Fix $\mathcal{X}\in \Conf$ and $\epsilon>0$.
  Let $(\ell_i)_{i\in \{1,\dots , k\}}$ be the increasing enumeration of the finite totally ordered set $\mathcal{L}_{\gamma, \geq \epsilon}$. Define
  \[ \eta_1\coloneqq \min \{ \sigma_{\ell_{i+1}}-\sigma_{\ell_{i}} :i\in \{0,\dots, k\} \}>0,\]
  where formally $\sigma_{\ell_0}\coloneqq 0$ and $\sigma_{\ell_{k+1}}\coloneqq t_\gamma$. Recall $(\gamma,\ell)\mapsto \sigma_{\gamma,\ell}$ is continuous  on $\mathscr{L}_{reg}$ and $(\gamma,\ell)\in \mathscr{L}_{reg}$ for all  $\ell \in \mathcal{L}_\gamma$.
  Thus, for all $\ell\in \mathcal{L}_\gamma$, there exists $\eta_{\ell}>0$ such that for all $(\gamma',\ell')$ with $(\rho\times d_\infty)((\gamma',\ell' ),(\gamma,\ell) )<\eta_\ell$, it holds
  $|\sigma_{\gamma,\ell}-\sigma_{\gamma',\ell'}|< \eta_1/2$.
  Let $\eta_2\coloneqq \min \{ \eta_{\ell}: \ell \in \mathcal{L}_{\gamma, \geq \epsilon}\}$, and $\eta=\eta_2 \wedge \delta$, where $\delta>0$ is obtained by applying Lemma~\ref{le:suited} to $\mathcal{X}$ and~$\epsilon$.

  Let $\mathcal{X}'=(\gamma',\mathcal{L}')\in B'_{\eta}(\mathcal{X})$, and $\phi$ be an $\eta$-isomorphism from $\mathcal{X}$ to $\mathcal{X}'$. Since $\eta\leq \delta$, $\phi$ induces a $\delta$-isomorphism which is
  $\delta$-suited. Since $\delta<\epsilon$, the $\epsilon$-isomorphism induced by $\phi$ is also $\epsilon$-suited.
  Furthermore, for all $\ell\in \mathcal{L}_{\gamma, \geq \epsilon}$, it holds
  \[
  (\rho\times d_\infty)((\gamma', \phi(\ell)),(\gamma,\ell) )\leq d'_{\Conf_0}(\mathcal{X}, \mathcal{X}')<\eta_2\leq \eta_\ell, \qquad \mbox{thus}\qquad
  |\sigma_{\gamma,\ell}-\sigma_{\gamma',\phi(\ell)}|\leq \eta_1/2.\]
  Thus, for  $\ell_1,\ell_2\in \mathcal{L}_{\gamma,\geq \epsilon}$, if  $\ell_1\succ_{\gamma} \ell_2$,
  \begin{align*}
  \sigma_{\gamma',\phi(\ell_1)}-\sigma_{\gamma',\phi(\ell_2)}&\geq
  \sigma_{\gamma,\ell_1}-\sigma_{\gamma,\ell_2} -
  |\sigma_{\gamma',\phi(\ell_1)}-\sigma_{\gamma,\ell_1}|
  -|\sigma_{\gamma',\phi(\ell_2)}-\sigma_{\gamma,\ell_2}|\\
  &>\eta- \eta/2-\eta/2=0.
  \end{align*}
  Thus, $\sigma_{\gamma',\phi(\ell_1)}>\sigma_{\gamma',\phi(\ell_2)}$, which implies $\phi(\ell_1)\succ_{\gamma'} \phi(\ell_2)$.
  We have proved the implication
  \begin{equation}
  \label{eq:temp:imp1}
  \ell_1\succ_{\gamma} \ell_2\implies \phi(\ell_1)\succ_{\gamma'} \phi(\ell_2).\end{equation}
  By contraposition, the reciprocal implication is
  \[
  \ell_1\succeq_{\gamma} \ell_2\implies \phi(\ell_1)\succeq_{\gamma'} \phi(\ell_2).
  \]
  This is true because
  $\prec_{\gamma}$ is a total strict order (recall $\mathcal{X}\in \Conf$). Indeed,
  \[
  \ell_1\succeq_{\gamma} \ell_2
  \iff (\ell_1\succ_{\gamma} \ell_2\mbox{ or } \ell_1= \ell_2 )
  \implies (   \phi(\ell_1)\succ_{\gamma'}, \phi(\ell_2) \mbox{ or } \phi(\ell_1)= \phi(\ell_2) )
  \implies \phi(\ell_1)\succeq_{\gamma'}, \phi(\ell_2).
  \]
  Thus, the $\epsilon$-suited isomorphism induced by $\phi$ is $\epsilon$-strongly suited.
\end{proof}
Considering Lemma~\ref{le:strongsuited}, for any $\mathcal{X} \in \Conf$, we will let $\eta=\eta(\mathcal{X}):(0,\infty)\to(0,\infty)$, be a function such that $\eta(\delta) < \delta$ for all $\delta>0$, and any $\eta(\delta)$-isomorphism from $\mathcal{X}$ is $\delta$-strongly suited.

\subsection{Equivalence with weaker distances under boundedness assumption}
\label{sub:weak}
In the following, for $\mathcal{X}=(\gamma,\mathcal{L})\in \Conf_0$ and $\epsilon >0$, we let 
\begin{equation}
\label{eq:def:Tleqepsilon}
T_{\leq \epsilon}(\mathcal{X})\coloneqq \sum_{\ell \in \mathcal{L}_{\gamma,\leq \epsilon}} t_\ell.
\end{equation}
\begin{lemma}
  \label{le:Tbound}
  For $f:[0,\infty)\to [0,\infty)$, let $K_f\coloneqq\{ \mathcal{X}\in \Conf_0: \forall \epsilon>0, \ T_{\leq \epsilon}(\mathcal{X})\leq f(\epsilon)\}$.
  For $r>0$ and $\mathcal{X}\in\Conf$, let $\tilde{B}_{r}(\mathcal{X})$ be the ball in $\Conf_0$ with radius $r$ centred at $\mathcal{X}$ for the distance $\rho(\gamma,\gamma')+  d( \mathcal{L},\mathcal{L}'  )+d_{\infty,[0,1]}(\omega(\mathcal{L}),\omega(\mathcal{L}') )$.

  Then, for all $\mathcal{X}\in\Conf$,
  \begin{itemize}
  \item For all continuous function $f$ with $f(0)=0$, for all $\epsilon>0$, there exists $\delta>0$ (which depends on $\epsilon$, $f$, and $\mathcal{X}$)
  such that $\tilde{B}_{\delta}(\mathcal{X})\cap K_f\subseteq B_\epsilon(\mathcal{X})$.
  \item There exist continuous functions $(f_\epsilon)_{\epsilon>0}$ (which depend on $\mathcal{X}$) such that $f_\epsilon(\epsilon)\underset{\epsilon\to 0}\longrightarrow 0$ and such that
  for all $\mathcal{X}'\in \Conf_0$, if there exists an $\epsilon$-suited isomorphism $\phi$ from $\mathcal{X}$ to $\mathcal{X}'$, then $\mathcal{X}'\in  K_{f_\epsilon} $.

  In particular, $B_{\eta(\epsilon)} (\mathcal{X})\subseteq K_{f_\epsilon} $.
  \end{itemize}
\end{lemma}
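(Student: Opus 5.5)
For the first item, the only term of $d_{\Conf_0}$ missing from the $\tilde{B}$-distance is $|t_{\mathcal{X}}-t_{\mathcal{X}'}|$, with $t_{\mathcal{X}}=t_{\mathcal{X},0}=\sum_{\ell\in\mathcal{L}_\gamma}t_\ell$. So I will show that $t_{\mathcal{X}'}\to t_{\mathcal{X}}$ as $\mathcal{X}'\to\mathcal{X}$ in $\tilde{B}$, uniformly over $\mathcal{X}'\in K_f$. Fix $\epsilon>0$ and pick $\alpha>0$ with $f(\alpha)+T_{\leq\alpha}(\mathcal{X})<\epsilon/4$. This is possible since $f(0)=0$, $f$ is continuous, and $T_{\leq\alpha}(\mathcal{X})\to 0$ by dominated convergence, because $t_{\mathcal{X}}<\infty$ by condition~\eqref{it:tmax}. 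Split
\[
t_{\mathcal{X}'}=T_{\leq\alpha}(\mathcal{X}')+\sum_{\ell'\in\mathcal{L}'_{\gamma',>\alpha}}t_{\ell'},
\]
and similarly for $\mathcal{X}$. The first terms are bounded by $f(\alpha)$ and $T_{\leq\alpha}(\mathcal{X})$. To compare the big-loop sums I apply Lemma~\ref{le:suited}, or better Lemma~\ref{le:strongsuited}, with parameter $\alpha$. This gives a $\delta\le\alpha$ such that any $\delta$-isomorphism $\phi$ induces an $\alpha$-suited isomorphism. Then $\phi$ maps $\mathcal{L}_{\gamma,\geq\alpha}$ into $\mathcal{L}'_{\gamma'}$, and $\phi^{-1}$ maps $\mathcal{L}'_{\gamma',\geq\alpha}$ into $\mathcal{L}_\gamma$, with $|t_\ell-t_{\phi(\ell)}|\le\delta/2$.

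The big-loop sums then differ only through three sources. The first is the sum over at most $N=\#\mathcal{L}_{\geq\alpha/2}$ matched pairs of errors $\delta/2$. The second is loops whose partner lies just below the threshold $\alpha$; their durations are at most $\alpha+\delta$, so their total is bounded by $T_{\leq 2\alpha}$ on either side, which I handle by choosing $\alpha$ with $f(2\alpha)+T_{\le 2\alpha}(\mathcal{X})<\epsilon/4$. The third is unmatched loops, which cannot exist above the threshold by the definition of a $\delta$-isomorphism. Finiteness of $N$ comes from condition~\eqref{it:tmax}. Taking $\delta$ small enough that $N\delta<\epsilon/4$ concludes the first item.

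For the second item, fix $\mathcal{X}\in\Conf$ and an $\epsilon$-suited isomorphism $\phi$ from $\mathcal{X}$ to $\mathcal{X}'$. I want to bound $T_{\leq s}(\mathcal{X}')$ for every $s$. Loops of $\mathcal{L}'_{\gamma'}$ with duration in $[\epsilon,s]$ are matched by $\phi^{-1}$ to loops of $\mathcal{L}_\gamma$ of duration at most $s+\epsilon/2$, by suitedness. This gives
\[
\sum_{\ell'\in\mathcal{L}'_{\gamma'},\,\epsilon\le t_{\ell'}\le s}t_{\ell'}\le T_{\le s+\epsilon}(\mathcal{X})+\epsilon\,\#\mathcal{L}_{\gamma,\le s+\epsilon}\cap\mathcal{L}_{\ge\epsilon/2},
\]
which is bounded by $2T_{\le s+\epsilon}(\mathcal{X})$. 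The loops of $\mathcal{X}'$ below $\epsilon$ are the main obstacle, since nothing in a suited isomorphism controls them directly. Here I plan to use that $|t_{\mathcal{X}}-t_{\mathcal{X}'}|\le\epsilon$ is part of $d_{\Conf_0}$, so that the statement's "in particular" is the real content. Define $f_\epsilon(s)$ as a continuous majorant of
\[
2T_{\le s+\epsilon}(\mathcal{X})+\bigl(\epsilon+T_{\le 2\epsilon}(\mathcal{X})+\epsilon N_\epsilon\bigr),
\]
where the bracket bounds the total small-loop time in $\mathcal{X}'$. That bound comes from writing $T_{<\epsilon}(\mathcal{X}')=t_{\mathcal{X}'}-(\text{big-loop sum})$ and comparing the big-loop sums as in the first item.

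There is a difficulty in controlling $\epsilon N_\epsilon$, where $N_\epsilon=\#\mathcal{L}_{\gamma,\ge\epsilon/2}$. Since each such loop has duration at least $\epsilon/2$, we have $\epsilon N_\epsilon\le 2\sum_{\ell\in\mathcal{L}_\gamma,\,t_\ell\ge\epsilon/2}\min(t_\ell,\epsilon)$, and this tends to $0$ by dominated convergence. Hence $f_\epsilon(\epsilon)\to0$, as required. For the final inclusion, every $\mathcal{X}'\in B_{\eta(\epsilon)}(\mathcal{X})$ admits an $\eta(\epsilon)$-isomorphism, which is $\epsilon$-strongly suited by the choice of $\eta$, and has $|t_{\mathcal{X}}-t_{\mathcal{X}'}|<\epsilon$. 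So the bound above applies and gives $\mathcal{X}'\in K_{f_\epsilon}$.
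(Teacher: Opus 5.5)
Your proposal is correct and follows essentially the paper's argument. For the first item you split $t_{\mathcal{X}'}$ at a small threshold, match the large loops through a suited isomorphism, and absorb the small ones into $f$ and $T_{\le\cdot}(\mathcal{X})$; you also handle the loops whose partner straddles the threshold more carefully than the paper does. For the second item you bound the small-loop time of $\mathcal{X}'$ by $t_{\mathcal{X}'}$ minus the matched large-loop time, and the estimate $\epsilon\,\#\mathcal{L}_{\gamma,\ge\epsilon/2}\to 0$ by dominated convergence plays the role of the paper's choice $\#\mathcal{L}_{\gamma,\geq\delta(\epsilon)}\le\epsilon^{-1/2}$.

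You are also right that a suited isomorphism alone gives no control on the loops of $\mathcal{X}'$ below $\epsilon/2$. In fact the second bullet as literally stated is false: adding many tiny loops meeting $\gamma'$ preserves an $\epsilon$-suited isomorphism but makes $T_{\le u}(\mathcal{X}')$ arbitrarily large. The paper's proof silently uses $t_{\mathcal{X}'}-t_{\mathcal{X}}\le\epsilon$ exactly as you do. So what you prove, namely the bound under that extra hypothesis and hence $B_{\eta(\epsilon)}(\mathcal{X})\subseteq K_{f_\epsilon}$, is precisely the content that is true and used later.
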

\begin{remark}
  On the other hand, there does not even exist any $\mathcal{X}\in \Conf_0$, $\epsilon>0$ and $f<\infty$ such that $\tilde{B}_{\eta(\epsilon)}(\mathcal{X})\subset K_f$.  Indeed, we can always and easily find $\mathcal{X}'$ at distance less than $\epsilon$ from $\mathcal{X}$ and such that the very small loops accumulate a total time larger than $\|f\|_{\infty,[0,1]}$.
\end{remark}
\begin{proof}[Proof of Lemma~\ref{le:Tbound}]
  For the first point, fix $\epsilon>0$. Let $\delta_0=\epsilon/4$, let  $\delta_1>0$ such that $T_{\leq\delta_1}(\mathcal{X})\leq \epsilon/4$ (which exists since $t_{\mathcal{X}}<\infty$), let $\delta_2>0$ such that $f(\delta_2)\leq \epsilon/4$, let $\epsilon'=\min(\delta_1,\delta_2)$,
  let $\delta_3 = \epsilon/ (4 \#\mathcal{L}_{\gamma,\geq \epsilon'}  )$, and $\delta=\min(\delta_0,\delta_1,\delta_2,\delta_3)$. Let $\eta>0$ such that any $\eta$-isomorphism is $\delta$-suited.
  Let $\mathcal{X}'\in \tilde{B}_{\eta}(\mathcal{X})\cap K_f$, and let $ \phi$ be an $\eta$-isomorphism. Then,
  \begin{align*}
  |t_{\mathcal{X}'}-t_{\mathcal{X}}|&\leq
  \Big|\sum_{\ell \in \mathcal{L}_{\gamma, \geq \epsilon'}} (t_\ell -t_{\phi(\ell)})\Big|+ \Big|\sum_{\ell \in \mathcal{L}_{\gamma, \leq \epsilon'}} t_\ell \Big|
  + \Big|\sum_{\ell' \in \mathcal{L}'_{\gamma', \leq \epsilon'}} t_\ell \Big|\\
  &\leq \delta_3 \cdot \#\mathcal{L}_{\gamma,\geq \epsilon'} + T_{\leq \delta_1}(\mathcal{X})+ T_{\leq \delta_2}(\mathcal{X}')\leq 3\epsilon/4.
  \end{align*}
  Since $\delta\leq \delta_0=\epsilon/4$,
  $\rho(\gamma,\gamma')+
  d( \mathcal{L},\mathcal{L}'  )+d_{\infty,[0,1]}(\omega(\mathcal{L}),\omega(\mathcal{L}') )\leq \epsilon/4$, thus $d_{\Conf_0}(\mathcal{X},\mathcal{X}' )\leq \epsilon$, which proves the first point.

  For the second point, let $\delta:(0,1]\to (0,1]$ be a continuous non-decreasing function of $\epsilon$ which goes to $0$ with $\epsilon$, but sufficiently slowly that $\# \mathcal{L}_{\gamma, \geq \delta(\epsilon) }\leq \epsilon^{-\frac{1}{2}}$. Up to replacing $\delta$ with $\epsilon\mapsto \max(\epsilon,\delta(\epsilon))$, we can further assume $\delta(\epsilon)\geq \epsilon$. Fix $T:[0,\infty]\to [0,\infty)$ such that $T(0)=0$, $T$ is continuous, and $T(u)\geq T_{\leq u}(\mathcal{X})$ for all $u\geq 0$ (e.g. $T(u)=\frac{1}{u}\int_u^{2u} T_{\leq s}(\mathcal{X})\d s $).
  The function $f_\epsilon$ will be given by
  \[
  f_\epsilon(u)=\epsilon+\sqrt{\epsilon}+T(u)+u/\sqrt{\epsilon}.
  \]

  Let $\epsilon>0$ and $\delta\in(0,\epsilon)$ such that all $\delta$-isomorphism are $\epsilon$-suited. Let $\mathcal{X}'$ be such that an $\epsilon$-suited isomorphism $\phi$ exists. Since $\delta\geq \epsilon$, $\phi$ is $\delta$-suited as well.
  For $u>0$,
  \begin{align*}
&  T_{\leq u}(\mathcal{X}')=
  t_{\mathcal{X}'}-\sum_{\ell'\in \mathcal{L}'_{\gamma', \geq u}   } t_{\ell'}
  \\
  &= t_{\mathcal{X}'}- t_{\mathcal{X}}
  +\sum_{\ell \in \mathcal{L}_{\gamma,\leq \delta} } t_\ell+
  \sum_{\ell \in \mathcal{L}_{\gamma,\geq \delta} } (t_\ell-t_{\phi(\ell)})+
  \sum_{\ell \in \mathcal{L}_{\gamma,\geq \delta} } t_{\phi(\ell)}-\sum_{\ell'\in \mathcal{L}'_{\gamma', \geq u}   } t_{\ell'}\\
  &\leq \epsilon + T_{\leq\delta}(\mathcal{X})
  + \epsilon \cdot  \#  \mathcal{L}_{\gamma,\geq \delta} +
  \sum_{\ell' \in \phi(\mathcal{L}_{\gamma,\geq \delta})\setminus    \mathcal{L}'_{\gamma', \geq u}   } t_{\ell'}\\
  &\leq \epsilon+ T_{\leq\delta}(\mathcal{X})
  + \epsilon  \#  \mathcal{L}_{\gamma,\geq \delta} +
  u\cdot \#  \phi(\mathcal{L}_{\gamma,\geq \delta}) \quad \mbox{since $\phi(\mathcal{L}_{\gamma,\geq \delta})
  \setminus    \mathcal{L}'_{\gamma', \geq u}\subseteq \mathcal{L}'_{\gamma'}
  \setminus    \mathcal{L}'_{\gamma', \geq u}=  \mathcal{L}'_{\gamma',< u} $}
	\\&\leq f_\epsilon(u).
  \end{align*}
The inclusion    $\phi(\mathcal{L}_{\gamma,\geq \delta})\subseteq   \mathcal{L}'_{\gamma'} $ is due to the fact that $\phi$ is $\delta$-suited.
\end{proof}
\begin{lemma}
\label{le:omegabound}
  For a function $f:[0,1)\to [0,\infty)$, let $J_f\coloneqq \{ \mathcal{X}=(\gamma,\mathcal{L})\in \Conf_0: \forall \epsilon>0,\ \omega_\epsilon(\mathcal{L}))\leq f(\epsilon)\}$.
  For $r>0$ and $\mathcal{X}\in\Conf$, recall $B'_{r}(\mathcal{X})$ be the ball in $\mathcal{R}_0$ with radius $r$ centred at $\mathcal{X}$ for the distance $\rho(\gamma,\gamma')+d( \mathcal{L},\mathcal{L}'  )$ of \eqref{eq:defdprime}, and let $\tilde{B}_r(\mathcal{X})$ as in Lemma~\ref{le:Tbound}.
  Then,  for all $\mathcal{X}\in\Conf$,
  for all continuous function $f$ with $f(0)=0$, for all $\epsilon>0$, there exists $\delta>0$ (which depends on $\epsilon$, $f$, and $\mathcal{X}$)
  such that $B'_{\delta}(\mathcal{X})\cap J_f\subseteq \tilde{B}_\epsilon(\mathcal{X})$.
\end{lemma}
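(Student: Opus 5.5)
Since $\tilde{B}_r(\mathcal{X})$ differs from $B'_r(\mathcal{X})$ only through the extra term $\|\delta\mapsto \omega_\delta(\mathcal{L})-\omega_\delta(\mathcal{L}')\|_{\infty,[0,1]}$, it suffices to show that for $\mathcal{X}'\in B'_\delta(\mathcal{X})\cap J_f$ and $\delta$ small, $|\omega_u(\mathcal{L})-\omega_u(\mathcal{L}')|\leq \epsilon/2$ uniformly in $u\in[0,1]$. Then taking $\delta\leq\epsilon/2$ concludes. The plan is to split the range of $u$ into small $u$, handled by the a priori bounds, and $u$ bounded below, handled by comparing large loops through a $\delta$-isomorphism.

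\emph{Small $u$.} Since $\mathcal{X}\in\Conf\subseteq\Conf_0$, condition~\eqref{it:unifcont} gives $\omega_u(\mathcal{L})\to 0$ as $u\to 0$. Also $f$ is continuous with $f(0)=0$, and $\omega_u(\mathcal{L}')\leq f(u)$ on $J_f$. Hence there is $u_0>0$ such that for all $u\leq 2u_0$ both $\omega_u(\mathcal{L})$ and $\omega_u(\mathcal{L}')$ are at most $\epsilon/4$, so their difference is at most $\epsilon/2$. This step is independent of $\delta$.

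\emph{Loops of small duration.} A loop $\ell$ with $t_\ell\leq \kappa$ has $\omega_u(\ell)\leq\omega_{\kappa}(\ell)$ for every $u$, because all pairs $s,t$ satisfy $|s-t|\leq t_\ell\leq\kappa$. Thus loops of duration at most $\kappa\le u_0$ contribute at most $\epsilon/4$ to either supremum, uniformly in $u$, using $f$ for $\mathcal{L}'$ and equicontinuity for $\mathcal{L}$. Fix $\kappa\leq u_0$ accordingly.

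\emph{Large loops, $u\geq u_0$.} By~\eqref{it:tmax} the multiset $\mathcal{L}_{\geq \kappa/2}$ is finite, so $u\mapsto\omega_u(\ell)$ is controlled for finitely many loops. Take $\delta<\kappa$ and a $\delta$-isomorphism $\phi$ from $\mathcal{L}$ to $\mathcal{L}'$. Then every loop of $\mathcal{L}_{\geq\kappa}$ and of $\mathcal{L}'_{\geq\kappa}$ is matched, with $d_\infty(\ell,\phi(\ell))\leq\delta/2$. For matched loops,
\[
\omega_u(\phi(\ell))\leq \omega_{u+\delta}(\ell)+2\delta,\qquad \omega_u(\ell)\leq \omega_{u+\delta}(\phi(\ell))+2\delta,
\]
which follows by comparing $\ell'(s)$ with $\ell(s\wedge t_\ell)$ as in Lemma~\ref{le:sametopo}. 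This gives
\[
\omega_u(\mathcal{L}')\leq \max\bigl(\epsilon/4,\ \omega_{u+\delta}(\mathcal{L})+2\delta\bigr),
\]
and the symmetric bound with $\mathcal{L}$ and $\mathcal{L}'$ exchanged and $f(u+\delta)$ in place of $\omega_{u+\delta}(\mathcal{L})$.

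\emph{Main obstacle.} The remaining difficulty is to turn the shifted bound $\omega_{u+\delta}$ back into $\omega_u$ uniformly in $u\in[u_0,1]$. For $\mathcal{L}$ itself this works because $\omega_{u+\delta}(\ell)\leq\omega_u(\ell)+\omega_\delta(\ell)$ by subadditivity of the modulus on an interval, and $\omega_\delta(\mathcal{L})\to0$ by equicontinuity. The same subadditivity applied to $\ell'$ gives $\omega_{u+\delta}(\ell')\le\omega_u(\ell')+f(\delta)$. Choosing $\delta$ so that $2\delta+\omega_\delta(\mathcal{L})+f(\delta)\leq\epsilon/4$ then yields $|\omega_u(\mathcal{L})-\omega_u(\mathcal{L}')|\leq\epsilon/2$ for all $u\in[0,1]$.
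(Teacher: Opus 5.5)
Your proof is correct and follows essentially the paper's argument: loops of duration at least $\delta$ are compared with their images under the $\delta$-isomorphism, while short loops are bounded using equicontinuity of $\mathcal{L}$ on one side and $f$ on the other. The differences are cosmetic. The time shift and the subadditivity step are unnecessary, because $s\mapsto s\wedge t_{\ell'}$ is $1$-Lipschitz and so $|\omega_u(\ell)-\omega_u(\ell')|\le 2d_\infty(\ell,\ell')$ holds directly at the same $u$ (this is the inequality the paper uses). Your separate small-$u$ step is likewise already covered by the short-loop bound.
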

\begin{proof}
  Fix $\epsilon>0$. By the equicontinuity condition~\eqref{it:unifcont}, there exists $\delta_1>0$ such that $\omega_{\delta_1}(\mathcal{L})<2\epsilon/3$. By continuity of $f$ and since $f(0)=0$, there exists $\delta_2>0$ such that $f(\delta_2)<2\epsilon/3$. Let $\delta\coloneqq \min(\epsilon/3,\delta_1,\delta_2)$.
  Let $t\in[0,1]$
  and $\mathcal{X}'=(\gamma',\mathcal{L}')\in B'_{\delta}(\mathcal{X})\cap J_f$, and let $\phi$ be a $\delta$-isomorphism.

  First assume there exists $\ell \in \mathcal{L}_{\geq \delta}$ such that  $\omega_t(\mathcal{L})=\omega_t(\ell )$.
  Let then $\ell'=\phi(\ell)$. Then,
  \[\omega_t(\mathcal{L}')\geq \omega_t(\ell')\geq \omega_t(\ell)-2 d_\infty(\ell,\ell')> \omega_t(\mathcal{L})-2 \delta\geq \omega_t(\mathcal{L})-2\epsilon/3.\]
  If such an $\ell$ does not exist,  then $\omega_t(\mathcal{L})=\omega_t(\mathcal{L}_{< \delta})$, and we deduce
   \[\omega_t(\mathcal{L})-\omega_t(\mathcal{L}')\leq\omega_t(\mathcal{L})  =\omega_t(\mathcal{L}_{< \delta})=\omega_\delta(\mathcal{L}_{<\delta})\leq \omega_\delta(\mathcal{L})<2\epsilon/3.
   \]
  In both case we deduce $\omega_t(\mathcal{L})-\omega_t(\mathcal{L}')\leq \epsilon$. For the reverse inequality, let
  first assume there exists $\ell' \in \mathcal{L}'_{\geq \delta}$ such that  $\omega_t(\mathcal{L}')=\omega_t(\ell')$ and let $\ell=\phi^{-1}(\ell')$. Similarly as above we see $\omega_t(\mathcal{L})\geq \omega_t(\mathcal{L}')-\epsilon$. If such an $\ell'$ does not exist, then
  $\omega_t(\mathcal{L}')=\omega_t(\mathcal{L}'_{< \delta})$, and we deduce
   \[\omega_t(\mathcal{L}')-\omega_t(\mathcal{L})\leq\omega_t(\mathcal{L}')  =\omega_t(\mathcal{L}'_{< \delta})=\omega_\delta(\mathcal{L}'_{<\delta})\leq \omega_\delta(\mathcal{L}')\leq f(\delta)\leq 2\epsilon/3.\]

  Thus, $|\omega_t(\mathcal{L}')-\omega_t(\mathcal{L})|\leq 2\epsilon/3$. Since $\rho(\gamma,\gamma')+d( \mathcal{L},\mathcal{L}'  )< \delta\leq \epsilon/3$, we deduce indeed that $\mathcal{X}'\in \tilde{B}_\epsilon(\mathcal{X})$.
\end{proof}

\subsection{Continuity of the hitting times, local equicontinuity of \texorpdfstring{$\Xi$}{X}}

\begin{lemma}
\label{le:Tcont}
  For all $\mathcal{X}=(\gamma,\mathcal{L})\in \Conf$, for all $\epsilon>0$, there exists $\delta>0$ such that for all
  $\mathcal{X}'\in \Conf_0$ and $\delta$-strongly suited isomorphism $\phi$ from $\mathcal{X}$ to $\mathcal{X}'$,
  for all $\ell\in \mathcal{L}_{\gamma,\geq \delta}$,
  \[|T_\ell(\mathcal{X})-T_{\phi(\ell)}(\mathcal{X}')|\leq \epsilon. \]
\end{lemma}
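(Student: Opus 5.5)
The plan is to split $T_\ell$ into the contribution of the \emph{large} loops preceding $\ell$ and that of the \emph{small} ones. Fix $\epsilon>0$ and choose a threshold $u>0$ with $T_{\leq u}(\mathcal{X})\leq \epsilon/4$, which exists since $t_{\mathcal{X}}<\infty$ by~\eqref{it:tmax}. Let $N\coloneqq\#\mathcal{L}_{\gamma,\geq u/2}<\infty$, and take $\delta\leq u/2$ small enough that $N\delta\leq \epsilon/4$. For $\ell\in\mathcal{L}_{\gamma,\geq\delta}$ write
\[
T_\ell(\mathcal{X})=\sum_{\ell'\in \mathcal{L}_{\gamma,\geq \delta},\ \ell'\prec_\gamma \ell} t_{\ell'}\;+\;\sum_{\ell'\in \mathcal{L}_{\gamma,< \delta},\ \ell'\prec_\gamma \ell} t_{\ell'},
\]
and write $T_{\phi(\ell)}(\mathcal{X}')$ analogously, with large loops taken in $\phi(\mathcal{L}_{\gamma,\geq\delta})\cup\mathcal{L}'_{\gamma',\geq\delta}$. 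If a term $\lambda\sigma_{\gamma,\ell}$ is present, I will control it separately by the continuity of $\sigma$ on $\mathscr{L}_{reg}$ (Proposition~\ref{prop:sigmaxcont}). In the regime relevant for the lemma, with $\lambda$ close to $0$, this term is small anyway.

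\textbf{Large loops.} Because $\phi$ is $\delta$-strongly suited, it maps $\mathcal{L}_{\gamma,\geq\delta}\cup\phi^{-1}(\mathcal{L}'_{\gamma',\geq\delta})$ bijectively onto the corresponding set of large loops of $\mathcal{X}'$ that hit $\gamma'$. It also preserves $\prec$ in both directions. Hence the large loops preceding $\ell$ correspond exactly, under $\phi$, to the large loops preceding $\phi(\ell)$. Each of these satisfies $|t_{\ell'}-t_{\phi(\ell')}|\leq\delta/2$, and all of them lie in $\mathcal{L}_{\geq\delta/2}$, whose number is at most $N$. The difference of the large-loop sums is therefore at most $N\delta/2\leq\epsilon/8$.

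\textbf{Small loops.} On the $\mathcal{X}$ side, the small-loop sum is at most $T_{\leq\delta}(\mathcal{X})\leq\epsilon/4$. On the $\mathcal{X}'$ side, the small-loop sum is at most the total small-loop time of $\mathcal{X}'$. This equals $t_{\mathcal{X}'}$ minus the total time of the large loops of $\mathcal{X}'$, and by the previous step it is at most
\[
|t_{\mathcal{X}'}-t_{\mathcal{X}}|+T_{\leq\delta}(\mathcal{X})+N\delta/2 .
\]

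\textbf{Main obstacle.} The crux is controlling $|t_{\mathcal{X}'}-t_{\mathcal{X}}|$, that is, ruling out that many tiny loops of $\mathcal{L}'$ accumulate time before $\phi(\ell)$. An isomorphism alone cannot see this, so I would use the term $|t_{\mathcal{X}}-t_{\mathcal{X}'}|$ built into $d_{\Conf_0}$. In the intended use, where $\mathcal{X}'\in B_\delta(\mathcal{X})$ and $\phi$ is obtained from Lemma~\ref{le:strongsuited}, this gives $|t_{\mathcal{X}'}-t_{\mathcal{X}}|\leq\delta$. Alternatively, one can invoke the bound $T_{\leq u}(\mathcal{X}')\leq f_\delta(u)$ from Lemma~\ref{le:Tbound}, second point. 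Either route makes the $\mathcal{X}'$ small-loop sum at most $\epsilon/2$. Collecting the three contributions yields $|T_\ell(\mathcal{X})-T_{\phi(\ell)}(\mathcal{X}')|\leq\epsilon$, uniformly over $\ell\in\mathcal{L}_{\gamma,\geq\delta}$.
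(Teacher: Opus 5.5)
Your plan is the paper's own argument. Strong suitedness matches the large loops preceding $\ell$ with those preceding $\phi(\ell)$; no ties arise among their images, since $\prec_\gamma$ is total on $\mathcal{L}_\gamma$. The small loops of $\mathcal{X}$ are absorbed into $T_{\leq \cdot}(\mathcal{X})$, and the small loops of $\mathcal{X}'$ are controlled through $t_{\mathcal{X}'}$.

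You are right that this last step needs $|t_{\mathcal{X}'}-t_{\mathcal{X}}|$ small, and that this is not among the hypotheses. The paper's proof silently uses $t_{\mathcal{X}'}-t_{\mathcal{X}}\leq\delta$. This is available in the application (Theorem~\ref{th:continuityAtt}) because there $d_{\Conf_0}(\mathcal{X},\mathcal{X}')\leq\delta$. Without it the literal statement fails. Keep $\gamma'=\gamma$ and add many loops of duration $<\delta$ that hit $\gamma$ before the first loop of $\mathcal{L}_{\gamma,\geq\delta}$. The identity is still a $\delta$-strongly suited isomorphism, yet every $T_{\phi(\ell)}$ grows arbitrarily.

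Your ``alternative'' via the second point of Lemma~\ref{le:Tbound} is not independent. The proof of that point also bounds $t_{\mathcal{X}'}-t_{\mathcal{X}}$ by $\epsilon$, so it is only valid in the same regime (its ``in particular'' clause).

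There is, however, a counting error that breaks your estimate as written. You set $N=\#\mathcal{L}_{\gamma,\geq u/2}$ and take $\delta\leq u/2$, but you split at $\delta$. Your large loops therefore lie in $\mathcal{L}_{\gamma,\geq \delta/2}\supseteq \mathcal{L}_{\gamma,\geq u/2}$, so their number is at least $N$, not at most $N$. Since $\mathcal{L}_\gamma$ is infinite by \eqref{it:dense}, this number tends to infinity as $\delta\to 0$. The bound $N\delta/2$, which you use both for the large-loop discrepancy and for the small-loop time of $\mathcal{X}'$, is therefore unjustified.

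There is also a mismatch in your sets. On the $\mathcal{X}$ side you call $\mathcal{L}_{\gamma,\geq\delta}$ large, but on the $\mathcal{X}'$ side you use $\phi(\mathcal{L}_{\gamma,\geq\delta})\cup\mathcal{L}'_{\gamma',\geq\delta}$. These are not in bijection under $\phi$.

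Two easy repairs:
\begin{enumerate}
\item Split at $u$ instead of $\delta$. Take as large loops $\mathcal{L}_{\gamma,\geq u}\cup\phi^{-1}(\mathcal{L}'_{\gamma',\geq u})$. This is exactly the preimage of $\phi(\mathcal{L}_{\gamma,\geq u})\cup\mathcal{L}'_{\gamma',\geq u}$. It lies inside the set where strong suitedness applies, and it lies in $\mathcal{L}_{\gamma,\geq u/2}$, since durations are at least $u-\delta/2$. So it has at most $N$ elements. The small loops of $\mathcal{X}$ then contribute at most $T_{\leq u}(\mathcal{X})\leq\epsilon/4$.
\item Do as the paper does and choose $\delta$ so that $\frac{\delta}{2}\#\mathcal{L}_{\gamma,\geq\delta/2}\leq\sum_{\ell\in\mathcal{L}_\gamma}(t_\ell\wedge\frac{\delta}{2})$ is small. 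This is possible by dominated convergence, since $t_{\mathcal{X}}<\infty$.
\end{enumerate}
With either fix, and with $\delta\leq\epsilon/4$ to absorb $|t_{\mathcal{X}'}-t_{\mathcal{X}}|\leq\delta$, your three contributions sum to at most $\epsilon$.
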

The idea behind this lemma is that,
when $\mathcal{X}'$ converges to $\mathcal{X}$, intersections between path and loops cannot disappear in the limit, which leads to the inequality $\liminf T_{\ell}(\mathcal{X}')\geq T_{\phi(\ell)}(\mathcal{X})$. However, since we also have a control over the total time (namely $t_{\cX'} \to t_{\cX}$) the previous inequality must in fact be sharp: any strict inequality would result in $t_{\cX}$ being noticeably different from $t_{\cX'}$. 
\begin{proof}
  First, remark  \[ \delta\cdot \#   \mathcal{L}_{\gamma,\geq \delta} \leq \sum_{\ell\in \mathcal{L}_\gamma} (t_\ell \wedge \delta)\underset{\delta\to 0}\longrightarrow 0,\]
  by dominated convergence theorem, since $t_{\mathcal{X}}<\infty $. Thus, for all $\epsilon>0$, for all $\delta>0$ small enough, it holds
  \[T_{\leq\delta}(\mathcal{X})+ \frac{\delta}{2}\cdot \#   \mathcal{L}_{\gamma,\geq \delta} + \delta+f_\delta(\delta) \leq \epsilon,\] where $f_\delta$ is the function from the second point in Lemma~\ref{le:Tbound}.
  Let $\mathcal{X}'\in \Conf_0$, and  $\phi$ be an $\delta$-suited isomorphism. By Lemma~\ref{le:Tbound}, $T_{\leq\delta}(\mathcal{X}')\leq f_\delta(\delta)$ and $T_{\leq\delta}(\mathcal{X})\leq f_\delta(\delta)$.

  On the one hand, for any $\ell'\prec \ell$ with  $t_{\ell'}\geq \delta $, it holds $\phi(\ell')\in \mathcal{L}'_{\prec \phi(\ell)} $, thus
  \begin{align*}
  T_\ell(\mathcal{X})-T_{\phi(\ell)}(\mathcal{X}')&= \sum_{\tilde{\ell}\in \mathcal{L}_{\prec \ell,< \delta} } t_{\tilde{\ell}}
  + \sum_{\tilde{\ell}\in \mathcal{L}_{\prec \ell,\geq \delta} } t_{\tilde{\ell}}-
  \sum_{\ell'\in \mathcal{L}'_{\prec \phi(\ell)}   } t_{\ell'}\\
  &\leq T_{\leq \delta}(\mathcal{X})+  \sum_{\tilde{\ell}\prec \ell, t_{\tilde{\ell}}> \delta} (t_{\tilde{\ell}}-t_{\phi(\tilde{\ell})})\\
  & \leq T_{\leq \delta}(\mathcal{X})+ \frac{\delta}{2} \# \mathcal{L}_{\gamma,> \delta}.
  \end{align*}

  On the other hand,
  \begin{align*}
  T_{\phi(\ell)}(\mathcal{X}')-T_\ell(\mathcal{X}) &=
  t_{\mathcal{X}'}-t_{\mathcal{X}}-
  \sum_{\tilde{\ell}\in \mathcal{L}_{\succeq \ell} } t_{\tilde{\ell}}
  +\sum_{\ell'\in \mathcal{L}'_{\succeq \phi(\ell),\geq \delta}  } t_{\ell'}
  +
  \sum_{\ell'\in \mathcal{L}'_{\succeq \phi(\ell),\leq \delta}   } t_{\ell'}
  \\
  &\leq \delta +  \sum_{\ell'\in \mathcal{L}'_{\succeq \phi(\ell), \geq \delta} } (t_{\ell'}-t_{\phi^{-1}(\ell')})+f_\delta(\delta) \qquad \leq \delta + \frac{\delta}{2} \# \mathcal{L}_{\gamma,\leq \delta}+f_\delta(\delta).
  \end{align*}
  Thus $|T_\ell(\mathcal{X})-T_{\phi(\ell)}(\mathcal{X}')|\leq \delta +  T_{\leq \delta}(\mathcal{X})+ \frac{\delta}{2} \# \mathcal{L}_{\gamma,\leq \delta}+f_\delta(\delta)\leq
  \epsilon$.
\end{proof}

\begin{lemma}
  \label{le:omega1bound}
  Recall the definition of $\omega^1$ from~\eqref{eq:def:omegadelta1}.

  Let $\mathcal{X}\in \Conf$. Then, for all $\epsilon>0$, there exists $\delta>0$ such that for all $\mathcal{X}'\in \Conf_0$, if there exists a $\delta$-strongly suited isomorphism $\phi$  from $\mathcal{X}$ to $\mathcal{X}'$, then $\omega^1_\delta(\mathcal{X}')<\epsilon$.
\end{lemma}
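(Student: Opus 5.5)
The plan is to reduce the bound on $\omega^1_\delta(\mathcal{X}')$ to the density of first hitting times of \emph{large} loops of $\mathcal{X}$ (Condition~\eqref{it:dense}), together with the continuity of $\sigma_{\gamma,\ell}$ on $\mathscr{L}_{reg}$ (Proposition~\ref{prop:sigmaxcont}), exactly as in the proof of Lemma~\ref{le:strongsuited}.

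First I fix $\mathcal{X}\in\Conf$ and $\epsilon>0$. By Condition~\eqref{it:dense}, $\sigma_{\gamma,\mathcal{L}}$ is dense in $[0,t_\gamma]$. Cover $[0,t_\gamma]$ by finitely many intervals of length $\epsilon/4$; each contains some $\sigma_\ell$ with $\ell\in\mathcal{L}_\gamma$. Choosing one such loop per interval gives a finite family $\ell_1,\dots,\ell_m$. Let $\delta_0=\min_i t_{\ell_i}>0$. For every $\delta\le\delta_0$, these loops lie in $\mathcal{L}_{\gamma,\geq\delta}$, and consecutive elements of $\{0,\sigma_{\ell_1},\dots,\sigma_{\ell_m},t_\gamma\}$ are at distance at most $\epsilon/2$. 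In particular $\omega^1_{\delta}(\mathcal{X})\le\epsilon/2$.

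Next I transport this to $\mathcal{X}'$. Each $(\gamma,\ell_i)$ belongs to $\mathscr{L}_{reg}$ by Condition~\eqref{it:reg}, so Proposition~\ref{prop:sigmaxcont} gives $\eta_i>0$ such that $(\rho\times d_\infty)((\gamma',\ell'),(\gamma,\ell_i))<\eta_i$ implies $\ell'\in\mathscr{L}_{\gamma'}$ (Theorem~\ref{th:rerootingcontinuity}, item~\ref{th:rerootingcontinuity1}) and $|\sigma_{\gamma',\ell'}-\sigma_{\gamma,\ell_i}|<\epsilon/8$. I take
\[
\delta<\min\Big(\delta_0/2,\ \min_i\eta_i/2,\ \epsilon/8\Big).
\]
Now let $\phi$ be a $\delta$-strongly suited isomorphism from $\mathcal{X}$ to $\mathcal{X}'$. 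The hypothesis has to provide $\rho(\gamma,\gamma')<\delta$; either this is part of the notion from $\mathcal{X}$ to $\mathcal{X}'$, or it is implicit since one works in $B'_\eta$. Since $t_{\ell_i}\ge\delta_0\ge\delta$, each $\phi(\ell_i)$ is defined, lies in $\mathcal{L}'_{\gamma'}$ by suitedness, satisfies $d_\infty(\ell_i,\phi(\ell_i))\le\delta/2$, and has $t_{\phi(\ell_i)}\ge\delta_0-\delta/2\ge\delta$. Hence $\sigma'_i:=\sigma_{\gamma',\phi(\ell_i)}$ are first hitting times of loops of $\mathcal{L}'_{\gamma',\geq\delta}$, each within $\epsilon/8$ of $\sigma_{\ell_i}$. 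Also $|t_{\gamma'}-t_\gamma|\le\rho$-controlled, so $|t_{\gamma'}-t_\gamma|<\epsilon/8$.

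Then I conclude. The set $\{\sigma_\ell:\ell\in\mathcal{L}'_{\gamma',\ge\delta}\}\cup\{0,t_{\gamma'}\}$ contains the points $\sigma'_i$, which are $\epsilon/8$-perturbations of an $\epsilon/2$-net including the endpoints. Its maximal gap is therefore at most $\epsilon/2+2\epsilon/8<\epsilon$. Adding points only shrinks gaps, so $\omega^1_\delta(\mathcal{X}')<\epsilon$.

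The main obstacle is the bookkeeping needed to guarantee that $\phi(\ell_i)$ is genuinely hit by $\gamma'$ and still counts as a loop of duration $\ge\delta$. This is handled by choosing $\delta$ small relative to $\delta_0=\min t_{\ell_i}$. The other delicate point is ensuring that $\rho(\gamma,\gamma')$ is controlled by the hypothesis, which is needed in order to invoke the continuity of $\sigma$. Strong suitedness is not really needed, only suitedness, although order preservation could alternatively give the gap bound.
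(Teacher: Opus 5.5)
Your argument is correct. It rests on the same two ingredients as the paper's proof: density of $\sigma_{\gamma,\mathcal{L}}$ (Condition~\eqref{it:dense}) and continuity of $(\gamma,\ell)\mapsto\sigma_{\gamma,\ell}$ on $\mathscr{L}_{reg}$. The bookkeeping differs, however.

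The paper does the following:
\begin{itemize}
\item it picks $\delta_1$ with $\omega^1_{3\delta_1/2}(\mathcal{X})<\epsilon/2$;
\item it enumerates \emph{all} of $\mathcal{L}'_{\gamma',\geq\delta}$ and pulls this enumeration back by $\phi^{-1}$;
\item it uses order preservation of the strongly suited isomorphism to compare consecutive gaps term by term.
\end{itemize}
You instead fix a finite anchor family $\ell_1,\dots,\ell_m$ before choosing $\delta$. You transport only those anchors, and you use that the maximal gap can only shrink when points are added.

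Your version buys several things:
\begin{itemize}
\item Continuity of $\sigma$ is invoked only at finitely many loops chosen in advance, which is exactly what a uniform $\delta$ requires. The paper's term-by-term comparison tacitly applies the bound $|\sigma_{\gamma',\ell'_i}-\sigma_{\gamma,\ell_i}|<\epsilon/2$ to every $\ell_i=\phi^{-1}(\ell'_i)$, including loops of duration in $[\delta/2,\delta_1)$ for which $\delta$ was not tuned.
\item Order preservation is never used.
\item Suitedness is superfluous too: your choice of $\eta_i$ via item~\ref{th:rerootingcontinuity1} of Theorem~\ref{th:rerootingcontinuity} already gives $\phi(\ell_i)\in\mathscr{L}_{\gamma'}$.
\item You handle the endpoint $t_{\gamma'}\neq t_\gamma$ explicitly through $|t_\gamma-t_{\gamma'}|\le\rho(\gamma,\gamma')$.
\end{itemize}

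On the point you flagged: the bound $\rho(\gamma,\gamma')<\delta$ is genuinely necessary and is not part of the definition of a strongly suited isomorphism. Take $\mathcal{L}'=\mathcal{L}$, $\phi=\mathrm{id}$, and a time-reparametrisation $\gamma'=\gamma\circ\psi$ that spends a long time inside one gap of $\{\sigma_{\gamma,\ell}:\ell\in\mathcal{L}_{\gamma,\geq\delta}\}$. This gives a $\delta$-strongly suited isomorphism with $\omega^1_\delta(\mathcal{X}')$ arbitrarily large. So the statement as written needs that extra hypothesis. The paper's proof uses it implicitly (``$(\gamma',\ell')$ at distance less than $\delta$ from $(\gamma,\ell)$''), and in the applications it is supplied by $\mathcal{X}'\in B'_\eta(\mathcal{X})$, exactly as you said.
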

\begin{proof}
  Let $\mathcal{X}\in \Conf$ and $\epsilon>0$. Let $\delta_1>0$ such that $\omega^1_{3 \delta_1/2}(\mathcal{X})<\epsilon/2$, which exists by density of $\sigma_{\mathcal{L}}$.
  Let $\delta \in(0, \delta_1)$ such that for all $\ell\in \mathcal{L}_{\gamma,\geq \delta_1}$, for all $(\gamma',\ell')$ at distance less than $\delta$ from $(\gamma,\ell)$, it holds $|\sigma_{\gamma',\ell'}-\sigma_{\gamma,\ell}|<\epsilon/2$. The existence of $\delta$ follows from the fact $(\gamma,\ell)\in \mathscr{L}_{reg}$ for all $\ell \in \mathcal{L}$ (by definition of $\Conf$)
  and the continuity on $\mathscr{L}_{reg}$ of the function $(\gamma',\ell')\in \mathscr{S}\times\mathscr{L}\mapsto \sigma_{\gamma',\ell'}$  (Theorem~\ref{th:rerootingcontinuity}).

  Then, for all $\mathcal{X}'\in \Conf_0$ such that there exists a $\delta$-strongly suited isomorphism $\phi$  from $\mathcal{X}$ to $\mathcal{X}'$,
  let $(\ell'_i)_{i\in \{1,\dots, k\}}$ be an increasing enumeration of $\mathcal{L}'_{\gamma,\geq \delta}$ and $\ell_i=\phi^{-1}(\ell'_i)$. Since $\mathcal{L}_{\gamma,\geq 3\delta/2}\subseteq \phi^{-1}(\mathcal{L}'_{\gamma',\geq \delta})=\{ \ell_i: i\in \{1,\dots, k\} \}$,
  \[
  \omega_\delta^1(\gamma',\mathcal{L}')= \max \{ \sigma_{\gamma',\ell'_{i+1}}- \sigma_{\gamma',\ell'_{i+1}}\}\leq \epsilon/2+
  \max \{ \sigma_{\gamma,\ell_{i+1}}- \sigma_{\gamma,\ell_{i+1}}\}< \epsilon/2+ \omega_{3\delta/2}^1(\gamma,\mathcal{L})<\epsilon.\qedhere
  \]
\end{proof}

\begin{proposition}
  \label{prop:equicont}
  For all $\mathcal{X}\in \Conf$, for all $\epsilon>0$, there exists $\delta>0$ such that  for all $\lambda>0$ and
  $\mathcal{X}'=(\gamma',\mathcal{L}')\in \Conf_0$ and $\delta$-strongly suited isomorphism $\phi$ from $\mathcal{X}$ to $\mathcal{X}'$, for all $B\in \mathcal{B}_{\gamma',\mathcal{L}'}$,
  both $\omega_\delta(\gamma'\circ \sigma'_{\mathcal{X}',\lambda})$ and $\omega_\delta(\Att(\gamma',\mathcal{L}',\lambda,B))$ are smaller than $\epsilon$.
\end{proposition}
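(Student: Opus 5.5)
The plan is to combine the pointwise bound of Proposition~\ref{prop:extension},
\[
\omega_\delta(\Att(\mathcal{X}',\lambda,B))\leq 2\,\omega^l_\delta(\mathcal{L}')+\omega_\delta(\gamma'\circ\sigma_{\mathcal{X}',\lambda}),
\]
which holds for every $\lambda>0$ and every tie-break $B$. It therefore suffices to make each of the two terms on the right small, uniformly in $\lambda$ and $B$.

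\textbf{Path term.} I would control $\omega_\delta(\gamma'\circ\sigma_{\mathcal{X}',\lambda})$ in two steps.
\begin{enumerate}[(a)]
\item By \eqref{eq:def:omegadelta1} in Lemma~\ref{le:sigmaIsCont}, $\omega_\delta(\sigma_{\mathcal{X}',\lambda})\leq\omega^1_\delta(\mathcal{X}')$, and this bound does not depend on $\lambda$ or $B$. Lemma~\ref{le:omega1bound} then gives $\omega^1_\delta(\mathcal{X}')<\epsilon_1$ once $\delta$ is small enough, because a $\delta$-strongly suited isomorphism from $\mathcal{X}$ to $\mathcal{X}'$ exists.
\item It then remains to compose with $\gamma'$, i.e.\ to bound $\omega_{\epsilon_1}(\gamma')$ uniformly. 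As in the proofs of Lemmas~\ref{le:suited}--\ref{le:omega1bound}, the isomorphism $\phi$ is understood relative to the pair $(\gamma,\gamma')$ with $\rho(\gamma,\gamma')\leq\delta$; this closeness is what the continuity of $\sigma$ in Theorem~\ref{th:rerootingcontinuity} already uses inside Lemma~\ref{le:omega1bound}. Lemma~\ref{le:sametopo} then gives $d_\infty(\gamma,\gamma')\leq 2\delta+\omega_{2\delta}(\gamma)$, hence
\[
\omega_{\epsilon_1}(\gamma')\leq\omega_{\epsilon_1}(\gamma)+2d_\infty(\gamma,\gamma').
\]
\end{enumerate}
Choosing first $\epsilon_1$ with $\omega_{\epsilon_1}(\gamma)<\epsilon/10$, and then $\delta$ so small that $\omega^1_\delta(\mathcal{X}')<\epsilon_1$ and $2d_\infty(\gamma,\gamma')<\epsilon/10$, yields $\omega_\delta(\gamma'\circ\sigma_{\mathcal{X}',\lambda})<\epsilon/5$.

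\textbf{Loop term.} Next I would bound $\omega^l_\delta(\mathcal{L}')$, splitting the loops of $\mathcal{L}'$ into two classes.
\begin{enumerate}[(a)]
\item For loops $\ell'\in\mathcal{L}'$ in the image of $\phi$, we have $d_\infty(\phi^{-1}(\ell'),\ell')\leq\delta/2$, so
\[
\omega^l_\delta(\ell')\leq\omega^l_\delta(\phi^{-1}(\ell'))+2\delta+\omega^l_{\delta}(\mathcal{L})\text{-type corrections},
\]
which is small by the equicontinuity condition~\eqref{it:unifcont} for $\mathcal{L}$.
\item Any other loop of $\mathcal{L}'$ has duration $<\delta$. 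I would first refine the inequality of Proposition~\ref{prop:extension}: the loop term only matters for loops actually traversed by $X'$, i.e.\ for $\ell'\in\mathcal{L}'_{\gamma'}$. Such a loop contains a point of $\gamma'$, namely its root $x_{\gamma',\ell'}$. Within a window of length $\delta$, $X'$ moves either inside a single loop, or from loop to path to loop as in the proof of Proposition~\ref{prop:extension}. The displacement is therefore bounded by two loop diameters plus the path term already controlled above.
\end{enumerate}

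\textbf{Main obstacle.} The remaining difficulty is bounding the diameter of the short loops $\ell'\in\mathcal{L}'_{\gamma',<\delta}$ that are not matched by $\phi$. Nothing in the definition of a $\delta$-strongly suited isomorphism constrains these loops spatially, so I expect this step to be where the argument needs extra input. What I would try first is to obtain the missing control from the equicontinuity condition~\eqref{it:unifcont} of $\mathcal{X}'\in\Conf_0$, in the uniform form supplied by Lemma~\ref{le:omegabound}, i.e.\ $\mathcal{X}'\in J_f$. If that is not available in the setting where the proposition is applied, the conclusion for $\Att$ must be weakened to the path term $\omega_\delta(\gamma'\circ\sigma_{\mathcal{X}',\lambda})$ plus $2\,\omega^l_\delta(\mathcal{L}')$ kept as an explicit term.
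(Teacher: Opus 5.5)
\textbf{The path term is fine; the loop term has a gap.} Your treatment of the path term is the paper's argument. The bound $\omega_\delta(\sigma_{\mathcal{X}',\lambda})\le\omega^1_\delta(\mathcal{X}')$ comes from \eqref{eq:def:omegadelta1}, Lemma~\ref{le:omega1bound} makes it small, and then $\omega_{\omega^1_\delta(\mathcal{X}')}(\gamma')\le\omega_{\omega^1_\delta(\mathcal{X}')}(\gamma)+2d_\infty(\gamma,\gamma')$. You take $\rho(\gamma,\gamma')\le\delta$ as part of the hypothesis, exactly as the paper does. The gap is the loop term: you stop at the short unmatched loops and never bound $\omega_\delta(\Att(\mathcal{X}',\lambda,B))$.

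Your diagnosis is correct, and the statement read literally is in fact false. Take $\gamma'=\gamma$ and $\mathcal{L}'=\mathcal{L}\cup\{\ell_0\}$, where $t_{\ell_0}=\delta/2$ and $\Range(\ell_0)$ meets $\Range(\gamma)$ and has diameter larger than $\epsilon$.
\begin{itemize}
\item $\mathcal{X}'\in\Conf_0$.
\item The identity on $\mathcal{L}_{\geq\delta}=\mathcal{L}'_{\geq\delta}$ is a $\delta$-strongly suited isomorphism.
\item Even $d'_{\Conf_0}(\mathcal{X},\mathcal{X}')$ and $|t_{\mathcal{X}}-t_{\mathcal{X}'}|$ are at most $\delta$.
\end{itemize}
Yet $\Att(\mathcal{X}',\lambda,B)$ runs through all of $\Range(\ell_0)$ within a time window of length $\delta/2$, so its $\delta$-modulus exceeds $\epsilon$.

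\textbf{The missing input.} The paper's proof uses, without stating it, that $\mathcal{X}'$ is also $d_{\Conf_0}$-close to $\mathcal{X}$, say $d_{\Conf_0}(\mathcal{X},\mathcal{X}')\le\delta$. This is exactly the situation in Theorem~\ref{th:continuityAtt}, where the proposition is applied. The last term of $d_{\Conf_0}$ was introduced precisely to exclude loops like $\ell_0$. For $\delta\le 1$ it gives, for every loop of $\mathcal{L}'$ at once, matched or not,
\[
\omega^l_\delta(\mathcal{L}')\le 2\,\omega_\delta(\mathcal{L}')\le 2\,\omega_\delta(\mathcal{L})+2\,d_{\Conf_0}(\mathcal{X},\mathcal{X}').
\]
This is small by the equicontinuity of $\mathcal{L}$, and Proposition~\ref{prop:extension} then concludes. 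The paper writes $\omega^l_\delta(\mathcal{L}')\le\omega_{2\delta}(\mathcal{L}')$, which can fail when the two endpoints straddle the root. The bound $\omega^l\le2\omega$ from the preliminaries of Section~\ref{sec:construction} is the safe one. With this, your matched/unmatched split and the refinement of Proposition~\ref{prop:extension} to traversed loops are unnecessary.

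Your alternative hypothesis $\mathcal{X}'\in J_f$ would also close the argument. It gives $\omega^l_\delta(\mathcal{L}')\le 2f(\delta)$ directly, without any appeal to Lemma~\ref{le:omegabound}. However, it is not what is available where the proposition is used. What was needed was to commit to one such extra hypothesis and finish, rather than leave the step open.
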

\begin{proof}
Let  $\mathcal{X}=(\gamma,\mathcal{L})\in \Conf$ and $\epsilon>0$.
Let $\delta_0>0$ such that $\omega_{\delta_0}(\gamma)<\epsilon/6$. Let $\delta_1>0$ such that for all
$\delta_1$-strongly suited isomorphism $\phi$ from $\mathcal{X}$ to $\mathcal{X}'$, $\omega^1_{\delta_1}(\mathcal{X}')<\eta$, the existence of which follows from Lemma~\ref{le:omega1bound}. Let $\delta\leq \min(\epsilon/8, \delta_1 )$ such that
$\rho(\gamma,\gamma')\leq \delta\implies d_\infty(\gamma,\gamma')<\epsilon/6$ and $\omega_{2\delta}(\mathcal{L})<\epsilon/8$.

Then, if there exists a $\delta$-strongly suited isomorphism $\phi$ from $\mathcal{X}$ to $\mathcal{X}'=(\gamma',\mathcal{L}')$,
\begin{equation}
\label{eq:temp:omegasmall}
\omega_\delta(\gamma'\circ \sigma_{\mathcal{X}',\lambda} )\leq \omega_{\omega_\delta(\sigma_{\mathcal{X}',\lambda}) }(\gamma')
\leq \omega_{\omega^1_\delta( \mathcal{X}')    }(\gamma')
\leq \omega_{\omega^1_\delta( \mathcal{X}')    }(\gamma)+2 d_\infty(\gamma,\gamma')\leq \omega_{\delta_0}(\gamma)+\epsilon/3\leq \epsilon/2,
\end{equation}
where the second inequality comes from Lemma~\ref{le:sigmaIsCont} (Equation~\eqref{eq:def:omegadelta1}).

Furthermore, $\omega^l_\delta(\mathcal{L}')\leq \omega_{2\delta}(\mathcal{L}')\leq \omega_{2\delta}(\mathcal{L})+ d_{\mathcal{R}_0}(\mathcal{X},\mathcal{X}' )\leq
\epsilon/4$. Combining this with~\eqref{eq:temp:omegasmall} and Proposition~\ref{prop:extension}, we deduce $\omega_\delta(\Att(\gamma',\mathcal{L}',\lambda,B))<\epsilon$, which concludes the proof.
\end{proof}

\subsection{Continuity of the attachment map}
\label{sub:cont2}
\begin{theorem}
\label{th:continuityAtt}

	The map $\Att$ is continuous on $\Conf\times\{0\}$, in the following sense:

  For all $\mathcal{X}\in \Conf$, for all $\epsilon>0$, there exists $\delta>0$ such that
  \[
  \forall (\mathcal{X}',\lambda)\in \Conf', \qquad d_{\mathcal{R}_0}(\mathcal{X},\mathcal{X}')\leq \delta \mbox{ and }\lambda\leq \delta\implies \forall B\in \mathcal{B}_{\mathcal{X}'} , \ d_\infty(\Att(\mathcal{X}',\lambda, B) ,\Att(\mathcal{X})   )<\epsilon.
  \]
\end{theorem}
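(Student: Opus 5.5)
We compare $X'=\Att(\mathcal{X}',\lambda,B)$ and $X=\Att(\mathcal{X})$ directly in the sup norm, matching the large loops of $\mathcal{X}$ with those of $\mathcal{X}'$ through a strongly suited isomorphism and controlling everything else by equicontinuity.

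Fix $\mathcal{X}\in\Conf$ and $\epsilon>0$, and choose a small auxiliary scale $\delta_1>0$. By Lemma~\ref{le:strongsuited}, for $\delta\le\eta(\delta_1)$ any $\delta$-isomorphism from $\mathcal{X}$ to $\mathcal{X}'$ induces a $\delta_1$-strongly suited isomorphism $\phi$. We then apply three earlier results with this $\phi$.
\begin{enumerate}[(a)]
\item Lemma~\ref{le:Tcont} gives $|T_\ell(\mathcal{X})-T_{\phi(\ell)}(\mathcal{X}')|\le\epsilon_1$ for all $\ell\in\mathcal{L}_{\gamma,\geq\delta_1}$. The lemma is stated at $\lambda=0$, but the extra term $\lambda\sigma_{\gamma',\phi(\ell)}\le\lambda t_{\gamma'}$ is at most $\delta(t_\gamma+1)$, so it is harmless.
\item Proposition~\ref{prop:equicont} gives $\omega_{\delta_1}(X')<\epsilon_1$ uniformly in $B$ and $\lambda$, and Proposition~\ref{prop:extension} gives the same bound for $X$.
\item By Corollary~\ref{coro:thetacont}, and after shrinking $\delta$, the rerooting times satisfy $|\theta_{\gamma',\phi(\ell)}-\theta_{\gamma,\ell}|\le\epsilon_1$ for each of the finitely many $\ell\in\mathcal{L}_{\gamma,\ge\delta_1}$, whatever root $B$ selects in $\Theta_{\gamma',\phi(\ell)}$, because both $\theta^-$ and $\theta^+$ converge. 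Consequently $d_\infty(\hat\ell,\widehat{\phi(\ell)})$ is small, using the periodic modulus $\omega^l$ of $\ell$.
\end{enumerate}
Finally, $|t_{\mathcal{X}',\lambda}-t_{\mathcal{X},0}|\le\delta+\lambda t_{\gamma'}$ is small.

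Now take $t\in[0,t_{\mathcal{X}}]$. Since $T_{\le\delta_1}(\mathcal{X})$ is small for $\delta_1$ small, and $t_{\mathcal{X}}<\infty$, the union of the intervals $[T_\ell,T_\ell+t_\ell]$ with $\ell\in\mathcal{L}_{\gamma,\geq\delta_1}$ covers $[0,t_{\mathcal{X}}]$ up to gaps of total length at most $T_{\le\delta_1}(\mathcal{X})$. In particular every gap is shorter than some $\delta_2$, which we make smaller than the modulus scale in (b). Pick a time $s$ within $\delta_2$ of $t$ that lies in $[T_\ell,T_\ell+t_\ell]$ for some large loop $\ell$. Then $X(s)=\hat\ell(s-T_\ell)$. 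Using (a) and (c),
\[
X'\bigl(s-T_\ell+T_{\phi(\ell)}\bigr)=\widehat{\phi(\ell)}(s-T_\ell)
\]
is close to $X(s)$, and the time $s-T_\ell+T_{\phi(\ell)}$ is within $\epsilon_1$ of $s$; near the endpoints we clip with $t_{\phi(\ell)}\approx t_\ell$. The equicontinuity in (b) for both $X$ and $X'$ then controls $|X(t)-X'(t)|$. Times near $t_{\mathcal{X}}$ are handled by the closeness of the total durations and equicontinuity, since $d_\infty$ also accounts for $|t_X-t_{X'}|$.

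The main obstacle is the uniformity in $B$ and $\lambda$. It also has to work even though strong suitedness only controls the order among loops of size at least $\delta_1$, so small loops of $\mathcal{X}'$ may be arbitrarily permuted. This is exactly why we use only large loops as anchors and rely on Proposition~\ref{prop:equicont} to interpolate. Care is needed in choosing the order of constants: first $\epsilon_1$ from $\epsilon$, then $\delta_1$ from the modulus and from $T_{\le\cdot}$, then $\delta$ from Lemmas~\ref{le:strongsuited}, \ref{le:Tcont} and Corollary~\ref{coro:thetacont}. With that order, each anchor estimate is at most a fixed multiple of $\epsilon_1$.
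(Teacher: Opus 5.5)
Your strategy is the paper's. You reduce to a strongly suited isomorphism via Lemma~\ref{le:strongsuited}, anchor each time $t$ to a nearby time inside the interval of a large loop $\ell$, and transport it to $\phi(\ell)$ using Lemma~\ref{le:Tcont} and the continuity of $\theta^\pm$. You then interpolate with the uniform modulus of Proposition~\ref{prop:equicont}. Two of your details are cleaner than the paper's. First, your measure argument replaces the appeal to Lemma~\ref{le:dense}: at $\lambda=0$ the intervals $(T_\ell,T_\ell+t_\ell)$ are disjoint with total length $t_{\mathcal{X}}$, so the large ones leave gaps of length at most $T_{<\delta_1}(\mathcal{X})$. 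Second, sandwiching the root selected by $B$ between $\theta^-$ and $\theta^+$ makes the uniformity in $B$ explicit.

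However, as written your constants do not close.
\begin{itemize}
\item Proposition~\ref{prop:equicont} controls the modulus of $X'$ only at a time scale equal to the strong-suitedness parameter. In (b) you take that parameter to be $\delta_1$, which is in general much smaller than the target accuracy.
\item Your anchored time $s'=s-T_\ell+T_{\phi(\ell)}$ is at distance up to $\delta_2+\epsilon_1$ from $t$, because in (a) and (c) you ask for time accuracy $\epsilon_1$. With your ordering ($\epsilon_1$ first, $\delta_1$ small afterwards) you have $\epsilon_1\gg\delta_1$.
\item Subadditivity of the modulus then only gives $|X'(s')-X'(t)|\lesssim(\epsilon_1/\delta_1)\,\epsilon_1$, which is not small.
\item Similarly, since the anchor threshold and the modulus scale are the same parameter $\delta_1$, nothing forces the gap bound $T_{<\delta_1}(\mathcal{X})$ below $\delta_1$.
\end{itemize}

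The fix is the paper's three-scale choice, which decouples the modulus scale, the anchor threshold and the time accuracies.
\begin{enumerate}
\item Fix a modulus scale $\delta_0$ such that $\omega_{3\delta_0}(X')\le\epsilon/4$ for every $\delta_0$-strongly suited $\phi$ and every $B,\lambda$, and such that $\omega_{6\delta_0}(\mathcal{L})$ is small.
\item Choose an anchor threshold $\delta_1<\delta_0$ with all gaps at most $\delta_0$. Your bound $T_{<\delta_1}(\mathcal{X})\le\delta_0$ suffices here.
\item Choose $\delta<\delta_1$ such that Lemma~\ref{le:Tcont} and the continuity of $\theta^\pm$ give time errors at most $\delta_0$ (not $\epsilon_1$) for the finitely many loops in $\mathcal{L}_{\gamma,\ge\delta_1}$, and such that $\lambda t_{\gamma'}\le\delta_0$.
\end{enumerate}
Then $\phi$ is also $\delta_0$-strongly suited, $|t-s'|\le 3\delta_0$, and each of the three terms in $|X(t)-X'(t)|$ is controlled.
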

\begin{remark}
  It should be possible to extend Proposition~\ref{prop:extension} to the larger space $\Conf_0\times [0,\infty)$ (i.e. to remove the condition~\eqref{it:dense} when $\lambda=0$) at the cost of replacing ``continuous" with ``càdlàg" in the result. Then, the continuity property in Theorem~\ref{th:continuityAtt} should also extend to the space of pairs which satisfy~\eqref{it:inj} and~\eqref{it:reg} but not necessarily~\eqref{it:dense}, provided we replace the uniform topology on $\mathcal{C}([0,\infty),\mathbb{R}^2)$
  with an appropriate Skorokhod-type topology on the space of càdlàg functions. In summary:~\eqref{it:inj} ensures uniqueness of the path obtained by attachment,~\eqref{it:dense} ensures continuity of this path when we follow $\gamma$ at infinite pace, and~\eqref{it:reg} ensures that the attachment map itself is continuous.


\end{remark}

\begin{proof}[Proof of Theorem~\ref{th:continuityAtt}]
Let $\mathcal{X}=(\gamma,\mathcal{L})\in \Conf$, $X=\Att(\mathcal{X})$, and $\epsilon>0$.
By Lemma~\ref{le:strongsuited}, it suffices to show that there exists $\delta>0$ such that for all $(\gamma',\mathcal{L}',\lambda)\in \Conf'$ with $\lambda<\delta$, if there exists a $\delta$-strongly suited isomorphism $\phi$ from $\mathcal{X}$ to $\mathcal{X}'\coloneqq (\gamma',\mathcal{L}')$, then, for all  $B\in \mathcal{B}_{\gamma',\mathcal{L}'}$ , $ d_\infty(\Att(\gamma',\mathcal{L}',\lambda, B) ,\Att(\gamma,\mathcal{L})   )<\epsilon$.

Let $\delta_0\in(0,\epsilon/4)$ such that $\omega_{6 \delta_0}(\mathcal{L})<\epsilon/8$ and such that for all $\delta_0$-strongly suited isomorphism $\phi$ from $\mathcal{X}$ to $\mathcal{X}'$,
$\omega_{3\delta_0}(X')\leq \epsilon/4$, where $X'=\Att(\mathcal{X}', \lambda,B)$ (in particular, $\omega_{3\delta_0}(X)\leq \epsilon/4$ since we can take $\mathcal{X}'\coloneqq\mathcal{X}$ and $\phi\coloneqq\operatorname{id}$). The existence of $\delta_0$ follows from the equicontinuity condition and Proposition~\ref{prop:equicont}.

Let $\delta_1\in(0,\delta_0)$ such that for all $t\in [0,t_{\mathcal{X}}=t_X]$, there exists $\ell \in \mathcal{L}_{\gamma,\geq \delta_1}$ and $s\in(T_\ell,T_\ell+t_\ell)$ such that $|s-t|\leq \delta_0$. The existence of $\delta_1$ follows from the density of $\mathcal{T}$ in $[0,t_\gamma]$ (Lemma~\ref{le:dense}).

Let $\delta\in(0,\delta_1)$ such that $\delta<\delta_0/(2t_\gamma)$ and for all $\delta$-strongly suited isomorphism $\phi$ from $\mathcal{X}$ to $\mathcal{X}'$, for all $\ell\in \mathcal{L}_{\gamma,\geq \delta_1}$, it holds
$|T_\ell-T_{\phi(\ell)}|\leq \delta_0$ and $|\theta_\ell-\theta_{\phi(\ell)}|\leq \delta_0$. This is possible by Lemma~\ref{le:Tcont} and Theorem~\ref{th:rerootingcontinuity} (continuity of $(\gamma,\ell)\mapsto \theta_{\gamma,\ell}$).

Then, for  all $\delta$-strongly suited isomorphism $\phi$ from $\mathcal{X}$ to $\mathcal{X}'$,
for all $t\in [0,t_X]$, let  $\ell \in \mathcal{L}_{\gamma,\geq \delta_1}$ and $s\in(T_\ell,T_\ell+t_\ell)$ such that $|s-t|\leq \delta_0$, let $\ell'\coloneqq \phi(\ell)$ and let
\[ s'=(s-T_\ell+T_{\ell'})\wedge (T_{\ell'}+t_{\ell'})\in [T_{\ell'},T_{\ell'}+ t_{\ell'}].\]
First, remark
\[|X(t)-X(s)|\leq \omega_{\delta_0}(X)\leq \epsilon/4.\]
Furthermore,  $X(s)={\ell}( s-T_\ell -\theta_{\gamma,\ell} +\mathbbm{1}_{ s-T_\ell\leq \theta_{\gamma,\ell}  } t_\ell
)$ and
$X'(s')=\ell'(s'- T_{\ell'} -\theta_{\gamma',\ell'}  +\mathbbm{1}_{ s'-T_{\ell'}\leq \theta_{\gamma',\ell'}  } t_{\ell'}   )  $.
Let  $\tilde{\ell}$ be the $t_\ell$-periodic extension of $\ell$, and observe that
\[
X(s)=\ell( s-T_\ell -\theta_{\gamma,\ell} +\mathbbm{1}_{ s-T_\ell\leq \theta_{\gamma,\ell}  } t_\ell
)=\tilde{\ell}( s-T_\ell -\theta_{\gamma,\ell} +\mathbbm{1}_{ s'-T'_{\ell'}\leq \theta_{\gamma',\ell'}  } t_\ell
).
\]
Thus,
\begin{align*}
|X(s)-X'(s')|
&\leq \omega^l_{ |s'-s| +|\theta_{\gamma,\ell}-\theta_{\gamma',\ell'}| +|t_\ell-t_{\ell'}| }(\ell)+\|\ell'-\tilde{\ell}\|_{\infty,[0,t_{\ell'}] }\\
&\leq 2 \omega^l_{ |s'-s|+|\theta_{\gamma,\ell}-\theta_{\gamma',\ell'}| +|t_\ell-t_{\ell'}| }(\ell)+d_\infty(\ell',\ell)\leq   2\omega_{4\delta_0+2\delta }(\mathcal{L}) +\delta\leq \epsilon/2.
\end{align*}
Furthermore, $|s-s'|\leq \delta_0+\delta/2<2\delta_0$:
either $s'=s-T_\ell+T_{\ell'}$, in which case $|s'-s|=| T_{\ell'}-T_\ell|\leq \delta_0 $, or
$s'=T_{\ell'}+t_{\ell'}$, which happens only if $T_{\ell'}+t_{\ell'}\leq s-T_\ell+T_{\ell'}$, i.e. $s\geq T_\ell+t_{\ell'}$. In this case, $s\in[T_\ell+t_{\ell'},T_\ell+t_{\ell}]$, thus $s-s'\in [T_{\ell}-T_{\ell'},T_{\ell}-T_{\ell'}+t_\ell-t_{\ell'}]$, thus $|s-s'|\leq | T_{\ell'}-T_\ell|+| t_{\ell'}-t_\ell|\leq \delta_0+\delta/2$.

It follows that $|t-s'|\leq 3\delta_0$. Since $\omega_{3\delta_0}(X')\leq \epsilon/4$,
\[ |X'(s')-X'(t)|\leq  \epsilon/4,\qquad \mbox{thus}  \qquad   |X(t)-X'(t)|\leq  \epsilon.\]
Thus,
$\|X-X'\|_{\infty,[0,t_X]}\leq \epsilon$. Since $X'$ is uniformly continuous in the vicinity of $X$ and $t_{X'}=t_{\mathcal{X}'}$ converges toward $t_X=t_{\mathcal{X}}$ as $(\gamma',\mathcal{X}',\lambda)$ converges toward $(\gamma,\mathcal{X},0)$ in $\mathcal{R}'$, this is sufficient to conclude.
\end{proof}

\section{Probabilistic aspects: \texorpdfstring{$(\gamma,\mathcal{L})\in \Conf$}{(gamma,L) in R} and fixed lattice approximation}
\label{sec:probabilistic1}
\subsection{Notations and preliminary properties}
In the following, we fix a proper simply connected planar domain $D$ with $0\in D$. We recall that $\mu^{\mathcal{L}}$ is the probability distribution of $\mathcal{L}$ a rooted and oriented Brownian loop soup with intensity $1$ on $D$. The whole-plane Brownian loop soup is written $\mathcal{L}^{\mathbb{R}^2}$. We recall also that $\mu^{\gamma}$ is the probability distribution of $\gamma$ a radial $SLE_2$ from the boundary of $D$ to $0$, run backward in time, and $\mathcal{X}\coloneqq (\gamma,\mathcal{L})\sim \mu^{\gamma}\otimes \mu^{\mathcal{L}}$.

We let $\mathbb{L}^{\mathbb{Z}^2}$ be a rooted and oriented random walk loop soup with intensity $1$ on the square lattice. For an integer $n$, we let $\mathbb{L}^{nD}$ be the subset of $\mathbb{L}^{\mathbb{Z}^2}$ which consists of loops whose edges are all contained  inside $nD$. It should be understood that for $n\neq n'$, the sets $\mathbb{L}^{nD}$ and $\mathbb{L}^{n'D}$ are not issued from the same whole-lattice loop soup $\mathbb{L}^{\mathbb{Z}^2}$,
but instead by two copies of it, possibly coupled in a non-trivial way (especially if we wish to consider convergence in probability after rescaling, as $n\to\infty$).
We also let $\gamma^{nD}$ be a loop-erased random walk on the square lattice, from $0$ to $\partial D$. It is always implicitly assumed that $\gamma^{nD}$ and $\mathbb{L}^{nD}$ are independent from each other.
Both $\gamma^{nD}$ and elements of $\mathbb{L}^{nD}$ can be considered either as continuous curves, parametrised at unit speed, or as the finite sequence of their values at integer times. Given an integer $n$, we will consider the scaling functions
\[
\begin{array}{cccc} \phi^*_{n}:& \mathscr{S} &\to & \mathscr{S}, \\ &\mathsf{S}&\mapsto &  n^{-1}\mathsf{S}( c_* n^{\frac{5}{4}}  \ \cdot \ )\end{array} \qquad
\begin{array}{cccc}\phi_n:& \mathcal{C}([0,\cdot],\mathbb{R}^2)&\to & \mathcal{C}([0,\cdot],\mathbb{R}^2) \\ &\mathsf{L}&\mapsto &  n^{-1}\mathsf{L}(2 n^2  \ \cdot \   )\end{array},
\]
where $c_*>0$ is a universal constant which is defined by the property that $\phi^*_n(\gamma^{nD})$ converges in distribution in $\rho$-distance toward $\gamma$ (recall that here $\gamma$ is parametrised by its Minkowski content or ``natural parametrisation'').
Then, we define \[ \mathcal{X}^n\coloneqq ( \phi^*_n(\gamma^{nD}), \phi_n( \mathbb{L}^{nD}  )   ) \qquad \mbox{and} \qquad \lambda_n\coloneqq \frac{c_*}{2}n^{-\frac{3}{4}}.\] Remark that $\lambda_n\to 0$ as $n\to \infty$.

Given $\mathcal{X}=(\gamma,\mathcal{L})\in \mathscr{S}\times \mathscr{L}^{lf}$ and $a>0$, let $a\cdot \mathcal{X}\coloneqq(a \gamma, \{ a \ell: \ell\in \mathcal{L}\})$. Then,
there is a canonical bijection between $\mathcal{B}_{\mathcal{X}}$ and  $\mathcal{B}_{a\cdot \mathcal{X}}$, and if we identify $B\in \mathcal{B}_{\mathcal{X}}$
with the corresponding element in $ \mathcal{B}_{a \cdot\mathcal{X}}$, the map $\Att$ commutes with multiplication by $a$, i.e.
\begin{equation}
\Att(a\cdot \mathcal{X}, \lambda, B    )=a \Att(\mathcal{X},\lambda, B    ).
\end{equation}
Furthermore, for $c>0$, if $S_c: \mathcal{C}([0,\cdot],\mathbb{R}^2 )\to \mathcal{C}([0,\cdot],\mathbb{R}^2 )$ is the operator $f\mapsto (t\mapsto f(c t ))$,
and $S_c(\mathcal{X})\coloneqq(S_c(\gamma),S_c(\mathcal{L})\coloneqq \{S_c(\ell):\ell\in \mathcal{L}\})$, there is also a canonical bijection between $\mathcal{B}_{\mathcal{X}}$ and  $\mathcal{B}_{S_c (\mathcal{X}) }$, and for this identification the map $\Att$ also commutes with $S_c$:
\begin{equation}
\Att(S_c(\mathcal{X}), \lambda, B    )=S_c ( \Att(\mathcal{X},\lambda, B    ) ).
\end{equation}
Furthermore,
\begin{equation}
\Att( S_c(\gamma), \mathcal{L},\lambda,B )= \Att( \gamma, \mathcal{L},c^{-1}  \lambda,B ).
\end{equation}
Of course the operators of scaling in space and in time commute with each other: for $a>0$ and $\lambda>0$, $S_c(a\cdot \mathcal{X})=a\cdot S_c( \mathcal{X})$, and the identification between $\mathcal{B}_{\mathcal{X}}$ and $\mathcal{B}_{S_c(a\cdot \mathcal{X})}=\mathcal{B}_{a\cdot S_c( \mathcal{X})}$ does not depend on the order in which we perform the two operations.

It follows from these considerations, and the choice of $\lambda_n$, that we have the equality
\begin{equation}
\label{eq:scaling}
\phi_n( \Att({\gamma}^{nD},{\mathbb{L}}^{nD},1,B  )) =
  \Att(\phi^*_n(\gamma^{nD}),\phi_n(\mathbb{L}^{nD}),\lambda_n,B ).
\end{equation}

For lattice path, the procedure of chronological loop-erasure is well-defined. We let $LE(X)$ be the loop erasure of a lattice path $X$. In particular,
\begin{equation}
LE(\Att(\mathsf{S}, \mathbb{L},1,B)  )=\mathsf{S}.
\end{equation}

We let $\lambda$ be the two-dimensional Lebesgue measure. For $X\subset D$, we let $X^\delta$ be the $\delta$-thickening of $X$, that is $X^\delta\coloneqq\{x: d(x,X)\leq \delta\}$. The notion of dimension will always refer to upper Minkowski dimension, e.g. a subset of $\mathbb{R}^ 2$ has dimension $d$ if
\[
\limsup \frac{\ln\ \lambda(X^\epsilon)  }{\ln\ \epsilon }= 2-d.
\]
In particular, $\mu^\gamma$-almost surely, $\Range(\gamma)$ has dimension $5/4$.

\subsection{Summary of the strategy}
In this section, we  will prove the two following results.
\begin{proposition}
\label{prop:conf}
  Let $\gamma\in \mathscr{S}$ deterministic and assume that $\Range(\gamma)$ has dimension $d<2$. Then, $ \mu^{\mathcal{L}}$-almost surely, $(\gamma,\mathcal{L})\in \Conf$.

  In particular, $\mu^\gamma\otimes \mu^{\mathcal{L}}$-almost surely, $(\gamma,\mathcal{L})\in \Conf$.
\end{proposition}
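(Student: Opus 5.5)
We check the five conditions defining $\Conf$ one at a time, for fixed $\gamma$ with $\dim \Range(\gamma)=d<2$, using the Poissonian structure of $\mathcal{L}$. The second assertion then follows by Fubini: $\mu^\gamma$-a.s. $\Range(\gamma)$ has dimension $5/4<2$, and $\gamma$ and $\mathcal{L}$ are independent.

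\emph{Conditions (i) and (ii).} For (i), it suffices to show $\mathbb{E}[t_{\gamma,\mathcal{L}}]=\int t_\ell \mathbbm{1}_{\ell\cap\gamma\neq\emptyset}\,\mu^D(\d\ell)<\infty$. Finiteness of $\mathcal{L}_{\geq\epsilon}$ is then immediate for loops hitting $\gamma$; for all loops in $D$ one can use local finiteness of $\mu^D$ restricted to $\{t_\ell\ge\epsilon\}$ (if $D$ is unbounded one only needs it for the loops relevant to $\Att$, so I would interpret (i) via $\mathcal{L}_\gamma$ if needed). To bound the expectation, split according to $t_\ell\in[2^{-2k-2},2^{-2k}]$. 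A loop of duration $t$ hitting $\gamma$ is rooted within distance $r$ of $\gamma$, with $r\approx\sqrt t$ up to a Gaussian tail. This gives a contribution of order
\[
\int_0^1 t\cdot t^{-2}\lambda\big(\Range(\gamma)^{\sqrt{t}\,\log(1/t)}\big)\d t\lesssim\int_0^1 t^{-1}\,t^{(2-d')/2-o(1)}\d t<\infty
\]
for any $d'\in(d,2)$. Large durations are handled by the fact that $D\neq\mathbb{C}$, so loops staying in $D$ have exponentially decaying mass (or boundedness of $D$). For (ii), the same dyadic decomposition combined with Lévy's modulus of continuity for Brownian bridges shows that the number of loops of duration $\approx 2^{-2k}$ with $\omega_\delta(\ell)>\epsilon$ has summable expectation. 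Together with finiteness of the large loops, Borel--Cantelli gives equicontinuity.

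\emph{Conditions (iii) and (v).} These are local properties of single loops, so by Mecke's (Slivnyak) formula it suffices to show that for $\mu^D$-a.e.\ $\ell$ hitting $\gamma$, the loop $\ell$ is regular for $\gamma$, and that for a pair of independent loops $\sigma_\ell\neq\sigma_{\ell'}$ a.s. For a single loop, decompose at its first hitting of $\Range(\gamma)$ (after rerooting, by the strong Markov property of the bridge). Since the first hitting point is a.s.\ not $o$ or $f$, and after the hit the path behaves locally as Brownian motion started on $\gamma$, it immediately enters both $L(x)$ and $R(x)$; this holds because planar Brownian motion started on a curve hits both sides instantly (a winding or cone-point argument, or the fact that the two-sided neighbourhood components are nonpolar with positive density). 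A Brownian path a.s.\ does not return to a prescribed point, which gives uniqueness of $\theta_\ell$, and the same reason prevents $x_\ell$ from being a double point. For (iii), condition on $\ell$: then $x_\ell$ is a fixed point, and an independent loop $\ell'$ hits a given point with probability zero, so $\sigma_{\ell'}\neq\sigma_\ell$. The main obstacle is the bilateral condition: Brownian motion started from the (random, non-stopping in the naive sense) point $x_\ell$ where it first touches a fractal curve. I would first try to handle it by time reversal of the bridge segment before the hitting time together with the strong Markov property after it, together with the observation that a one-sided approach at time $\theta_\ell$ would force a two-sided cone point relative to $\gamma$. Such cone points have measure zero for the planar Brownian motion.

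\emph{Condition (iv).} For a fixed rational interval $(a,b)\subset[0,t_\gamma]$, the set $\gamma([a,b])$ has positive capacity, so $\mu^D$ gives infinite mass to the small loops hitting $\gamma([a,b])$ but avoiding $\gamma([0,a])$, namely those loops of diameter less than the distance from a middle point of $\gamma([a,b])$ to $\gamma([0,a])$. Since this mass is $\sum_k c\,2^{2k}\cdot 2^{-2k}\cdot(\text{const})=\infty$, such a loop exists a.s. Each such loop has $\sigma_\ell\in(a,b]$, and a countable intersection over rational intervals gives density.
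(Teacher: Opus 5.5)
\paragraph{Verdict.} Your handling of (i), (ii), (iv) and of the injectivity of $\ell\mapsto\sigma_\ell$ matches the paper's in substance: Campbell's formula with a $\sqrt t$-neighbourhood of $\gamma$; Gaussian tails of bridges; bivariate Mecke formula plus polarity of points; a scale-invariant lower bound for small loops near $\gamma(\sigma)$ to hit $\gamma$. One small correction on (iv): the lower bound that is uniform in the scale comes from $\gamma$ crossing every small annulus around $\gamma(\sigma)$, as in the paper's non-contractible loops. Positive capacity of $\gamma([a,b])$ alone does not give it. The genuine gap is everything that concerns $\ell$ at the time $\theta_\ell$: uniqueness of the temporal root in (iii), and the bilateral condition (v).

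\paragraph{Where the argument breaks.} The point $x_\ell=\gamma(\sigma_\ell)$ is the point of $\Range(\ell)\cap\Range(\gamma)$ with the smallest \emph{$\gamma$-parameter}. It depends on the whole loop and is in general not the first point of $\gamma$ that $\ell$ hits in its own time. So ``decompose at its first hitting of $\Range(\gamma)$'' treats the wrong time. Likewise, ``a Brownian path does not return to a prescribed point'' does not apply: $\theta^-_\ell$ is not a stopping time and $x_\ell$ is not prescribed. You flag this for (v), but the fallback does not work. Staying on one side of a fractal curve is not a cone condition. Also, the set of cone times being Lebesgue-null says nothing about a specific non-adapted random time; exceptional cone points do exist.

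\paragraph{Missing idea for uniqueness of $\theta_\ell$.} For rational $s$, on $\{\theta^-_\ell<s\}$ one has $x_\ell=x_{\gamma,\ell|_{[0,s]}}$, and this point is $\mathcal{F}_s$-measurable. The Markov property of the bridge at time $s$ and polarity of points then show that $\ell$ a.s.\ never revisits $x_\ell$ after $s$. A union over rational $s$ concludes.

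\paragraph{Missing idea for bilaterality.} Avoid $\theta_\ell$ altogether.
\begin{enumerate}
\item Cut $[0,t_\ell]$ into blocks of length $\epsilon$. In block $j$, let $T_j$ be the first hitting time of $\gamma((0,t_\gamma))$; this is a genuine stopping time.
\item Use the strong Markov property, with absolute continuity of the bridge with respect to Brownian motion away from its endpoint. Combine it with the fact that Brownian motion started on $\gamma$ enters both $L$ and $R$ instantly, which follows from Beurling's projection principle. This gives, a.s.\ and simultaneously for all $j$, visits to $L(\ell(T_j))$ and $R(\ell(T_j))$ during $[T_j,T_j+\epsilon]$.
\item The block containing $\theta_\ell$ satisfies $T_J\le\theta_\ell<T_J+\epsilon$. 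For $y\in\gamma$ close to $x_\ell$, the sides $L(y),R(y)$ agree with $L(x_\ell),R(x_\ell)$ near $x_\ell$. Hence $\ell$ visits both sides of $x_\ell$ within $2\epsilon$ of $\theta_\ell$.
\end{enumerate}
This argument works at every visit of $\ell$ to $\gamma((0,t_\gamma))$, so the non-adapted nature of $\theta_\ell$ never has to be confronted.
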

In other words, the conditions imposed by the space $\Conf$, albeit sufficient restrictive to ensure continuity of the attachment map, are sufficiently generic to hold with probability $1$.

\begin{proposition}
  \label{prop:RW=LERW+RWLS}
  Let $\Lambda$ be a sublattice of the square lattice on $\mathbb{Z}^2$, which contains $0$.
  Let $\gamma$ be a loop-erased random walk started from $0$ and stopped when it exits $\Lambda$ (by edge). Let $\mathbb{L}^{\mathbb{Z}^2}$ be a random walk loop soup independent from $\gamma$, and
  $\mathbb{L}$ be the restriction of $\mathbb{L}^{\mathbb{Z}^2}$ to loops contained (by edges) in $\Lambda$.
  Then , the set $\mathcal{B}_{\gamma,\mathbb{L}}$ is finite. Let $B\in \mathcal{B}_{\gamma,\mathbb{L}}$ be chosen uniformly at random, independently from $(\gamma, \mathbb{L})$
  conditionally on $\mathcal{B}_{\gamma, \mathbb{L}}$.

  Then, $\Att(\gamma,\mathbb{L},1,B)$ is distributed as a simple random walk started from $0$ and stopped when it exits $\Lambda$ (by edge).
\end{proposition}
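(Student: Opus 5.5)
The plan is a direct probability computation for a fixed lattice path, following the classical decomposition of a random walk path into its loop-erasure together with the erased loops (Lawler; Le Jan). Fix a nearest-neighbour path $\omega$ in $\Lambda$ from $0$ which exits $\Lambda$ at its last step. It then suffices to show
\[
\P\big(\Att(\gamma,\mathbb{L},1,B)=\omega\big)=4^{-|\omega|}.
\]
First write $\eta=LE(\omega)=(\eta_0,\dots,\eta_m)$ and $\Lambda_i\coloneqq\Lambda\setminus\{\eta_0,\dots,\eta_{i-1}\}$. Then $\omega$ decomposes uniquely as $\eta$ with, for each $i$, a (possibly trivial) rooted loop $l_i$ at $\eta_i$ inserted; $l_i$ stays in $\Lambda_i$ and $|\omega|=|\eta|+\sum_i|l_i|$. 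Since $LE(\Att(\mathsf{S},\mathbb{L},1,B))=\mathsf{S}$, the event $\{\Att=\omega\}$ forces $\gamma=\eta$. Given $\gamma=\eta$, the loops of $\mathbb{L}$ whose first hitting point along $\eta$ is $\eta_i$ are exactly the loops of $\mathbb{L}$ in $\Lambda_i$ visiting $\eta_i$. These families are disjoint for different $i$, so by the Poisson property they are independent. The event $\{\Att=\omega\}$ is then the intersection over $i$ of the events that the concatenation, in the order and with the roots chosen by $B$, of the family at $\eta_i$ equals $l_i$. (When $\Lambda$ is infinite I will first check that $\mathcal{B}_{\gamma,\mathbb{L}}$ is finite. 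This holds because the total mass of loops meeting the finite path $\gamma$ is $\sum_i\log G_{\Lambda_i}(\eta_i,\eta_i)<\infty$, using that the walk exits a.s.)

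Next I will use the two standard inputs. The first is Lawler's formula $\P(\gamma=\eta)=4^{-|\eta|}\prod_{i}G_{\Lambda_i}(\eta_i,\eta_i)$. The second is that the $\mu$-mass of rooted loops in $\Lambda_i$ visiting $\eta_i$ equals $\log G_{\Lambda_i}(\eta_i,\eta_i)$, where the rooted loop measure gives weight $4^{-|\ell|}/|\ell|$ to each rooted lattice loop $\ell$ (intensity $1$). By the Poisson property, the probability that the family at $\eta_i$ is a given finite multiset is $G_{\Lambda_i}(\eta_i,\eta_i)^{-1}$ times the product of its $\mu$-weights, divided by the multiplicity factorials. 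The Green's function factors therefore cancel those of Lawler's formula. It remains to prove the purely combinatorial identity, for each $i$,
\[
\sum_{\text{multisets } M}\ \frac{\prod_{\ell\in M}4^{-|\ell|}/|\ell|}{\prod \mathrm{mult}!}\cdot\frac{\#\{B\in\mathcal{B}_M:\ \text{concatenation}=l_i\}}{\#\mathcal{B}_M}=4^{-|l_i|}.
\]
Here $\#\mathcal{B}_M=|M|!\prod_{\ell}\#\Theta_\ell$, with $\#\Theta_\ell$ the number of visits of $\ell$ to $\eta_i$. Since the weight is $4^{-|l_i|}$ for every $M$, the identity reduces to a counting statement.

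The main obstacle is this counting. I plan to prove it by reading it as a bijective statement. Cut $l_i$ at its $k$ returns to $\eta_i$ into elementary excursions $e_1,\dots,e_k$. A pair consisting of a decomposition $M$ and a compatible $B$ amounts to cutting the word $e_1\cdots e_k$ into consecutive blocks, each block defining one loop rooted at its start. Each loop $\ell$ of the soup appears once per rotation of its sequence of excursions, with weight $1/|\ell|$ per rooting. I would then show that the sum over the $|\ell|$ rootings of $\ell$ (grouped by which visit to $\eta_i$ is the temporal root) exactly compensates the factors $1/\#\Theta_\ell$, $1/|\ell|$ and the ordering factor $1/|M|!$. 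The delicate points are periodic loops, whose rootings coincide, and repeated identical loops in $M$, where the multiplicity factorials must cancel against identical orderings. I would handle both by an orbit-counting argument under cyclic rotation, equivalently by invoking the known identity $\sum_M\cdots=\exp$ of the loop-measure expansion $\log G=\sum_k \frac{1}{k}(\text{excursion generating function})^k$, matched term by term.

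Finally, multiplying over $i$ gives $4^{-|\eta|}\prod_i4^{-|l_i|}=4^{-|\omega|}$, which is the claim.
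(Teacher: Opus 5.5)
Your argument is correct and has the same skeleton as the paper's proof. You decompose $\omega$ along $\eta=LE(\omega)$, use that the loop families first hit at distinct vertices $\eta_i$ are independent Poisson pieces (with uniform $B$ factorising), and apply a one-vertex identity; the paper simply cites that identity as \cite[Proposition~9.4.1]{LawlerLimic}, while you reprove it via the exponential formula $\sum_{r\ge1}\frac1{r!}\sum_{k_1+\dots+k_r=k}\prod_j\frac1{k_j}=1$, the coefficient of $z^k$ in $\exp(-\log(1-z))$.

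The one bookkeeping difference is that the paper never identifies the constants. It notes that $\P(S=\omega\mid LE(S)=\eta)$ and $\P(\Att(\eta,\mathbb{L},1,B)=\omega)$ are both proportional to $\prod_j 4^{-|l_j|}$ on $\{\omega: LE(\omega)=\eta\}$ and concludes by normalisation, so it needs neither Lawler's formula for $\P(\gamma=\eta)$ nor the identity that the loop mass through $\eta_i$ in $\Lambda_i$ equals $\log G_{\Lambda_i}(\eta_i,\eta_i)$.
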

In the next section, we will prove the following.
\begin{proposition}
  \label{prop:converg}
  As $n\to \infty$,
  $\mathcal{X}^n$ converges in distribution in $d_{\Conf_0}$-distance toward $\mathcal{X}$.
\end{proposition}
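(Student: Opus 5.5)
The plan is to reduce convergence in $d_{\Conf_0}$ to convergence in the weaker distance $d'_{\Conf_0}$ of \eqref{eq:defdprime}, using Lemma~\ref{le:omegabound} and the first point of Lemma~\ref{le:Tbound}. Both apply because the limit $\mathcal{X}$ lies in $\Conf$ almost surely (Proposition~\ref{prop:conf}). Concretely, it suffices to prove three things, for each $\eta>0$ and $n$ large.
\begin{enumerate}[(a)]
\item There is a coupling in which $d'_{\Conf_0}(\mathcal{X}^n,\mathcal{X})\to 0$ in probability.
\item There is a deterministic continuous $f_\eta$ with $f_\eta(0)=0$ such that $\omega_\epsilon(\phi_n(\mathbb{L}^{nD}))\le f_\eta(\epsilon)$ for all $\epsilon$, with probability at least $1-\eta$ uniformly in $n$.
\item There is a similar $g_\eta$ with $T_{\le \epsilon}(\mathcal{X}^n)\le g_\eta(\epsilon)$ for all $\epsilon$, with probability at least $1-\eta$ uniformly in $n$.
\end{enumerate}
Given (a)--(c), on the intersection of these events we have $\mathcal{X}^n\in J_{f_\eta}\cap K_{g_\eta}$. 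Applying Lemma~\ref{le:omegabound} and then Lemma~\ref{le:Tbound} at the (random) point $\mathcal{X}\in\Conf$ gives $d_{\Conf_0}(\mathcal{X}^n,\mathcal{X})\to 0$ in probability, hence convergence in distribution.

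For (a), I would take independent couplings of the two components. The first couples $\phi^*_n(\gamma^{nD})$ with $\gamma$ in $\rho$-distance; this exists by the convergence of LERW to SLE$_2$ in natural parametrisation, which is built into the definition of $c_*$, together with Skorokhod's representation theorem. The second is the Lawler--Trujillo-Ferreras coupling of $\phi_n(\mathbb{L}^{\mathbb{Z}^2})$ with $\mathcal{L}^{\mathbb{R}^2}$, in which all loops of duration at least $n^{-\alpha}$ are matched bijectively within $o(1)$ in $d_\infty$, durations included. From a matching of the whole-plane soups one obtains a $\delta$-isomorphism between the restricted soups $\phi_n(\mathbb{L}^{nD})$ and $\mathcal{L}$. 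The only care needed is for loops of duration at least $\delta$ that come close to $\partial D$: the event that they are contained in $D$ must agree in the discrete and continuum soups. This holds with high probability because almost surely no Brownian loop of the soup touches $\partial D$ without exiting $D$. I would obtain this by approximating $D$ from inside and outside and using the fact that the soup has finitely many loops of duration at least $\delta$ near $\partial D$.

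For (b), macroscopic loops are handled by the coupling in (a). For loops of duration at most $\delta$, I would use a union bound. The expected number of rescaled discrete loops of duration in $[t,2t]$ inside $D$ is $O(|D|/t)$. The probability that such a loop, a random walk bridge, oscillates by more than $r$ within time $\epsilon$ is at most $C(t/\epsilon)e^{-cr^2/\epsilon}$. Summing over dyadic $t$ gives a bound that is uniform in $n$ and tends to zero as $\epsilon\to0$. Short lattice loops of duration below about $n^{-2}$ are trivial, since their spatial size is $O(1/n)$.

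I expect (c) to be the main obstacle. The estimate to aim for is
\[
\mathbb{E}\big[T_{\le\epsilon}(\mathcal{X}^n)\big]\le h_1(\epsilon)+h_2(n),
\]
with $h_1(\epsilon)\to0$ as $\epsilon\to0$ uniformly in $n$ and $h_2(n)\to0$; Markov's inequality then gives (c), after choosing $g_\eta$ by a diagonal argument over dyadic $\epsilon$. Since $\gamma^{nD}$ and the loop soup are independent, I would first condition on $\gamma^{nD}$. The expected total duration of loops of duration roughly $t$ that hit $\gamma_n$ is then bounded by $t\cdot t^{-2}\cdot \lambda(\Range(\gamma_n)^{C\sqrt t})$, up to a Gaussian tail correction for loops straying farther than $\sqrt{t}$ from their root. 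Taking expectation, the one-point LERW estimate $\mathbb{P}(d(x,\gamma_n)\le r)\le C r^{3/4}$ (for $x$ at macroscopic distance from $0$, with a separate crude bound near $0$ and near $\partial D$, valid for $r\ge 1/n$) gives $\mathbb{E}\,\lambda(\Range(\gamma_n)^{r})\le C r^{3/4}$. Summing over dyadic $t\le\epsilon$ then yields $\sum_t t^{-1}t^{3/8}\cdot t\lesssim \epsilon^{3/8}$. Loops at the lattice scale, $t\lesssim n^{-2}$, contribute through the number of lattice points of $\gamma_n$ times the expected time of rooted lattice loops at a point, which is $O(n^{5/4}\cdot n^{-2})\to0$; this is the term $h_2(n)$. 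The delicate points are making the LERW estimate uniform up to the scale $1/n$ and near $\partial D$.
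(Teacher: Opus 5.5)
Your reduction is sound and close to the paper's.

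\begin{itemize}
\item Step (a) is what the paper does: Skorokhod for the LERW, the Sapozhnikov--Shiraishi form of the Lawler--Trujillo-Ferreras coupling, and Proposition~\ref{prop:blsBis} for loops near $\partial D$.
\item For (b) the paper avoids a union bound. A lattice loop with at most $n^\theta$ steps has rescaled modulus controlled by $\min(n^{\theta-1},\epsilon n)$, and longer loops are handled by the coupling (Lemma~\ref{le:omegaequibound}). The resulting $f_{\mathcal L}$ is random, which is harmless because Lemma~\ref{le:omegabound} is applied at the random point $\mathcal X$.
\item Instead of your $K_g$ plus Lemma~\ref{le:Tbound}, the paper proves $t_{\mathcal X^n}\to t_{\mathcal X}$ directly (Theorem~\ref{th:tCV}), so it needs the small-loop bound at a single $\delta$ only. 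Your route needs a bound for all $\epsilon$ at once. To get $g_\eta(0)=0$ uniformly in $n$ you must use $T_{\le\epsilon}(\mathcal X^n)=0$ for $\epsilon<n^{-2}$ to absorb $h_2(n)$.
\end{itemize}
Either way, everything rests on the estimate you write down, which is exactly the paper's \eqref{eq:temp:Ebound}.

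The gap is that this estimate, which is the real content of the proposition, is left unproved, and the route you sketch does not close. A bound $\mathbb P(d(x,\gamma_n)\le r)\le Cr^{3/4}$, uniform down to $r=1/n$ in an arbitrary bounded simply connected $D$, is not available off the shelf. More importantly, the regions near $0$ and near $\partial D$ cannot be handled by a ``crude'' bound. Your sum runs over $\asymp\log(\epsilon n^2)$ dyadic scales, each contributing about $\mathbb E\,\lambda(\Range(\gamma_n)^{C\sqrt t})$. Any part of this volume that does not decay with $t$ contributes a multiple of $\log(\epsilon n^2)$, which diverges. An example is bounding the probability by $1$ on $B_\rho(0)$ or on $\{x\in D: d(x,\partial D)<\rho\}$. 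Since $\partial D$ can be arbitrarily rough, you also cannot trade $\rho$ against $t$ with a rate.

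What you need is a bound with some exponent $\nu>0$ (its value is irrelevant) that decays in $k/d(x,0)$, even for $x$ within $k$ of $n\partial D$. The paper gets this by elementary means.
\begin{itemize}
\item Lemma~\ref{le:probbound1} bounds the probability that the \emph{random walk} comes within $k$ of $x$ by $C(\max(d(x,n\partial D),k)/d(x,0))^q$. In each dyadic annulus around $x$ between those scales, the walk makes a loop non-contractible around $x$ with probability at least $p>0$ uniformly (Lemma~\ref{le:unifp}). Such a loop would force it out of the simply connected $nD$.
\item Lemma~\ref{le:probbound2} treats interior points. Take the last time $j_0$ at which the walk is within $k$ of $x$ at a point kept by the loop-erasure; then $S_{[j_0+1,\tau]}$ avoids $LE(S_{[0,j_0]})$. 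Since $j_0$ is not a stopping time, it is replaced by the $(k,2k)$-upcrossing times, whose expected number is $O(\log(n/k))$ (Lemma~\ref{le:upcross}). The discrete Beurling estimate then gives $\log(n/k)\sqrt{k/d(x,n\partial D\cup\{0\})}$.
\item Interpolating the two bounds (Proposition~\ref{prop:probbound}) and summing over $x$ (Corollary~\ref{coro:vol}) gives $\mathbb E\,\#\{x: d(x,\gamma^{nD})\le k\}\le C_1k^\nu n^{2-\nu}$ for $k\ge C'\log n$.
\end{itemize}
From this, \eqref{eq:temp:Ebound} follows, with loops of fewer than $(\log n)^2$ steps treated at scale $C'\log n$.
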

To prove this, we will rely on known results ensuring the convergence of $\phi_n^*(\gamma^{nD})$ toward $\gamma$ (for the distance $\rho$) and the convergence of $\phi_n(\mathbb{L}^{nD})$ toward $\mathcal{L}$ (for the distance $d$). We will have to prove, then, convergence of $\mathcal{X}^n$ toward $\mathcal{X}$, in the distance $d_{\Conf_0}$, and in particular that $t_{\mathcal{X}^n}$ converges toward $t_{\mathcal{X}}$ (which is the difficult part).
%
%

Assuming these three results holds, we can easily deduce the following.
\begin{theorem}
  \label{th:main}
  There exist couplings of $(\gamma^{nD},\mathbb{L}^{nD},S^{nD}\coloneqq \Att(\gamma^{nD},\mathbb{L}^{nD},1,B^n ))$ over the positive integers $n$, with
$\gamma^{nD}$, $\mathbb{L}^{nD}$ and $B^n$ distributed as above, such that the following holds.
  \begin{itemize}
  \item For all $n\in \mathbb{N}$, $S^{nD}$ is a simple random walk started from $0$ and stopped when it first reaches $\partial D$.
  \item As $n\to \infty$
  $(\phi^*_n(\gamma^{nD}),\phi_n(\mathbb{L}^{nD}) )$ converges almost surely toward $(\gamma,\mathcal{L})\sim \mu^\gamma\otimes \mu^{\mathcal{L}}$ for the topology of $d_{\Conf_0}$. In particular,
  $\phi^*_n(\gamma^{nD})$ converges almost surely toward $\gamma$ and
  $\phi_n(\mathbb{L}^{nD})$ converges almost surely toward $\mathcal{L}$ (for the respective distances $\rho$ and $d$ defined in~\eqref{eq:def:dist})
  \item As $n\to \infty$, $\phi_n(S^{nD})$
  converges almost surely for the topology of $d_\infty$.  The limit $W$ is distributed as a Brownian motion started from $0$ and stopped when it first reaches $\partial D$, and $W$ is almost surely equal to $\Att(\gamma, \mathcal{L})$.
  \end{itemize}
  In particular, if $(\gamma, \mathcal{L})\sim \mu^\gamma\otimes \mu^{\mathcal{L}}$, then
  $\Att(\gamma, \mathcal{L})$ is distributed as a Brownian motion started from $0$ and stopped when it first reaches $\partial D$.
\end{theorem}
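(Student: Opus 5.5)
The plan is to combine Propositions~\ref{prop:conf}, \ref{prop:RW=LERW+RWLS} and \ref{prop:converg} with the continuity Theorem~\ref{th:continuityAtt}, using a Skorokhod representation to pass from convergence in distribution to almost sure convergence.

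\textbf{Step 1: separability and the coupling.} Proposition~\ref{prop:converg} gives $\mathcal{X}^n\to\mathcal{X}$ in distribution for $d_{\Conf_0}$. To apply the Skorokhod representation theorem I need the law of $\mathcal{X}$ to be carried by a separable subset of $(\Conf_0,d_{\Conf_0})$. I expect this to follow from the separability of $(\mathcal{C}([0,\cdot],\mathbb{R}^2),d_\infty)$ and of locally finite configurations under $d$; the extra terms $|t_{\mathcal{X}}-t_{\mathcal{X}'}|$ and the sup over continuity moduli are real-valued or controlled by finitely many large loops plus small errors. Granting this, I obtain on a common probability space random variables $\tilde{\mathcal{X}}^n\overset{d}{=}\mathcal{X}^n$ and $\tilde{\mathcal{X}}\overset{d}{=}\mathcal{X}$ with $d_{\Conf_0}(\tilde{\mathcal{X}}^n,\tilde{\mathcal{X}})\to0$ almost surely. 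Since $\phi^*_n$ and $\phi_n$ are injective and measurable, I recover $\gamma^{nD}$ and $\mathbb{L}^{nD}$ from $\tilde{\mathcal{X}}^n$. I then sample $B^n$ uniformly from the finite set $\mathcal{B}_{\gamma^{nD},\mathbb{L}^{nD}}$, conditionally independently, by enlarging the space. The second bullet follows, since $\rho$ and $d$ are each dominated by $d_{\Conf_0}$.

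\textbf{Step 2: laws.} Proposition~\ref{prop:RW=LERW+RWLS}, applied with $\Lambda$ equal to the lattice approximation of $nD$, shows that $S^{nD}=\Att(\gamma^{nD},\mathbb{L}^{nD},1,B^n)$ is a simple random walk stopped on exiting. This gives the first bullet. By \eqref{eq:scaling},
\[
\phi_n(S^{nD})=\Att(\tilde{\mathcal{X}}^n,\lambda_n,B^n).
\]
Donsker's invariance principle, together with convergence of exit times for a regular enough domain, shows that $\phi_n(S^{nD})$ converges in distribution for $d_\infty$ to Brownian motion stopped at $\tau_D$. I expect the exit-time convergence to be a standard point, though it needs some care for rough $\partial D$.

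\textbf{Step 3: deterministic continuity.} By Proposition~\ref{prop:conf}, $\tilde{\mathcal{X}}\in\Conf$ almost surely. Moreover $\lambda_n\to0$ and $\lambda_n>0$, so $(\tilde{\mathcal{X}}^n,\lambda_n)\in\Conf'$. Theorem~\ref{th:continuityAtt} holds uniformly over all tie-breaks $B\in\mathcal{B}_{\tilde{\mathcal{X}}^n}$. Hence, on the almost sure event, $d_\infty\big(\Att(\tilde{\mathcal{X}}^n,\lambda_n,B^n),\Att(\tilde{\mathcal{X}})\big)\to0$, whatever the $B^n$ are. So $\phi_n(S^{nD})$ converges almost surely to $W\coloneqq\Att(\tilde{\mathcal{X}})$. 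Almost sure convergence implies convergence in distribution, so by uniqueness of limits and Step 2, $W$ is a stopped Brownian motion. This gives the third bullet and the final claim, because $\tilde{\mathcal{X}}\sim\mu^\gamma\otimes\mu^{\mathcal{L}}$.

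\textbf{Main obstacle.} All the heavy lifting sits in the quoted propositions. Within this proof, the delicate points are the measure-theoretic ones: that $(\Conf_0,d_{\Conf_0})$ is, near the support, separable enough for Skorokhod; and that the randomisation of $B^n$ and the lattice-versus-continuum exit conventions (exit by edge from $nD$ versus $\tau_D$) are compatible. For the former I would first try to work in the completion of $\Conf_0$, noting that $\mathcal{X}$ lies in $\Conf_0$ almost surely. For the latter I would use that Brownian motion started at $0$ exits $D$ at a regular point and crosses the boundary immediately after $\tau_D$.
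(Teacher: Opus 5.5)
Your proposal is correct and follows the paper's proof essentially step for step: Skorokhod representation from Proposition~\ref{prop:converg}, enlarging the space to sample $B^n$, Proposition~\ref{prop:RW=LERW+RWLS} with the scaling identity~\eqref{eq:scaling}, Proposition~\ref{prop:conf} together with Theorem~\ref{th:continuityAtt} (uniform in the tie-break) to get $\phi_n(S^{nD})\to\Att(\gamma,\mathcal{L})$ almost surely, and then Donsker's theorem plus uniqueness of limits in law. The measure-theoretic points you flag, namely separability of $(\Conf_0,d_{\Conf_0})$ for the Skorokhod representation and convergence of exit times in Donsker's theorem, are passed over silently in the paper, so your write-up is if anything slightly more careful.
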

\begin{proof}[Proof of Theorem~\ref{th:main} assuming Theorems 
\ref{prop:conf},~\ref{prop:RW=LERW+RWLS}  ,\ref{prop:converg} ]
  Let $\gamma^{nD}$, $\mathbb{L}^{nD}$, $B^n$ as above. By Proposition~\ref{prop:converg}, the couple $(\gamma^{nD},\mathbb{L}^{nD})$ converges in distribution, for the distance $d_{\Conf_0}$, toward $(\gamma,\mathcal{L})$. By Skorokhod's almost sure representation theorem, there exists a probability space on which the $(\gamma^{nD},\mathbb{L}^{nD})$
  are simultaneously defined for all $n\geq 1$, as well as $\gamma$ and $\mathcal{L}$, and such that $(\gamma^{nD},\mathbb{L}^{nD})\to (\gamma,\mathcal{L})$ almost surely as $n\to \infty$ (for  $d_{\Conf_0}$).
  Up to enlarging this probability space, we can assume it also contains the random element $B\in\mathcal{B}_{\gamma^{nD}, \mathbb{L}^{nD}} $. By
  Proposition~\ref{prop:RW=LERW+RWLS}, $S^n$ is then a simple random walk on the lattice, started from $0$ and stopped when it first exits $nD$. By~\eqref{eq:scaling}, the rescaled and reparametrised random walk $\phi_n(S^n)$ is equal to $\Att(\phi_n^*(\gamma^{nD}),\phi_n(\mathbb{L}^{nD}),\lambda_n,B^n)$, with $\lambda_n\underset{n\to \infty}\longrightarrow 0$.

  By Proposition~\ref{prop:conf}, almost surely, $ (\gamma, \mathcal{L}) \in \Conf$, so that $W\coloneqq \Att(\gamma, \mathcal{L})=\Att(\gamma, \mathcal{L},0)$ is almost surely well-defined. Since the restricted map $ X:(\gamma, \mathcal{L}) \in \Conf\mapsto  \Att(\gamma, \mathcal{L},0)$ is continuous, it is measurable. Thus, $W$ is a genuine random variable.

  Since, almost surely, $ (\phi_n^*(\gamma^{nD}),\phi_n(\mathbb{L}^{nD}),\lambda_n)\longrightarrow (\gamma,\mathcal{L},0)\in \Conf\times \{0\}$, 
  Theorem~\ref{th:continuityAtt} ensures that, almost surely, $\phi_n(S^n)=\Att(\phi_n^*(\gamma^{nD}),\phi_n(\mathbb{L}^{nD}),\lambda_n,B^n)$ converges almost surely for $d_\infty$ toward $W\coloneqq \Att(\gamma,\mathcal{L})=W$.

  Since $S^n$ is a simple random walk, Donsker's theorem ensures that $\phi_n(S^n)$ converges in distribution for $d_\infty$ toward a Brownian motion started from $0$ and stopped when it first exits $D$. By unicity of the limit in distribution, $W$ is distributed as a Brownian motion started from $0$ and stopped when it first exits $D$.
\end{proof}
\subsection{Properties~\texorpdfstring{\eqref{it:tmax}}{(i)} to~\texorpdfstring{\eqref{it:dense}}{(iv)} for the Brownian loop soup}

In this section, we prove in Lemma~\ref{le:item:tmax},~\ref{le:item:unifcont},~\ref{le:item:inj},~\ref{le:item:dense}, that for any simple path $\gamma\in \mathscr{S}$
such that $\Range(\gamma)$ has dimension less than $2$, and such that $\gamma((0,t_\gamma))\subset D$, for $\mathcal{L}$ a Brownian loop soup on
$D$, the pair $(\gamma,\mathcal{L})$ almost surely satisfies the properties~\eqref{it:tmax},~\eqref{it:unifcont},~\eqref{it:inj}, and~\eqref{it:dense}. The last property~\eqref{it:reg} will require a bit of extra work and is relegated to the next subsection.
As remarked in \cite{LawlerWerner}, the finiteness of the total time $t_{\gamma,\mathcal{L}}$ follows from the fact that $\gamma$ as dimension less than~$2$. We include a short proof for completeness.
\begin{lemma}
    \label{le:tech1}
    Let $X\subset\mathbb{R}^2$ be a set with dimension $d<2$.
    Then,  as $t\to 0$,
    \[
    \int_D \mathbb{P}_{t,x,x}(\Range(\ell)\cap X\neq \emptyset  ) \d x
    \leq t^{1-d/2 +o(1)   }.
    \]
\end{lemma}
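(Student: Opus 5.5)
The plan is to compare the event that a small Brownian bridge loop hits $X$ with the event that its starting point lies close to $X$. Fix $\eta>0$ and set $r=r(t)\coloneqq t^{1/2-\eta}$. Under $\mathbb{P}_{t,x,x}$, split according to whether $x\in X^{r}$ or not:
\[
\int_D \mathbb{P}_{t,x,x}(\Range(\ell)\cap X\neq \emptyset)\d x
\leq \lambda(X^{r}\cap D) + \int_{D\setminus X^{r}} \mathbb{P}_{t,x,x}\Big(\sup_{s\le t}|\ell(s)-x|\geq r\Big)\d x .
\]
Indeed, if $d(x,X)>r$ and the loop meets $X$, then the loop must travel distance at least $r$ from its root $x$.

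For the first term, I would use the definition of upper Minkowski dimension. Since $\Range$-type sets here are compared via $\limsup \ln\lambda(X^\epsilon)/\ln\epsilon = 2-d$, for every $\eta'>0$ and all $\epsilon$ small enough we have $\lambda(X^\epsilon)\leq \epsilon^{2-d-\eta'}$. Taking $\epsilon=r=t^{1/2-\eta}$ gives the bound $t^{(1/2-\eta)(2-d-\eta')}=t^{1-d/2+O(\eta+\eta')}$. If $D$ is unbounded, $X^r\cap D$ is still controlled by $\lambda(X^r)$, so this step causes no trouble.

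For the second term, the Brownian bridge of duration $t$ from $x$ to $x$ equals $x$ plus $\sqrt{t}$ times a standard bridge of duration $1$. Hence
\[
\mathbb{P}_{t,x,x}(\sup_{s\le t}|\ell(s)-x|\geq r)=\mathbb{P}(\sup_{s\le1}|b_s|\geq t^{-\eta})\leq C e^{-c t^{-2\eta}},
\]
by standard Gaussian tail bounds for the bridge, for instance via its two coordinates and reflection. This is uniform in $x$.

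The main obstacle is integrating this second bound over $D$, which may be unbounded, since the term is not localized near $X$. I would first restrict to bounded $D$, or to the relevant case: the lemma is applied with $X=\Range(\gamma)$, which is compact. For $D$ unbounded, I would instead localize. A loop rooted at $x$ with $d(x,X)=u\ge r$ must reach distance $u$, which has probability at most $Ce^{-cu^2/t}$. Then $\int_{d(x,X)\ge r} Ce^{-c\,d(x,X)^2/t}\d x$ is superpolynomially small in $t$ when $X$ is bounded, by integrating over shells $\{k r\le d(x,X)<(k+1)r\}$, whose areas grow at most polynomially in $k$. Since $e^{-ct^{-2\eta}}=o(t^{N})$ for every $N$, the first term dominates. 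Letting $\eta,\eta'\to0$ then yields $t^{1-d/2+o(1)}$.
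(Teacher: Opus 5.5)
Your proposal is correct and follows the paper's proof. You make the same split at the $t^{1/2-\eta}$-neighbourhood of $X$, bound the near part by the Minkowski estimate $\lambda(X^{r})\le r^{2-d-\eta'}$, and bound the far part by Gaussian tails of the bridge; your shell argument for unbounded $D$, which uses that $X$ is bounded, also covers a point the paper leaves implicit, since the paper integrates a uniform-in-$x$ bound over $D\setminus X^\delta$.

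One correction to your justification: the bound $\lambda(X^\epsilon)\le\epsilon^{2-d-\eta'}$ comes from the upper Minkowski dimension, i.e. from $\liminf_{\epsilon\to0}\ln\lambda(X^\epsilon)/\ln\epsilon=2-d$. The $\limsup$ formula you quote from the paper only gives the opposite inequality eventually.
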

\begin{proof}
    Let $d'>d$, so that $\lambda(X^{\epsilon})\epsilon^{d'-2}$ remains bounded as $\epsilon \to 0$.   Choose $\epsilon>0$, and let $\delta=\delta(t)=t^{\frac{1}{2}-\epsilon}$. We decompose the integral over $D$ into $X^\delta \cup ( D\setminus X^\delta )$.
    On the one hand, we have
    \[ \int_{ X^\delta} \mathbb{P}_{t,x,x}(\Range(\ell)\cap X\neq \emptyset ) \d x
    \leq \lambda(X^\delta)=O(\delta^{2-d' } ) = o(t^{ 1-d'/2  -2\epsilon } ). \]

    On the other hand, for $x\notin  \Range(\gamma)^\delta$, the probability $\mathbb{P}_{t,x,x}( \Range(\ell)\cap X\neq \emptyset)$ is smaller than
    \[\mathbb{P}_{t,x,x}(\exists s\in [0,t]: |\ell_s-x|\geq \delta )=
    \mathbb{P}_{1,0,0}(\exists s\in [0,1]: |\ell_s|\geq t^{-\epsilon}  )
    .\]
    Since the maximum displacement of a unit Brownian bridge in one dimension has Gaussian tails, we deduce that as $t\to 0$:
    \[ \exists c'>0, C'<\infty:
    \mathbb{P}_{t,x,x}(\Range(\ell)\cap X\neq \emptyset)\leq C' \exp({-c' t^{ -2 \epsilon} }).
    \]
    Since $\epsilon$ is arbitrary and $d'$ is arbitrary close to $d$, this concludes the proof.
\end{proof}

\begin{lemma}
  \label{le:item:tmax}
  Almost surely for all $\epsilon>0$, $\mathcal{L}_{\geq \epsilon}$ is finite.
  For all deterministic $\gamma\in \mathcal{C}([0,\cdot],\mathbb{R}^2)$ such that $\Range(\gamma)$ has dimension $d<2$, almost surely, $t_{\gamma,\mathcal{L}}<\infty$ and $\mathbb{E}[ t_{\gamma,\mathcal{L}}]<\infty $.
\end{lemma}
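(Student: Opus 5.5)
The plan is to reduce both claims to first-moment computations under the Poisson intensity $\mu^D$, using Campbell's formula. Since $\mathcal{L}$ is a Poisson point process with intensity $\mu^D$, every count or sum over loops has expectation equal to the corresponding integral against $\mu^D$.

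\emph{First claim.} Fix $\epsilon>0$. The number of loops in $\mathcal{L}_{\geq\epsilon}$ is Poisson with mean
\[
\mu^D(\mathscr{L}_{\geq \epsilon})\le \int_\epsilon^\infty\frac{\lambda(D)}{2\pi t^2}\d t=\frac{\lambda(D)}{2\pi\epsilon}.
\]
If $D$ is bounded, this is finite, so $\mathcal{L}_{\geq\epsilon}$ is finite almost surely. Taking $\epsilon=1/k$ and intersecting over $k\in\mathbb{N}$ gives the statement simultaneously for all $\epsilon>0$, by monotonicity in $\epsilon$.

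If $D$ is unbounded, a different bound is needed, since $D\neq\mathbb{C}$ is simply connected and still contains arbitrarily large loops. Here I would use that a loop of duration $t$ rooted at $x$ stays in $D$ only if its excursion radius is below $d(x,\partial D)$. Concretely, I would bound $\int_D \mathbb{P}_{t,x,x}(\ell\subset D)\d x$ using that the probability that a Brownian bridge stays within distance $r$ of its root decays like $\exp(-ct/r^2)$. In the setting of this section, $D$ is bounded in the lattice approximation, so I expect assuming bounded $D$ (or working locally) suffices. I flag the unbounded case as a technical point rather than the main issue.

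\emph{Second claim.} By Campbell's formula,
\[
\mathbb{E}[t_{\gamma,\mathcal{L}}]=\int_0^\infty\frac{t}{2\pi t^2}\int_D\mathbb{P}_{t,x,x}(\Range(\ell)\cap\Range(\gamma)\neq\emptyset)\d x\d t .
\]
I split the time integral at $t=1$.

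For $t\le 1$, Lemma~\ref{le:tech1} applied to $X=\Range(\gamma)$ bounds the inner integral by $t^{1-d/2+o(1)}$. The integrand is therefore $t^{-d/2+o(1)}$, which is integrable near $0$ because $d<2$. This is the key place the dimension hypothesis enters, and the only delicate point: one must choose $d'\in(d,2)$ and the small exponent in Lemma~\ref{le:tech1} so that the total exponent stays strictly above $-1$.

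For $t\ge1$, I bound the probability by the probability that the loop stays in $D$. This gives at most $\frac{1}{2\pi t}\mu$-mass of loops of duration $t$ contained in $D$. For bounded $D$ this is controlled by $\lambda(D)\,\mathbb{P}_{t,0,0}(\text{loop stays in a ball of radius }\operatorname{diam}D)$, which decays exponentially in $t$, so the tail is integrable. (Even the crude bound $\lambda(D)/(2\pi t)$ is not integrable, so the confinement estimate is genuinely needed at large $t$.)

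Finiteness of $\mathbb{E}[t_{\gamma,\mathcal{L}}]$ then gives $t_{\gamma,\mathcal{L}}<\infty$ almost surely.
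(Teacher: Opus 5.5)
Your argument is essentially the paper's: finiteness of $\mu^D(\mathscr{L}_{\geq\epsilon})$ for the first claim, and for the second, Campbell's formula combined with Lemma~\ref{le:tech1} near $t=0$ and exponential decay of $\mathbb{P}_{t,x,x}(\Range(\ell)\subset D)$ near $t=\infty$; your displayed Campbell integrand should carry the event $\Range(\ell)\subset D$, which you do use for $t\geq 1$. One correction to your side remark: unbounded $D$ is not a technicality that a $d(x,\partial D)$ bound can repair, since for $D=\mathbb{H}$ the quantity $\mathbb{P}_{t,x,x}(\Range(\ell)\subset\mathbb{H})$ does not depend on $\Re x$, so $\mu^{\mathbb{H}}(\mathscr{L}_{\geq\epsilon})=\infty$ and $\mathcal{L}_{\geq\epsilon}$ is a.s.\ infinite; boundedness of $D$, which the paper's proof also uses tacitly, is genuinely needed.
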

\begin{proof}
  The first property follows directly from the fact that the intensity measure of $\mathcal{L}_{\geq \epsilon}$ is finite.

  For the second property, notice that by Campbell's theorem for Poisson processes (see e.g. \cite{Kingman}),
  \[
  \mathbb{E}[ t_{\gamma,\mathcal{L}}]= \int_0^\infty \int_D \frac{t }{2\pi t^2} \mathbb{P}_{t,x,x}(\Range(\ell)\subset D,  \Range(\ell)\cap \Range(\gamma)\neq \emptyset   ) \d x\d t,
  \]
  where the right-hand side is finite if and only $t_{\gamma,\mathcal{L}}$ is almost surely finite and in $L^1(\mathbb{P})$.
  The extra factor $t$ in the numerator comes from the lengths $t_\ell$ we are summing.

  By Lemma~\ref{le:tech1},
  we deduce, as $t\to 0$,
  \[
  \int_D \frac{t }{2\pi t^2} \mathbb{P}_{t,x,x}(\Range(\ell)\cap \Range(\gamma)\neq \emptyset ) \d x \leq t^{-d/2+o(1)},
  \]
  which is integrable in a neighbourhood of $0$ since $d<2$.
  Near $+\infty$, the integrability follows from the exponential decay of $\mathbb{P}_{t,x,x}(\Range(\ell)\subset D)$.
  Thus, $  \mathbb{E}[ t_{\gamma,\mathcal{L}}  ]<\infty$ and in particular $t_{\gamma,\mathcal{L}}$ is almost surely finite.
\end{proof}

\begin{lemma}
  \label{le:item:unifcont}
  Almost surely, the set $\mathcal{L}$ is equicontinuous.
\end{lemma}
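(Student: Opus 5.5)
The property to prove is that for every $\epsilon>0$ there is $\delta>0$ with $\omega_\delta(\ell)<\epsilon$ for all $\ell\in\mathcal{L}$. Loops of duration at least a fixed $t_0>0$ cause no trouble: by Lemma~\ref{le:item:tmax}, $\mathcal{L}_{\geq t_0}$ is almost surely finite, and each loop is uniformly continuous, so a finite minimum of moduli works. The whole difficulty is in the infinitely many small loops. For those, the idea is that a Brownian bridge of duration $t$ moves only about $\sqrt{t}$, and the intensity of loops is small enough that with high probability no small loop moves much more.

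I would make this precise with a dyadic decomposition in the duration. For $k\geq 0$, let $N_k$ be the number of loops $\ell\in\mathcal{L}$ with $t_\ell\in[2^{-k-1},2^{-k}]$ and $\sup_{s,u}|\ell_s-\ell_u|\geq 2^{-k/4}$. Such a loop satisfies $\omega_\delta(\ell)\leq \operatorname{diam}\Range(\ell)<2^{-k/4}$ for every $\delta$, so its modulus is controlled uniformly. By Campbell's formula,
\[
\mathbb{E}[N_k]\leq \int_{2^{-k-1}}^{2^{-k}}\frac{|D|}{2\pi t^2}\,\mathbb{P}_{t,0,0}\big(\operatorname{diam}\Range(\ell)\geq 2^{-k/4}\big)\d t .
\]
If $D$ is unbounded, I would first restrict to loops meeting a fixed compact set and then exhaust $D$. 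By Brownian scaling, $\mathbb{P}_{t,0,0}(\operatorname{diam}\geq 2^{-k/4})\leq \mathbb{P}_{1,0,0}(\sup|\ell|\geq c\,2^{k/4})\leq C\exp(-c'2^{k/2})$, using the Gaussian tails already invoked in Lemma~\ref{le:tech1}. This gives $\mathbb{E}[N_k]\leq C 2^{k}e^{-c'2^{k/2}}$, which is summable. By Borel--Cantelli, almost surely $N_k=0$ for all $k\geq k_0$ (random). Hence every loop with $t_\ell\leq 2^{-k_0}$ has diameter at most $\sup_{k\geq k_0}2^{-k/4}$, and more usefully, every loop with $t_\ell\leq 2^{-k}$, $k\geq k_0$, has diameter less than $2^{-k/4}$.

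To conclude, given $\epsilon>0$, choose $k\geq k_0$ with $2^{-k/4}<\epsilon$. Every loop with $t_\ell\leq 2^{-k}$ then has $\omega_\delta(\ell)<\epsilon$ for all $\delta$. The remaining loops form the finite set $\mathcal{L}_{\geq 2^{-k}}$, for which a common $\delta$ exists by uniform continuity of each loop.

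The only real obstacle is handling unbounded $D$, where the intensity restricted to a duration window is infinite. This forces the localisation described above, which interacts with the definition of $\omega_\delta(\mathcal{L})$ as a supremum over all loops. If $D$ is unbounded, I would check whether the paper's standing assumptions allow restricting to bounded $D$, as the lattice setting suggests. Otherwise I would state equicontinuity on loops meeting each compact set.
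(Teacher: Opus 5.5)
Your proof is correct and is essentially the paper's argument: a Campbell-formula first-moment bound, using Gaussian tails of the Brownian bridge under scaling, shows that almost surely no sufficiently short loop has a large range, and the finitely many remaining loops are handled by their individual uniform continuity. The only difference is packaging. The paper fixes $\epsilon$ and shows that the probability that some loop of duration at most $\delta$ leaves the $\epsilon/2$-ball around its root tends to $0$ as $\delta\to 0$. Your scale-dependent thresholds $2^{-k/4}$ together with Borel--Cantelli instead give a single almost-sure event valid for all $\epsilon$ at once.

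Your worry about unbounded $D$ is justified, but it affects the paper equally: its bound also uses $|D|<\infty$. For, e.g., $D=\mathbb{H}$ the statement actually fails, because each duration window then contains infinitely many independent loops. So finite area of $D$ is an implicit standing assumption in the paper.
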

\begin{proof}
  Fix $\epsilon>0$.
  As $t$ goes to $0$, $\mathbb{P}_{t,x,x}( \|X-x\|_\infty\geq  \frac{\epsilon}{2} )=
  \mathbb{P}_{1,0,0}( \|X\|_\infty\geq  \frac{\epsilon}{2 \sqrt{t}})
  $ goes to $0$ exponentially fast in $t$: there exist positive constants $c,C$ (which depend on $\epsilon$) such that for all $t\leq 1$,
  \[\mathbb{P}_{t,x,x}( \|X-x\|_\infty\geq  \frac{\epsilon}{2} ) \leq C e^{- c t^{-1} }.\]

  For $\delta\in (0,1]$, define the event
  $E_\delta\coloneqq \{ \forall \ell \in \mathcal{L}_{\leq \delta } ,   \|X-x\|_\infty<  \frac{\epsilon}{2} \}$.
  Then,
  \begin{align*}
  \mathbb{P}(E_\delta^c )
  \leq
  \mathbb{E}[ \# \{  \ell \in \mathcal{L}_{\leq \delta } : \|X-x\|_\infty\geq  \frac{\epsilon}{2}   ) \}]
&  \leq \int_0^{\delta}  \frac{|D| }{2\pi t^2}  \mathbb{P}_{t,x,x}( \|X-x\|_\infty\geq  \frac{\epsilon}{2} ) \d t\\
&  \leq
  C |D| \int_0^{\delta} \frac{e^{-ct^{-1} }}{2\pi t^2} \d t\underset{\delta\to 0}\longrightarrow 0.
  \end{align*}
  Thus $\mathbb{P}(\bigcup_{\delta>0} E_\delta)=1$, hence almost surely there exists $\delta_1$ (random) such that the event $E_{\delta_1}$ holds. On this event, for all $\ell\in  \mathcal{L}_{\leq \delta_1 } $, and for all $\delta\in(0,+\infty)$, it holds $\omega_\ell(\delta)<\epsilon$.

  For each $\ell\in \mathcal{L}_{> \delta_1}$, let $\delta_\ell>0$ such that
  $\omega_{\delta_\ell}(\ell)\leq \epsilon$, which exists by continuity of $\ell$. Since $\mathcal{L}_{>\delta_1}$ is finite, $\delta_2\coloneqq \min \{\delta_\ell : \ \ell \in \mathcal{L}_{> \delta_1} \}$ is well-defined and positive. Then, $\delta\coloneqq \min(\delta_1, \delta_2)$ satisfies $\omega_\mathcal{L}(\delta)\leq \epsilon$ as desired.
\end{proof}

In order to consider the property~\eqref{it:inj}, given $\gamma\in \mathscr{S}$, let $\nu_{t,x,y}$ denote the law of $\sigma_{\gamma,\ell}$ where $\ell$ is distributed according to $\P_{t,x,y}$. We set arbitrarily $\sigma_{\gamma,\ell}=+\infty$ in the event that $\ell$ does not intersect $\gamma$.
Likewise let $\nu_{t,x}$ denote the law of $\sigma_{\gamma,\ell}$ when $\ell$ is distributed according to $\P_{t,x}$.
\begin{lemma}
    \label{le:noAtom}
    For all $\gamma\in \mathscr{S}$, for all
    $x\in \mathbb{R}^2\setminus \Range(\gamma)$ and $t>0$, the distributions $\nu_{t,x}$ and $\nu_{t,x,x}$ have no atoms on $[0,t_\gamma ]$.
\end{lemma}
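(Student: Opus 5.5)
We need to show that for fixed $\sigma_0\in[0,t_\gamma]$ the probability that $\sigma_{\gamma,\ell}=\sigma_0$ is zero, under either $\P_{t,x}$ (Brownian motion of duration $t$ from $x$) or $\P_{t,x,x}$ (Brownian bridge). The plan is to reduce the bridge case to the free case by absolute continuity, and to treat the free case using polarity of points for planar Brownian motion.

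\textbf{Step 1: reduce the event to a hitting event.} If $\sigma_{\gamma,\ell}=\sigma_0$, then $\ell$ hits the point $y_0\coloneqq\gamma(\sigma_0)$ and does not hit the set $\gamma([0,\sigma_0))$. Since $\gamma$ is continuous, $y_0$ is an accumulation point of $\gamma([0,\sigma_0))$ when $\sigma_0>0$. When $\sigma_0=0$ the event simply says that $\ell$ hits the single point $\gamma(0)$. In either case the event is contained in
\[
\{\exists s\in[0,t]:\ \ell(s)=y_0\},
\]
which is very crude but turns out to be enough.

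\textbf{Step 2: the free case $\nu_{t,x}$.} Points are polar for planar Brownian motion. Since $x\notin\Range(\gamma)$, we have $x\neq y_0$, so $\P_x(\exists s\in[0,t]: B_s=y_0)=0$. Hence $\nu_{t,x}(\{\sigma_0\})=0$.

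\textbf{Step 3: the bridge case $\nu_{t,x,x}$.} Split the time interval into $[0,t/2]$ and $[t/2,t]$. On $[0,t/2]$, the law of the bridge restricted to this interval is absolutely continuous with respect to Brownian motion from $x$ on $[0,t/2]$, with density $p_{t/2}(B_{t/2},x)/p_t(x,x)$. This density is bounded, so Step 2 shows that the bridge a.s.\ does not hit $y_0$ during $[0,t/2]$. On $[t/2,t]$, apply the same argument to the time-reversed bridge, which is again a bridge from $x$ to $x$. Both halves almost surely avoid $y_0$, so $\nu_{t,x,x}(\{\sigma_0\})=0$.

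\textbf{Main obstacle.} The only delicate point is the endpoint $\sigma_0=t_\gamma$. This is not a problem, because Step 1 still applies verbatim at that endpoint. The case $\sigma_{\gamma,\ell}=+\infty$ lies outside $[0,t_\gamma]$ and is therefore irrelevant to the claim. The argument in fact shows something stronger than needed: $\ell$ almost surely avoids every fixed point distinct from $x$, so no refinement using non-hitting of $\gamma([0,\sigma_0))$ is required.
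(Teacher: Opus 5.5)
Your proposal is correct and follows essentially the same route as the paper: bound the atom by the probability of hitting the single point $\gamma(\sigma_0)\neq x$, which vanishes by polarity of points for planar Brownian motion. You then transfer this to the bridge exactly as the paper does, via absolute continuity on $[0,t/2]$ and reversibility of the bridge.
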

\begin{proof}
    Let us start with $\nu_{t,x}$. Fix $x\in \mathbb{R}^2\setminus \Range(\gamma)$. For any $\sigma\in[0,t_\gamma ]$,
    \[
    \nu_{t,x}(\{\sigma\})\leq \mathbb{P}_{t,x}(\exists u\in[0,t]: W_u=\gamma(\sigma)).
    \]
    By point polarity of Brownian motion in two dimensions, the right hand side is zero unless $\gamma(\sigma)=x$, which is prevented by the condition $x \notin \Range(\gamma)$.

    The same argument holds for $\nu_{t,x,x}$, where we remark that the point polarity for a Brownian bridge follows from the point polarity of standard Brownian motion: indeed, the restriction of a Brownian bridge of duration $t$ to $[0, t/2]$ is absolutely continuous with respect to that of an ordinary Brownian motion, and Brownian bridges are furthermore reversible (thereby showing that a given point cannot also be hit with positive over the interval $[t/2, t]$).
\end{proof}
\begin{lemma}
\label{le:item:inj}
    Let $\gamma\in \mathscr{S}$ such that $\lambda(\Range(\gamma))=0 $. Then,
    $\mathcal{L}$-almost surely,
    it holds that
    \begin{enumerate}
        \item For all $\ell \in \mathcal{L}_\gamma$, $\Theta_{\gamma,\ell}$ is reduced to a single element.
        \item For all $\ell,\ell' \in \mathcal{L}_\gamma$, $x_{\gamma,\ell}\neq x_{\gamma,\ell'}$.
    \end{enumerate}
\end{lemma}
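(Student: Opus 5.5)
The plan is to reduce both items to statements about a single loop (item 1) or a pair of distinct loops (item 2), and then use the Poisson structure of $\mathcal{L}$. By the Mecke (Slivnyak) formula, the expected number of loops $\ell\in\mathcal{L}$ with a property $P(\ell)$ equals $\mu^D(P)$. The expected number of ordered pairs of distinct loops $(\ell,\ell')$ with a property $Q(\ell,\ell')$ equals $(\mu^D\otimes\mu^D)(Q)$. So it suffices to show that $\mu^D$-almost every loop $\ell\in\mathscr{L}_\gamma$ has $\#\Theta_{\gamma,\ell}=1$, and that $\mu^D\otimes\mu^D$-almost every pair has $x_{\gamma,\ell}\neq x_{\gamma,\ell'}$. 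Since $\lambda(\Range(\gamma))=0$, we may disregard roots $x\in\Range(\gamma)$, which form a Lebesgue-null set in the definition of $\mu$. We therefore work with $\mathbb{P}_{t,x,x}$ for fixed $t>0$ and $x\notin\Range(\gamma)$, and integrate over $t,x$ at the end.

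\textbf{Item 2.} Fix the first loop $\ell$ together with its point $y\coloneqq x_{\gamma,\ell}$; this is a single deterministic point once $\ell$ is fixed, because $\ell$ is independent of $\ell'$. Then $x_{\gamma,\ell'}=y$ forces $\ell'$ to hit the fixed point $y$. Under $\mathbb{P}_{t,x',x'}$ with $x'\neq y$, this has probability zero by the point polarity of the Brownian bridge used in Lemma~\ref{le:noAtom}. Integrating over $(t',x')$ with $x'\notin\Range(\gamma)$, and then over $\ell$, by Fubini gives zero.

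\textbf{Item 1.} I would use a Markov-type decomposition at the first hitting time of $\Range(\gamma)$ by the loop. Work under $\mathbb{P}_{t,x,x}$ with $x\notin\Range(\gamma)$, and let $\tau$ be the first time the bridge hits $\Range(\gamma)$. A first obstacle is that $x_{\gamma,\ell}$ is the earliest point along $\gamma$ hit by $\ell$, not the first point hit in $\ell$'s time, so $\Theta_{\gamma,\ell}$ is not simply determined by $\tau$. Instead, I would show that almost surely no point of $\Range(\gamma)$ is visited twice by $\ell$ at the minimal $\gamma$-parameter.

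To handle this, exploit the rotation invariance of the rooted loop measure (rerooting at a uniform time) together with absolute continuity on subintervals. Split $\ell$ into two pieces, $\ell|_{[0,s]}$ and $\ell|_{[s,t]}$, with $s$ rational. Conditionally on the first piece, the second piece is a bridge, and on $[s,t-\eta]$ it is absolutely continuous with respect to Brownian motion. The event that $\Theta$ contains times on both sides of $s$ implies that the second piece hits the point $\gamma(\sigma_1)$, where $\sigma_1\coloneqq\min\{\sigma:\gamma(\sigma)\in\ell[0,s]\}$ is determined by the first piece. If $\Theta$ had two elements $\theta_1<\theta_2$, some rational $s$ would separate them. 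Hence the bad event is contained in a countable union of events of the form ``$\ell|_{[s,t]}$ hits $\gamma(\sigma_1)$ with $\sigma_1$ measurable in the first piece'', which has the structure of Lemma~\ref{le:noAtom}.

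\textbf{Main obstacle.} The delicate point is the endpoint issue: the bridge on $[s,t]$ ends at $x=\ell(0)$, and near time $t$ absolute continuity with Brownian motion fails. Since $x\notin\Range(\gamma)$, the target $\gamma(\sigma_1)$ differs from $x$. So I would use time reversal of the bridge, as in Lemma~\ref{le:noAtom}, to cover the final segment. I would also handle $\Theta\ni 0,t_\ell$ trivially, since $\ell(0)=x\notin\Range(\gamma)$. Finally, Campbell's formula converts the zero $\mu$-measure of the bad sets into almost sure absence of bad loops.
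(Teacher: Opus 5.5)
Your proposal is correct and follows essentially the same route as the paper. For item 1, both use the Campbell/Mecke reduction to a single bridge, a rational time $s$ separating two temporal roots, the observation that the target $\gamma(\sigma_1)=x_{\gamma,\ell|_{[0,s]}}$ is $\mathcal{F}_s$-measurable, and the Markov property together with point polarity of the bridge; for item 2, both use the bivariate Mecke formula and Fubini with the no-atom/point-polarity argument of Lemma~\ref{le:noAtom}. The appeal to rerooting invariance is unnecessary, and your explicit remark that the target differs from the bridge's endpoint $x$ (which the time-reversal step needs) is slightly more careful than the paper's $\sup_{y,z}$ bound.
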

\begin{proof}
    First, we have
    \begin{align*}
        &\mathbb{P}(\exists \ell \in \mathcal{L}, \exists s_1,s_2: s_1<s_2, \ \ell(s_1)=\ell(s_2)=x_{\gamma,\ell} ) \\
        &\leq \mathbb{E}[ \# \{ \ell \in \mathcal{L}: \exists s_1<s_2:  \ell(s_1)=\ell(s_2)=x_{\gamma,\ell} \}   ]\\
        &= \int_0^\infty \int_D \frac{1}{2 \pi t^2} \mathbb{P}_{t,x,x}( \ell \cap \gamma\neq \emptyset ,
        \exists s_1<s_2:  \ell(s_1)=\ell(s_2)=x_{\gamma,\ell}
        ) \d x \d t \\
        &\leq
        \int_0^\infty \int_D \frac{1}{2 \pi t^2}
        \sum_{s\in \mathbb{Q}}
        \mathbb{P}_{t,x,x}( \ell \cap \gamma\neq \emptyset ,
        \exists s_1<s< s_2 :  \ell(s_1)=\ell(s_2)=x_{\gamma,\ell}
        )\d x \d t \\
        &\leq
        \int_0^\infty \int_D \frac{1}{2 \pi t^2}
        \sum_{s\in \mathbb{Q}}
        \mathbb{P}_{t,x,x}( \theta^-_\ell<s \text{ and }
        \exists s_2>s :  \ell(s_2)=x_{\gamma,\ell}
        )\d x \d t .
    \end{align*}
    Thus, it suffices to show that for all $0<s<t$, for almost all $x\in D$,
    \begin{equation}\label{eq:doublepoints}
        \mathbb{P}_{t,x,x}( \theta^-_{\gamma,\ell}<s \text{ and }
    \exists s_2>s :  \ell(s_2)=x_{\gamma,\ell}
    )=0.
        \end{equation}
    As we will see, this follows from the fact that points are polar for a Brownian bridge and the Markov property of Brownian bridge at time $s$. Care is needed because the event $\theta^-_{\gamma,\ell} <s$ is not
    measurable with respect to the $\sigma$-algebra $\cF_s$ generated by the Brownian bridge up to time $s$.

    We have defined $x_{\gamma,\ell}$ only when $\ell$ is a loop, but the definition extends without modification to all $\ell\in \mathcal{C}([0,\cdot],\mathbb{R}^2)$ which intersects $\gamma$.
    On the event $\{ \theta^-_{\gamma,\ell} < s \}$, it holds $x_{\gamma,\ell} = x_{\gamma,\ell_{[0,s]} }$. Furthermore, the position $x_{\gamma,\ell[0,s]}$ is measurable with respect to $\cF_s$.

    Therefore,
    \begin{align*}
        \P_{t,x,x} ( \theta_{\gamma,\ell}^-<s \text{ and }
    \exists s_2>s :  \ell(s_2)=x_{\gamma,\ell} | \cF_s) & \leq\mathbb{P}_{t,x,x} ( \exists s_2 > s: \ell (s_2) = x_{\ell[0,s]} |\cF_s)\\
    & \leq \sup_{y,z}  \P_{t-s, y,x} ( \exists s_2 >0 : b_{s_2} = z).
    \end{align*}
  As the right hand side is zero, so is the unconditional probability~\eqref{eq:doublepoints}. This concludes the proof of the first point.

  For the second point, remark
    \begin{align}
        &\mathbb{P}(\exists \ell\neq \ell'  \in \mathcal{L}_\gamma: x_{\gamma,\ell}=x_{\gamma,\ell'} ) \nonumber\\
        &\leq \mathbb{E}[ \#
        \{ \ell, \ell'  \in \mathcal{L}_\gamma: \ell\neq \ell',  x_{\gamma,\ell}=x_{\gamma,\ell'} ) \}]\nonumber\\
        &= \int_0^\infty \int_D \int_0^\infty \int_D  \frac{1}{4 \pi^2 t^2 s^2}
        \nu_{t,x,x}\otimes \nu_{s,y,y} (\{(\sigma,\sigma): \sigma\in [0,t_\gamma] \}) \d x \d t \d y \d s  , \label{eq:temp:iszero}
    \end{align}
    which is $0$ by Fubini theorem and Lemma~\ref{le:noAtom}, using the fact $\lambda(\Range(\gamma))=0$.
\end{proof}

\begin{lemma}
\label{le:item:dense}
  Assume $\gamma\in \mathscr{S}$ and $\gamma(0,t_\gamma)\subset D$.
  Almost surely, the set $\{ \sigma_{\gamma,\ell} : \ell\in \mathcal{L}\}$ is dense on $[0,t_\gamma]$.
\end{lemma}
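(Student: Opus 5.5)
It suffices to show that for every fixed rational interval $(a,b)\subset[0,t_\gamma]$ with $a<b$, almost surely some loop $\ell\in\mathcal{L}$ satisfies $\sigma_{\gamma,\ell}\in(a,b)$; a countable intersection over rational $a<b$ then gives density. Since $\mathcal{L}$ is a Poisson point process with intensity $\mu^D$, the number of loops with $\sigma_{\gamma,\ell}\in(a,b)$ is Poisson with mean
\[
\mu^D\big(\{\ell: \sigma_{\gamma,\ell}\in(a,b)\}\big).
\]
So it is enough to show that this mean is infinite, or at least positive. Showing it is infinite is cleaner, since then the count is a.s. infinite. Positivity alone already gives probability $1-e^{-m}$, which is not $1$, but this can be upgraded: replace $(a,b)$ by countably many disjoint subintervals of it, or directly show infinite mass.

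To get positive (in fact infinite) mass, pick $\sigma_0\in(a,b)\cap(0,t_\gamma)$ and set $y=\gamma(\sigma_0)\in D$. Choose $r>0$ small enough that $B(y,2r)\subset D$ and $B(y,2r)\cap\gamma([0,a])=\emptyset$; this is possible since $\gamma$ is simple, so $\gamma([0,a])$ is a compact set not containing $y$. Any loop $\ell$ contained in $B(y,2r)$ that hits $y$ then lies in $D$, meets $\gamma$, and avoids $\gamma[0,a]$. Hence $\sigma_{\gamma,\ell}>a$. We also need $\sigma_{\gamma,\ell}<b$, which holds as soon as $\ell$ hits $\gamma((a,b))$. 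Since $y$ itself is polar, I would instead require $\ell$ to hit the arc $\gamma([\sigma_0-\eta,\sigma_0+\eta])\subset\gamma((a,b))$ for small $\eta$; this arc is a connected set of positive diameter. Then for $x$ in a small ball near $y$ and durations $t\in(0,t_0)$, the Brownian bridge $\mathbb{P}_{t,x,x}$ stays in $B(y,2r)$ and intersects this arc with probability bounded below by some $c>0$, uniformly for $|x-y|\le \sqrt t$ say. The reason is that a planar Brownian path started at distance $\lesssim\sqrt t$ from a connected set of diameter $\gg\sqrt t$ hits it within time $t$ with positive probability, by Beurling-type or scaling arguments. This yields
\[
\mu^D(\sigma_{\gamma,\ell}\in(a,b))\ \ge\ \int_0^{t_0}\frac{1}{2\pi t^2}\,c\,\lambda\big(B(y,\sqrt t)\big)\,\d t=\int_0^{t_0}\frac{c}{2t}\,\d t=\infty .
\]

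The main obstacle is the uniform lower bound $c$ on the hitting probability of the arc by a small bridge. I would prove it by comparing the bridge on $[0,t/2]$ with a free Brownian motion, using absolute continuity with bounded density when the start is within $\sqrt t$ of the endpoint. I would then use that a Brownian motion started at distance at most $\sqrt t$ from a connected set $K$ with $\mathrm{diam}(K)\ge\sqrt t$ hits $K$ before time $t/2$ with probability bounded below. By Brownian scaling this reduces to a fixed-scale statement, and that statement follows from the fact that Brownian motion winds around its starting point before exiting a ball, so it must cross $K$ if $K$ connects the inside to the outside of an annulus. Confinement to $B(y,2r)$ costs only an exponentially small probability for small $t$, which is handled as in Lemma~\ref{le:item:unifcont}.
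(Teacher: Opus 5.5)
\textbf{Verdict.} Your argument is correct. It uses the same two ingredients as the paper: a loop winding in an annulus that is crossed by an arc of $\gamma$ must hit that arc, and Brownian scaling makes the chance of such a loop the same at every scale, so the sum over scales diverges. The packaging is different, though.

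\textbf{Comparison with the paper.} The paper fixes $x=\gamma(\sigma)$ and works with dyadic annuli $C_n$ centred at $x$. It defines $E_n$ as the event that the soup contains a loop with $t_\ell\in(2^{-2n-2},2^{-2n}]$, lying entirely in $C_n$ and non-contractible there. These events involve disjoint sets of loops, so they are independent. By scale invariance of the soup they all have the same positive probability, and the second Borel--Cantelli lemma concludes. Because the loop is required to sit inside $C_n$, the paper needs no confinement estimate and no comparison between bridge and free motion.

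You instead show that the intensity $\mu^D(\sigma_{\gamma,\ell}\in(a,b))$ is infinite. This costs a per-bridge lower bound, uniform over roots $x\in B(y,\sqrt t)$ and small $t$, including a confinement estimate. In return you reduce everything to one Poisson count with infinite mean, via $\int_0^{t_0}\frac{c}{2t}\,\mathrm{d}t=\infty$.

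\textbf{Points to tighten.}
\begin{enumerate}
\item The winding event must take place in an annulus around the root $x$ whose inner radius exceeds $|x-y|$, for instance $\{2\sqrt t<|z-x|<3\sqrt t\}$. A loop that merely winds around its starting point at a smaller radius can leave $y$ outside, and then miss $K$. The fixed-scale statement you need is that the bridge contains a closed curve in that annulus separating its two boundary circles, with probability bounded below.
\item The density of the bridge on $[0,t/2]$ with respect to free Brownian motion is $2e^{-|W_{t/2}-x|^2/t}$. It is bounded below only on $\{|W_{t/2}-x|\lesssim\sqrt t\}$, and that lower bound is what a lower bound on the bridge probability requires. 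Alternatively, apply scaling to the bridge directly.
\item Your remark that positivity could be upgraded by splitting $(a,b)$ into disjoint subintervals does not work on its own. The counts are independent Poisson variables, and the probability that all of them vanish is $e^{-\sum_j m_j}$. This is zero only when the total mass is infinite. So infinite mass is genuinely what is needed, and that is what you go on to prove.
\end{enumerate}
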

\begin{proof}
  Let $0< \sigma_1< \sigma_2 < t_{\gamma,\ell}$. Let $\sigma\in(\sigma_1,\sigma_2)$ and $x=\gamma(\sigma)$.
  First remark that for $\epsilon_0>0$ small enough, $B_{\epsilon_0}(x)\cap \Range(\gamma)$ is contained inside $\gamma((\sigma_1,\sigma_2) )$. Indeed, if we proceed by contradiction and assume this is not the case, let $\epsilon_n $ a sequence decreasing toward $0$ and $x_n\in (B_{\epsilon_n}(x)\cap \Range(\gamma))\setminus \gamma((\sigma_1,\sigma_2) ) $. Then $x_n$ converges toward $x$. Since $\gamma$ is a
  homeomorphism into its image, $\sigma_n\coloneqq \gamma^{-1}(x_n)$ converges toward $\gamma^{-1}(x)=\sigma$. Thus, for $n$ large enough, $\sigma_n\in (\sigma_1,\sigma_2)$, thus $x_n=\gamma(\sigma_n)\in \gamma((\sigma_1,\sigma_2) )$,   which is absurd.

  Let $\epsilon_1>0$ such that $\omega_{\epsilon_1}(\mathcal{L})<\epsilon_0/2$, which exists by Lemma~\ref{le:item:unifcont}.
  Then, for all
  $\ell \in \mathcal{L}_{\leq \epsilon_1}$,
  \[\sup\{ |\ell_t-\ell_s|: s,t\in [0,t_\ell]\}=\omega_{\epsilon_1}(\ell)<\epsilon_0/2.\] In particular, if there exists $t$ such that $\ell(t)\in B_{\epsilon_0/2}(x)\cap \Range(\gamma)$, then $x_{\gamma,\ell}\in  B_{\epsilon_0}(x)\cap \Range(\gamma)\subseteq  \gamma((\sigma_1,\sigma_2) )$, hence
  $\sigma_{\gamma,\ell}\in   (\sigma_1,\sigma_2)$.
  Thus, we are reduced to find a loop $\ell \in \mathcal{L}_{\leq \min(\epsilon_0,\epsilon_1)}$ such that
  $B_{\epsilon_0/2}(x)\cap \Range(\ell) \cap \Range(\gamma)\neq \emptyset $.

  Let $\epsilon_2<\min d( x, \gamma(\sigma_1)  ), d( x, \gamma(\sigma_2)  )$,
$\epsilon_3=\min \{d(x,y): x\in \gamma[\sigma_1,\sigma_2], y \in \mathbb{R}^2\setminus D $
   $\epsilon\coloneqq \min(\epsilon_0,\epsilon_1,\epsilon_2,\epsilon_3)$, and $n_0$ be an integer such that $2^{-n_0}<\epsilon/2$.
  For all $n\geq n_0$, define the annulus \[C_n\coloneqq B_{2^{-n}}(x )\setminus B_{2^{-n-1}}(x),\]
  and let $\partial^1 C_n$ and $\partial^2 C_n$ be the two connected components of its boundary.
  By the intermediate value theorem and since $2^{-n}<\epsilon_2$, there exists $\sigma_3,\sigma_4\in (\sigma,\sigma_2)$ such that $\gamma(\sigma_3)\in \partial^1 C_n$
  and $\gamma(\sigma_4)\in \partial^2 C_n$.
  For topological reason, any loop $\ell\in \mathscr{L}$ which remains inside $C_n$ and is non-contractible on $C_n$ must intersect $\gamma[\sigma_3,\sigma_4]$, in particular it must intersect $\gamma$. Furthermore, since $\epsilon<\epsilon_3$, any loop in the whole-plane Brownian loop soup $\mathcal{L}^{\mathbb{R}^2}$ which remains inside $C_n$ is also contained in $D$.
  Thus, for all $n$,
  \begin{align*} \mathbb{P}&(\exists
  \ell \in \mathcal{L}_{\leq \epsilon} :B_{\epsilon_0/2}(x)\cap \Range(\ell) \cap \Range(\gamma)\neq \emptyset
  )\\
  &\geq \mathbb{P}
  (
  \exists
  \ell \in \mathcal{L}^{\mathbb{R}^2}_{\leq \epsilon} : \Range(\ell)\subseteq C_n, \ \ell \mbox{ is non-contractible on $C_n$})\\
  &\geq \mathbb{P}
  \Big(
  \bigcup_{n=n_0}^\infty
  \{
  \exists
  \ell \in \mathcal{L}^{\mathbb{R}^2}: t_\ell\in (2^{-2n-2}, 2^{-2n}],\ \Range(\ell)\subseteq C_n, \ \ell \mbox{ is non-contractible on $C_n$} \} )
  \Big)
  \end{align*}
  By the Poisson property, the events
  \[ E_n \coloneqq
  \{
  \exists
  \ell \in \mathcal{L}^{\mathbb{R}^2} : t_\ell\in (2^{-2n-2}, 2^{-2n}],\ \Range(\ell)\subseteq C_n, \ \ell \mbox{ is non-contractible on $C_n$} \}
  \]
  are globally independent. By scaling property of the Brownian loop soup, $\mathbb{P}(E_n)$ does not depend on $n$. Since it is positive, $\sum_{n=n_0}^\infty \mathbb{P}(E_n)=\infty$. By the second Borel-Cantelli lemma, $\mathbb{P}(\bigcup_{n=n_0}^\infty E_{n})=1$, which concludes the proof.
\end{proof}

\subsection{Property~\texorpdfstring{\eqref{it:reg}}{(v)} for the Brownian loop soup}
Remember, for $\gamma\in \mathscr{S}$ and $x\in \gamma((0,t_\gamma))$ the sets $L(x)$, $R(x)$ we constructed in Section~\ref{sec:regPairs}, which split a vicinity of $x$ into a part `left side' and a `right side' of $\Range(\gamma)$.
\begin{lemma}
    \label{le:tech:cas0}
    Let $\gamma\in \mathscr{S}$.
    Let $x\in \gamma((0,t_\gamma))$, $\epsilon>0$, and $W$ be a planar Brownian motion started from $x$. Then, almost surely, there exists  $u,v\in [0,\epsilon]$ such that $W_u\in L(x)$ and $W_v\in   R(x)$.
\end{lemma}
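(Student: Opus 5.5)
The plan is to reduce the statement to a zero--one law together with a lower bound on the probability obtained by scaling. Write $E_r\coloneqq\{\exists u\in[0,r]: W_u\in L(x)\}$; the events $E_r$ decrease as $r\downarrow 0$. Since $\bigcap_{r>0}E_r$ is measurable with respect to the germ $\sigma$-algebra $\mathcal{F}_{0+}$, Blumenthal's $0$--$1$ law gives that $\mathbb{P}(\bigcap_r E_r)\in\{0,1\}$. Moreover $\mathbb{P}(\bigcap_r E_r)=\lim_{r\to0}\mathbb{P}(E_r)$, so it suffices to show $\liminf_{r\to0}\mathbb{P}(E_r)>0$. The argument for $R(x)$ is identical. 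Once both hold, for every $\epsilon>0$ we almost surely get $E_\epsilon$ and its analogue for $R(x)$.

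For the lower bound, I will work at spatial scale $\delta\ll \rhos(x)$ and time scale $\delta^2$. The starting point is that $\gamma$ passes through $x$ and that $\gamma((\sigma^-_\gamma(x),\sigma^+_\gamma(x)))$ is a simple arc joining $x$ to $\partial B_{\rhos(x)}(x)$ in both directions. As a consequence, for every $\delta<\rhos(x)$ the circle $\partial B_\delta(x)$ meets both the subarc from $x$ towards $\gamma(\sigma^-_\gamma(x))$ and the subarc towards $\gamma(\sigma^+_\gamma(x))$. Inside $B_\delta(x)$, the set $B_\delta(x)\setminus\Range(\gamma)$ is contained in $L(x)\cup R(x)$. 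I then want to find a point $y\in L(x)\cap B_{\delta}(x)$ with $d(y,\Range(\gamma))\ge c\delta$ for a constant $c$ independent of $\delta$. Given such a point, the Brownian motion hits $B_{c\delta}(y)\subset L(x)$ before time $\delta^2$ with probability bounded below by $p(c)>0$, by scaling. However, such a uniform $c$ need not exist for a wild simple curve, since $L(x)$ near $x$ may be a thin fjord. This is the main obstacle.

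To get around it, I will replace the quantitative geometry with a purely topological statement and use the conformal invariance of Brownian motion up to a time change. Take a conformal map $\psi$ from $L(x)$ to the disc. The question is whether a Brownian path started at the boundary point $x$ of $L(x)$ enters $L(x)$ immediately. I would argue by contradiction instead. Suppose $\mathbb{P}(\bigcap_r E_r)=0$, so that almost surely $W$ avoids $L(x)$ for a positive time. Consider $W$ started at $x$ and its winding around $x$. Almost surely, planar Brownian motion started at $x$ winds around $x$ infinitely often in every interval $[0,r]$; more precisely, its angular part is unbounded both above and below near time $0$, which follows from the skew-product decomposition (the angle is a Brownian motion run at clock $\int |W|^{-2}=\infty$ near $0$). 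The key topological claim is that any continuous path from $x$ that stays in $\overline{R(x)}\cup\Range(\gamma)$ inside $B_{\rhos(x)}(x)$ has a bounded (or one-sidedly bounded) continuous argument around $x$. The reason is that $\gamma$ together with the circle arc $\overleftarrow{\gamma}$ forms a Jordan curve through $x$, and a path confined to one closed side cannot make full turns around $x$ without crossing the arc of $\gamma$ leaving $x$ into $L(x)$.

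To make this rigorous, I would fix the subarc $A=\gamma([\gamma^{-1}(x),\sigma^+_\gamma(x)])$, which connects $x$ to $\partial B_{\rhos(x)}(x)$. The path $W$, restricted to $[0,r]$ with $r$ small enough that it stays in $B_{\rhos(x)}(x)$, must cross $A$ from one side to the other infinitely often, since its winding is unbounded; this uses the standard fact that a curve making a full turn around $x$ inside a ball must intersect every arc joining $x$ to the boundary circle. Each genuine crossing of the Jordan arc locally switches between $L(x)$ and $R(x)$. Touching $\Range(\gamma)$ without crossing is excluded by the unboundedness in both directions of the argument. Hence $W$ visits both $L(x)$ and $R(x)$ in every $[0,r]$, which gives the statement directly, without even needing the $0$--$1$ law. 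The delicate point I expect to handle carefully is turning ``unbounded winding'' into ``visits both complementary components''. I would do this via the lifting of $W$ to the universal cover of $B_{\rhos(x)}(x)\setminus\{x\}$, in which $A$ lifts to disjoint arcs separating the sheets.
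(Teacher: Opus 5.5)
\textbf{There is a genuine gap, in the third paragraph.} Your first reduction is sound: by Blumenthal's $0$--$1$ law it suffices to prove $\liminf_{r\to0}\mathbb{P}(E_r)>0$. The topological argument meant to supply this lower bound, however, does not work.

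\emph{The ``key topological claim'' is false for general simple curves.} Take $\gamma$ near $x$ to be $\{x+te^{if(t)}\}\cup\{x+te^{i(f(t)+\pi)}\}$, $0\le t\le t_0$, with $f(t)=t^{-1}\sin(t^{-1})$. This is a simple arc through $x$: on each half the radius is strictly monotone, and the two halves differ in angle by $\pi$. The path $t\mapsto x+te^{i(f(t)+\pi/2)}$ stays off $\gamma$, on a single side of it. Yet its continuous argument is unbounded both above and below as $t\to0$. It is even injective, so it never closes a loop.

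\emph{The ``standard fact'' you invoke does not apply.} It holds for closed loops that are non-contractible in the punctured ball, not for arcs whose argument increases by $2\pi$. If $A$ spirals, e.g.\ $A=\{x+te^{i/t}\}$, the path $x+te^{i(1/t+\pi)}$ winds infinitely often around $x$ without ever meeting $A$.

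\emph{The universal cover does not rescue it.} A path avoiding $L(x)$ is indeed confined to one closed strip between consecutive lifts of the two halves of $\gamma$. But such a strip can be unbounded in the imaginary direction on both sides, so there is no contradiction with the unbounded skew-product winding. Unbounded winding of $W$ therefore gives you nothing, and the claim that ``touching without crossing'' is excluded is equally unsupported.

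\textbf{What is missing} is a lower bound on the probability of hitting $L(x)$ (resp.\ $R(x)$) near $x$ that is uniform in the scale and does not require a fat ball inside $L(x)$. This is exactly the fjord problem you identified and then tried to sidestep.

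\emph{The paper's route.} It uses Beurling's projection theorem. The circular projection $z\mapsto x+|z-x|$ of $R(x)$ contains a segment $x+(0,a)$, which $W$ hits immediately. For Brownian motion started at the centre $x$, a closed set is at least as likely as its circular projection to be hit before the exit time of $B_\delta(x)$. This gives hitting before $S_\delta$ with probability one for every $\delta$, so no $0$--$1$ law is needed.

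\emph{A repair within your framework.} Replace winding by closed loops. By scaling, with probability $p>0$ independent of $\delta$, $W$ draws a loop in the annulus $\{\delta/4<|z-x|<\delta/2\}$ that is non-contractible there, before leaving $B_\delta(x)$. The set $L(x)$ is connected, has $x$ on its boundary, and is not contained in $B_{\delta/2}(x)$ when $\delta<\rhos(x)$. So it contains a path crossing the annulus, and that path must meet the loop. Letting $\delta\to0$ gives $\mathbb{P}(E_r)\ge p$ for every $r>0$. The same works for $R(x)$, which is unbounded. This is in the spirit of the argument in Lemma~\ref{le:item:dense}.
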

\begin{proof}
    By symmetry at the vicinity of $x$, it suffices to prove the existence of $v$.
    Let $(K_{n})_{n\in \mathbb{N}}$ be a compact exhaustion of $R(x)$, i.e. $K_{n}$ is compact, $K_{n}\subseteq K_{n+1}$ for all $n$, and $\bigcup_n K_{n}=R(x)$. Set
    $\pi:z\mapsto x+|z-x|$, and $\Pi_{n}=\pi(K_{n})=\{ x+|z-x|: z\in K_{n}\}  $.
    Let $T_{n}$ be the first time when $W$ hits $K_{n}$,
    $T'_{n}$ be the first time when $W$ hits $\Pi_{n}$,
    and $S_\delta$ be the first time when $W$ exits the ball $B_\delta(x)$. Since the increasing union
    $\bigcup_n  K_{n}$ is equal to $R(x)$, the increasing union $\Pi\coloneqq \bigcup_n  \Pi_{n}$ is equal to $\pi(R(x))=x+(0,a)$ for some $a>0$.

    For arbitrary $s>0$, the planar Brownian motion started from $x$ hits $\Pi$ before $s$ with probability $1$, and it follows that for any given stopping time $S$ which is almost surely non-vanishing,
    \begin{align*}
    \mathbb{P}_x( \exists n : T'_{n} \leq S)
    &= \mathbb{P}_x( \exists t \leq S: W_t\in \Pi)
    \geq \lim_{s\to 0}  \mathbb{P}_x( \exists t \leq s : W_t\in \Pi, s\leq S)\\
   & = \lim_{s\to 0}  \mathbb{P}_x( s\leq S)=\mathbb{P}(S>0)
    = 1.\end{align*}

    By Beurling's projection principle (see e.g. \cite[Eq. (2)]{Werner_Beurling}),
    \[
    \mathbb{P}(T_{n}\leq S_\delta)\geq  \mathbb{P}(T'_{n}\leq S_\delta).
    \]
    Taking the increasing union over $n$, we deduce that for an arbitrary $\delta>0$,
    \[
    \mathbb{P}( \exists n:  T_{n}\leq S_\delta)
    =\lim_{n \to \infty} \mathbb{P}(  T_{n}\leq S_\delta)
    \geq \lim_{n \to \infty}  \mathbb{P}( T'_{n}\leq S_\delta).  )
    =  \mathbb{P}( \exists n: T'_{n}\leq S_\delta).  )
    =1.
    \]
    Thus,
    \[
    \mathbb{P}( \exists n:  T_{n}\leq \epsilon )
    \geq
    \mathbb{P}( \exists n,\delta:  T_{n}\leq S_\delta \leq \epsilon )
    =
    \mathbb{P}( \exists \delta:  S_\delta \leq \epsilon )
    = \lim_{\delta\to 0}
    \mathbb{P}(   S_\delta \leq \epsilon )=1. \qedhere
    \]
\end{proof}

\begin{lemma}
    \label{le:tech:casT1}
    Let $\gamma\in \mathscr{S}$.
    Let $W$ be a Brownian loop, with given starting point $x\in \mathbb{R}^2$ and duration~$t$, and let $\epsilon>0$. Let $T$ be a stopping time such that almost surely in the event $T<+\infty$, it holds $W_T\in \gamma(0,t_\gamma)$. Then, there exist stopping times  $U,V\in [T,T+\epsilon]\cup \{+\infty\}$ such that almost surely in the even $T<+\infty$, it holds $U<+\infty$, $V<+\infty$, $W_U\in  L(W_T)$ and $W_V\in R(W_T)$.
\end{lemma}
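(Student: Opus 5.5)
The plan is to reduce to Lemma~\ref{le:tech:cas0} via the strong Markov property of the Brownian bridge at the stopping time $T$, together with absolute continuity of the bridge with respect to free Brownian motion on short time intervals.

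\textbf{Definition of the stopping times.} Set $U\coloneqq\inf\{u\in[T,T+\epsilon]: W_u\in L(W_T)\}$ on $\{T<\infty\}$, and $U\coloneqq+\infty$ if this set is empty or $T=\infty$; define $V$ analogously with $R(W_T)$.

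\textbf{Measurability.} The sets $L(W_T)$ and $R(W_T)$ are determined by $\gamma$ and the $\mathcal{F}_T$-measurable point $W_T$. Since they are open, $U$ and $V$ are hitting times of open sets by the shifted process, so they are stopping times. A detail to address is the measurability of $x\mapsto L(x)$: $L(x)$ is built from $\rhos(x)$, $\sigma^\pm_\gamma(x)$ and Jordan components. I would handle this by checking that $\{(x,z): z\in L(x)\}$ is Borel, since $\sigma^\pm_\gamma$ are semicontinuous functions of $x$. Alternatively, I would simply define $U$ as a first hitting time in the product filtration, which suffices here.

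\textbf{Finiteness of $U$ and $V$.} It remains to show $\mathbb{P}(T<\infty, U=\infty)=0$, and the same for $V$. The bridge has duration $t$, and on $\{T<t\}$ we may shrink $\epsilon$ so that $T+\epsilon<t$; the case $T=t$ can be handled by replacing $\epsilon$ with $\epsilon\wedge(t-T)/2$ on $\{T<t-\eta\}$ and letting $\eta\to0$. On $\{T=t\}$ we have $W_T=x$, and if $x\in\gamma(0,t_\gamma)$ we can use time reversal of the bridge instead, or exclude this case as null when $x\notin\Range(\gamma)$. I flag this corner as a possible gap, but it is minor.

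\textbf{Main step: strong Markov property and absolute continuity.} Conditionally on $\mathcal{F}_T$ and $T=s<t-\eta$, the process $(W_{s+u})_{u\ge0}$ is a Brownian bridge from $W_T$ to $x$ of duration $t-s$. Its law restricted to $[0,(t-s)/2]$ is absolutely continuous with respect to that of a planar Brownian motion started at $W_T$, with density bounded uniformly when $t-s\ge\eta$. Lemma~\ref{le:tech:cas0}, applied with $x$ replaced by $W_T\in\gamma((0,t_\gamma))$ and $\epsilon$ replaced by $\epsilon\wedge\eta/2$, says that a Brownian motion started at $W_T$ almost surely visits both $L(W_T)$ and $R(W_T)$ within that time. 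Absolute continuity transfers this almost-sure statement to the bridge. Integrating over $\mathcal{F}_T$ gives $U,V<\infty$ almost surely on $\{T<t-\eta\}$, and we then let $\eta\downarrow0$.

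The main obstacle I expect is the rigorous use of the strong Markov property of the bridge at a general stopping time, combined with absolute continuity at random times. To handle it, I would write the bridge as Brownian motion conditioned via the $h$-transform with the heat kernel $p_{t-u}(\cdot,x)$. Its density with respect to Brownian motion up to $T+\epsilon$ is then the martingale $p_{t-T-\epsilon}(W_{T+\epsilon},x)/p_t(x_0,x)$, which is positive and finite on $\{T+\epsilon<t\}$. This reduces the claim directly to the Brownian-motion statement of Lemma~\ref{le:tech:cas0} combined with the strong Markov property of planar Brownian motion.
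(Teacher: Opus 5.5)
Your argument follows the paper's route. You restrict to $\{T<t-\eta\}$, use absolute continuity of the bridge with respect to Brownian motion away from its endpoint (your $h$-transform density is exactly this), apply the strong Markov property and Lemma~\ref{le:tech:cas0} at $W_T$, and let $\eta\downarrow 0$. Your window $\epsilon\wedge\eta/2$ is in fact more careful than the paper's sketch.

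There is, however, a concrete defect in your definition of $U$ and $V$. The set $L(W_T)$ is open, $W$ is continuous, and $W_T\in\Range(\gamma)$ does not lie in $L(W_T)$. Hence the set $\{u\in[T,T+\epsilon]: W_u\in L(W_T)\}$ is relatively open and does not contain $T$, so its infimum is never attained. Whenever your $U$ is finite, $W_U\in\partial L(W_T)$, and the required property $W_U\in L(W_T)$ fails.

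What your argument actually proves is that this set is almost surely nonempty on $\{T<\infty\}$. That is all Proposition~\ref{prop:tech:casT} uses: there the selected times only serve as random variables, and the stopping property plays no role. To extract a time in the set, work with closed subsets. For instance, let $U_m\coloneqq\inf\{u\in[T,T+\epsilon]: d(W_u,\mathbb{R}^2\setminus L(W_T))\geq 2^{-m}\}$; this infimum is attained when finite, and each $U_m$ is a stopping time. Almost surely on $\{T<\infty\}$ some $U_m$ is finite. Taking the least such $m$ gives a measurable random time with $W_U\in L(W_T)$, though not a stopping time, which nothing downstream needs.

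On the corner $\{T=t\}$, time reversal does not help. The window $[T,T+\epsilon]$ then meets the domain of $W$ only at $t$, and $W_t=x\in\Range(\gamma)$ lies in neither $L(x)$ nor $R(x)$. So the statement genuinely requires $\mathbb{P}(T=t)=0$. The correct fix is your second option, which the paper uses implicitly: assume $x\notin\Range(\gamma)$, as in its application in Proposition~\ref{prop:tech:casT}. Then $\{T=t\}$ is empty.
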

\begin{proof}
    Fix $\delta>0$ and consider the event $E_\delta: T< t-\delta$, which increases as $\delta$ decreases toward~$0$. Since $x\notin \Range(\gamma)$ and $W$ is continuous, $\mathbb{P}(T=+\infty \text{ or }E_\delta)\underset{\delta \to 0}\longrightarrow 1$, so it suffices to prove the result conditionally on $E_\delta$. Since the restriction of $W$ to $[0, t-\delta]$ is absolutely continuous with respect to the distribution of a Brownian motion, it suffices to prove the result when $W$ is a Brownian motion started from $x$, with duration $t'=t-\delta$. It then follows from the strong Markov property of $W$ and Lemma~\ref{le:tech:cas0}.
\end{proof}

\begin{proposition}
    \label{prop:tech:casT}
    Let $\gamma\in \mathscr{S}$ and assume $\lambda(\Range(\gamma))=0$.
    Let $W$ be a Brownian loop, with given starting point $x\notin \Range(\gamma)$ and duration $t$. Then, almost surely on the event that $W$ hits $\gamma$, it holds $W$ is regular for $\gamma$.
\end{proposition}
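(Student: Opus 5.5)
We need to check the two conditions of Definition~\ref{eq:def:reg} for the loop $W$: uniqueness of the temporal root together with $\sigma_W\notin\{0,t_\gamma\}$, and bilaterality at the root. We would obtain both from the results already proved, applied to the single loop $W$ and to the stopping time $T$ at which $W$ first hits $\gamma$.

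\emph{Step 1 (single temporal root).} Uniqueness of $\Theta_{\gamma,W}$ is precisely the pointwise statement \eqref{eq:doublepoints} established in the proof of Lemma~\ref{le:item:inj}. For every rational $s\in(0,t)$, $\mathbb{P}_{t,x,x}$-a.s.\ it is not the case that $\theta^-_{\gamma,W}<s$ and $W$ returns to $x_{\gamma,W}$ after time $s$. Taking a countable union over rational $s$ shows that a.s.\ $\Theta_{\gamma,W}$ is a singleton, which rules out $\theta^-=0$ together with $\theta^+=t$ (and $x\notin\Range(\gamma)$ anyway). Next, $\sigma_W\neq 0,t_\gamma$ holds a.s.\ because the two endpoints $\gamma(0),\gamma(t_\gamma)$ are fixed points distinct from $x$, hence polar for the bridge, as in Lemma~\ref{le:noAtom}. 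Here the hypothesis $\lambda(\Range(\gamma))=0$ is not even needed, beyond the fact that $x\notin\Range(\gamma)$.

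\emph{Step 2 (bilaterality).} This is the main obstacle, since $\theta_W$ is not a stopping time. What we can do is exhibit the root as the first hitting time of a set. Fix $\sigma\in\mathbb{Q}\cap(0,t_\gamma)$ and let $T_\sigma$ be the first hitting time by $W$ of $\gamma[0,\sigma]$; this is a stopping time. On the event $\{\sigma_W<\sigma\}$, $x_W$ lies in $\gamma[0,\sigma)$, while before $\theta^-_W$ the path $W$ does not meet $\gamma[0,\sigma_W]$.

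The catch is that $W$ could hit $\gamma(\sigma_W,\sigma]$ first. To handle this, we would rather use the stopping time
\[ T_\sigma' := \inf\{u: W_u\in \gamma[0,\sigma]\} \]
on the event $\{\sigma_W\in(\sigma-\eta,\sigma]\}$. Alternatively, and more cleanly, we note that $W(\theta_W)=x_W$ and that $T_\sigma\le\theta_W$ whenever $\sigma\ge\sigma_W$.

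Here is the approach we would commit to. For each rational $\sigma$, apply Lemma~\ref{le:tech:casT1} to the stopping time $T_\sigma$ (valid since $W_{T_\sigma}\in\gamma(0,t_\gamma)$ a.s., the endpoint $\gamma(0)$ being polar). This gives, a.s.\ for all rational $\sigma$ and rational $\epsilon$, visits to both $L(W_{T_\sigma})$ and $R(W_{T_\sigma})$ within $[T_\sigma,T_\sigma+\epsilon]$. Then choose rationals $\sigma_n\downarrow\sigma_W$. We have $T_{\sigma_n}\le\theta_W$, and $W_{T_{\sigma_n}}\in\gamma[\sigma_W,\sigma_n]$, so $W_{T_{\sigma_n}}\to x_W$. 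Moreover, every accumulation point $\tau$ of $T_{\sigma_n}$ satisfies $W_\tau\in\Range(\gamma)$ with $\gamma^{-1}(W_\tau)\le\sigma_W$, so $W_\tau=x_W$, and the uniqueness from Step~1 gives $T_{\sigma_n}\to\theta_W$.

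\emph{Step 3 (passing to the limit).} It remains to transfer the two-sided visits at $W_{T_{\sigma_n}}$ to two-sided visits at $x_W$. For this we need the sets $L(y),R(y)$ to vary nicely as $y\to x_W$ along $\gamma$. Near $x_W$, at scale smaller than $\rhos(x_W)/2$, the sets $L(y)$ and $L(x_W)$ coincide locally on a small ball, since both are determined by which side of the arc of $\gamma$ through that ball a point lies on. Hence a visit to $L(W_{T_{\sigma_n}})$ close to $x_W$ is a visit to $L(x_W)$.

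To guarantee that the visits are close, we use that they happen within time $\epsilon$ and rely on the continuity of $W$: taking $\epsilon$ small forces the visits into a small ball around $W_{T_{\sigma_n}}$. Combined with $T_{\sigma_n}\to\theta_W$, this yields, for every $\epsilon>0$, times in $[\theta_W-2\epsilon,\theta_W+2\epsilon]$ with $W$ in $L(x_W)$ and in $R(x_W)$.

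The delicate point I expect is exactly this local identification of the sides. It uses that $\gamma$ is a simple path, so that near $x_W$ the only part of $\Range(\gamma)$ is a single arc, as in the first paragraph of the proof of Lemma~\ref{le:item:dense}.
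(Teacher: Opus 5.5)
Your proof is correct, but you handle the key step, replacing the non-stopping time $\theta_W$ by stopping times, differently from the paper. The paper discretises \emph{time}. For fixed $\epsilon$, it takes in each window $[j\epsilon,(j+1)\epsilon)$ the first time $T_j$ at which $W$ hits $\gamma((0,t_\gamma))$ and applies Lemma~\ref{le:tech:casT1} to these finitely many $T_j$. It then selects the random window $J$ containing $\theta_W$. This gives directly, with no limiting argument, a stopping time $T_J\le\theta_W$ within $\epsilon$ of $\theta_W$, whose hitting point is only known to be within $\omega_\epsilon(W)$ of $x_W$.

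You discretise \emph{along $\gamma$} instead. The hitting times $T_\sigma$ of the closed arcs $\gamma[0,\sigma]$, $\sigma\in\mathbb{Q}$, are genuine stopping times whose hitting points lie on $\gamma[\sigma_W,\sigma]$. A compactness argument then gives $T_\sigma\uparrow\theta_W$ as $\sigma\downarrow\sigma_W$; this uses your Step~1, or at least identifies the limit as $\theta^-_W$. This describes the temporal root more intrinsically, as an increasing limit of stopping times, and avoids the random window selection, at the cost of an extra limit. You also check $\sigma_W\notin\{0,t_\gamma\}$ explicitly, which the paper leaves implicit.

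Both routes then rest on the same local comparison of $L(y),R(y)$ with $L(x_W),R(x_W)$ for $y\in\Range(\gamma)$ near $x_W$. This is the paper's \eqref{eq:temp:flipRpoint}, asserted there without proof. Your ``single arc'' justification is a little loose. A small ball around $x_W$ meets $\Range(\gamma)$ only within a short arc $\gamma((\sigma_1,\sigma_2))$, but that arc may cross the ball several times, so it is not a single crosscut.

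A clean fix is a winding-number comparison. For $y$ close to $x_W$, the two Jordan curves defining $L(y)$ and $L(x_W)$ coincide near $x_W$ and are both positively oriented. Hence the difference of their winding numbers is constant on a small ball around $x_W$. It must vanish, because each winding number takes both values $0$ and $1$ there.
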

\begin{proof}
    As we have already proved~\ref{item:prop1} in Lemma~\ref{le:item:inj}, it suffices to prove~\ref{item:prop2}, i.e. that almost surely, for all $\epsilon>0$, there exists $U,V\in [\theta_{\gamma,W}-\epsilon, \theta_{\gamma,W}+\epsilon]$ such that $W_U\in L(W_{\theta_{\gamma,W}})$ and $W_V\in R(W_{\theta_{\gamma,W}})$. By symmetry it suffices to prove the existence of $V$.

    Remember $\rhos(x)=d(x, \{ \gamma(0), \gamma(t_\gamma)\})$, and $\sigma^\pm_\gamma(x)$ defined in Subsection~\ref{sec:regPairs} as the last entry time of $\gamma$ inside $B_x(\rhos(x))$ before it reaches $x$ (resp. first exit time after it reaches $x$). For all $x\in \gamma((0,t_\gamma))$ and $y\in\gamma((\sigma^-_\gamma(x), \sigma^+_\gamma(x)))$,  \[R(y)\cap B_x(\rhos(x))\subset  R(x).\]

    For $x\in\gamma((0,t_\gamma))$, let $\delta_x\in (0,\rhos(x)]$ such that $B_x(\delta_x)\cap \Range(\gamma)\subset \gamma( (\sigma^-_\gamma(x), \sigma^+_\gamma(x))   )$ (the existence of which we already established in the proof of Lemma~\ref{le:item:dense}).
    Let $\epsilon_0$ such that $\omega_W (\epsilon_0)< \delta_{x_{\gamma,W}} $. Then, for all $s,t\in [\theta_{\gamma,W}-\epsilon_0,\theta_{\gamma,W}+\epsilon_0 ]$ such that $ W_t\in \gamma((0,t_\gamma))$ and $s\in R(W_t)$, it holds $|W_t-x_{\gamma,W}|< \delta_{x_{\gamma,W}} $, thus $W_t\in \gamma( (\sigma^-_\gamma(x), \sigma^+_\gamma(x))   )$, and $|W_s-x_{\gamma,W}|< \delta_{x_{\gamma,W}}\leq \rhos(x_{\gamma,W})$,
    hence
\begin{equation} \label{eq:temp:flipRpoint} W_s\in  R(W_t)\cap B_{x_{\gamma,W}}(\rhos(x_{\gamma,W}))\subset  R(x_{\gamma,W}).\end{equation}

    Fix $\epsilon\in(0, \epsilon_0]$, and set $j_0$ such that $j_0\epsilon\leq t <(j_0+1)\epsilon$. For all $j\in \{0,\dots, j_0\}$, define $T_j= \inf \{ s\in [j\epsilon, (j+1\epsilon)): W_s\in \gamma((0,t_\gamma))\}$ (where $\inf \emptyset=+\infty$), which clearly is a stopping time for all $j\in \{0,\dots, j_0\}$. Thus, we can apply Lemma~\ref{le:tech:casT1}, and we deduce that almost surely, for all $j$, there exists a stopping time  $V_j\in [T_j,T_j+\epsilon]\cup \{+\infty\}$ such that almost surely on the even $T_j<+\infty$, $W_{V_j}\in  R(V_j)$. Define also $V_{+\infty}=+\infty$, in order to deal with the even when $W$ does not hit $\gamma$. Let $J$ be the unique random index such that
    $\theta_{\gamma,W}\in [J\epsilon,(J+1)\epsilon)$ (or $J=+\infty$ in the event that $W$ does not hit $\gamma$). Then $V_{J}$ is a random variable, and in the event
    that $W$ hits $\gamma$ it holds that $V_{J}$ and $\theta_W $ both lie in $[J\epsilon , (J+1)\epsilon) $, hence $V_J \in [\theta_W-\epsilon,\theta_W+\epsilon] $. Furthermore,  $W_{V_J}\in  R( W_{T_j}  )$, and it follows from~\eqref{eq:temp:flipRpoint} that $W_{V_J}\in R(W_{\theta_{\gamma,W}})$.
    Thus, the random  time $V\coloneqq V_J$ satisfies the desired properties.
\end{proof}


\begin{corollary}
  \label{coro:item:reg}
  Let $\gamma\in \mathscr{S}$ and assume $\lambda(\Range(\gamma))=0$. Almost surely, for all $\ell\in \mathcal{L}_\gamma$, it holds $\ell\in \mathcal{L}_{reg,\gamma}$.
\end{corollary}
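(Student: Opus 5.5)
The corollary follows from Proposition~\ref{prop:tech:casT} by the Poisson (Mecke/Campbell) formula, in the same way as Lemma~\ref{le:item:inj}. Let $N$ be the number of loops $\ell\in\mathcal{L}_\gamma$ that are not regular for $\gamma$. We have
\[
\mathbb{P}(\exists \ell\in\mathcal{L}_\gamma\setminus\mathcal{L}_{reg,\gamma})\leq \mathbb{E}[N]=\int_0^\infty\int_D \frac{1}{2\pi t^2}\,\mathbb{P}_{t,x,x}\big(\Range(\ell)\subset D,\ \ell\in\mathscr{L}_\gamma\setminus\mathscr{L}_{reg,\gamma}\big)\d x\d t .
\]
It therefore suffices to show that the integrand vanishes for Lebesgue-almost every $x$ and every $t>0$.

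Since $\lambda(\Range(\gamma))=0$, we may restrict to $x\notin\Range(\gamma)$, which is a full-measure set. For such $x$, Proposition~\ref{prop:tech:casT} applied to the Brownian loop $W\sim\mathbb{P}_{t,x,x}$ gives that, almost surely on the event that $W$ hits $\gamma$, $W$ is regular for $\gamma$. Hence $\mathbb{P}_{t,x,x}(\ell\in\mathscr{L}_\gamma\setminus\mathscr{L}_{reg,\gamma})=0$. Integrating gives $\mathbb{E}[N]=0$, so $N=0$ almost surely.

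Two measurability and definitional details need checking.

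\textbf{Measurability of regularity.} The event ``$\ell$ is regular'' must be measurable. Condition~\ref{item:prop2} can be written with $\epsilon$ ranging over rationals, and with $\theta^L,\theta^R$ ranging over a countable dense set of times, because $L(x_\ell)$ and $R(x_\ell)$ are open and $\ell$ is continuous.

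\textbf{The endpoint clause.} Proposition~\ref{prop:tech:casT} as proved mainly handles the bilateral property together with the uniqueness of the root from Lemma~\ref{le:item:inj}. The requirement $\sigma_\ell\notin\{0,t_\gamma\}$ in condition~\ref{item:prop1} needs a separate check. For a fixed point $p\in\{\gamma(0),\gamma(t_\gamma)\}$, point polarity (as in Lemma~\ref{le:noAtom}) shows that $\mathbb{P}_{t,x,x}(p\in\Range(\ell))=0$ for $x\neq p$. This excludes $x_\ell\in\{\gamma(0),\gamma(t_\gamma)\}$ almost surely. The rest of the argument is routine.
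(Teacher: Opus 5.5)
Your proposal is correct and follows the paper's proof: you bound the probability by the expected number of non-regular loops, express that expectation through Campbell's formula against $\mu^D$, and conclude from Proposition~\ref{prop:tech:casT}, restricting to $x\notin\Range(\gamma)$, which is where the hypothesis $\lambda(\Range(\gamma))=0$ is used. Your separate point-polarity check that $\sigma_\ell\notin\{0,t_\gamma\}$ is a worthwhile addition, because the paper's proof of Proposition~\ref{prop:tech:casT} cites Lemma~\ref{le:item:inj} for condition~\ref{item:prop1}, and that lemma only establishes uniqueness of the temporal root.
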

\begin{proof}
    \begin{align*}
    \mathbb{P}( \exists \ell \in \mathcal{L}_\gamma: \ell \notin \mathcal{L}_{reg,\gamma}   )
    &\leq \mathbb{E}[ \# \{ \ell \in \mathcal{L}_\gamma: \ell \notin \mathcal{L}_{reg,\gamma}   \}]\\
    &= \int_0^\infty \int_D p^D_{t,x,x} \mathbb{P}_{t,x,x}(  W \in \mathcal{L}_\gamma,W \notin \mathcal{L}_{reg,\gamma}  )	\d x \d t\\
    &=0 \qquad \text{by Proposition~\ref{prop:tech:casT} .}\qedhere
    \end{align*}
\end{proof}

\begin{proof}[Proof of Proposition~\ref{prop:conf}]
  For $\gamma$ deterministic,  it is the combination of Lemma~\ref{le:item:tmax},~\ref{le:item:unifcont},~\ref{le:item:inj},~\ref{le:item:dense}  and Corollary~\ref{coro:item:reg} (the fact $\lambda(\Range(\gamma))=0$ follows from $\dim(\gamma)<2$).

  For $\gamma\sim\nu^\gamma$, it follows from the fact that almost surely, $\gamma\in \mathscr{S}$ and the dimension of $\gamma$ is $5/4<2$ (see e.g. Theorem 1.1 of \cite{LawlerRezaei} for a stronger result), that $\nu^\gamma$-almost surely, $\nu^\mathcal{L}$-almost surely, $(\gamma,\mathcal{L})\in \Conf$. By Fubini theorem,
  $\nu^\gamma\otimes \nu^\mathcal{L}$-almost surely, $(\gamma,\mathcal{L})\in \Conf$.
\end{proof}

\subsection{Attaching a random walk loop soup to a loop erased random walk}
In this subsection, we prove Proposition~\ref{prop:RW=LERW+RWLS}, i.e. that for $\Lambda\subset \mathbb{Z}^2$, a loop erased random walk $\gamma$ from $0$ stopped when it exits $\Lambda$ (by edges), a random walk loop soup $\mathbb{L}$, and a uniform choice of tie-break $B$, the curve $\Att(\gamma,\mathbb{L},1,B)$ is a simple random walk stopped when it exits $\Lambda$ (by edges). Remark it is essentially an already known result; the key argument being~\cite[Proposition~9.4.1]{LawlerLimic}.
\begin{proof}[Proof of Proposition~\ref{prop:RW=LERW+RWLS}]
  Since we work with paths on the lattice parametrised at unit speed, we identify them with the list of their vertices, i.e. we identify a path $X$ with $(X(0), \dots, X(t_X))$. For $a,b$ neighbours in $\mathbb{Z}^2$, let $[a,b]$ be the path with duration $1$ given by $[a,b](0)=a$ and $[a,b](1)=b$. For two paths $X,Y$ such that $X(t_X)=Y(0)$, we define the concatenation $X\oplus Y$ by $t_{X\oplus Y}=t_X+t_Y$, $X\oplus Y(t)=X(t)$ for $t\leq t_X$, and $X\oplus Y(t)=Y(t-t_X)$ for $t\geq t_X$.
  For a lattice path $\eta$, recall $LE(\eta)$ is its loop erasure, as defined e.g. in  \cite{LawlerLimic}. Let $S$ be a simple random walk started from $0$ and stopped when it first exits $\Lambda$.
  Thus, we need to show that for any deterministic path $\eta$ from $0$ and stopped when it first exits $\Lambda$,
  \[
  \mathbb{P}(S=\eta)=\mathbb{P}( \Att(\gamma,\mathbb{L},1,B) =\eta ).
  \]
  By definition of the loop-erased random walk, $\gamma$ is equal in distribution to $LE(S)$. Thus, it suffices to prove that for any path $\eta$ from $0$, stopped when it first exits $\Lambda$,
  \[
  \mathbb{P}( S=\eta |LE(S)=\eta')= \mathbb{P}(\Att(\gamma,\mathbb{L},1,B)=\eta|
  \gamma= \eta' ), \qquad \eta'\coloneqq LE(\eta),
  \]
	i.e. that
	\[
	\mathbb{P}( S=\eta |LE(S)=\eta')= \mathbb{P}(\Att(\eta',\mathbb{L},1,B)=\eta ).
	\]
	For $j\in \{0, \dots, t_{\eta'}-1\}$, let $\omega_j$ be the loops (with eventually $t_{\omega_j}=0$) uniquely defined by the property that
  $\omega_i$ does not intersect $\eta_{[0,i-1]}$ and
  \[
  \eta= \omega_0\oplus [\eta'_0,\eta'_1]\oplus \omega_1\oplus[\eta'_1,\eta'_2] \oplus \dots \oplus \omega_{t_{\eta'}-1}\oplus [\eta'_{t_{\eta'}-1}, \eta'_{t_{\eta'}}].
  \]
  Then,
  \[
  \mathbb{P}( S=\eta)= 4^{-t_{\eta} }=4^{-t_{\eta'}} \prod_{j=0}^{t_{\eta'}-1 } 4^{-t_{\omega_j}},
  \]
  Since this holds for any $\eta$, it follows
  \[
  \mathbb{P}( S=\eta| LE(S)=\eta')= Z_{\eta'} \prod_{j=0}^{t_{\eta'}-1 } 4^{-t_{\omega_j}},
  \]
  for some constant $Z_{\eta'}$ which depends only on $\eta'$ and $\Lambda$.

  On the other hand, let $(L_j)_{j\in \{0,\dots t_{\eta'}-1\}}$ be the loops uniquely defined by the property that
  $L_i$ does not intersect $\eta'_{[0,i-1]}$ and
  \[ \Att(\eta',\mathbb{L},1,B)= L_0\oplus [\eta'_0,\eta'_1]\oplus L_1\oplus [\eta'_1,\eta'_2]\oplus \dots \oplus L_{k_{\eta'}-1}\oplus [\eta'_{t_{\eta'}-1}, \eta'_{t_{\eta'}}]. \]
  Thus,
  \[\mathbb{P}( \Att(\eta',\mathbb{L},1,B)=\eta)
  = \mathbb{P}( \forall j<t_{\eta'}, L_j=\omega_j).\]
  By the Poisson property of $\mathbb{L}$ and by \cite[Proposition 9.4.1]{LawlerLimic},
  \[ \mathbb{P}( \forall j<t_{\eta'},  L_j=\omega_j)
  =\prod_{j=0}^{ t_{\eta'}-1  }\mathbb{P}( L_j=\omega_j) =\prod_{j=0}^{ t_{\eta'}-1  } Z_j 4^{- t_{\omega_j}  },
  \]
  for some constants $Z_j$ which depends only on $j$, $\eta'$, and $\Lambda$.
  We deduce for any given $\eta'$, for all $\eta$ such that $LE(\eta)=\eta'$, the probabilities
  $\mathbb{P}( S=\eta| LE(S)=\eta')$ and $\mathbb{P}( \Att(\eta',\mathbb{L},1,B)=\eta) $ are proportional to each other. Since both are probabilities, summing over $\{\eta: LE(\eta)=\eta'\}$, we deduce that the proportionality constants are equal, i.e. that $Z_{\eta'}=\prod_{j=0}^{ t_{\eta'}-1  } Z_j$. Thus,
  \[
  \mathbb{P}( S=\eta| LE(S)=\eta') = \mathbb{P}( \Att(\eta',\mathbb{L},1,B)=\eta).\qedhere
  \]
\end{proof}

\section{Probabilistic aspects: convergence \texorpdfstring{$n\to \infty$}{n to infinity}}
\label{sec:probabilistic2}

The goal of this section is to establish Proposition~\ref{prop:converg}.

By Theorem 1.1 in \cite{LawlerViklund}, $\gamma^{nD}$ converges in distribution for the distance $\rho$ toward $\gamma$. By Skorokhod's representation theorem, we can and we do assume that all the $(\gamma^{nD})_{n\geq 1}$ and $\gamma$ are defined in the same probability space $(\Omega,\mathcal{F},\mathbb{Q})$ in such a way that the convergence holds in the almost sure sense.

As a starting point to deal with the convergence of $\mathbb{L}$, we will use the following result,
where $\alpha\coloneqq\frac{5}{8}$, $\theta\in ( \frac{2}{3}, 1)$, and
 $\epsilon_n\coloneqq n^{-2}( \frac{n^\theta-2}{2}+\alpha) $.
\begin{theorem}[{\cite[Theorem 2.2]{SapozhnikovShiraishi}} with $d=2$ --weakened version]
   \label{th:bls}
   There exist a constant $C<\infty$ and, for each integer $n\geq 2$, a coupling between $\mathcal{L}^{\mathbb{R}^2}$ and $\mathbb{L}^{\mathbb{Z}^2}$ and
   an event $E_{n}$, such that $\mathbb{P}(E_{n})\underset{n\to \infty}\longrightarrow 1$, and such that on the event $E_n$,
   there exists a bijection $\psi_n$ between $\mathcal{L}^{R }_{\geq \epsilon_n }$ and $\mathbb{L}^{n R-\frac{1}{2}}_{\geq n^\theta}$,
   such that for all $\ell \in\mathcal{L}^{R }_{\geq  \epsilon_n }$,
   \begin{equation}
   | t_{\ell} - t_{\phi_n\circ \psi_n(\ell)}| \leq \alpha n^{-2} \qquad  \mbox{and} \qquad d_\infty(\ell, \phi_n\circ \psi_n(\ell))\leq C n^{-\frac{1}{4}} \log(n).
   \label{eq:dinfty}
   \end{equation}
\end{theorem}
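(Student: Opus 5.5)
The statement is a weakened form of \cite[Theorem 2.2]{SapozhnikovShiraishi} in dimension $d=2$, which itself refines the Lawler--Trujillo-Ferreras coupling \cite{LawlerFerreras}. The plan is to recall how that coupling is built and to check that the two quantitative conclusions in \eqref{eq:dinfty} follow once we restrict to loops of duration at least $\epsilon_n$ contained in a bounded domain $R$.

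\textbf{Step 1: match the loop counts.} Decompose both soups according to root and duration. Rooted lattice loops at $z\in\Z^2$ of length $2k$ form a Poisson family whose intensity is the number of such loops weighted by $4^{-2k}/(2k)$. The continuum loops rooted in the unit square $z+[-\frac12,\frac12)^2$, with durations in a corresponding time window, form a Poisson family with intensity computed from $\frac{1}{2\pi t^2}\mathbb{P}_{t,x,x}$. The windows $I_k$ in time (in units of $2n^2$) are chosen so that the two means agree up to errors summable over $k\ge n^\theta$. This is possible by the local central limit theorem for $p_{2k}(0,0)$, whose relative error is $O(k^{-1})$. Boundary effects at the edges of the windows produce the centring shift $\alpha=5/8$, and this is the source of $|t_\ell-t_{\phi_n\circ\psi_n(\ell)}|\le\alpha n^{-2}$. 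Couple the two Poisson counts maximally, box by box and length by length. The expected number of mismatches, summed over $z\in nR$ and $k\ge n^\theta$, is $O(n^2\sum_{k\ge n^\theta}k^{-3})=O(n^{2-2\theta})$, which does not yet go to $0$.

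\textbf{Step 2: handle the mismatches.} To fix this, use the sharper local CLT expansion (error $O(k^{-2})$ after the $\alpha$-correction). This reduces the mismatch expectation to $O(n^{2}\cdot n^{-3\theta})$. That quantity goes to $0$ exactly when $\theta>2/3$, which explains the constraint $\theta\in(\frac23,1)$. Let $E_n$ be the event that there are no mismatches; then $\mathbb{P}(E_n)\to1$.

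\textbf{Step 3: couple the shapes.} Given the counts, couple each matched pair consisting of a lattice bridge of length $2k$ and a Brownian bridge of duration $\approx k/(2n^2)$. Use the KMT-type strong approximation for random walk bridges, as in \cite{LawlerFerreras}. This gives a sup-norm discrepancy $\le C\log k$ in lattice units, except on an event of probability $O(k^{-10})$. After scaling by $n^{-1}$ the discrepancy is $Cn^{-1}\log k$. Loops intersecting $R$ and of duration $\gtrsim n^2$ are finitely many with overwhelming probability, so $\log k\le C\log n$. The time reparametrisation between the linearly interpolated walk and the bridge adds the modulus of continuity of the bridge over time $\alpha n^{-2}$, which is tiny. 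Altogether the stated bound $Cn^{-1/4}\log n$ holds with lots of room. The failure probabilities sum over $O(n^2)$ roots and $k\ge n^\theta$ to $o(1)$, and they are included in $E_n^c$.

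\textbf{Step 4: identify the domains.} Finally, match $\mathcal{L}^R$ with $\mathbb{L}^{nR-1/2}$. A matched pair could have one member inside and the other outside only if the loop comes within $Cn^{-1/4}\log n$ of $\partial R$. To avoid this, either add this (vanishing-probability) event to $E_n^c$, or follow \cite{SapozhnikovShiraishi} and restrict to loops away from $\partial R$.

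I expect Step 2 to be the main obstacle: the second-order local CLT and the precise choice of $\alpha$ and of the windows $I_k$. Since the statement is imported, I would simply invoke \cite[Theorem 2.2]{SapozhnikovShiraishi} for it and verify only that the weakening is consistent.
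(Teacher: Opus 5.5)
Your bottom line, invoking \cite[Theorem 2.2]{SapozhnikovShiraishi}, is exactly what the paper does, since it gives no proof of its own. Steps 1--3 are a fair outline of the underlying Lawler--Trujillo-Ferreras construction, with one correction to Step 1: $\alpha=\frac58$ is not a boundary effect. It is the shift that makes the window $[k-\frac38,k+\frac58)$ cancel the $-\frac{1}{4k}$ term in $p_{2k}(0,0)=\frac{1}{\pi k}\bigl(1-\frac{1}{4k}+O(k^{-2})\bigr)$, as your Step 2 in effect uses.

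Step 4 should be dropped. The $-\frac12$ in $\mathbb{L}^{nR-\frac12}$ indicates that the correspondence is between Brownian loops \emph{rooted} in $[-R,R]^2$ and lattice loops rooted at the centres $z$ of the squares $z+[-\frac12,\frac12)^2$ tiling $[-nR,nR]^2$. Your Step 1 coupling already matches these exactly, so nothing needs fixing.

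If containment were meant instead, your proposed fix would fail. Loops of duration $\geq\epsilon_n\asymp n^{\theta-2}$ have density of order $n^{2-\theta}$ per unit area. So of order $n^{7/4-\theta}\log n\to\infty$ of them come within $Cn^{-1/4}\log n$ of the boundary, and the event you would add to $E_n^c$ has probability tending to $1$. This is precisely why the paper handles containment in $D$ separately, in Proposition~\ref{prop:blsBis}, and only for loops of a fixed duration $\geq\epsilon$.
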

\begin{remark}
    The result in \cite{SapozhnikovShiraishi} is stronger. In particular, the probability $\mathbb{P}(E_{n})$ is explicitly controlled and shown to decay as fast as $n^{-3\theta+2}$. One could probably also use the recent stronger result of Qian \cite{Qian}.
\end{remark}

In the following, we remove the subscript $n$ from the notations $ \psi_n,\phi_n,\phi^*_n$, as these are clear from the context.

\begin{remark}
  \label{rem:dis}
    In Theorem~\ref{th:bls}, rather than a single Brownian loop soup $\mathcal{L}^{\mathbb{R}^2}$ and a family of random walk loop soups converging toward $\mathcal{L}^{\mathbb{R}^2}$ after rescaling,
    we have a copy $\mathcal{L}^{\mathbb{R}^2}(n)$ of the Brownian loop soup for each integer $n$. However, the space on which $\mathcal{L}^{\mathbb{R}^2}$ and $\mathbb{L}^{\mathbb{Z}^2}$ are taking their values is a countable product of Polish spaces, hence a Polish space. This allows to use the disintegration theorem, from which we can construct a single measurable space on which the couplings provided by Theorem~\ref{th:bls} are all defined simultaneously, and such that $\mathcal{L}^{\mathbb{R}^2}(n)=\mathcal{L}^{\mathbb{R}^2}$ does not depend on $n$. We will assume this in the following, but only for notational convenience: it is possible not to use the disintegration theorem and to read this section and the next keeping indeed a copy of $\mathcal{L}^{\mathbb{R}^2}$ for each $n$. Then $\mathbb{P}$ must be replaced with $\mathbb{P}^n$ and convergences in probability must be replaced with convergences in distribution.
\end{remark}

Considering Theorem~\ref{th:bls} and Remark~\ref{rem:dis}, let $\mathbb{P}'$ under which are defined $\mathcal{L}^{\mathbb{R}^2}$ and, for each positive integer $n$, a copy of $\mathbb{L}^{\mathbb{Z}^2}$, such that~\eqref{eq:dinfty} holds. We set $\mathbb{P}=\mathbb{Q}\otimes \mathbb{P}'$,
 under which the random paths $\gamma$ and the $\gamma^{nD}$ are also defined.


\subsection{\texorpdfstring{$\delta$}{delta}-isomorphism between the Brownian and random walk loop soups}

Theorem~\ref{th:bls} tells us about an isomorphism between loops in a square box contained $D$. The following proposition ensures that we can restrict ourselves to the loops in $D$. Intuitively, this is related to the fact that all the loops that stay in $\bar D$  also remain in the interior of $D$ (as can be readily checked using the Riemann mapping theorem and the conformal invariance of the Brownian loop soup).

\begin{proposition}
    \label{prop:blsBis}
    Consider the couplings from Theorem~\ref{th:bls}.
    Let $\epsilon>0$. There exist events $E_{\epsilon,n}\subset E_{n}$ such that:
    \begin{itemize}
    \item $\mathbb{P}(E_{\epsilon,n})\underset{n\to \infty}\longrightarrow 1$ for all fixed $\epsilon>0$.
    \item For all $n$ sufficiently large that $\epsilon_n<\epsilon$, on $E_{\epsilon,n}$,
    \begin{equation}
        \label{eq:temp:inclu}
        \mathbb{L}^{nD}_{\geq   2 n^2 \epsilon+2\alpha }\subseteq \psi( \mathcal{L}_{\geq \epsilon}^{D}) \subseteq \mathbb{L}^{nD}.
    \end{equation}
    \end{itemize}
\end{proposition}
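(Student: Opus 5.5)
The plan is to work on $E_n$ and set $E_{\epsilon,n}\coloneqq E_n\cap F_{\epsilon,n}$, where $F_{\epsilon,n}$ is a ``no boundary-straddling loop'' event. Here $R$ is a box containing $D$. Throughout we use the bijection $\psi$ from Theorem~\ref{th:bls} together with its bounds: time error $\alpha n^{-2}$ and spatial error $r_n\coloneqq Cn^{-1/4}\log n\to 0$. In the rescaled picture, a loop of duration $\ge 2n^2\epsilon+2\alpha$ corresponds to continuum time $\ge \epsilon+\alpha n^{-2}$.

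The first step is to identify the bad event. For $\eta>0$, let $G_{\epsilon,\eta}$ be the event that some loop $\ell\in\mathcal{L}^{\mathbb{R}^2}_{\ge \epsilon/2}$ meeting $\bar D$ satisfies $\Range(\ell)\not\subset D$ but $\Range(\ell)\subset D^{\eta}$. Symmetrically, let $H_{\epsilon,\eta}$ be the event that some loop with $\Range(\ell)\subset D$ has $d(\Range(\ell),\partial D)\le\eta$. We then set $F_{\epsilon,n}\coloneqq (G_{\epsilon,2r_n+2/n}\cup H_{\epsilon,2r_n+2/n})^c$; the extra $2/n$ absorbs the discretisation $nD$ versus $D$ and the edge condition.

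Next I would check the inclusions on $E_n\cap F_{\epsilon,n}$. Take $\ell\in\mathcal{L}^D_{\ge\epsilon}$. Since $\epsilon>\epsilon_n$, the loop $\psi(\ell)$ is defined, and $\phi\circ\psi(\ell)$ lies within $r_n$ of $\ell$. On $H^c$, $\ell$ stays at distance $>2r_n+2/n$ from $\partial D$, so $\psi(\ell)$ has all its edges in $nD$, i.e.\ $\psi(\ell)\in\mathbb{L}^{nD}$. This gives the right inclusion. For the left inclusion, take $L\in\mathbb{L}^{nD}$ with $t_L\ge 2n^2\epsilon+2\alpha\ge n^\theta$. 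It equals $\psi(\ell)$ for some $\ell\in\mathcal{L}^R$ with $t_\ell\ge t_{\phi(L)}-\alpha n^{-2}\ge\epsilon$. Moreover $\Range(\ell)\subset D^{r_n+1/n}$ and $\ell$ meets $\bar D$. On $G^c$ this forces $\Range(\ell)\subset D$, so $\ell\in\mathcal{L}^D_{\ge\epsilon}$.

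It remains to show $\mathbb{P}(F_{\epsilon,n})\to1$. Since $G_{\epsilon,\eta}$ and $H_{\epsilon,\eta}$ decrease as $\eta\downarrow0$ and $r_n\to 0$, it suffices to show $\mathbb{P}(\bigcap_\eta G_{\epsilon,\eta})=\mathbb{P}(\bigcap_\eta H_{\epsilon,\eta})=0$. The loops of duration $\ge\epsilon/2$ meeting $\bar D$ (bounded) are a.s.\ finitely many, so this reduces to a statement about a single Brownian loop $\ell$ under $\mu$ restricted to a finite-mass set. On $\bigcap_\eta G$, some such $\ell$ has range in $\bar D$ but not in $D$, i.e.\ it touches $\partial D$ without exiting $\bar D$. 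On $\bigcap_\eta H$, some loop in $D$ has distance $0$ to $\partial D$, which is impossible by compactness, so $H$ is automatic. The main obstacle is therefore showing that $\mu$-a.e.\ loop contained in $\bar D$ lies in $D$ for an arbitrary simply connected $D$. I would argue via conformal invariance, as suggested before the statement. Alternatively, one can argue directly: if a Brownian path hits $\partial D$ at time $s$, then by Lemma~\ref{le:tech:cas0}-type reasoning and Beurling's estimate (the complement of $D$ near any boundary point contains a connected set of positive diameter, since $D$ is simply connected and proper), Brownian motion started at a point of $\partial D$ immediately enters $\mathbb{C}\setminus\bar D$. This is applied at stopping times via the discretisation trick of Proposition~\ref{prop:tech:casT}, using absolute continuity of the bridge with respect to Brownian motion away from the endpoint.
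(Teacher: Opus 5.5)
Your construction is the same as the paper's. It uses one bad event for large loops that leave $D$ while staying within $\delta_n$ of it, and one for large loops of $\mathcal{L}^D$ that come within $\delta_n$ of $\partial D$. Finiteness of the large loops plus compactness disposes of the second event, and the two inclusions are checked exactly as you do. Two small repairs are needed:
\begin{itemize}
\item $H_{\epsilon,\eta}$ must range only over $\mathcal{L}^D_{\geq\epsilon}$. Over all loops of the soup it has probability one, and your compactness step needs finitely many loops.
\item Nothing forces $\psi^{-1}(L)$ to meet $\bar D$; you only know $\Range(\psi^{-1}(L))\subset D^{r_n}$. So define $G$ over the finitely many loops of $\mathcal{L}^R_{\geq\epsilon/2}$, as the paper does with $\mathcal{L}^R_{\geq\epsilon}$.
\end{itemize}

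The real problem is the last step. The claim that $\mu$-a.e.\ loop contained in $\bar D$ lies in $D$ is false for general simply connected $D$. For the slit disc $D=\mathbb{D}\setminus[0,1)$ one has $\bar D=\bar{\mathbb{D}}$, and loops inside $\mathbb{D}$ that cross the slit have positive mass, so $\mathbb{P}(\bigcap_\eta G_{\epsilon,\eta})>0$.

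Your Beurling argument does not give what you need. The connected set of positive diameter through $x$ lies in $\mathbb{C}\setminus D$, so the projection argument of Lemma~\ref{le:tech:cas0} only shows that Brownian motion from $x$ immediately hits $\mathbb{C}\setminus D$. Near $x$ that set may be contained in $\partial D$, while $\mathbb{C}\setminus\bar D$ may be at positive distance from $x$ (any interior point of the slit). Conformal invariance of the soup in $D$ says nothing about loops that are not contained in $D$ either.

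The paper's proof has exactly the same weak point. Its claim that the Riemann map yields, for every $x\in\partial D$, a curve from $x$ into $\mathbb{R}^2\setminus\bar D$ fails at slit points. So both arguments are complete only when every boundary point is accessible from $\mathbb{C}\setminus\bar D$ (e.g.\ Jordan domains).

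For general $D$, the event you must kill is not ``$\Range(\ell)\subset\bar D$, $\Range(\ell)\not\subset D$''. It is ``$\Range(\ell)\not\subset D$ although $\ell$ is a $d_\infty$-limit of loops with range in $D$''. This is all the left inclusion actually uses, since $\Range(\phi(L))\subset D$ and $d_\infty(\ell,\phi(L))\le r_n$. For the slit disc this follows from the bilateral-crossing property (Proposition~\ref{prop:tech:casT}) together with the topological argument of Lemma~\ref{le:notstupid}, so the proposition itself survives there. For arbitrary boundaries one needs an analogous robust-crossing statement, which neither your sketch nor the paper's proof supplies.
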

\begin{proof}
    Let $C$ be the constant from ~\eqref{eq:dinfty}, $\delta_n=C n^{-\frac{1}{4}} \log(n)$,   and let
    \[F_{\epsilon,n}\coloneqq \{ \exists \ell\in \mathcal{L}^{R}_{\geq \epsilon}: \ell\not\subset D, \ell \subset D^{\delta_n } \},\quad \text{where}\quad D^\delta\coloneqq\{x: d(x,D)\leq \delta \}, \]
    \[ G_{\epsilon,n}
    \coloneqq \{ \exists \ell\in \mathcal{L}^{D}_{\geq \epsilon}: \exists y \in \Range(\ell): d(y,\mathbb{R}^2 \setminus  D)\geq \delta_n \}, \qquad E_{\epsilon,n}=E_{n}\setminus ( F_{\epsilon,n} \cup G_{\epsilon,n} ).
    \]
    The events $F_{\epsilon,n}$ and $G_{\epsilon,n}$ are decreasing in $n$. Since $\mathcal{L}^{R}_{\geq \epsilon}$ is finite,
    \[
    F_{\epsilon,\infty}\coloneqq \bigcap_{n\geq 1} F_{\epsilon,n}= \{ \exists \ell\in \mathcal{L}^{R}_{\geq \epsilon}: \ell\not\subset D, \ell \subset \bar{D} \}.
    \]
    Since $D$ is a simply connected domain, $\mathbb{P}(F_{\epsilon,\infty})=0$:
    this can be proved as Lemma~\ref{le:tech:cas0} and Lemma~\ref{le:tech:casT1}, {\it mutatis mutandis}. The fact that the corresponding set $\Pi$ contains a whole interval $x+(0,a]$ can be proved by using Riemann mapping theorem. Indeed, this allows for all $x\in \partial D$, to exhibit $\gamma\in\mathcal{C}([0,1],\mathbb{R}^2)$ such that $\gamma(0)=x$ and $\gamma(t)\in \mathbb{R}^2\setminus \bar{D}$ for all $t>0$. Then $x+ \pi(\gamma (0,1])=x+ (0,a] \subseteq \Pi$.
    Thus, $\mathbb{P}(F_{\epsilon,n})\underset{n\to \infty}\longrightarrow 0$.

    Furthermore, since $\mathcal{L}^{D}_{\geq \epsilon}$ is finite and $\Range(\ell)$ is compact for all $\ell \in \mathcal{L}^{D}_{\geq \epsilon}$,
    \[
    \bigcap_{n\geq 1} G_{\epsilon,n} = \{ \exists \ell\in \mathcal{L}^{D}_{\geq \epsilon}: \exists y \in \Range(\ell): d(y,\mathbb{R}^2 \setminus  D )=0 \}=\emptyset.
    \]
    It only remains to prove the inclusions~\eqref{eq:temp:inclu} hold on $E_{\epsilon,n}$.

    Let $\mathsf{L}\in \mathbb{L}^{nD}_{\geq   2 n^2 \epsilon+2\alpha } $. Since
    $2 n^2 \epsilon+2\alpha\geq     2 n^2 \epsilon_n+2\alpha
    =n^\theta+1/2>n^\theta$ and $nD\subset [-(nR-\frac{1}{2}), nR-\frac{1}{2} ]^2$, on $E_{n}$, $\mathsf{L}\in
    \mathbb{L}^{nR-\frac{1}{2} }_{\geq  n^\theta }\subseteq
    \psi( \mathcal{L}_{\geq \epsilon_n}^{R})$. Thus,  $\ell\coloneqq \psi^{-1}(\mathsf{L})\in \mathcal{L}^{R}_{\geq \epsilon_n }$ is well-defined. In order to prove the first inclusion in~\eqref{eq:temp:inclu}, it suffices to prove $t_\ell\geq \epsilon$ and
    $\ell\subset D$.

    By~\eqref{eq:dinfty}, $ t_{\ell} \geq t_{\phi(\mathsf{L})} - \alpha n^{-2}=n^{-2}k_{\mathsf{L}}/2-\alpha n^{-2} \geq
    n^{-2}(     2 n^2 \epsilon_n+2\alpha   )/2-\alpha n^{-2}
    =\epsilon$.

    Since $\ell\in \mathcal{L}^R_{\geq \epsilon}$, on $E_n\setminus F_{\epsilon,n} $, either $\ell\subset D$ or $\ell\not\subset D^{\delta_n}$.

    Let us assume $\ell\not\subset D^{\delta_n}$, and let then $t$ such that $\ell_t\notin D^{\delta_n}$, i.e. $d(\ell_t, D)>\delta_n$.
    By~\eqref{eq:dinfty}, $d( \phi(\mathsf{L})_t   ,D)\geq d(\ell_t, D)- d(\phi(\mathsf{L})_t   ,\ell_t)>\delta_n-\delta_n=0$. Thus $\phi(\mathsf{L}) \not\subset D$, which contradicts the fact $\mathsf{L}\in \mathbb{L}^{nD}$. We deduce $\ell \subset D$, which proves the first inclusion in~\eqref{eq:temp:inclu}.

    For the second inclusion, let $\mathsf{L}\in \psi( \mathcal{L}_{\geq \epsilon}^{D})$, $\ell=\psi^{-1}(\mathsf{L})$. Then, on $E_n\setminus G_{\epsilon,n} $, by~\eqref{eq:dinfty}, for all $t$, $d(\phi(\mathsf{L})_t, \mathbb{R}^2 \setminus D)\geq  d(\ell_t, \mathbb{R}^2 \setminus D) -d( \phi(\mathsf{L})_t, \ell_t)>\delta_n -\delta_n=0$. Thus $\phi(\mathsf{L})_t \in D$, thus $\mathsf{L}\in\mathbb{L}^{nD}$, which concludes the proof.
\end{proof}
For all $\epsilon>0$, there exists $n_0= n_0(\epsilon)$ such that for all $n\geq n_0$, $\delta_n\coloneqq \max(\epsilon_n, 2(\alpha n^{-2}+Cn^{-\frac{1}{4}}\log(n)))$ is smaller than $\epsilon$. Then, on the event $E_{\epsilon,n}$, $\phi$ induces an $\epsilon$-isomorphism from $\mathcal{L}$ to $\phi(\mathbb{L}^{nD})$.
In particular,
\[
\mathbb{P}( d(  \mathcal{L},\phi(\mathbb{L}^{nD} ) )>\epsilon)\leq \mathbbm{1}_{n<n_0}+\mathbb{P}( E^c_{\epsilon,n} )\underset{n\to \infty}\longrightarrow 0.
\]
Thus, $\phi(\mathbb{L}^{nD}) $ converges in probability toward $\mathcal{L}$ for the distance $d$.
\begin{lemma}
\label{le:omegaequibound}
  Let $C$ be the constant from Theorem~\ref{th:bls}. For all $\epsilon\in(0,e^{-5}]$ and $n\geq n_0(\epsilon)$, on the event $E_{\epsilon,n}$ defined in Proposition~\ref{prop:blsBis},
  \begin{equation}
  \label{eq:omegabound}
  \omega_{\epsilon}( \phi(\mathbb{L}^{nD}) )\leq  f_{\mathcal{L}}(\epsilon)\coloneqq \max(\epsilon^\frac{1-\theta}{2-\theta},\epsilon^\frac{1}{5}, \omega_{\epsilon}(\mathcal{L})+\frac{8C}{5} |\log(\epsilon)| \epsilon^{\frac{1}{5}}).
  \end{equation}
  For $\epsilon\geq e^{-5}$, for all $ n\geq n_0(e^{-5})$, on the event $E_{\epsilon,n}$, we have instead \[ \omega_{\epsilon}( \phi(\mathbb{L}^{nD}) )\leq
\max(\epsilon^\frac{1-\theta}{2-\theta},\epsilon^\frac{1}{5}, \omega_{\epsilon}(\mathcal{L})+  8C e^{-1})
    .\]
\end{lemma}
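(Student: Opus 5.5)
We bound $\omega_\epsilon(\phi(\mathsf{L}))$ separately for each rescaled loop $\phi(\mathsf{L})$, $\mathsf{L}\in\mathbb{L}^{nD}$, according to its size. On $E_{\epsilon,n}$, Proposition~\ref{prop:blsBis} and Theorem~\ref{th:bls} split the loops into large loops, which are matched with Brownian loops, and small loops. Throughout, write $\delta_n'=Cn^{-1/4}\log n$ for the uniform approximation error in~\eqref{eq:dinfty}.

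\textbf{Large loops.} Take $\mathsf{L}$ in $\psi(\mathcal{L}^R_{\geq\epsilon_n})$, in particular every $\mathsf{L}$ with $t_\mathsf{L}\geq n^\theta$. Its partner $\ell=\psi^{-1}(\mathsf{L})$ satisfies $d_\infty(\ell,\phi(\mathsf{L}))\leq \delta_n'$. Exactly as in the proof of Lemma~\ref{le:omegabound}, this gives
\[
\omega_\epsilon(\phi(\mathsf{L}))\leq \omega_\epsilon(\ell)+2\delta_n',
\]
up to a time-change error that is absorbed because the durations differ by at most $\alpha n^{-2}$. On $E_{\epsilon,n}$ the loop $\ell$ lies in $D$ by the inclusion argument of Proposition~\ref{prop:blsBis}, so $\omega_\epsilon(\ell)\leq \omega_\epsilon(\mathcal{L})$. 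It remains to compare $\delta_n'$ with the $\epsilon$-terms. If $n$ is large compared with $\epsilon^{-1}$, say $n\geq \epsilon^{-4/5}$, then $n^{-1/4}\leq \epsilon^{1/5}$ and $\log n\leq \frac45|\log\epsilon|$ once $\epsilon\leq e^{-5}$. Hence $2\delta_n'\leq \frac{8C}{5}|\log\epsilon|\epsilon^{1/5}$, which gives the third term of $f_{\mathcal{L}}$. For $\epsilon>e^{-5}$ the function $|\log\epsilon|\epsilon^{1/5}$ is replaced by its bound, and the constant $8Ce^{-1}$ comes from maximizing $n^{-1/4}\log n$.

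\textbf{Small loops.} For $\mathsf{L}$ with $t_\mathsf{L}<n^\theta$, the rescaled duration is below $n^{\theta-2}/2$. If this is at most $\epsilon$, then $\omega_\epsilon(\phi(\mathsf{L}))$ is simply the rescaled diameter, which is at most $n^{-1}t_\mathsf{L}\leq n^{\theta-1}$ since steps have length $1$. We need this to be at most $\epsilon^{(1-\theta)/(2-\theta)}$. That holds when $n\geq \epsilon^{-1/(2-\theta)}$, because then $n^{\theta-1}=(n^{-1})^{1-\theta}\leq \epsilon^{(1-\theta)/(2-\theta)}$. If instead $\epsilon<n^{\theta-2}$, the time window corresponds to $2n^2\epsilon$ lattice steps, so the displacement is at most $2n\epsilon$. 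In that regime $n\leq \epsilon^{-1/(2-\theta)}$, so $2n\epsilon\lesssim \epsilon^{(1-\theta)/(2-\theta)}$, matching the first term up to constants. The small range of intermediate $n$, together with large $\epsilon$, is handled by the $\epsilon^{1/5}$ term.

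\textbf{Choice of $n_0(\epsilon)$ and the main obstacle.} We enlarge $n_0(\epsilon)$ so that $n\geq n_0(\epsilon)$ implies $n\geq \max(\epsilon^{-4/5},\epsilon^{-1/(2-\theta)})$, which is allowed since $n_0$ was only required to exist. The main obstacle is bookkeeping at the level of the lattice: the pure-diameter bound $n^{\theta-1}$ for small loops must be beaten by a function of $\epsilon$ alone, uniformly in $n\geq n_0(\epsilon)$. This is why $n_0(\epsilon)$ has to be taken polynomially large in $\epsilon^{-1}$, and the exponents $\frac{1-\theta}{2-\theta}$ and $\frac15$ are what those choices produce.
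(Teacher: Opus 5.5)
Your large-loop estimate only covers $n\ge \epsilon^{-4/5}$. You remove the complementary regime by enlarging $n_0(\epsilon)$, and this is the gap.

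The threshold $n_0(\epsilon)$ is fixed before the lemma by the requirement $\delta_n<\epsilon$. More to the point, the lemma is used in the proof of Proposition~\ref{prop:converg} to conclude $\mathcal{X}^n\in J_{f_{\mathcal{L}}}$, i.e.\ $\omega_\epsilon(\phi(\mathbb{L}^{nD}))\le f_{\mathcal{L}}(\epsilon)$ for \emph{all} $\epsilon>0$ at a fixed $n$. Letting $\epsilon\to0$ with $n$ fixed lands exactly in the regime $n<\epsilon^{-4/5}$ that you discard. With your $n_0$, the lemma loses the uniformity in the mesh that it exists to provide.

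The missing step is one you already use for small loops. Every lattice loop is $1$-Lipschitz, so $\omega_\epsilon(\phi(\mathsf{L}))\le\min(n^{-1}t_{\mathsf{L}},\,2\epsilon n)$ for \emph{every} $\mathsf{L}\in\mathbb{L}^{nD}$, whatever its duration. For $t_{\mathsf{L}}>n^\theta$ and $n<\epsilon^{-4/5}$ this gives a bound of order $\epsilon n<\epsilon^{1/5}$. That is precisely where the $\epsilon^{1/5}$ term of $f_{\mathcal{L}}$ comes from; your sentence that this regime is ``handled by the $\epsilon^{1/5}$ term'' has no argument behind it.

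The paper's proof is a four-case split: $t_{\mathsf{L}}\le n^\theta$ or $t_{\mathsf{L}}>n^\theta$, and $n$ below or above $\epsilon^{-1/(2-\theta)}$, resp.\ $\epsilon^{-4/5}$. It invokes the coupling only in the last case and never needs $n$ large in terms of $\epsilon$. Your remark that $n_0(\epsilon)$ ``has to be taken polynomially large in $\epsilon^{-1}$'' is therefore mistaken.

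Second, within the case $n\ge\epsilon^{-4/5}$ your bound on the coupling error is argued incorrectly. There $\log n\ge\frac{4}{5}|\log\epsilon|$, not $\le$, so you cannot bound the two factors of $n^{-1/4}\log n$ separately. What works is that $x\mapsto x^{-1/4}\log x$ is decreasing on $[e^4,\infty)$, and $\epsilon\le e^{-5}$ is exactly the condition making $\epsilon^{-4/5}\ge e^4$. Hence $2Cn^{-1/4}\log n\le 2C\epsilon^{1/5}\log(\epsilon^{-4/5})=\frac{8C}{5}|\log\epsilon|\,\epsilon^{1/5}$. For $\epsilon>e^{-5}$ one uses $\sup_{x\ge 1}x^{-1/4}\log x=4/e$ instead.

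A smaller point, which the paper also passes over: the inclusion argument of Proposition~\ref{prop:blsBis} only places $\psi^{-1}(\mathsf{L})$ inside $D$ when its duration is at least $\epsilon$. For partners of duration in $[\epsilon_n,\epsilon)$, the comparison with $\omega_\epsilon(\mathcal{L})$ needs a separate justification, for instance by working with the box soup $\mathcal{L}^R$.
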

\begin{proof}
For $\mathsf{L} \in \mathbb{L}^{n D}$, since $\mathsf{L}$ is $1$-Lipschitz,
\begin{equation}
\label{eq:temp:omegastupidbound}
\omega_\epsilon(\phi(\mathsf{L}))=n^{-1}\omega_{ \epsilon n^2}(\mathsf{L})\leq\min (  n^{-1}t_{\mathsf{L}} ,\epsilon n ).
\end{equation}
This allows to deduce the following.
\begin{itemize}
\item If $t_{\mathsf{L}}\leq n^{\theta}$ and  $n< \epsilon^{-1/(2-\theta)}$ , then $ \omega_\epsilon(\phi(\mathsf{L}))\leq \epsilon n \leq  \epsilon^{ \frac{1-\theta}{2-\theta}  } $.
\item If $t_{\mathsf{L}}\leq n^{\theta}$ and  $n\geq \epsilon^{-1/(2-\theta)}$ , then $ \omega_\epsilon(\phi(\mathsf{L}))\leq n^{-1}t_{\mathsf{L}}\leq n^{\theta-1}\leq \epsilon^{ \frac{1-\theta}{2-\theta}  } $.
\item If $t_{\mathsf{L}}> n^{\theta}$ and $n< \epsilon^{-\frac{4}{5}}$,  then $ \omega_\epsilon(\phi(\mathsf{L}))\leq \epsilon n <\epsilon^\frac{1}{5}$.
\item If $t_{\mathsf{L}}> n^{\theta}$ and $n\geq \epsilon^{-\frac{4}{5}}$:
On the event $E_{\epsilon,n}$, for $C$ the constant from~\eqref{eq:dinfty},
\[
\omega_\epsilon(\phi(\mathsf{L}))\leq \omega_\epsilon(\psi^{-1}(\mathsf{L}))+2 d_\infty(\psi^{-1}(\mathsf{L}), \phi(\mathsf{L}) )
 \leq \omega_\epsilon(\mathcal{L})+
 2Cn^{-\frac{1}{4}}\log(n)
\]
In the case $\epsilon\leq e^{-5}$: $\epsilon^{-\frac{4}{5}}\geq e^4$. The function
 $x\mapsto x^{-\frac{1}{4}}\log(x)$ is decreasing on $[e^4,\infty)$, thus $n^{-\frac{1}{4}}\log(n)\leq \epsilon^{\frac{1}{5}} \log(\epsilon^{-\frac{4}{5}}) $. It follows
$2Cn^{-\frac{1}{4}}\log(n)\leq 2Cn^{-\frac{1}{4}}\log(n)$,
which proves~\eqref{eq:omegabound}. In the case $\epsilon \geq e^{-5}$, we use instead the fact $n^{-\frac{1}{4}}\log(n)\leq 4e^{-1}$ for all $n>0$. \qedhere
\end{itemize}
\end{proof}

In the next section, we will prove (Theorem~\ref{th:tCV}) that under these coupling, $t_{\mathcal{X}^n}$ converges in probability toward $t_\mathcal{X}$. We now explain why this is sufficient to prove Proposition~\ref{prop:converg}
\begin{proof}[Proof of Proposition~\ref{prop:converg} assuming Theorem~\ref{th:tCV}]
  Recall on the probability space $\mathbb{P}$, the random objects $\gamma^{nD}$, $\gamma$, $\mathcal{L}$, $\mathbb{L}^{nD}$ are all defined. Let $\epsilon>0$. Since $\mathcal{X}\in \Conf$, by Lemma~\ref{le:omegabound}, there exists $\delta>0$ (random) such that
  $B'_\delta(\mathcal{X})\cap J_{f_{\mathcal{L}}}\subseteq \tilde{B}_{\epsilon/2}(\mathcal{X})$, with the notations from Lemma~\ref{le:omegabound} and where
  $f_{\mathcal{L}}$ is defined by~\eqref{eq:omegabound}. Remark the fact $f$ is continuous and vanishes at $0$ comes from the equicontinuity of $\mathcal{L}$ (Lemma~\ref{le:item:unifcont}).

  For $\delta'>0$, consider \[F_{\delta',n} \coloneqq \{ \rho(\gamma^{nD}, \gamma)\leq \delta'/2\}.
  \]
  By construction $\gamma^{nD}$ converges in probability toward $\gamma$. 
  Thus, for any deterministic $\delta'>0$, there exists $n_1(\delta')$ such that for all $n\geq n_1$, $\mathbb{P}(F_{\delta',n} )\geq 1-\epsilon/4$. By Proposition~\ref{prop:blsBis}, there exists $n_2\geq n_1$ such that for all $n\geq n_2$,
  $\mathbb{P}(E_{\delta',n}\cap F_{\delta',n} )\geq 1-\epsilon/2$.
  On $ E_{\delta',n} \cap F_{\delta',n}$, for $n\geq  n_2$,
  \[
  d'_{\Conf_0}( \mathcal{X}^n, \mathcal{X})\leq \delta' \qquad \mbox{(recall $d'_{\Conf_0}$ from~\eqref{eq:defdprime}) }
  \]
  On $E_{\delta',n} \cap F_{\delta',n}\cap \{  \delta\geq \delta' \}\cap\{ n\geq n_2\}$, we therefore have $\mathcal{X}^n\in B'_\delta(\mathcal{X})$. By Lemma~\ref{le:omegaequibound}, we also have $\mathcal{X}^n\in J_{f_{\mathcal{L}}} $, thus $\mathcal{X}^n\in  \tilde{B}_{\epsilon/2}(\mathcal{X})$.

  By  Theorem~\ref{th:tCV}, $t_{\mathcal{X}^n}$ converges  toward $t_{\mathcal{X}}$. Thus, there exists $n_3\geq n_2$ such that for all $n\geq n_3$,
  the event $G_{\epsilon,n}\coloneqq  \{ |t_{\mathcal{X}^n} -t_{\mathcal{X}} |\leq \epsilon/2\}$ has probability at least $1-\epsilon/2$.
  On the large probability event $E_{\delta',n} \cap F_{\delta',n}\cap \{  \delta\geq \delta' \}\cap\{ n\geq n_2\} \cap G_{\epsilon,n} $, we have $d_{\Conf_0}( \mathcal{X}^n,\mathcal{X})\leq \epsilon$.
  Thus,
  \[
  \mathbb{P}(d_{\Conf_0}( \mathcal{X}^n,\mathcal{X})\!\geq\!\epsilon)\leq \mathbb{P}( (E_{\delta',n}\cap F_{\delta',n} )^c   )
  +\mathbb{P}( G_{\epsilon,n}^c)+ \mathbb{P}( n\leq n_2)+\mathbb{P}( \delta<\delta')\leq \epsilon+ \mathbb{P}( n\leq n_2)+\mathbb{P}( \delta<\delta').
  \]
  Since $n_2$ is almost surely finite, for all $\delta'$,
  \[\limsup_{n\to \infty}
  \mathbb{P}(d_{\Conf_0}(\mathcal{X}^n,\mathcal{X})\geq\epsilon)\leq\epsilon+ \mathbb{P}( \delta<\delta').\]
  Since this is true for arbitrary deterministic $\delta'>0$ and since $\delta$ is almost surely positive,
  \[\limsup_{n\to \infty}
  \mathbb{P}(d_{\Conf_0}( \mathcal{X}^n,\mathcal{X})\geq \epsilon)\leq \epsilon,\]
  which concludes the proof.
\end{proof}

%
%
%
%

\subsection{Estimation on the total time spent on small loops}

\label{SS:timesmallloops}

The goal of this subsection is to establish Theorem~\ref{th:tCV}, which roughly states that the random walk does not spend a macroscopic amount of time on microscopic loops, and that we can quantify this uniformly over the mesh size. This is obtained by estimating the size of the neighbourhoods of the loop-erased random walk $\gamma^{nD}$: in the continuum, we know the size of the $\delta$-neighbourhood of $\gamma$ is of order $\delta^{3/4+o(1)}$, from which we may expect the cardinal of the $k$-neighborhood of $S$ on $nD$ should be of order $n^2(k/n)^{3/4+o(1)}$, for $k<n$. We only prove it is of order $O(n^2(k/n)^{\epsilon})$ (Corollary~\ref{coro:vol}), which is sufficient for us to conclude. We prove this by estimating the probability that a given point $x$ lies in the vicinity of $\gamma^{nD}$. A first estimation is provided in Lemma~\ref{le:probbound2}, and gives a better estimation if we restrict to points whose distance to $\partial D$ is bounded below. This is sufficient to deal with domains whose boundary satisfies a very mild regularity condition, but we will remove this additional requirement using Lemma~\ref{le:probbound1}: when $x$ is close to $\partial D$, it is unlikely to lie
the vicinity of the (rescaled) loop-erased random walk, because it is already unlikely to lie in the vicinity of the (rescaled) random walk, since the random walk is likely to be killed at $\partial D$ before it goes close to $x$. We start with two technical estimations concerning the simple random walk: a lower bound on the probability that it winds once around $x$ without ever approaching it (Lemma~\ref{le:unifp}), and an upper bound on the number of upcrossing events (Lemma~\ref{le:upcross}). We will use these respectively in
Lemma~\ref{le:probbound1} and Lemma~\ref{le:probbound2}.

\medskip

For $r>0$ let $B_r^{\mathbb{Z}^2}=\{ x\in \mathbb{Z}^2: |x|< r \} $, and let $\partial B^{\mathbb{Z}^2}_r=\{x\in B^{\mathbb{Z}^2}_r: \exists y \in  \mathbb{Z}^2 \setminus B^{\mathbb{Z}^2}_r: |x-y|=1$.
For a simple random walk $S$ on $\mathbb{Z}^2$ (not necessarily started from $0$), let $\zeta_r(S)=\inf\{j: |S(j)|< r \}$ be the first entry time of $S$ inside $B^{\mathbb{Z}^2}_r$.  Let also $\zeta_0(S)$ be the first time $S$ hits $0$.
For $0\leq s<t<\zeta_0(S)$ such that $S(s)=S(t)$, we say $S_{[s,t]}$ is non-contractible if the continuous loop obtain by piecewise-linear interpolation of $S_{[s,t]}$ is non-contractible on $\mathbb{R}^2\setminus \{0\}$.

%

\begin{lemma}
\label{le:unifp}
For $r>2$ and $ x \in \partial B^{\mathbb{Z}^2}_{r}$,
    let \[ p_{x,r}\coloneqq
     \mathbb{P}_x( \exists s,t: s \leq t\leq \zeta_{r/2}(S ):  S_{[s,t]} \text{ is non-contractible}  ) .
    \]
    Then,  $\inf\{ p_{x,r}:
    r> 2, x \in \partial B^{\mathbb{Z}^2}_{r} \}   >0$.
\end{lemma}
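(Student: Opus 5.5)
The plan is to split into large $r$ and bounded $r$. For $r\ge r_0$ (with $r_0$ to be chosen), we use Donsker's invariance principle together with a continuum estimate. For $2<r<r_0$, there are finitely many pairs $(x,r)$ up to the relevant discrete data, and each has positive probability by an explicit path construction.

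\emph{Continuum estimate.} Let $W$ be a planar Brownian motion started from a point $z$ with $|z|=1$. Consider the event $A_z$ that $W$ stays in the annulus $\{3/4 - \eta<|w|<1+\eta\}$ (say with $\eta=1/16$), and that its argument increases by more than $2\pi+1$ before it first exits this annulus. Then $\mathbb{P}(A_z)=p_0>0$ is independent of $z$ by rotation invariance. Positivity follows from the support theorem: some tube around a smooth curve winding once and a bit more inside the annulus has positive probability.

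On a strengthened version of $A_z$ (the path stays in a tube of width $\eta/2$ around such a curve), the path has a winding of at least $2\pi$ around $0$ while staying well inside $\{|w|>5/8\}$. The key step is converting this winding into a non-contractible \emph{loop} $S_{[s,t]}$ with $S(s)=S(t)$ exactly. To do so, we choose the tube around a curve that passes twice through a thin sector near its starting point, crossing the ray $\arg=\arg z$ transversally both times. For the rescaled walk $r^{-1}S(r^2\cdot)$ coupled within distance $\eta/10$ of $W$ (KMT or Donsker, with probability tending to one as $r\to\infty$ uniformly in the starting point on $\partial B_r$), the walk path contains two crossings of a small box in this sector, one after a full turn. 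Lattice paths that cross a box twice do not necessarily intersect. We therefore ask more of the continuum picture: in the second passage, $W$ performs a small loop inside the box that separates the box's two sides. This gives another positive-probability event, and on it the walk's first crossing must be met by the second passage at a lattice vertex, by the planar intersection lemma for lattice paths crossing a quadrilateral in transverse directions. Choosing $s$ at the first crossing and $t$ at the later visit of the same vertex, $S_{[s,t]}$ is a closed lattice loop whose winding number around $0$ is $\pm1$, since the winding is inherited from the continuum up to errors smaller than $\pi$. Everything happens before $\zeta_{r/2}$, because the path stays at modulus $\ge 5r/8$. This yields $p_{x,r}\ge p_0/2$ for $r\ge r_0$.

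\emph{Small $r$.} For $2<r<r_0$ and $x\in\partial B^{\mathbb{Z}^2}_r$, we exhibit a deterministic nearest-neighbour path from $x$ that goes once around the square $\{\|y\|_\infty = \lceil r\rceil\}$ and returns to $x$ (or closes up at a vertex), staying outside $B_{r/2}$. Such a path has length $O(r_0)$, so it has probability at least $4^{-Cr_0}$. There are finitely many such $(x,\lceil r\rceil)$, giving a uniform positive bound. Taking the minimum of the two bounds concludes.

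The main obstacle is the large-$r$ step: forcing an exact self-intersection of the discrete path from a continuum winding event. I would handle it with the "second passage separates the box" trick above, rather than through finer coupling rates.
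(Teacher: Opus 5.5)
Your proof is correct, and its skeleton matches the paper's. For $r\ge r_0$ you couple the rescaled walk with a Brownian motion and use a positive-probability continuum event, with uniformity in $x$ coming from rotation invariance. For $2<r<r_0$ you use an explicit lattice path of length $O(r_0)$; the paper instead reduces to finitely many $x$ via monotonicity of $p_{x,r}$ in $r$, and your uniform bound $4^{-Cr_0}$ does the same job.

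The genuine difference is in the step you flag as the main obstacle. The paper chooses an event $E$ on which $W$ itself has a non-contractible sub-loop. It then asserts that every curve $\ell$ with $\|\ell-W\|_{\infty,[0,1]}\le 1/4$ also contains one. For general curves this is false. Suppose $W$ closely follows a curve $c$ that circles once near radius $0.9$ with a transverse self-crossing and then moves inwards, which is compatible with $E$. Then $\ell(u)=\rho(u)e^{i\arg c(u)}$, with $\rho$ strictly decreasing and $|\rho-|c||\le 0.1$, stays within $1/4$ of $W$ but is injective, since $|\ell|=\rho$. Self-avoiding lattice spirals show the same phenomenon for walk paths. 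So some extra mechanism forcing an exact self-intersection of the walk really is needed, and your transverse-crossing event supplies it. Suppose the rescaled walk stays in a thin tube around a deterministic curve that crosses a box tangentially, makes a full turn, and then crosses the same box radially. Then the two sub-paths must meet, and they meet at a common lattice vertex, because unit edges of $\mathbb{Z}^2$ intersect only at vertices. The resulting closed loop has integer winding number within a small error of $\pm1$, hence exactly $\pm1$.

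A few points to tidy:
\begin{itemize}
\item A closed loop lying strictly inside the box separates nothing. State the second passage as a crossing from the outer to the inner radial side of the box, with margins larger than the tube width plus the coupling error.
\item Place the box at positive angular distance from the starting point, so that the first passage is a genuine tangential crossing.
\item The final constant should be the probability of the strengthened tube event, not $p_0=\mathbb{P}(A_z)$.
\end{itemize}
With these fixes your route gives a complete proof. The paper's argument is shorter but rests on an unjustified robustness claim.
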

\begin{proof}
    Let $W$ be a planar Brownian motion started from $0$ with covariance matrix $I/2$. Let $E$ be the event
\[ \begin{array}{c} \zeta_{3/4}(W )\leq 1 \text{ and there exists $s\leq t\leq \zeta_{3/4}(W)$ such that $W_{[s,t]}$ is non-contractible on $\mathbb{R}^2\setminus\{0\}$, }\\ \text{$|W(s)-1|> 1/4$, and $|W(s)-W( \zeta_{3/4}(W )  )|>1/4 $,}
\end{array}
\]
    and $p_0=\mathbb{P}(E)>0$. On the event $E$, for any curve $\ell$ such that $\| \ell-W \|_{\infty,[0,1]}\leq 1/4$, their exist $s'\leq t' \leq  \zeta_{1/2}(\ell) $ such that $\ell(s')=\ell(t')$ and $\ell_{[s',t']}$ is non-contractible  on $\mathbb{R}^2\setminus\{0\}$.

    Extend $S$ to $[0, \zeta_0(S))$ by piecewise linear interpolation, and for a given $r$ and $x=|x| e^{i \phi}\in\partial B^{\mathbb{Z}^2}_{r} $, define $\tilde{S}:t\mapsto e^{-i \phi} S( 2 r^2 t )   /r$. As $r\to \infty$, $\tilde{S}$ converges locally uniformly in distribution toward $W$. Let $r_0$ be large enough so that for all $r\geq r_0$, there exists a coupling of $W$ with $S$ such that the event  $F:\{\|W-\tilde{S}\|_{\infty,[0,1]}\leq 1/4\}$ has probability at least $1-p_0/2$. Thus,
    \[\inf\{ p_{x,r}: r\in [r_0, \infty), x\in \partial B^{\mathbb{Z}^2}_{r}  \} \geq\mathbb{P}(E \cap F)
\geq p_0/2>0.
\]

  On the other hand,
  for all $x\in \partial B^n_{r}$ and $r>2$, it holds $r\leq |x|+1$ and $|x|>1$.
Since $p_{x,r}$ is decreasing in the $r$ variable,
\[\inf\{ p_{x,r}: r\in (2,r_0], x\in \partial B^n_{r}  \}
\geq \min\{ p_{x,|x|+1}: x\in \mathbb{Z}^2, |x|>1   \}.
\]
For any
$x\in  \mathbb{Z}^2$ with $|x|>1$, there exists at least one deterministic path $\eta$ from $x$ and $s<t<\xi_{(|x|+1)/2}(\eta)$ such that $\eta_{[s,t]}$ is non contractible, so that $p_{x,|x|+1}>0$, which concludes the proof.
\end{proof}

In the following, for $W$ either discrete or continuous, and for $0<a<b$, we define recursively the $(a,b)$-\emph{upcrossing times} $\tau_i=\tau_i(a,b,W)$ and $\sigma_i=\sigma_i(a,b,W)$:
	\[
\tau_0=\sigma_0=0,
\tau_{i+1}\coloneqq \inf\{ j\geq \sigma_i: |W(j)|\leq a   \},
\sigma_{i+1}\coloneqq \inf\{ j\geq \tau_{i+1}: |W(j)|\geq b \},
\]
and we then define  the number of $(a,b)$-upcrossing of $W$ before the time $T$
\[N_{a,b,T}(W)\coloneqq \max\{ i : \tau_i(a,b)<T \}.\]
For $r>0$, we also define the exit time $\xi_r = \inf\{ j: |W(j)|\geq r \}$.
\begin{lemma}
    \label{le:upcross}
    Let $S$ be a simple random walk on $\mathbb{Z}^2$.	Then, there exists $C,C'$ such that for all $k,n\geq 2$ with $ C\log(n)\leq k\leq n/2$, for all $x\in \mathbb{Z}^2$,
    \[
    \mathbb{E}_x[ N_{k,2k,\xi_n}(S)   ] \leq C' \log_2( n/k).
    \]
\end{lemma}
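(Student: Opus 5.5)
The plan is to write the number of upcrossings as a sum of geometric-type contributions and to bound each return probability using the logarithmic behaviour of the potential kernel of the planar walk. By the strong Markov property at the times $\sigma_i$ (when $|S|\ge 2k$), we get
\[
\mathbb{E}_x[N_{k,2k,\xi_n}]\le 1+\sum_{i\ge 1}\mathbb{P}_x(\tau_{i+1}<\xi_n)\le 1+\sum_{i\ge1}\Big(\sup_{y:\,2k\le |y|< 2k+1}\mathbb{P}_y(\zeta_{k}<\xi_n)\Big)^{i}.
\]
Here we use that after $\sigma_i$ the walk sits at a point $y$ with $2k\le|y|<2k+1$, that $\tau_{i+1}$ is the next entry into $\{|z|\le k\}$, and that each upcrossing, once started, ends before $\xi_n$ or the count stops. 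So it suffices to show
\[
q\coloneqq\sup_{2k\le|y|<2k+1}\mathbb{P}_y(\text{hit } \{|z|\le k\}\text{ before }\{|z|\ge n\})\le 1-\frac{c}{\log_2(n/k)},
\]
which gives $\mathbb{E}_x[N]\le 1+q/(1-q)\le C'\log_2(n/k)$, using $\log_2(n/k)\ge1$.

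To bound $q$, I would use the standard annulus hitting estimate via the potential kernel $a(z)=\frac{2}{\pi}\log|z|+\kappa+O(|z|^{-2})$, for example from \cite{LawlerLimic}. Since $a(S_{j\wedge T})$ is a martingale for the walk started away from $0$, we get, with $T=\zeta_k\wedge\xi_n$,
\[
\mathbb{P}_y(\zeta_k<\xi_n)=\frac{\mathbb{E}_y[a(S_{\xi_n})]-a(y)}{\mathbb{E}_y[a(S_{\xi_n})]-\mathbb{E}_y[a(S_{\zeta_k})\mid\cdot]}\approx\frac{\log n-\log(2k)+O(1/k)}{\log n-\log k+O(1/k)}.
\]
Both overshoots are at most one lattice step, so they only change the logarithms by $O(1/k)$. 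The numerator and denominator differ by $\log 2+O(1/k)$, so $1-q\ge(\log2-O(1/k))/(\log(n/k)+O(1/k))\ge c/\log_2(n/k)$ once $k$ exceeds a fixed constant. This also explains the lower bound $k\ge C\log n$ in the statement: it makes all error terms (lattice overshoot, the $O(|z|^{-2})$ correction) negligible uniformly. Any larger lower bound on $k$ would suffice for this argument, and $C\log n\ge$ some absolute constant once $n\ge2$ and $C$ is large.

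The main obstacle is getting the error bookkeeping right: one has to check that the overshoot errors at the inner radius $k$ and outer radius $n$ do not eat the gap of $\log 2$ between $\log(2k)$ and $\log k$. I would handle this by taking $C$ large, so that $O(1/k)\le \frac{1}{10}\log 2$. Since $a$ is monotone in $|z|$ up to $O(|z|^{-2})$, replacing the exact exit values by $\log k\pm O(1/k)$ and $\log n\pm O(1/n)$ is legitimate. The starting point $x$ plays no role beyond the first term, which is why the bound is uniform in $x$.
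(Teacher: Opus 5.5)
Your proposal is correct and follows the same route as the paper: a geometric bound on $N_{k,2k,\xi_n}$ obtained from the strong Markov property at the times $\sigma_i$, with the escape probability from the circle of radius $2k$ to radius $n$, before returning to radius $k$, bounded below by $c/\log(n/k)$. The only difference is that the paper cites Lemma~6.6.7 of \cite{LawlerLimic} for this annulus escape estimate, whereas you derive it directly from the potential-kernel martingale, which is essentially how that lemma is proved.
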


\begin{proof} 
This can be proved either using a strong (KMT) coupling or directly. For instance, by Lemma~6.6.7 in \cite{LawlerLimic}, there exists $c>0$ such that if $\rho_k = \inf \{j\ge 0:| W(j) |\le k \}$ then we have 
$$
\P_x ( \xi_n < \rho_k) \ge p_{k,n}:=c (\log (k/n))^{-1}
$$
provided that $n\ge 2k$, uniformly over points $x\in \Z^2$ which are adjacent to $B(0,2k)$. Applying the strong Markov property iteratively, we deduce that 
$\mathbb{P}_x[ N_{k,2k,\xi_n}(S) \ge m  ] \leq C (1-p_{k,n})^m,$ and thus $ \mathbb{E}_x[ N_{k,2k,\xi_n}(S)]\le p_{k,n}^{-1},$ as desired.
\end{proof}

The next lemma shows that a simple random walk is polynomially unlikely to visit a point close to the boundary (thus this applies also to a loop-erased random walk).

\begin{lemma}
    \label{le:probbound1}
    There exist $C<\infty$ and $q>0$ such that for all positive integers $n,k$, for all $x\in n D$,
    \[ \mathbb{P}( d(x,S) \leq k)\leq C  d(x,0)^{-q} \max(d(x,n \partial  D), k)^q,
    \]
    where $S$ is the simple random walk on $\mathbb{Z}^2$ started from $0$ and stopped when it exits $n D$.
\end{lemma}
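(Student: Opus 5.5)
We will prove the bound by making the walk wind around $x$ repeatedly, using Lemma~\ref{le:unifp} at successive dyadic scales. Set $r_0\coloneqq \max(d(x,n\partial D),k)$ and $R\coloneqq d(x,0)$. We may assume $R\geq 8 r_0$; otherwise the right-hand side is at least $C 8^{-q}$, which is at least $1$ once $C$ is large. Pick a point $y\in\mathbb{R}^2\setminus nD$ with $|y-x|=d(x,n\partial D)\leq r_0$.

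Consider the annuli centred at $x$ with radii $2^j r_0$, for $j=1,\dots,J$, where $J\asymp\log_2(R/r_0)$ is chosen so that $2^{J+1}r_0<R$. The walk starts at $0$, outside all these annuli. To reach $B_k(x)\subseteq B_{r_0}(x)$ it must therefore enter each ball $B_{2^j r_0}(x)$ in turn, from the outside inward. Let $\zeta^{(j)}$ be the first entry time into $B_{2^j r_0}(x)$. At time $\zeta^{(j)}$ the walk sits (up to a lattice error) at a point of $\partial B_{2^j r_0}(x)$.

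By Lemma~\ref{le:unifp}, applied after translating by $x$ with $r=2^jr_0$, there is a probability at least $p>0$, uniform in $j$, $n$ and $x$, that the walk makes a loop non-contractible around $x$ before entering $B_{2^{j-1}r_0}(x)$. Such a loop lies in $\mathbb{R}^2\setminus B_{2^{j-1}r_0}(x)$ and is contained in the ball $B_{2^{j+1}r_0}(x)$, or at least in a bounded region around it. A closed curve winding around $x$ at distance at least $2^{j-1}r_0\geq r_0\geq |y-x|$ must separate $x$ from $\infty$ while enclosing the point $y$. The curve lies outside $B_{2^{j-1}r_0}(x)$, so its winding makes it intersect every path from $y$ to infinity. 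Since $nD$ is simply connected, its complement is connected and unbounded; the component of $\mathbb{R}^2\setminus nD$ containing $y$ therefore reaches infinity, and the loop must meet $\mathbb{R}^2\setminus nD$. Consequently the walk, stopped on exiting $nD$, has already been killed during this winding, before reaching the next scale.

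By the strong Markov property at the successive times $\zeta^{(J)},\zeta^{(J-1)},\dots$, the probability of surviving down to $B_k(x)$ is at most
\[(1-p)^{J}\leq C (r_0/R)^{q},\qquad q\coloneqq -\log_2(1-p).\]
This is the claimed bound.

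The main obstacle is making the topological step rigorous. We must check that a non-contractible loop in $\mathbb{R}^2\setminus\{x\}$ that avoids $B_{2^{j-1}r_0}(x)$ separates $y$ from $\infty$, which requires $y$ to lie inside the bounded complementary component containing $x$. This holds because $B_{r_0}(x)\ni y$ is connected and disjoint from the loop. The loop then meets the unbounded connected set $\mathbb{R}^2\setminus nD$. We also need to handle the lattice discretisation, and the edge-exit convention, in the statement ``$S$ exits before completing the loop'': a discrete loop crossing $\mathbb{R}^2\setminus nD$ in its linear interpolation has a lattice step leaving $nD$. Uniformity of $p$ across scales is exactly what Lemma~\ref{le:unifp} supplies.
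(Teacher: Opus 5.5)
Your proposal is correct and is essentially the paper's own argument. It uses dyadic annuli around $x$ between scale $\max(d(x,n\partial D),k)$ and $d(x,0)$, a scale-uniform probability of a non-contractible loop from Lemma~\ref{le:unifp}, the simple-connectivity argument showing such a loop must leave $nD$, and the strong Markov property to turn the per-scale failure probabilities into a power of $\max(d(x,n\partial D),k)/d(x,0)$. The only difference is cosmetic: you merge the paper's two cases ($k<d(x,n\partial D)$ and $k\geq d(x,n\partial D)$) by working directly with $r_0=\max(d(x,n\partial D),k)$. One small correction: for unbounded $D$ the set $\mathbb{R}^2\setminus nD$ need not be connected, but each of its components is unbounded, and that is all your argument uses.
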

\begin{proof}
    Up to replacing $C$ with $\max(C,1 )$, we can assume $d(x,n \partial D)<d(x,0)$ and $k<d(x,0)$. We treat separately the cases $k< d(x,n \partial D)$ and $k\geq d(x,n \partial D)$.

    Let \[j_0\coloneqq \lfloor \ln_2(d(x,0)/d(x,\partial D))\rfloor,\] i.e. the maximal integer such that $2^{j_0}d(x,\partial D) \leq d(x,0) $, which is non-negative. For $j\in \{1,\dots, j_0-1\}$, consider the concentric disks \[ \mathbb{D}_j\coloneqq \{ y\in \mathbb{Z}^2 : |y-x|\in [  2^{j_0-j-1}d(x,n \partial D),  2^{j_0-j} d(x,n \partial D) )  \}, \]
    and let $\tau'_j$ the first time when $S$ enters $\mathbb{D}_j$, which form an increasing sequence of stopping times.

    Consider the event $E_j\subset\{\tau'_j<\infty\}$ that there exists $s,t$ with $\tau'_j\leq s <t\leq \tau'_{j+1}$ such that $S_{[s,t]}$ is non-contractible around $x$. By Lemma~\ref{le:unifp}, the probability that this event happens is bounded below by a positive $p$ which depends on no parameter. Furthermore, for this event to happen, $S$ must exits $nD$ : let us proceed by contradiction and assume the opposite (the reader may convince themselves by drawing that we can always find a path which remains outside of $D$ and connects the two boundary components of $\mathbb{D}_j$).  Let $D'$ be the complementary of the unbounded path-connected component delimited by $S_{[s,t]}$. Since $nD$ is simply connected and contains $S_{[s,t]}$, $D'\subset nD$. Since $S_{[s,t]}$ is non-contractible on $\mathbb{D}_j$, $D'$ contains the closed ball $\bar{B}$ centred at $x$ with radius $r=2^{j_0-j-1}d(x,n\partial D)\geq d(x,n \partial D)$. By compactness, there exists $z \in n \partial D$ such that $d(x,z)=d(x,n \partial D)$. In particular $z\notin n D$, but $z\in \bar{B}\subseteq D'\subset nD$, hence the contradiction.

    Thus, for $k< d(x,n \partial D)$, setting  $q= \ln_2 (p^{-1})$ and $C=p^{-2}$, we get
    \begin{align*}
    \mathbb{P}\big( d(x, S)\leq  k\big)&\leq
    \mathbb{P}\big( d(x,S)\leq  d(x,n \partial D)\big)
    \leq \mathbb{P} \big( \forall j\in \{1,\dots, j_0-1\} , (E_j)^c \big)\\
    &\leq \prod_{j=1}^{j_0-1} \mathbb{P} \big(  (E_j)^c\big|  \tau'_j<\infty \big) \leq p^{j_0-1}
    \leq C d(x,n \partial D)^q  d(x, 0)^{-q}.
    \end{align*}

    In the case $k\geq d(x,n \partial D)$, consider instead
    $j_1\coloneqq \lfloor \ln_2(d(x,0)/k )\rfloor$.
    Since $k<d(x,0)$ and $k>d(x,n \partial D)$, it holds $j_0\in\{0,\dots, j_1\}$.
    Then
    \begin{align*}
    \mathbb{P}\big( d(x,S)\leq  k\big)
    & \leq \mathbb{P} \big( \forall j\in \{1,\dots, j_1-1\} , (E_j)^c \big)
    \leq \prod_{j=1}^{j_1-1} \mathbb{P} \big(  (E_j)^c\big|  \tau_j<\infty \big) \\ &\leq p^{j_1-1}
    \leq C k^q  d(x, 0)^{-q},
    \end{align*}
    which concludes the proof.
\end{proof}

We now show that a loop-erased random walk is polynomially unlikely to visit a point in the interior of a domain. Here, bounding crudely the range of the LERW by the range of the walk is not sufficient (as the random walk visits all but a \emph{logarithmic} fraction of points). Instead, we must rely on the fact that random walks create loops in annuli at all scales which erase previous portions of the path. The result below could be obtained by applying Proposition 4.11 in \cite{BLR}, but we include a proof for completeness.

\begin{lemma}
    \label{le:probbound2}
    There exist $C',C''<\infty$ such that for any positive integers $k,n\geq 2$ with $C'\log(n)\leq k\leq n/2$, for all $x\in n D\setminus\{0\}$,
    \[
    \mathbb{P}( d(x,\gamma^{nD})\leq k  )\leq C'' \log_2(n/k) \, k^\frac{1}{2}\,  d(x, n \partial D \cup \{0\})^{-\frac{1}{2}} .\]
\end{lemma}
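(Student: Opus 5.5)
Write $R\coloneqq d(x,n\partial D\cup\{0\})$. We may assume $k\leq R/4$, since otherwise the bound is trivial once $C''\geq 2$. The plan is to use the standard fact that the loop-erasure of $S$ only keeps points visited after the last ``erasing loop'', combined with the upcrossing count of Lemma~\ref{le:upcross} and the non-contractible-loop estimate of Lemma~\ref{le:unifp}, both recentred at $x$.

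\textbf{Step 1: decomposition by visits.} Let $S$ be the simple random walk from $0$ stopped at its exit time $\xi$ from $nD$, so that $\gamma^{nD}=LE(S)$. If $d(x,\gamma^{nD})\leq k$, then $S$ visits $B_k(x)$. Decompose the path into its $(2k,4k)$-upcrossings around $x$, recentred at $x$. These are the excursions from $\partial B_{2k}(x)$ to $\partial B_{4k}(x)$. The loop-erasure can only come within distance $k$ of $x$ through a point visited during one of these excursions. Since $R\leq n\cdot\mathrm{diam}$, Lemma~\ref{le:upcross} with $k$ replaced by $2k$ and $n$ replaced by a multiple of $n$ bounds the expected number of such excursions by $C'\log_2(n/k)$. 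This produces the logarithmic prefactor. The exit from $nD$ is before the exit from a ball of radius $O(n)$ around $x$, and the condition $k\geq C'\log n$ is exactly what that lemma needs.

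\textbf{Step 2: each excursion is erased with high probability.} Fix the $i$-th such excursion, with endpoint $S(\sigma_i)\in\partial B_{4k}(x)$. A point $y$ visited before $\sigma_i$ survives in $LE(S)$ only if the future path $S[\sigma_i,\xi]$ never returns to $y$. A sufficient condition for erasure is that, after $\sigma_i$ and before exiting $nD$, the walk performs a loop that is non-contractible in an annulus $A_j=\{2^{j}k\leq|y-x|<2^{j+1}k\}$ centred at $x$. Such a loop disconnects $B_{4k}(x)$ from $\partial(nD)$ and from $0$, so the walk must cross the earlier portion of the path and hence re-enter... Strictly, one needs the chronological structure: the loop surrounding $x$ together with the need to reach $\partial(nD)$ forces an intersection with the portion of the LERW built so far near $x$, and this erases it.

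So the plan is to show:
\[
\mathbb{P}\bigl(\text{no surrounding loop in any } A_j,\ j=2,\dots,\lfloor\log_2(R/k)\rfloor-1\bigr)\leq p^{\log_2(R/k)-2}.
\]
This follows by the strong Markov property and Lemma~\ref{le:unifp} applied successively at the entrance times of the annuli, exactly as in the proof of Lemma~\ref{le:probbound1}. The annuli lie inside $nD$ and avoid $0$ because $2^{j+1}k<R$. The walk, starting at $\partial B_{4k}(x)$, must cross all of these annuli outward to reach $\partial(nD)$. By time reversal one should phrase the argument via the walk started near $x$ heading outward, so Lemma~\ref{le:unifp} needs a version for the outward direction (start at $\partial B_r$, loop before exiting $B_{2r}$). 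I would prove this by the same invariance-principle argument.

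\textbf{Step 3: combine.} Summing over excursions gives
\[
\mathbb{P}\bigl(d(x,\gamma^{nD})\leq k\bigr)\leq C'\log_2(n/k)\,(k/R)^{q}
\]
with $q=\log_2(1/p)$. To reach the specific exponent $\tfrac12$, one would either need $q\geq\tfrac12$ or replace Step 2 by a sharper estimate. The sharper route is a Beurling-type estimate: the probability that the walk from $\partial B_{4k}(x)$ reaches distance $R$ without hitting a fixed connected set joining $B_{4k}(x)$ to distance $R$ is $O((k/R)^{1/2})$. Here the connected set is the past path from $0$, which connects $0$ to $x$'s vicinity. If the future avoids the past, the excursion survives; otherwise it is erased. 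This discrete Beurling estimate (Lawler--Limic) gives exactly $k^{1/2}R^{-1/2}$, so I would use it in place of Lemma~\ref{le:unifp}.

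The main obstacle is making rigorous the implication ``future hits the past near $x$ $\Rightarrow$ the excursion's points are erased''. A hit can erase only part of the past; it must occur after, or at the right place relative to, the excursion. I would handle this by considering the last excursion, which is the only one that can survive, together with a reversibility argument for LERW.
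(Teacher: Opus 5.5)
Your final architecture matches the paper's: a union bound over the $(2k,4k)$-upcrossings around $x$, whose expected number is controlled by Lemma~\ref{le:upcross}, followed by the discrete Beurling estimate for the walk restarted at the end of an upcrossing. You are also right that the annulus argument of Step~2 only gives exponent $q=\log_2(1/p)$ and must be discarded. The genuine gap is the step you yourself flag as the main obstacle, and the sketch you give for it is wrong in three ways.
\begin{itemize}
\item The set the future must avoid is not the past path $S[0,\sigma_m]$. A point of $\gamma^{nD}$ near $x$ survives even if the future hits portions of the past that were already erased. So survival does not imply avoidance of the past path, and for the same reason ``future hits the past $\Rightarrow$ erased'' is false.
\item It is not true that only the last excursion can survive. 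Points of $\gamma^{nD}$ near $x$ can have their last visits in several different upcrossings, which is exactly why one sums over all of them. Reversibility of LERW plays no role here.
\item Even the full loop-erasure $LE(S[0,\sigma_m])$ is the wrong set. Its terminal portion, which comes from the walk after the relevant visit, may well be hit by the future.
\end{itemize}

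The missing idea is a last-visit argument combined with a truncation at a stopping time.
\begin{itemize}
\item On $\{d(x,\gamma)\le k\}$, let $j_0\le\tau$ be the maximal time such that $S_{j_0}$ is a vertex of $\gamma$ lying in $\bar B_k(x)$. By maximality $S_{j_0}$ is never revisited, and the chronological identity $LE(P\oplus P')=LE(LE(P)\oplus P')$ then gives $S[j_0+1,\tau]\cap LE(S[0,j_0])=\emptyset$. Indeed, a first hit of $LE(S[0,j_0])$ after $j_0$ would erase $S_{j_0}$ for good, contradicting $S_{j_0}\in\gamma$.
\item Since $j_0$ is not a stopping time, let $m_0$ be the upcrossing index with $\tau_{m_0}\le j_0<\sigma_{m_0}$. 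Then $LE(S[0,\sigma_{m_0}])$ begins with $LE(S[0,j_0])$, which ends in $\bar B_k(x)$.
\item Let $\pi(LE(S[0,\sigma_{m_0}]))$ be the initial segment of this loop-erasure stopped at its first entrance into $\bar B_k(x)$. It is $\mathcal{F}_{\sigma_{m_0}}$-measurable, joins $0$ to $\bar B_k(x)$, and lies inside $LE(S[0,j_0])$. Hence $S[\sigma_{m_0}+1,\tau]$ misses it.
\item This gives $\{d(x,\gamma)\le k\}\subseteq\bigcup_m\{\sigma_m<\tau,\ \pi(LE(S[0,\sigma_m]))\cap S[\sigma_m+1,\tau]=\emptyset\}$.
\item Apply the strong Markov property at each $\sigma_m$ together with Beurling: the walk starts within $O(k)$ of a connected set reaching distance at least $R$ from $x$, and must travel a distance of order $R$ to leave $nD$. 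This yields the bound (expected number of upcrossings)$\cdot c\sqrt{k/R}$, i.e.\ the claimed estimate.
\end{itemize}
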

\begin{proof}
    Up to replacing $C''$ with $\max(C'',2^{3/2} )$, we can assume $4 k+4<d(x, n\partial D \cup \{0\})$.

    Let $S$ be a simple random walk on $\mathbb{Z}^2$ started from $0$, $\tau$ be its first exit time from $nD$, and let $\gamma$ be the loop-erasure of $S_{[0,\tau]}$, which is distributed as $\gamma^{nD}$.

    On the event $\{d(x,\gamma )\leq k\}$, there exists at least one couple $(j_0,s)$ such that $j_0\leq \tau $, $S_{j_0}=\gamma_s$, and $|S_{j_0}-x|\leq k$ (take e.g. $s$ such that $d(x,\gamma)=d(x,\gamma_s)$). Among the possible choices for $j_0$, let us choose the maximal one.

    We claim that $S_{[j_0+1,\tau]} \cap LE(S_{[0,j_0]})=\emptyset$.
    We set $s''$ the length of $LE(S_{[0,j_0]})$
    and we proceed by contradiction, assuming there exists $j_1\in [j_0+1,\tau]$, and $s'\in [0,s'']$ such that $S_{j_1}=  LE(S_{[0,j_0]})_{s'}$, and let $j_1$ be minimal with this property.
    Then,
    \[LE(S_{[0,j_0]})_{s'}=S(j_1)\neq
    S_{j_0}=LE(S_{[0,j_0]})_{s''}
    \]
The first equality is by definition of $(j_1,s')$, the inequality is by maximality of $j_0$ (recall $j_1>j_0$), and the last equality is due to the fact that loop-erasing a curve preserves its endpoints.
In particular, $s'\neq s''$, hence $s'<s''$. Since
$LE(S_{[0,j_0]})$ is injective and $ LE(S_{[0,j_0]})_{s''}=\gamma_s$, we deduce that $\gamma_s\notin LE(S_{[0,j_0]})_{[0,s']}=LE(S_{[0,j_1]})$. Furthermore, the maximality of $j_0$ ensures that there exists no $j\geq j_1$ such that $S_j=\gamma_s$. Recall loop-erasure is chronological: for arbitrary $\mathsf{P}, \mathsf{P}'$ such that $\mathsf{P}\oplus \mathsf{P}'$ is well-defined, $LE(\mathsf{P}\oplus \mathsf{P}')= LE( LE(\mathsf{P})\oplus \mathsf{P}')$.
Thus,
\begin{align*}
\gamma_s\notin LE(S_{[0,j_1]}) \cup S_{[j_1,\tau]}
=\Range(  LE(S_{[0,j_1]}) \otimes S_{[j_1,\tau]})
&\supseteq
\Range( LE(  LE(S_{[0,j_1]}) \otimes S_{[j_1,\tau]}) )\\
&=\Range(LE(S_{[0,\tau]})) =\Range(\gamma),
\end{align*}
hence the contradiction.

\smallskip

    \begin{figure}[ht]
        \begin{center}
        \includegraphics{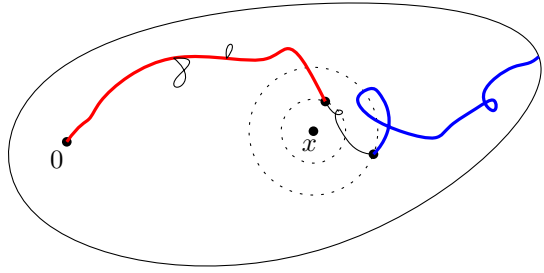}
        \end{center}
        \caption{The fatter parts of the trajectory ($ LE(S)_{[0,s]}$ in red, $S_{[\sigma_{M_0}, \tau ]} $ in blue) do not intersect each other.}
    \end{figure}

    Remark now that $j_0$ is not a stopping time for $S$, so we cannot apply the Markov property at this time, which we now overcome by using upcrossing times.   Let $\tau_i$, $\sigma_i$ and $N_{k,2k,\tau^{D_n}}$ be the $(k,2k)$-upcrossing times and number of upcrossing times of $S-x$.
    Since $2 k\leq  d(x,n \partial D)$, we have the equivalence $(\tau_i<\tau\iff \sigma_i<\tau) $.

Let $m_0$ be the unique random integer such that $\tau_{m_0}\leq j_0<\tau_{m_0+1}$. Since $|S_{j_0}-x|\leq k$, we have $\tau_{m_0}\leq j_0<\sigma_{m_0}<\tau_{m_0+1}$ (if we had $\sigma_{m_0}\leq j_0$, there would be an upcrossing in between $\tau_{m_0}$ and $\sigma_{m_0}$, followed by a downcrossing in between $\sigma_{m_0}$ and $j_0$, hence we would have $\tau_{m_0+1}\leq j_0$). Furthermore, on the event
$\{d(x,\gamma )\leq k\}$, $\tau_{m_0}\leq j_0<\tau$, hence $\sigma_{m_0}<\tau$.

  Since  $LE(S_{[0,j_0]})\cap S_{[j_0+1,\tau]} =\emptyset$,
  $LE(S_{[0,j]} )_{[0,j_0]}=LE(S_{[0,j_0]})$
  for all $j\geq j_0$.
  For a simple curve $\eta$, let $\pi(\eta)$ be the restriction of $\eta$ up to the first time $j$ such that $|\eta_j-x|\leq k$.
    Thus,
  \begin{align*}
j_0\leq \tau&\implies \sigma_{m_0}<\tau,  LE(S_{[0,\sigma_{m_0} ]})_{[0,\tau_{m_0}]}\cap S_{[\sigma_{m_0}+1,\tau]} =\emptyset\\
&\implies \exists m\in \mathbb{N}: \sigma_m<\tau,
   LE(S_{[0,\sigma_{m} ]})_{[0,\tau_{m}]}\cap S_{[\sigma_{m}+1,\tau]} =\emptyset\\
&\implies    \exists m\in \mathbb{N}: \sigma_m<\tau,
   \pi(LE(S_{[0,\sigma_{m} ]})) \cap S_{[\sigma_{m}+1,\tau]} =\emptyset.
  \end{align*}
 For any given $m$, we can now apply the Markov property at the stopping time $\sigma_m$:
    \begin{align*}
    \mathbb{P}(   d(x,\gamma)\leq k  )    &\leq \sum_{m=0}^\infty \mathbb{P}( \pi(LE(S_{[0,\sigma_{m} ]}))\cap S_{[\sigma_{m}+1,\tau]} =\emptyset )\\
    &\leq \sum_{m=0}^\infty \sum_{ \eta } \sup_y \mathbb{P}( \sigma_m\leq  \infty,\  \pi(LE(S_{[0,\sigma_{m} ]}) =\eta)\ \mathbb{P}_{y}(  S'( [1, \tau ] )\cap  \eta =\emptyset ) \\
    &\leq \Big(\sum_{m=0}^\infty \mathbb{P}( \sigma_m\leq  \infty)\Big)  \sup_{y,\eta} \mathbb{P}_{y}(  S'_{[1, \tau' ]} )\cap  \eta =\emptyset )\\
    &=	\mathbb{E}[N_{k,2k,\tau} ] \sup_{y,\eta} \mathbb{P}_{y}(  S'_{[0, \tau' ]} )\cap \eta =\emptyset  )	,
    \end{align*}
    where $\eta$ varies over the set of simple paths from $0$ on $\mathbb{Z}^2$, stopped at
    the first time $j$ such that $|\eta_j-x|\leq k$, $y$ varies over $\{ y\in n D: |y-x|\in [2k,2k +1)  \}$, $S'$ is a random walk started from $y$ under $\mathbb{P}_y$, and $\tau'$ is its exit time from $nD$.
    In these sets, the distance between $y$ and $\eta$ is smaller than $3k$, and the exit time $\tau'$ is bigger than
    \[
    \xi\coloneqq \inf\{j: |S'(j)-y|\geq  d(x,n \partial D \cup \{0\})- 2k-1    \},
    \]
    and it follows from the discrete Beurling estimate \cite[Theorem 6.8.1]{LawlerLimic} that there exists a universal constant $c>0$ such that
    \[
    \mathbb{P}_y(  S'_{[0, \tau' ]}\cap  \eta =\emptyset  )\leq c \sqrt{\frac{k  }{ d(x,n \partial D \cup \{0\})- 2k -1}}\leq \frac{c}{2} \sqrt{\frac{k  }{d(x,n \partial D \cup \{0\}) }}.
    \]
    Since $k\in [C\log(n), n/2]$, it follows from Lemma~\ref{le:upcross} and monotonicity of $t\mapsto N_{k,2k,t}$ that $\mathbb{E}[N_{k,2k,\tau} ]\leq C'\log(n/k)$ for a constant $C'$ which only depends  on the diameter of $D$. Thus,
    \[
    \mathbb{P}( d(x,\gamma)\leq k)\leq \frac{cC'}{2} \log(n/k) \sqrt{\frac{k}{d(x,n \partial D \cup \{0\}) }  }. \qedhere
    \]
\end{proof}

We can combine both previous estimates in a single inequality, as follows. 

\begin{proposition}
    \label{prop:probbound}
    There exist $C_0,C'<\infty$ and $\nu\in(0,1/2)$ such that for any integers $n\geq 2$ and $k\geq C'  \log(n)$, for all $x\in n D\setminus\{0\}$,
    \[
    \mathbb{P}_0( d(x,\gamma^{nD})\leq k )\leq C_0 d(x, 0)^{-\nu}  k^\nu \max(1, \log(n/k )).\]
\end{proposition}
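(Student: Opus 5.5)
The plan is to interpolate between Lemma~\ref{le:probbound1} and Lemma~\ref{le:probbound2}, splitting according to the size of $d(x,n\partial D)$ compared with $d(x,0)$. Write $a\coloneqq d(x,0)$ and $b\coloneqq d(x,n\partial D)$, so that $d(x,n\partial D\cup\{0\})=\min(a,b)$. The loop-erasure $\gamma^{nD}$ is contained in the range of the walk $S$ stopped on exiting $nD$, so $\mathbb{P}(d(x,\gamma^{nD})\le k)\le \mathbb{P}(d(x,S)\le k)$ and Lemma~\ref{le:probbound1} applies to $\gamma^{nD}$. We may assume $k< a$, since otherwise the right-hand side is at least $C_0$ and there is nothing to prove once $C_0\ge 1$. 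Similarly, if $k>n/2$ we use that $a\lesssim n\,\mathrm{diam}(D)$, so $k/a$ is bounded below and the bound is again trivial after enlarging $C_0$.

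\emph{Case $b\ge \sqrt{ka}$, or more generally $b\ge a^{1-\beta}k^{\beta}$ for a parameter $\beta\in(0,1)$ fixed below.} When $b\ge a$, Lemma~\ref{le:probbound2} gives directly $C''\log_2(n/k)(k/a)^{1/2}$, which has the required form with any $\nu\le 1/2$. When $a^{1-\beta}k^\beta\le b<a$, Lemma~\ref{le:probbound2} gives
\[
C''\log_2(n/k)\,(k/b)^{1/2}\le C''\log_2(n/k)\,(k/a)^{(1-\beta)/2}.
\]
Both bounds need $C'\log n\le k\le n/2$, which is where the hypothesis $k\ge C'\log(n)$ enters.

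\emph{Case $b< a^{1-\beta}k^\beta$.} Here Lemma~\ref{le:probbound1} gives
\[
C a^{-q}\max(b,k)^q\le C a^{-q}\,(a^{1-\beta}k^\beta)^q=C (k/a)^{\beta q},
\]
using $k\le a^{1-\beta}k^{\beta}$, which holds because $k\le a$.

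Taking $\beta$ so that $\beta q=(1-\beta)/2$, and setting $\nu\coloneqq\min(\beta q,(1-\beta)/2)$, which may be decreased slightly to lie strictly in $(0,1/2)$, both cases give a bound $C_0 (k/a)^{\nu}\max(1,\log(n/k))$, since $(k/a)^{s}\le (k/a)^{\nu}$ for $s\ge\nu$ when $k\le a$.

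The main obstacle is the bookkeeping of the edge regimes rather than the interpolation itself. Where $k$ is comparable to $n$ or to $a$, Lemma~\ref{le:probbound2}'s constraint $k\le n/2$ fails; these regimes must be absorbed into the constant using $a\le n\,\mathrm{diam}(D)$ and the trivial bound $1$. One must also check that the $\log_2(n/k)$ factor is bounded by $\max(1,\log(n/k))$ up to a constant.
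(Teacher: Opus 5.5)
Your proof is correct and follows the paper's strategy: bound $\gamma^{nD}$ by the walk $S$ and use Lemma~\ref{le:probbound1} near the boundary, use Lemma~\ref{le:probbound2} in the bulk, and absorb the regimes $k\ge d(x,0)$ and $k>n/2$ into $C_0$ via the diameter of $D$. The only difference is cosmetic: in the regime $k\le d(x,n\partial D)\le d(x,0)$ the paper writes the probability as $P^{1-\theta}P^{\theta}$ and bounds the two factors by the two lemmas, with $\theta=2q/(2q+1)$, rather than splitting at your threshold $b=a^{1-\beta}k^{\beta}$; your $\beta=1/(2q+1)$ is the paper's $1-\theta$, so both give the same exponent $\nu=q/(2q+1)$.
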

\begin{proof}
    Up to replacing $C_0$ with $\max(C_0, 2^\nu \operatorname{Diam}(D)^{\nu})$, we can assume $k\leq n/2$.
    Let $C,C',C'',q$ be the constants from Lemma~\ref{le:probbound1} and~\ref{le:probbound2}. Recall that for $k\in [C'\log(n),n/2]$, we get
    \begin{equation}
    \label{eq:probbound1}
    \mathbb{P}( d(x,\gamma^{nD})\leq k )\leq \mathbb{P}( d(x,S) \leq k)\leq C  d(x,0)^{-q} d(x,n \partial D)^q,\end{equation}
    and
    \begin{equation}
    \label{eq:probbound2}
    \mathbb{P}( d(x,\gamma^{nD})\leq k )\leq C'' \log_2(n/k ) k^{\frac{1}{2}} d(x, n \partial D )^{-\frac{1}{2}}.
    \end{equation}
    Let $\theta\coloneqq q/(q+1/2)\in(0,1)$ and $\nu\coloneqq(1-\theta)q=\theta/2<1/2$. We discuss four cases, depending on the relative order between $k$, $ d(x,n \partial D)$, and $d(x,0)$.
    \begin{itemize}
    \item Case $d(x,0)\leq d(x,n \partial D)$. Since $2\nu<1$, using~\eqref{eq:probbound2}, we get
    \begin{align*}
        \mathbb{P}( d(x,\gamma^{nD})\leq k )
    \leq \mathbb{P}( d(x,\gamma^{nD})\leq k )^{2\nu}
    &\leq C''^{2\nu} \log(n/k)^{2\nu} k^\nu d(x, n\partial D )^{-\nu}\\
    &\leq C''^{2\nu} \max(1,\log(n/k)) k^\nu d(x, 0 )^{-\nu}.
    \end{align*}

  \item   Case $ d(x,n \partial D)\leq d(x,0)\leq k$. We simply use
    \[\mathbb{P}( d(x,\gamma^{nD})\leq k )\leq 1\leq  d(x,0)^{-\nu} k^\nu.
    \]

    \item Case $ d(x,n \partial D)\leq k \leq d(x,0)$. Since $\nu=(1-\theta)q\leq q$,\eqref{eq:probbound1} readily gives
    \[	 \mathbb{P}( d(x,\gamma^{nD}) \leq k)\leq C  d(x,0)^{-q} k^q\leq
    C  d(x,0)^{-\nu} k^\nu. \]

    \item Case $k\leq d(x,n \partial D)\leq d(x,0)$.
    We interpolate between~\eqref{eq:probbound1} and~\eqref{eq:probbound2} :
    \begin{align*}
    \mathbb{P}( d(x,\gamma^{nD})\leq k )
    &=\mathbb{P}( d(x,\gamma^{nD})\leq k )^{1-\theta}\mathbb{P}( d(x,\gamma^{nD})\leq k )^{\theta}\\
    &\leq C^{1-\theta} C''^\theta \log_2(n/k)^\theta
    k^{\theta/2} d(x,0)^{-(1-\theta) q} d(x, n \partial D)^{-\theta/2}   d(x,\partial D)^{(1-\theta) q} \\
    &=
    C^{1-\theta} C''^\theta \log_2(n/k)^\theta k^{\nu} d(x,0)^{\nu}<    C^{1-\theta} C''^\theta \log_2(n/k)k^{\nu} d(x,0)^{\nu}.
    \end{align*}
    \end{itemize}
    We conclude by taking $ C_0=\max( 1, C,C''  )$.
\end{proof}

\begin{corollary}
    \label{coro:vol}
    There exists $C',C_1<\infty$ and $\nu>0$ such that for all positive integers $n$ and $k\geq C' \log(n)$,
    \[
\mathbb{E}[\#\{ x\in n D: d(x,\gamma^{nD})\leq k  \}] \leq C_1  k^\nu n^{2-\nu}.
    \]
\end{corollary}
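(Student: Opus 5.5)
The plan is to write the expected cardinality as a sum of probabilities and apply Proposition~\ref{prop:probbound} pointwise. By linearity,
\[
\mathbb{E}[\#\{ x\in n D: d(x,\gamma^{nD})\leq k \}] = \sum_{x\in nD\cap\mathbb{Z}^2} \mathbb{P}_0( d(x,\gamma^{nD})\leq k ).
\]
I will split this sum into the points with $|x|\leq k$ and the points with $|x|>k$.

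For $|x|\leq k$, bound each probability by $1$. This part contributes $O(k^2)$. Since $k\leq \operatorname{Diam}(D)\, n$ can be assumed (otherwise the whole of $nD$, of size $O(n^2)$, is trivially bounded by $C k^\nu n^{2-\nu}$), we get $k^2\leq C k^\nu n^{2-\nu}$.

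For $|x|>k$, take $\nu$ from Proposition~\ref{prop:probbound}, which gives $\mathbb{P}_0( d(x,\gamma^{nD})\leq k )\leq C_0 |x|^{-\nu}k^\nu \max(1,\log(n/k))$. Since $nD\subset B(0,Rn)$ with $R$ depending only on $D$, we have $\sum_{x\in\mathbb{Z}^2,\,|x|\leq Rn} |x|^{-\nu}\leq C n^{2-\nu}$ (compare with $\int_0^{Rn} r^{1-\nu}\d r$, using $\nu<1/2<2$). This part is therefore at most $C k^\nu n^{2-\nu}\max(1,\log(n/k))$.

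The only real obstacle is the logarithmic factor. To remove it, I will apply Proposition~\ref{prop:probbound} with a slightly larger exponent and then trade exponent for logarithm. Concretely, let $\nu_0$ be the exponent given by the proposition and set $\nu\coloneqq \nu_0/2$. The function $u\mapsto u^{\nu_0-\nu}\max(1,\log(1/u))$ is bounded on $(0,1]$, so
\[
k^{\nu_0}n^{2-\nu_0}\max(1,\log(n/k)) = k^{\nu}n^{2-\nu}\,(k/n)^{\nu_0-\nu}\max(1,\log(n/k)) \leq C k^\nu n^{2-\nu}
\]
whenever $k\leq n$. When $k>n$, the factor $\max(1,\log(n/k))$ equals $1$ and the bound is immediate, since $(k/n)^{\nu_0-\nu}$ is then bounded above and the earlier trivial bound covers large $k$ anyway. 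This yields the corollary with this smaller $\nu>0$, and the condition $k\geq C'\log(n)$ is inherited directly from Proposition~\ref{prop:probbound}.
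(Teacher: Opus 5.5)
Your proposal is correct and follows essentially the same route as the paper: sum the pointwise bound of Proposition~\ref{prop:probbound} over $x\in nD$, control $\sum_{x\neq 0}|x|^{-\nu}$ by $C n^{2-\nu}$, and absorb the factor $\log(n/k)$ by passing to a slightly smaller exponent. Your separate treatment of $|x|\le k$ is harmless but unnecessary, since the paper simply applies the proposition to every $x\neq 0$ (where the bound is anyway at least $1$ when $|x|\le k$) and adds $1$ for the point $x=0$.
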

\begin{proof}
  Eventually replacing $C_1$ with $\max(C_1,2^\nu \lambda(D)) $, we can assume $k<n/2$. Let \[ C_\nu\coloneqq   \sup_{n\geq 1} \ \ n^{\nu-2} \hspace{-0.2cm}\sum_{x\in n D\setminus\{0\} } d(x,0)^{-\nu}, \]
  which is finite for all $\nu\in(0,2)$. Indeed, let $K$ be such that $D\subset [-K,K]^2$. Then,
    \begin{align*}
    \sum_{x\in n D\setminus\{0\} } d(x,0)^{-\nu}
    &\leq
    \sum_{x\in n D\setminus\{0\}}  \int_{x+[-1/2,1/2]^2  }    (|z|-\frac{1}{\sqrt{2}} )^{-\nu} \mathrm{d} z\\
    &  \leq \sum_{x\in n D\setminus\{0\}} \int_{x+[-1/2,1/2]^2  }  (\frac{2-\sqrt{2}}{2})^{-\nu} |z|^{-\nu }\mathrm{d} z\quad \text{( $r-\frac{1}{\sqrt{2}} \geq (1-\frac{1}{\sqrt{2}}) r$ for $r\geq 1$)}  \\
    &    \leq (1-\frac{1}{\sqrt{2}})^{-\nu} \int_{ [-n K+1/2,nK+1/2]^2 }|z|^{-\nu }\mathrm{d} z \\
    & = (1-\frac{1}{\sqrt{2}})^{-\nu} n^{2-\nu} \int_{[-K-1/(2n),K+1/(2n)]^2} |u|^{-\nu } \d u.
    \end{align*}
    The last integral is bounded by its value for $n=1$, which is finite for $\nu<2$, and it follows $C_\nu<\infty$.

    Let $\nu\in(0,\frac{1}{2})$ and $C_0$ be given by Proposition ~\ref{prop:probbound}. We get
    \begin{align*}
    \mathbb{E}[\#\{ x\in n D: d(x,\gamma^{nD})\leq k  \}]
    & = \sum_{x \in n D} \mathbb{P}(   d(x,\gamma^{nD})\leq k  )\\
    &\leq 1+
    C_0  \log_2(n/k) k^\nu \sum_{x \in n D\setminus \{0\} }  d(x,0)^{-\nu} \\
    & \leq 1+C_0 \log_2(n/k)  k^\nu n^{2-\nu} C_\nu \\
    & \leq C_1  (k/n)^{\nu'} n^2,
    \end{align*}
    for an arbitrary $\nu'\in (0,\nu)$, and for $C_1$ large enough.
\end{proof}
\begin{remark}
    For $K$ compactly contained in $D$, we can deduce from Lemma~\ref{le:probbound2} the bound
    \[
    n^{-2}\mathbb{E}[\#\{ x\in n K: d(x,\gamma^{nD})\leq k \}] \leq C|\log(k/n)| (k/n)^{\frac{1}{2}}.
    \]
   One should expect the optimal exponent the left-hand side is in fact of order $(k/n)^{\frac{3}{4}+o(1)}$. Proposition~\ref{prop:probbound} gives a worse bound, but allows us to replace $K$ with $D$.
\end{remark}

\begin{proposition}
    \label{prop:vol2}
    There exists $C<\infty$ and $\nu>0$ such that for all positive integers $n$ and $k$,
    \[
    \mathbb{E}\Big[ \#   \{   \mathsf{L}\in \mathbb{L}^{nD}_{\gamma^{nD}}: t_\mathsf{L}=k \}  \Big]
    \leq C n^{2-\nu}   k^{-2}
    \max(\sqrt{k},   \log(n))^\nu .
    \]
    As a consequence, (recalling that $T_{\leq \delta} (\mathcal{X}^n)$ denotes the total time spent by $\mathcal{X}^n$ on loops of duration less than $\delta n^2$), there exists $C<\infty$ such that for all $n\geq 1$ and all $\delta>0$,
    \begin{equation}
    \label{eq:temp:Ebound}
    \mathbb{E}[T_{\leq \delta}(\mathcal{X}^n)]
    %
    \leq C ((\log\log n) (\log n)^\nu n^{-\nu}+ \delta^{\frac{\nu}{2}}).
    \end{equation}
\end{proposition}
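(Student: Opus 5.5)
By independence of $\mathbb{L}^{nD}$ and $\gamma^{nD}$ and the Poisson structure of the random walk loop soup, I will first condition on $\gamma^{nD}$. The walk loop measure gives, for each root vertex $x$ and length $k$, mass $k^{-1}\,\mathbb{P}_x(S_k=x)\lesssim k^{-2}$ to rooted loops of length $k$ at $x$ (using the local CLT $p_k(x,x)\asymp k^{-1}$, and noting that only even $k$ contribute). Integrating over the root with Campbell's formula, I obtain
\[
\mathbb{E}\big[\#\{\mathsf{L}\in\mathbb{L}^{nD}_{\gamma^{nD}}: t_{\mathsf{L}}=k\}\,\big|\,\gamma^{nD}\big]\le \sum_{x\in nD} \frac{C}{k^{2}}\,\mathbb{P}_{k,x,x}\big(\mathsf{L}\cap\gamma^{nD}\neq\emptyset\big).
\]
I then split according to whether $d(x,\gamma^{nD})\le r$ or not, with $r\coloneqq \max(\sqrt{k}\,k^{\eta},C'\log n)$ for a small $\eta$, or more simply $r=\max(\sqrt k,C'\log n)$ with an extra factor absorbed.

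When $d(x,\gamma^{nD})>r$, the bridge must travel distance $r$ in time $k$. By Gaussian (Azuma-type) tails for the maximal displacement of a random walk bridge, this has probability at most $Ce^{-cr^2/k}$. With $r=\sqrt k\,\log(\cdot)$ the resulting contribution is at most $n^2k^{-2}e^{-c\log^2}$, which is negligible. To keep the clean form $\max(\sqrt k,\log n)^\nu$ I will take $r=\max(\sqrt{k}\,s,C'\log n)$, sum dyadically over shells $d(x,\gamma)\in[2^j r,2^{j+1}r]$, and bound shell $j$ by $Ce^{-c4^j}$ times the number of points within $2^{j+1}r$. For the points close to $\gamma^{nD}$, I bound the probability by $1$ and their expected number by Corollary~\ref{coro:vol}, which gives $C_1 (2^{j}r)^\nu n^{2-\nu}$. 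Summing over $j$ with the Gaussian weights yields
\[
C n^{2-\nu}k^{-2}\max(\sqrt k,\log n)^\nu .
\]
The one requirement is $r\ge C'\log n$, which is exactly why $\log n$ appears inside the maximum. I also have to handle $2^j r\ge n/2$, where the trivial bound $\#nD\lesssim n^2$ suffices.

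For \eqref{eq:temp:Ebound}, I will write $T_{\le\delta}(\mathcal{X}^n)=\sum_{k\le 2\delta n^2} \tfrac{k}{2n^2}\#\{\mathsf{L}: t_{\mathsf{L}}=k\}$ in rescaled time and sum the bound just obtained:
\[
\mathbb{E}[T_{\le\delta}]\lesssim n^{-\nu}\sum_{k\le 2\delta n^2} k^{-1}\max(\sqrt k,\log n)^\nu .
\]
I split this sum at $k=\log^2 n$. The range $k\le\log^2 n$ contributes about $(\log n)^\nu\log\log n$, giving the first term $(\log\log n)(\log n)^\nu n^{-\nu}$. The range $k>\log^2n$ contributes about $\sum k^{\nu/2-1}\lesssim(\delta n^2)^{\nu/2}$, which after the prefactor $n^{-\nu}$ becomes $\delta^{\nu/2}$.

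I expect the main obstacle to be the near-curve count at scale $r$. Corollary~\ref{coro:vol} only applies when $r\ge C'\log n$, and the Gaussian tail estimate has to hold uniformly for bridges near $\partial(nD)$ and for odd/even parity. For the tail I will use the standard bound $\mathbb{P}_{k,x,x}(\max|\mathsf{L}-x|\ge r)\le Ce^{-cr^2/k}$ for $r\le k$, and this bound vanishes once $r>k$.
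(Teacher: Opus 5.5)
Your proposal is correct and follows essentially the paper's argument: you condition on $\gamma^{nD}$, use the $Ck^{-2}$ mass of rooted loops of length $k$, apply the Gaussian tail for the bridge's maximal displacement, and invoke Corollary~\ref{coro:vol} at scale $\max(\cdot, C'\log n)$; you then split the time sum at $k\approx \log^2 n$. The only cosmetic difference is that the paper slices by the loop radius $r_{\mathsf{L}}\in[R\sqrt{k},(R+1)\sqrt{k})$ rather than by dyadic shells of $d(x,\gamma^{nD})$, which changes nothing in substance.
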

\begin{proof}
    We assume $k$ even during the proof, for otherwise the left-hand side is equal to $0$.
    Let $\mathsf{L}$ be distributed as a simple random walk loop on $\mathbb{Z}^2$ started from $0$ with $k$ steps under $\mathbb{P}_{k}$, and let
    \[r_\mathsf{L}\coloneqq \max \{ |\mathsf{L}(t)-\mathsf{L}(0)|:   t\in \{0,\dots, k\} \}.\]

    The following estimate on the tail of $r_\mathsf{L}$ is classical, and we include it here for completeness.

\begin{lemma}\label{L:ex_hoeffding} There exists a universal constant $C>0$ such that 
    \begin{equation} 
    \label{eq:temp:hoef}
    \mathbb{P}_k( r_{\mathsf{L}} \geq R \sqrt{k} ) \leq C \exp(-R^2).
    \end{equation}
\end{lemma}

\begin{proof}[Proof of Lemma \ref{L:ex_hoeffding}]
    Let $(X_n, n\ge 0)$ denote simple random walk on $\mathbb{Z}^2$ starting from $X_0 = 0$, and let $M_n = 
    \max \{|X_j|:0
    \le j \le n \}$.  
    By symmetry and reversibility of a random walk bridge of length $k$, it suffices to prove that 
    \begin{equation}\label{eq:AH}
    \mathbb{P}( M_{k/2} \geq R \sqrt{k} | X_k = 0) \leq (C/2) \exp(-R^2).
    \end{equation}
    (Recall that $k$ is assumed even.) 
    Now, let $j$ denote $k/2$ if $k/2$ is even, and $k/2+1$ otherwise. 
    Applying the Markov property, and letting $p_n(x,y)$ denote the transition probability for random walk on $\mathbb{Z}^2$, 
    \begin{align*}
    \mathbb{P}( M_{k/2} \geq R \sqrt{k} ; X_k = 0) 
    &\le \mathbb{P}( M_j 
    \geq  R \sqrt{k} ; X_k = 0) \\
    & \leq \mathbb{E}( 1_{
    \{M_{j} \ge R\sqrt{k}\}} p_{k-j}(X_j,0))  .
    \end{align*}
    Now, using Cauchy--Schwarz and the reversibility of simple random walk on $
    \mathbb{Z}^2$ it is well known that for any $x\in \mathbb{Z}^2$ with the same parity as 0 we have $p_{2n}(x,0) \le p_{2n} (0,0)$. This also trivially holds when $x$ does not have the same parity as 0. Hence (since we have chosen $j$ even, hence $k-j$ is also even), we deduce that 
    \begin{align}\label{eq:tailAH}
    \mathbb{P}( M_{k/2} \geq R \sqrt{k} ; X_k = 0)  &\le p_{k-j} (0,0) \mathbb{P}( M_j 
    \geq  R \sqrt{k} )
    \end{align}
    By the maximal Azuma--Hoeffding inequality,
    $$
    \mathbb{P}( M_j 
    \geq  R \sqrt{k} )\le \exp ( - 2\tfrac{R^2k}{j}) \le \exp ( - R^2).
    $$
    Also, by the local central limit theorem, there exists a universal constant $C>0$ such that $p_{k-j}(0,0) \le C p_k(0,0)$. Plugging into \eqref{eq:tailAH} we get
    \[
    \mathbb{P}( M_{k/2} \geq R \sqrt{k} ; X_k = 0)  \le C \exp(-R^2) p_k(0,0).
    \]
    Dividing by $
    P(X_k =0) = p_k(0,0)$ to get the conditional probability, this proves \eqref{eq:AH} (changing the value of $C$ by a factor two) as desired. 
\end{proof}

    Now let us return to the proof of Proposition \ref{prop:vol2}.    For any $k\geq 1$, 
    
    \begin{equation}
    \label{eq:temp:mass}
    \mathbb{E}[\# \{ \mathsf{L}\in \mathbb{L}^{\mathbb{Z}^2}: t_\mathsf{L}=k, \mathsf{L}(0)=0 \} ] =\frac{4^{-k}}{k}\binom{k}{k/2}^2 \leq Ck^{-2}
    \end{equation}
    for a universal constant $C$.
    Let $\nu, C',C_1$ be the constants from Corollary~\ref{coro:vol}, let $\tilde C$ be the constant from Proposition \ref{prop:vol2} and let $n\geq 2$.
    For $R$ a non-negative integer, apply Corollary~\ref{coro:vol} to $k_R\coloneqq\lceil \max((R+1)\sqrt{k},  C' \log(n) )\rceil $:
    \begin{equation}
    \label{eq:temp:card}
    \mathbb{E}[\#\{ x\in n D: d(x,\gamma^{nD})\leq k_R  \}] \leq C_1  n^{2-\nu} k_R^{\nu}.
    \end{equation}
    For $\mathsf{L}$ rooted at $x$ to intersect $\gamma^{nD}$, it must hold $r_\mathsf{L}\geq d(x,\gamma^{nD})$. Let $\mathbb{E}^*$ denote the conditional expectation on the loop soup, given the path $\gamma^{nD}$ (i.e., we fix the realisation of $\gamma^{nD}$ and we take the expectation just over the loop soup $\mathbb{L}^{\mathbb{Z}^2}$. 
    By independence between $\mathbb{L}^{\mathbb{Z}^2}$ and $\gamma^{nD}$ and translation invariance of $\mathbb{L}^{\mathbb{Z}^2}$, for all $\delta\in(0,1/2]$, 
    \begin{align*}
    \mathbb{E}^*\Big[ &\#   \{   \mathsf{L}\in \mathbb{L}^{nD}_{\gamma^{nD}}: t_\mathsf{L}=k \}  \Big]
    = \sum_{R=0}^\infty
    \mathbb{E}^*\big[\# \{\mathsf{L}\in  \mathbb{L}^{nD}_{\gamma^{nD} } : t_\mathsf{L}=k, r_{\mathsf{L}}\in [R\sqrt{k}, (R+1)\sqrt{k}   )   \} \big]\\
    &	\leq \sum_{R=0}^\infty
    \mathbb{E}^*\big[\# \{\mathsf{L}\in  \mathbb{L}^{\mathbb{Z}^2} :
    \mathsf{L}(0)\in nD, \
    d(\mathsf{L}(0),\gamma^{nD})\leq (R+1)\sqrt{k}, \\
    &\hspace{6cm}
    t_\mathsf{L}=k, r_{\mathsf{L}}\in [R\sqrt{k}, (R+1)\sqrt{k}   )   \} \big]\\
    &	\leq \sum_{R=0}^\infty  \#
    \{  x\in n D : d(x,\gamma^{nD} )\leq  (R+1) \sqrt{k}  \}
    \\
    &\hspace{2cm}\cdot \mathbb{E}^*\big[\# \{L\in \mathbb{L}^{\mathbb{Z}^2}: L(0)=0, t_\mathsf{L}=k, r_{\mathsf{L}}\in [R\sqrt{k}, (R+1)\sqrt{k}  \} \big]\\
    &	\leq \sum_{R=0}^\infty  \#
    \big\{  x\in n D : d(x,\gamma^{nD} )\leq  (R+1) \sqrt{k}  \big\}
    \big(Ck^{-2} \big) \big( \tilde C \exp(-R^2) \big)\quad \text{by (\ref{eq:temp:mass},\ref{eq:temp:hoef})}
    \end{align*}
    Taking now the expectation with respect to $\gamma^{nD}$ and applying \eqref{eq:temp:card}, we get
    \begin{align*}
    \mathbb{E}\Big[ \#   \{   \mathsf{L}\in \mathbb{L}^{nD}_{\gamma^{nD}}: t_\mathsf{L}=k \}  \Big]
    &	\leq \sum_{R=0}^\infty
    \big( C_1  n^{2-\nu} k_R^{\nu} \big) \big(Ck^{-2} \big) 
    \big(\tilde C \exp(-R^2)\big) \\&=\tilde C CC_1 k^{-2}n^{2-\nu}  \sum_{R=0}^\infty  \exp(-R^2) k_R^{\nu}.
    \end{align*}

    Since $k_R\leq 2 (R+1)(\max(\sqrt{k},\log(n) ))$ and $\sum_{R=0}^\infty   \exp(-R^2) (R+1)^\nu<\infty$, we deduce the first statement in Proposition \ref{prop:vol2}.
    
    Summing over $k=1$ to $\lceil \log(n)^2 \rceil$, and from $k=\lceil \log(n)^2 \rceil$ to $\delta n^2$ for an arbitrary $\delta>0$,
    we deduce that there exists a constant $C'$ such that for all $n\geq 1$ and all $\delta>0$,
    \[
    \mathbb{E}[T_{\leq \delta}(\mathcal{X}^n)]
    =n^{-2}
    \sum_{k=1}^{\lfloor \delta n^2 \rfloor}  k \mathbb{E}[\# \{  \mathsf{L}\in \mathbb{L}^{nD}_{\gamma^{nD}}: t_{\mathsf{L}}=k  \}]
    \leq C' ((\log\log n) (\log n)^\nu n^{-\nu}+ \delta^{\frac{\nu}{2}}).
    \]
    This proves \eqref{eq:temp:Ebound}.
\end{proof}

%
%
%
%

\begin{theorem}
\label{th:tCV}
  As $n\to\infty$, $t_{\mathcal{X}^n}$ converges to $t_{\mathcal{X}}$ in probability.
\end{theorem}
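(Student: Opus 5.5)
We split the total time at a mesoscopic threshold $\delta>0$ and write
\[
t_{\mathcal{X}^n}=T_{\leq\delta}(\mathcal{X}^n)+\sum_{\ell'\in\phi(\mathbb{L}^{nD})_{\gamma^{n},>\delta}}t_{\ell'},\qquad t_{\mathcal{X}}=T_{\leq\delta}(\mathcal{X})+\sum_{\ell\in\mathcal{L}_{\gamma,>\delta}}t_\ell ,
\]
where $\gamma^n\coloneqq\phi^*(\gamma^{nD})$. The small-loop terms are handled by expectation bounds. By \eqref{eq:temp:Ebound}, $\mathbb{E}[T_{\leq\delta}(\mathcal{X}^n)]\leq C((\log\log n)(\log n)^\nu n^{-\nu}+\delta^{\nu/2})$, so $\limsup_n\mathbb{P}(T_{\leq\delta}(\mathcal{X}^n)>\eta)\leq C\delta^{\nu/2}/\eta$ by Markov's inequality. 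On the continuum side, $T_{\leq\delta}(\mathcal{X})\to0$ almost surely as $\delta\to0$ by dominated convergence, since $t_{\mathcal{X}}<\infty$ (Lemma~\ref{le:item:tmax}).

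For the large loops, work on the coupling event $E_{\delta',n}\cap\{\rho(\gamma^{nD},\gamma)\leq\delta'\}$ with $\delta'\ll\delta$. By Proposition~\ref{prop:blsBis} and the discussion after it, on this event $\phi\circ\psi$ induces a $\delta'$-isomorphism from $\mathcal{L}$ to $\phi(\mathbb{L}^{nD})$. Since almost surely $\mathcal{X}\in\Conf$ (Proposition~\ref{prop:conf}), Lemma~\ref{le:suited} gives a random $\delta'(\mathcal{X},\delta)>0$ such that this isomorphism is $\delta$-suited with respect to $(\gamma,\gamma^n)$. Consequently every loop of $\mathcal{L}_{\gamma,\geq\delta}$ is matched to a loop of $\phi(\mathbb{L}^{nD})_{\gamma^n}$, and conversely every loop of duration $\geq\delta$ hitting $\gamma^n$ comes from a loop hitting $\gamma$. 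Matched durations differ by at most $\delta'/2$. The finite set $\mathcal{L}_{\geq\delta/2}$ has some cardinality $N$.

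The main obstacle is the boundary layer, namely loops with duration close to $\delta$ on one side but not the other. To avoid it, we choose $\delta$ (random but independent of $n$) such that no loop of $\mathcal{L}$ has duration in $[\delta-\kappa,\delta+\kappa]$ for some $\kappa>0$; this holds almost surely for Lebesgue-a.e.\ $\delta$. Taking $\delta'<\kappa$, the large-loop sets then correspond bijectively, so the difference of the large-loop sums is at most $N\delta'/2$, which tends to $0$ as $\delta'\to0$. Any unmatched loops have duration $<\delta+\kappa$ and are absorbed into the $T_{\leq\delta+\kappa}$ terms, controlled as above.

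Combining the two parts,
\[
\limsup_n\mathbb{P}(|t_{\mathcal{X}^n}-t_{\mathcal{X}}|>3\eta)\leq C\delta^{\nu/2}/\eta+\mathbb{P}(T_{\leq2\delta}(\mathcal{X})>\eta)+\mathbb{P}(\delta'(\mathcal{X},\delta)<\delta'_0)+o(1).
\]
Letting $\delta'_0\to0$ and then $\delta\to0$ (along a deterministic sequence avoiding, almost surely, the atoms of the duration set) concludes the proof.
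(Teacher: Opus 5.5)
Your argument is correct and is essentially the paper's: cut at a mesoscopic scale $\delta$, match the large loops through the $\delta$-suited isomorphism supplied by Proposition~\ref{prop:blsBis} and Lemma~\ref{le:suited} so that duration errors are summed over finitely many loops, bound $T_{\leq\delta}(\mathcal{X}^n)$ by \eqref{eq:temp:Ebound} and Markov's inequality, and send $T_{\leq\delta}(\mathcal{X})\to0$ using $t_{\mathcal{X}}<\infty$ (you work in probability rather than with the paper's truncation event $F_k$, which is slightly cleaner). Your duration gap $\kappa$ makes explicit the threshold-crossing loops that the paper's displayed decomposition leaves implicit. Summing the errors over $\mathcal{L}_{\gamma,\geq\delta}\cup(\phi\circ\psi)^{-1}(\phi(\mathbb{L}^{nD})_{\gamma^n,\geq\delta})\subseteq\mathcal{L}_{\gamma,\geq\delta/2}$, which suitedness maps bijectively into $\phi(\mathbb{L}^{nD})_{\gamma^n}$, would handle this without choosing $\kappa$; as written, add $\mathbb{P}(\kappa\leq\delta'_0)$ and $\mathbb{P}(N\delta'_0>2\eta)$ to your final bound before letting $\delta'_0\to0$.
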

\begin{proof}
  Recall that $\mathcal{X} = (\gamma, \mathcal{L})$, where $\gamma$ is a radial SLE$_2$ curve. Thus $\gamma$ has almost surely finite $d$-dimensional Minkowski content for some $d<2$ (in fact one may take $d = 5/4$ \cite{LawlerRezaei}, but this is not relevant here).   By Lemma \ref{le:item:tmax}, when we take the expectation $\mathbb{E}'$ over only the loop soup,   
  $\mathbb{E}'[t_{\mathcal{X}} ]< \infty$.

Consider the event $F_k$ such that $t_{\mathcal{X}}\le k$. We first note since $t_{\mathcal{X}}< \infty$ almost surely, we have $\mathbb{P}( F_k) \to 1$ as $k \to \infty$. 

    Let $\epsilon>0$. Fix $k$ such that $\mathbb{P}(F_k) \ge 1-\epsilon/2$. 
  Let $f(\epsilon)=\mathbb{E}[\mathbbm{1}_{F_k} T_\epsilon(\mathcal{X})]$. 
  Since $t_{\mathcal{X}} < \infty$ a.s., we have $T_{\leq \epsilon}( \mathcal{X}) \to 0 $ as $\epsilon \to 0$, almost surely and hence by dominated convergence,
   $f(\epsilon)\to 0$ as $\epsilon \to 0$ as well. Let $\delta>0$ such that $C\delta^{\frac{\nu}{2}}+f(\delta)<\epsilon^2/2$, for $C$ from~\eqref{eq:temp:Ebound}.
  Let $\eta\in(0,\delta)$. Let $I_{\eta, n}$ denote the event that there is an $\eta$-isomorphism from $\mathcal{X}$ which is $\delta$-suited (recall Lemma~\ref{le:suited}). We have $\limsup_{\eta \to 0} \limsup_{n\to \infty} \P( I_{\eta, n}^c) = 0$ by combining Lemma~\ref{le:suited} and Proposition~\ref{prop:blsBis}. In particular, we can choose $\eta>0$ sufficiently small (in particular $\eta< \delta$) such that $\limsup_{n\to \infty} \mathbb{P}(I_{\eta, n}^c) \le \epsilon/2$. 

For all $n\geq 0$ such that $\delta\geq \epsilon_n$, since $I_{\eta, n} \subset E_{\delta , n} \subset E_n$ where $E_{\delta, n}$ is the good event from Proposition~\ref{prop:blsBis} and $E_n$ the good event from Theorem \ref{th:bls},
  \begin{align*}
  \mathbb{E}[ |t_{\mathcal{X}^n}-t_{\mathcal{X}}|\mathbbm{1}_{I_{\eta,n} \cap F_k} ]
  &\leq \mathbb{E}[T_{ \leq \delta}(\mathcal{X}_n)| ]+ \mathbb{E}[T_{\leq  \delta}(\mathcal{X})|1_{F_k} ]
  +\mathbb{E}[ \mathbbm{1}_{E_{n}} \sum_{\ell\in \mathcal{L}_{\gamma,\geq \delta}} |t_\ell-t_{\phi\circ \psi(\ell)}|    ]\\
  &\leq  C (\log\log n (\log n)^\nu n^{-\nu}+ \delta^{\frac{\nu}{2}}) +f(\delta)+ \alpha n^{-2}\mathbb{E}[  \#  \mathcal{L}_{\gamma,\geq \delta}].
  \end{align*}


  Thus, by Markov's inequality
  \begin{align*}
  \mathbb{P}(  |t_{\mathcal{X}^n}-t_{\mathcal{X}}| \geq \epsilon  )&\leq \mathbb{P}(I_{\eta,n}^c)+ \mathbb{P}(F_k^c) + \epsilon^{-1}(C (\log\log n (\log n)^\nu n^{-\nu}+ \delta^{\frac{\nu}{2}}) +f(\delta)+ \alpha n^{-2}\mathbb{E}[  \#  \mathcal{L}_{\gamma,\geq \delta}])\\
  &\leq  \mathbb{P}(I_{\eta,n}^c)+\epsilon/2+ C\epsilon^{-1} \log\log n (\log n)^\nu n^{-\nu}+ \epsilon^{-1} \alpha n^{-2}\mathbb{E}[  \#  \mathcal{L}_{\gamma,\geq \delta}]
  \end{align*}
  Hence 
  \[
\limsup_{n\to \infty} \mathbb{P}(  |t_{\mathcal{X}^n}-t_{\mathcal{X}}| \geq \epsilon  ) \le \epsilon/2 + \epsilon /2 
 = \epsilon. \]
 Since $\epsilon$ is arbitrary, this concludes the proof.
\end{proof}

\section{Open problems}
\label{sec:open}
\begin{itemize}
\item (\textbf{Measurability of $\gamma$.}) Naturally, our proof does not give any kind of uniqueness, and in particular, does not answer the question of whether one can ``erase the loops of planar Brownian motion'' and obtain an SLE$_2$ path. More precisely, one can ask the following question: is it possible to add a loop soup $\mathcal{L}$ to an SLE$_2$ path $\gamma$ to obtain a Brownian motion $W$ such that $\gamma$ (and also $\mathcal{L}_\gamma$) are a measurable function of $W$ in this coupling? 

\item (\textbf{Almost sure (non-)injectivity.})  A natural starting point for the above would be to check if the map $\Att$ is injective, In fact, one can show that this is not the case. This leaves open the following question: does $\Att$ almost surely has a unique pre-image? We believe this is also not the case, more precisely, almost surely there are infinitely many pre-images.

\item (\textbf{Dimension 3.}) It would be particularly interesting to prove a similar result in three dimensions. In this case, the (deterministic) continuity of the attachment map $\Att$ fails under the provided conditions, because the set of pairs $(\gamma,\ell)$ such that $\gamma$ intersects $\ell$ is no longer open (a small deformation can make $\ell$ detach from $\gamma$). However, it is natural to believe that for the scaling limit $\gamma$ of LERW, there is a kind of almost sure continuity:
``typical'' small perturbations should continue to intersect $\gamma$. Ideas of this type were in fact already used
 in the proof of \cite{SapozhnikovShiraishi}. 

\item (\textbf{SLE$_2$ loop analogue.}) Is there a way of describing a unit Brownian loop as a version of an SLE$_2$ loop, decorated by an independent Brownian loop soup?

\item (\textbf{Other values of $\kappa$}.) Our construction shows that we can attach a loop soup of arbitrary intensity $\lambda$ to a chordal SLE$_\kappa$ for other values of $\kappa$, to produce a continuous path $W^{\kappa, \lambda}$. It is also known that when the intensity of the loop soup is given by \[\lambda = \lambda(\kappa) = \frac{(8-3\kappa)(6-\kappa)}{2\kappa}\]
then the resulting hull (i.e. after filling in the holes) gives a restriction measure of exponent $\alpha = (6-\kappa)/ (2\kappa)$. But what is the law of the resulting continuous trajectory $W^{\kappa, \lambda}$ in that case? Does it correspond to a Brownian motion with 
``partially erased'' loops when $\kappa \ge 2$?

\item (\textbf{Continuity in rough path topologies.}) With a slight adaptation of Proposition~\ref{prop:extension}, we can deduce the $p$-variation estimate
\[\|X\|_{p-var}\leq C_p (\| \gamma\|_{p-var}+\sum_{\ell\in \mathcal{L}_\gamma} \|\ell\|_{p-var})\]
for an explicit constant $C_p$.
Can we prove some continuity of $\Att$ for  $\alpha$-H\"older or $p$-variation distances? What about rough path topologies? It may be the case that continuity in  these topologies holds, but
only along cusps on $\Conf'$, i.e. for a given $\mathcal{X}\in \Conf$,
consider only the $(\mathcal{X}',\lambda)$ where $\lambda\geq C d_{\Conf_0}(\mathcal{X}, \mathcal{X}')^\beta$, for a well-chosen $\beta$ and fixed $C$, and with $d_{\Conf_0}$ adapted to the appropriate norms.
\end{itemize}

\section*{Acknowledgments}

N.B.  acknowledges the support from the Austrian Science Fund (FWF) grants 10.55776/F1002 on ``Discrete random structures: enumeration and scaling limits" and 10.55776/PAT1878824 on ``Random Conformal Fields''. 

I.S. acknowledges the support of a Junior Research Fellowship (on gauge theory and Gaussian multiplicative chaos) from the Erwin Schrödinger Institute which made this collaboration possible. He also thanks the University of Vienna for its hospitality during this stay. Parts of this work were completed while he was at University of Warwick (with support from EPSRC grant EP/W006227/1) and at ENS Lyon (with support from Labex Milyon). Both these institutes and financial support are gratefully acknowledged.

We thank Adrien Kassel and Xinyin Li for some useful discussions.

\bibliographystyle{alpha}
\bibliography{bib.bib}

\end{document}